\begin{document}

\makeatletter
\def \leq {\leqslant}
\def \geq {\geqslant}
\def\e{\alpha}
\def\tend{\rightarrow}
\def\R{\mathbb R}
\def\S{\mathbb S}
\def\Z{\mathbb Z}
\def\N{\mathbb N}
\def\C{\mathcal C}
\def\D{\mathcal D}
\def\T{\mathcal T}
\def\E{\mathcal E}
\def\s{\sigma}
\def\k{\kappa}
\def \a{\beta}
 \def\be{\beta}
\def\t{\theta}
\def\l{\lambda}
\def \M {\mathcal{M}}
\def\L{\Lambda}
\def\O{\Omega}
\def\g{\gamma}
\def\G{\Gamma}
\def \mM {\mathfrak{m}}
\def\f{\varphi}
\def\o{\omega}
\def \d {\mathrm{d}}
\def \lm {\bm{m}}
\def \lM {\mathds{M}}
\def \lD {\mathds{D}}
\def\U{\Upsilon}
\def\z{\zeta}
\def \Q {\mathcal{Q}}
\def\over{\bm}
\def\b{\backslash}
\def\fet{f_{\ast}}
\def \get{g_{\ast}}
\def\fprim{f^{\prime}}
\def\fprimet{f^{\prime}_\ast }
\def\vet{v_{\ast}}
\def\vprim{v^{\prime}}
\def\vprimet{v^{\prime}_{\ast}}
\def\grad{\nabla}
\def\Log{\textrm{Log }}
\newtheorem{theo}{Theorem}[section]
\newtheorem{prop}[theo]{Proposition}
\newtheorem{cor}[theo]{Corollary}
\newtheorem{lem}[theo]{Lemma}
\newtheorem{hyp}[theo]{Assumptions}
\newtheorem{defi}[theo]{Definition}
\newtheorem{rmq}[theo]{Remark}

\def \dd {\bm{\varepsilon}}

\numberwithin{equation}{section}

\title[Long time dynamics for the Landau-Fermi-Dirac equation]{Long time dynamics for the Landau-Fermi-Dirac equation with hard potentials}

\author{Ricardo {\sc Alonso}}

\address{Departamento de Matem\'{a}tica, PUC-Rio, Rua Marqu\^{e}s de S\~ao Vicente 225, Rio de Janeiro, CEP 22451-900, Brazil.} \email{ralonso@mat.puc-rio.br}

 \author{V\'{e}ronique {\sc Bagland}}

 \address{Universit\'{e} Clermont Auvergne, LMBP, UMR 6620 - CNRS,  Campus des C\'ezeaux, 3, place Vasarely, TSA 60026, CS 60026, F-63178 Aubi\`ere Cedex,
 France.}\email{Veronique.Bagland@math.univ-bpclermont.fr}

 \author{Bertrand {\sc Lods}}

 \address{Universit\`{a} degli
Studi di Torino \& Collegio Carlo Alberto, Department of Economics and Statistics, Corso Unione Sovietica, 218/bis, 10134 Torino, Italy.}\email{bertrand.lods@unito.it}

\maketitle

\begin{abstract} 
In this document we discuss the long time behaviour for the homogeneous Landau-Fermi-Dirac equation in the hard potential case.  Uniform in time estimates for statistical moments and Sobolev regularity are presented and used to prove exponential relaxation of non degenerate distributions to the Fermi-Dirac statistics.  All these results are valid for rather general initial datum.  An important feature of the estimates is the independence with respect to the quantum parameter.  Consequently, in the classical limit the same estimates are recovered for the Landau equation.
 
\smallskip
\noindent \textbf{Keywords.} Landau equation, Fermi-Dirac statistics, scattering regularization, long-time asymptotic.

\end{abstract}


\section{Introduction}
\subsection{The model} We study in this document the long time behaviour of a particle gas satisfying the Pauli's exclusion principle in the Landau's grazing limit regime.  More specifically, we study the Landau-Fermi-Dirac (LFD) equation in the homogeneous setting for hard potential interactions described as
\begin{equation}\label{LFDeq}
\partial_{t} f(t,v) =  \Q_L(f)(t,v),\qquad (t,v)\in (0,\infty)\times\mathbb{R}^{3}\,, \qquad f(0)=f_{0}\,,
\end{equation}
where the collision operator is given by a modification of the Landau operator which includes the Pauli's exclusion principle
\begin{equation*}
\Q_L (f)(v)= {\grad}_v \cdot \int_{\R^3} \Psi(v-\vet) \, \Pi(v-\vet) 
\Big\{ \fet (1- \dd \fet) \grad f - f (1- \dd f) {\grad f}_\ast \Big\}
\, \d\vet\,.
\end{equation*}
We use the standard shorthand $f=f(t,v)$ and $\fet=f(t,\vet)$.  The matrix $\Pi(z)$ denotes the 
orthogonal projection on $(\R z)^\perp$, 
$$\Pi_{i,j}(z)=\delta_{i,j}-\frac{z_i z_j}{|z|^2}, \qquad 1\leq i , j\leq 3\,,$$
and  $\Psi(z)=|z|^{2+\g}$ is the kinetic potential.  The choice $\Psi(z)=|z|^{2+\g}$ corresponds 
to inverse power law potentials.  This document only considers the case of hard 
potentials, that is $0< \g\leq 1$.  We point out that the Pauli exclusion 
principle implies that a solution to \eqref{LFDeq} must satisfy the \emph{a priori} bound 
\begin{equation}\label{eq:Linf}
0\leq f(t,v)\leq \dd^{-1},
\end{equation}
where the \textit{quantum parameter} 
$$\dd:= \frac{(2\pi\hslash)^{3}}{m^{3}\beta} >0$$ 
depends on the reduced Planck constant $\hslash \approx 1.054\times10^{-34} \mathrm{m}^{2}\mathrm{kg\,s}^{-1}$, the mass $m$ and the statistical weight $\beta$ of the particles species, see \cite[Chapter 17]{chapman}.  Recall that the statistical weight is the number of independent quantum states in which the particle can have the same internal energy.  For example, for electrons $\beta=2$ corresponding to the two possible electron spin values.  In the case of electrons $m\approx 9.1\times10^{-31}$ kg, and therefore, $\dd \approx 1.93\times10^{-10}\ll 1$. The parameter $\dd$ encapsules the quantum effects of the model with the case $\dd=0$ corresponding to the classical Landau equation as studied in \cite{DeVi1,DeVi2}.

\subsection{Mathematical difficulty and our contribution in a nutshell}

Suitable modifications of classical kinetic equations, such as Boltzmann or Landau equations, that include quantum effects have been proposed in the literature since the pioneering works \cite{nordheim,UU}\footnote {Refer to \cite[Chapter 2E, Section 3]{villani} for a more detailed account.}.  In particular, the LFD equation is a natural modification of the Landau equation modelling a gas in the grazing collision regime.  The LFD equation has been introduced in several contexts \cite{chapman,daniel,kadomtsev,chavanis,lynden}.  The common feature to these kinetic models is that the relevant steady state is given by the \textit{Fermi-Dirac} statistics
\begin{equation}\label{FDs}
\M_{\dd}(v)=\frac{M_{\dd}(v)}{1+\dd\,M_{\dd}(v)}\,,
\end{equation}
where $M_{\dd}(v)$ is a suitable Maxwellian distribution which allows to recover, as $\dd \to 0$, a Maxwellian equilibrium. Of course, $\M_{\dd}$ satisfies \eqref{eq:Linf}.

\medskip
\noindent
There are several references studying the well posedness of the Cauchy problem \cite{bag1} and propagation of regularity of solutions \cite{Chen10,Chen11} to the LFD equation \eqref{LFDeq} as well as the precise form of steady states \cite{bag2}.  In addition, there are related references \cite{Escobedo,Lu,LW} treating Fermi-Dirac gases with the Boltzmann equation in the homogeneous setting.

\medskip
\noindent
A common feature of kinetic equation for particles satisfying Pauli's exclusion principle is that a suitable $L^{\infty}-$ \emph{a priori} estimate such as \eqref{eq:Linf} holds true. Such bound has been fully exploited to prove existence of solutions in the homogeneous and inhomogeneous setting \cite{alex,bag1,Dolbeault,Escobedo,Ma,Lions,Lu,LW}.  A negative issue related to this bound is that it brings a degeneracy in the set $\{(t,v):f(t,v)=\dd^{-1}\}$ which, in turn, makes time uniform estimates for the statistical moments and Sobolev regularity difficult to establish.  That this degeneracy is quite real is more evident from the fact that, besides the Fermi-Dirac statistics \eqref{FDs}, the distribution
\begin{equation}\label{eq:dege}
F_{\dd}(v)=\begin{cases}
\dfrac{1}{\dd}  \qquad \text{ if } \quad &|v|\leq \left(\dfrac{3\varrho\dd}{|\S^{2}|} \right)^{\frac{1}{3}}\\
0 \qquad \:\:\text{ if } \quad &|v|> \left(\dfrac{3\varrho\dd}{|\S^{2}|}\right)^{\frac{1}{3}}\,,\end{cases}
\end{equation}   
can be a stationary state with prescribed mass $\varrho=\int_{\R^{3}}F_{\dd}(v)\d v$, and where $|\S^{2}|$ is the volume of the unit sphere.  Such a degenerate, referred to as \emph{saturated Fermi-Dirac}, stationary state can occur for very cold gases where an explicit condition on the gas temperature can be found, see Appendix \ref{app:FD} for details.  For initial distributions near such a degenerate state, the regularization process can take a long time relative to those further away.  Of course, this impacts the transitory time of the particle relaxation.  

\medskip
\noindent
In this work we overcome such difficulty by controlling simultaneously the statistical moments and higher norms, see later for a more precise statement.  This can be done due to the strong elliptic nature of the equation.  A crucial point here is that all the estimates we obtain are independent of $\dd$, which means that the $L^{\infty}$--\textit{a priori} estimate \eqref{eq:Linf} is {\sc only}  used in our analysis through the bound $1-\dd f\leq 1$.  This allows to recover all estimates, in the hard potential setting, for the Landau equation in the classical limit $\dd \rightarrow 0$.

\medskip
\noindent
After uniform bounds have been found, the interesting question is the relaxation rate towards the Fermi-Dirac density profile. We perform a complete and direct study of the spectral properties of the linearized LFD operator rendering an explicit spectral gap for it. This approach requires some smallness assumption on $\dd$, but, it does not rely on a perturbative analysis with respect to linearized Landau operator $\dd=0$.  Only at the level of entropy - entropy production estimates we proceed in a perturbative setting, that is for $\dd$ sufficiently small.  In this respect, it would be quite interesting to obtain a type of Cercignani's conjecture result for the LFD collisional operator analog to that of Landau collisional operator.  The intrinsic difficulty of such result, even in the perturbative setting, is that the entropy production operators of LFD and Landau are essentially different; one admits at least two stationary states, the other only one.  So it happens that the Landau entropy operator is not the classical limit, $\dd\rightarrow0$, of the LFD entropy operator, see Section \ref{sec:entrop} for more details. 

\medskip
\noindent
Finally, combining the estimates on moments, higher norms, entropy dissipation, and spectral analysis, we present an exponential relaxation of general initial density towards the Fermi-Dirac density.  Of course, a central requirement is the non degeneracy of such initial state.  We also point out that a general strong convergence result without rate is presented, very much in the spirit of \cite{LW} for Boltzmann equation (see also \cite{CLR} for a quantum Fokker-Planck equation). This result is free from any condition on $\dd$.  For such result uniform in time estimates are again a key point.

\subsection{Notations} For $s \in \R $ and $ p\geq 1$, we define the space $L^{p}_{s}(\R^3)$ through the norm
$$\displaystyle \|f\|_{L^p_{s}} = \left(\int_{\R^3} \big|f(v)\big|^p \, 
\langle v\rangle^s \, \d v\right)^{1/p},$$
where $\langle v\rangle = \big(1+ |v|^2\big)^{1/2}$, that is, $L^{p}_{s}(\R^3)=\big\{f\::\R^{3} \to \R\;;\,\|f\|_{L^{p}_{s}} < \infty\big\}$. More generally, for any weight function $\varpi\::\:\R^{3} \to \R^{+}$, we define, for any $p \geq 1$
$$L^{p}(\varpi)=\Big\{f\::\:\R^{3} \to \R\;;\,\|f\|_{L^{p}(\varpi)}^{p}:=\int_{\R^{3}}\big|f\big|^{p}\,\varpi\d v  < \infty\Big\}\,.$$
With this notation one can write for example $L^{p}_{s}(\R^{3})=L^{p}\big(\langle \cdot \rangle^{s}\big)$, for $p \geq 1,\,s \geq 0$. We also define, for $k \in \N$,
$$H^{k}_{s}(\R^3)=\Big\{f \in L^{2}_{s}(\R^3)\;;\;\partial_{v}^{\beta}f \in L^{2}_{s}(\R^3) \:\;\forall\, |\beta| \leq k\Big\}$$
with the usual norm,
$$ \|f\|_{H^k_{s}} = \bigg( \sum_{0\leq |\beta| \leq k} 
\int_{\R^3} \big| \partial^{\beta}_v f(v)\big|^2\, 
\langle v\rangle^s \, \d v\bigg)^{\frac{1}{2}},$$
where  $\a=(i_1,i_2,i_3)\in \N^3$, $|\a|=i_1+i_2+i_3$ and 
$\partial^{\beta}_v f =\partial_1^{i_1}\partial_2^{i_2}\partial_3^{i_3} f$.
We shall also use the homogeneous norm 
$$ \|f\|_{\dot{H}^k_{s}} = \bigg( \sum_{ |\a| = k} 
\int_{\R^3} \big| \partial^{\beta}_v f(v)\big|^2\, 
\langle v\rangle^s \, \d v\bigg)^{\frac{1}{2}}.$$
\subsection{Main results}
Before describing in detail the main results of the present contribution, we state the key assumptions on the initial datum for equation \eqref{LFDeq}.
\begin{hyp}\label{ci} The initial datum $f(0)=f_{0}\geq0$ to \eqref{LFDeq} is such that
\begin{equation}\label{hypci}
0<\|f_0\|_{\infty} =:\dd_{0}^{-1} <\infty\qquad \text{ and }  \qquad S_{0}:=\mathcal{S}_{\dd_{0}}(f_0)>0
\end{equation}
where for any $\dd>0$ and $0\leq f \leq \dd^{-1}$ we introduce the Fermi-Dirac entropy as
\begin{equation}\label{eq:FDentro}
\mathcal{S}_{\dd}(f)= - \frac{1}{\dd}\int_{\R^3} \Big[\dd f\log(\dd f)+(1-\dd f)\log (1-\dd f)\Big] \, \d v\,.
\end{equation}
\end{hyp}
\begin{rmq}
Notice that $\mathcal{S}_{\dd}(f) \geq 0$ for any $0 \leq f \leq \dd^{-1}$.  Recalling the definition of the classical Boltzmann entropy $H(f):=\int_{\R^{3}}f\log f\d v$, one has $\mathcal{S}_{\dd}(f)=-\frac{1}{\dd}\big[H(\dd f)+H(1-\dd f)\big].$ 
\end{rmq}
\noindent
For initial datum $f_{0}$ satisfying the condition \eqref{hypci} we always consider the LFD equation \eqref{LFDeq} with quantum parameter $0 < \dd \leq \dd_{0}$ which guarantees estimate \eqref{eq:Linf} and the nonnegativity of the Fermi-Dirac entropy.  {\it A priori} estimates hold for the mass, momentum, and energy of solutions as a consequence of the conservation laws
\begin{eqnarray}
\hspace{-3cm}\forall\; t \geq 0, & \qquad & \int_{\R^3} f(t,v)\, \d v 
= \int_{\R^3} f_{0}(v) \, \d v =:M(f_0), \label{consmass}\\
&& \int_{\R^3} f(t,v)\, v\, \d v = \int_{\R^3} f_{0}(v) \,v\,  \d v \,, 
\label{momentum}\\
& &  \int_{\R^3} f(t,v)\, |v|^2 \, \d v
= \int_{\R^3} f_{0}(v) \,|v|^2 \,  \d v =:E(f_0)\,.\label{consenergy}
\end{eqnarray}
As mentioned, the Cauchy theory for \eqref{LFDeq} has been developed in \cite{bag1}, see Theorem \ref{results} for a precise statement. The question of the smoothness of the solution was then tackled in \cite{Chen10,Chen11}, where, as it occurs for the classical Landau equation, the parabolic nature of \eqref{LFDeq} was exploited. Recall that for the Landau equation, smoothness is immediately produced even if it does not initially exists and, for subsequent time, it is propagated uniformly in time. For the Landau equation, the propagation/appearance  of smoothness is strongly related to the propagation/appearance of moments, see \cite{DeVi1}.  

\medskip
\noindent
For the LFD equation, the analysis of \cite{bag1,Chen10,Chen11} proved the propagation of the moments and the associated smoothness but only on a finite interval of time and their appearance was left open.  An intrinsic difficulty, with respect to the Landau equation, is of course the strongest nonlinearity of the equation, trilinear vs. bilinear, which, at first sight, the additional $L^{\infty}$-bound is not able to compensate.  

\medskip
\noindent
The first main result fills this blank by showing the instantaneous appearance and, then, propagation of both $L^{1}$-moments and $L^{2}$-moments, uniformly in time.  In turn, this results in the appearance and propagation of smoothness.
\begin{theo}\label{smoothness}
Consider $0\leq f_0\in L^{1}_{s_{\gamma}}(\R^{3})$, with $s_{\g}=\max\{2+\tfrac{3\gamma}{2}, 4-\gamma\}$ satisfying \eqref{hypci}.  Then, for any $\dd \in (0,\dd_{0}]$ there exists a weak solution $f$ to \eqref{LFDeq}  such that:

\medskip
\noindent  
(i) \textit{\textbf{(Generation)}} For any $t_0>0$, $k\in\N$, and $s>0$, there exists a constant $C_{t_0}>0$ such that 
$$\sup_{t\geq t_0} \|f(t)\|_{H^k_s}\leq C_{t_0}.$$
The constant $C_{t_0}$ depends, in addition to $t_0$, on $M(f_0)$, $E(f_0)$, $S_{0}$, $k$, $s$, $\gamma$.  In particular,
$$f\in\C^\infty\big([t_0,+\infty); {\mathcal S}(\R^3)\big)\,,\qquad \forall\,t_0>0\,.$$

\noindent
(ii) \textit{\textbf{(Propagation)}} Furthermore, if $\|f_0\|_{H^k_s}<\infty$ and $f_{0}\in L^{1}_{s'}(\R^{3})$ for sufficiently large $s'>0$, the choice $t_0=0$ is valid with constant depending on such initial regularity. 
\end{theo}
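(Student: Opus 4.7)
The plan is to prove part \textbf{(i)} first, and then to recover part \textbf{(ii)} by rerunning the same estimates from $t=0$ when the initial norms are finite. The starting observation is that the LFD operator may be recast in drift--diffusion form
\[
\Q_L(f) = \grad_v \cdot \bigl(\bar A[f]\grad f - \bar B[f]\,f\bigr), \qquad \bar A_{ij}[f](v)=\int_{\R^3}\Psi(v-\vet)\Pi_{ij}(v-\vet)\,\fet(1-\dd\fet)\,\d\vet,
\]
with $\bar B[f]$ the associated drift. Since $0\leq f(1-\dd f)\leq f$, every upper bound on the weight $\fet(1-\dd\fet)$ reduces to an estimate involving only $\fet$, so all constants in the analysis come out \emph{automatically} independent of $\dd\in(0,\dd_{0}]$; this is the mechanism by which the classical Landau regime is recovered as $\dd\downarrow 0$. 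The complementary piece---a uniform-in-time lower ellipticity bound $\xi\cdot\bar A[f](v)\xi\geq \kappa\langle v\rangle^{\gamma}|\xi|^{2}$---is produced by exploiting the Fermi--Dirac entropy: Assumption \ref{ci} together with the $H$-theorem yields a time-independent lower bound on $\mathcal{S}_{\dd}(f(t))$, which in turn quantitatively controls the measure of the near-saturation set $\{f\approx\dd^{-1}\}$ and so prevents $\fet(1-\dd\fet)$ from degenerating.

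The next step is the instantaneous appearance and time-uniform propagation of $L^{1}$ moments. Plugging $\langle v\rangle^{s}$ into the weak formulation of \eqref{LFDeq}, symmetrising in $(v,\vet)$ as in the classical Landau moment estimate and using $(1-\dd\fet)\leq 1$ only on the positive terms, one extracts a coercive gain $-c_{1}\|f\|_{L^{1}_{s+\gamma}}$ from the hard-potential factor $\Psi$, while Povzner-type inequalities absorb the remainder into strictly lower-order moments. The resulting differential inequality
\[
\frac{\d}{\d t}\|f\|_{L^{1}_{s}}\leq -c_{1}\|f\|_{L^{1}_{s+\gamma}}+c_{2},
\]
with constants independent of $\dd$, combined with the interpolation $\|f\|_{L^{1}_{s+\gamma}}\geq c\|f\|_{L^{1}_{s}}^{1+\gamma/s}\|f\|_{L^{1}}^{-\gamma/s}$, yields instantaneous generation and uniform propagation of every $L^{1}_{s}$ moment. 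The specific threshold $s_{\gamma}=\max\{2+\tfrac{3\gamma}{2},4-\gamma\}$ arises from closing the lower-order terms generated by the drift $\bar B[f]$ and by the symmetrisation of the coefficient matrix.

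With the moment control in hand, I would turn to weighted $L^{2}$ and higher Sobolev bounds by the standard energy method. Multiplying \eqref{LFDeq} by $f\langle v\rangle^{s}$, integrating by parts and invoking the lower ellipticity of $\bar A[f]$ gives
\[
\frac{\d}{\d t}\|f\|_{L^{2}_{s}}^{2} + \kappa\,\|\grad f\|_{L^{2}_{s+\gamma/2}}^{2} \leq C\bigl(1+\|f\|_{L^{2}_{s}}^{2}\bigr),
\]
with $\kappa,C$ uniform in $\dd$, producing an $L^{2}_{s}$ bound on every interval $[t_{0},\infty)$ and, after time-integration of the dissipation term, an $\dot H^{1}_{s}$ bound. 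Iterating the same scheme on $\partial^{\beta} f$---differentiating the equation and bounding the commutators of $\partial^{\beta}$ with $\bar A[f]$ and $\bar B[f]$ by products of lower-order Sobolev norms times already-controlled moments---propagates the estimate inductively to $H^{k}_{s}$ for every $k$, whence the Schwartz regularity of $f(t)$ for each $t\geq t_{0}$. Part \textbf{(ii)} then follows at once: every differential inequality above closes starting from $t=0$ when $\|f_{0}\|_{H^{k}_{s}}$ and sufficiently many $L^{1}$ moments of $f_{0}$ are finite.

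The principal technical obstacle I anticipate is producing the lower ellipticity bound on $\bar A[f]$ in a way that is \emph{simultaneously} uniform in time and in $\dd$, since the only available pointwise control on $f$ is the saturation bound $f\leq \dd^{-1}$ itself. This is precisely where the nondegeneracy assumption $S_{0}>0$ cannot be bypassed: it must be turned quantitatively into an upper bound on $|\{f\geq(2\dd)^{-1}\}|$ uniformly in time, before the weight $\fet(1-\dd\fet)$ acquires the $\dd$-independent lower bound that drives the elliptic regularisation at the heart of the argument.
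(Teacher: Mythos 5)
Your proposal correctly identifies two of the structural ingredients --- the nonlocal parabolic form of the equation and the entropy argument that yields a $\dd$-uniform lower ellipticity constant (Proposition \ref{diffusion}) --- but it contains a genuine gap in the moment step that the paper's approach is specifically designed to circumvent.

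You claim that multiplying by $\langle v\rangle^{s}$ and symmetrising yields a \emph{self-contained} $L^{1}$ moment inequality
$\frac{\d}{\d t}\|f\|_{L^{1}_{s}}\leq -c_{1}\|f\|_{L^{1}_{s+\gamma}}+c_{2}$
with constants independent of $\dd$. This is precisely what fails for the LFD equation. The coercive term that comes out of the $L^{1}$ moment computation (see the proof of Proposition \ref{lem5}) is not $K_{s}\lm_{s+\g}(t)$ but
$K_{s}\int_{\R^3}\langle\vet\rangle^{s+\g}\fet(1-\dd\fet)\,\d\vet = K_{s}\big(\lm_{s+\g}(t)-\dd\,\lM_{s+\g}(t)\big),$
and this quantity can be arbitrarily small (indeed it vanishes on the saturated state $f\in\{0,\dd^{-1}\}$) while $\lm_{s+\g}$ stays large. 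The saturation bound $\dd f\leq 1$ gives only $\dd\lM_{s+\g}\leq\lm_{s+\g}$, which is exactly the borderline that prevents extracting a $\dd$-independent coercivity $\geq c\,\lm_{s+\g}$. You cannot repair this by invoking a uniform pointwise lower bound on $1-\dd f$, because that lower bound (Corollary \ref{cor:Linfty}) is itself a \emph{consequence} of the smoothness estimate you are trying to prove; using it here would be circular. The only way to turn the coercive term into something usable is to move the $L^{2}$ moment $\lM_{s+\g}$ to the right-hand side, which is exactly inequality \eqref{ineq_L1s}: the $L^{1}$ moment evolution is \emph{coupled} to the $L^{2}$ moments. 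Symmetrically, the $L^{2}$ evolution \eqref{ineq_L2s} produces the mixed term $\lm_{2+\g}(t)\,\lM_{s+\g-2}(t)$, whose absorption via Littlewood/Sobolev interpolation (Lemma \ref{lem6}) reintroduces the higher $L^{1}$ moment $\lm_{s+\g}$. Neither hierarchy closes on its own. The paper's solution --- stated as the central novelty in the introduction --- is to study the joint functional $\mathcal{E}_{s}(t)=\lm_{s}(t)+\lM_{s}(t)$, add \eqref{ineq_L1s} and \eqref{ineq_L2s}, and close the combined inequality by interpolating the right-hand side against the combined dissipation $\lm_{s+\g}+\lD_{s+\g}$. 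This single-functional argument, not a sequential $L^{1}$-then-$L^{2}$ bootstrap, is what yields the $\dd$-independent generation and propagation in Theorem \ref{moments}, after which the regularisation to $H^{k}_{s}$ proceeds along the inductive energy scheme you correctly sketch.
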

\noindent
This result improves the regularity estimates obtained in \cite[Theorem 1.2 and Proposition 2.1]{Chen11} in two directions.  First, the assumptions on the initial datum are relaxed, and second, the estimates are uniform with respect to time and to the quantum parameter.  See the analog result for the Landau equation in \cite[Theorem 5]{DeVi1}.  The novelty of our approach, as mentioned previously, lies in the fact that we treat \emph{simultaneously} the appearance of $L^{1}$ and $L^{2}$ moments by studying the evolution of the functional $\mathcal{E}_{s}(t)=\lm_{s}(t)+\lM_{s}(t)$ where
$$\lm_{s}(t):=\int_{\R^{3}}f(t,v)\langle v\rangle^{s}\d v, \qquad \lM_{s}(t)=\int_{\R^{3}}f(t,v)^{2}\langle v\rangle^{s}\d v, \qquad s \geq 0.$$
This is reminiscent of some recent numerical investigations for the Boltzmann equation \cite[Section 4]{AGT} and appears quite natural since the evolution of $L^{1}$-moments involves $L^{2}$-moments and mixed terms of the form $\lm_{s+\g}(t)\lM_{s}(t)$, see Section \ref{sec:L1L2} for a complete proof. The fact that $L^{1}-L^{2}$ bounds translate in smoothness estimates is a standard procedure developed for the Landau equation. 

\medskip
\noindent
We emphasise the fact that special effort is put in the proof to \emph{not use} the $L^{\infty}$-bound \eqref{eq:Linf} so that the constants involved in   Theorem \ref{smoothness} are all independent of $\dd$. This is the key for the study of the long time behaviour since it is what allows to argue that solutions to \eqref{LFDeq} \emph{stay away} from the degenerate steady state \eqref{eq:dege}, see Corollary \ref{cor:Linfty}.  Having ruled out the steady state \eqref{eq:dege}, we can investigate  the question of the long time behaviour for solutions to \eqref{LFDeq} and prove that the Fermi-Dirac statistics attract the solutions to \eqref{LFDeq} obtained in Theorem \ref{smoothness}.  We can prove this result in a general \emph{non quantitative way} by using suitable compactness argument, see Theorem \ref{theo:conve}. The advantage of such a convergence result is that it applies to any solution to \eqref{LFDeq} and quantum parameter.  The drawback, of course, is that it does not provide any rate of convergence and no indication of the relaxation time.

\medskip
\noindent
In order to quantify the relaxation time, we make a quantitative study of the linearization of \eqref{LFDeq} around equilibrium and we prove the following theorem where $\mathscr{L}_{\dd}$ is the linearized operator around the unique steady state $\M_{\dd}$, see Sections \ref{sec:FDstat} and \ref{sec:line} for precise definitions.

\begin{theo}\label{theo:enlargeintrod} There exists some explicit $\dd^{\dagger} >0$ such that, for any $\dd \in (0,\dd^{\dagger})$  there exists $k_{\dd}^{\dagger}>0$ such that for any $k>k_{\dd}^{\dagger}$, the linearized operator $\mathscr{L}_{\dd}$ around the Fermi-Dirac statistics $\M_{\dd}$ generates a $C_{0}$-semigroup $(\bm{S}_{\dd}(t))_{t\geq0}$ in $L^{2}(\langle \cdot \rangle^{k})$ and for any $g\in L^{2}(\langle \cdot \rangle^{k})$,
\begin{equation*}
\big\|\bm{S}_{\dd}(t)g-\mathbb{P}_{\dd}g\big\|_{L^{2}(\langle \cdot \rangle^{k})} \leq C_{\dd}\,\exp(-\lambda_{\star}\,t)\big\|g-\mathbb{P}_{\dd}g\big\|_{L^{2}(\langle \cdot \rangle^{k})}, \qquad \forall\, t \geq0\,,
\end{equation*}
for some explicit constant $C_{\dd}>0$, and any $\lambda_{\star} \in (0,\lambda_\gamma(\dd))$, with explicit $\lambda_\gamma(\dd)>0$.  The operator $\mathbb{P}_{\dd}$ is the spectral projection on $\mathrm{Ker}\big(\mathscr{L}_{\dd}\big)$.
\end{theo}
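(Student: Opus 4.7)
The plan is to use the enlargement-of-functional-space method of Gualdani--Mischler--Mouhot, which reduces the spectral analysis on the large space $L^{2}(\langle\cdot\rangle^{k})$ to a spectral gap problem in a small ``natural'' Hilbert space attached to the Fermi--Dirac entropy.

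\emph{Step 1 --- spectral gap in the natural space.} First, I would work in the weighted space
$$\mathcal{H}_{\dd} := L^{2}\bigl(\R^{3};\,\M_{\dd}^{-1}(1-\dd\M_{\dd})^{-1}\,\d v\bigr),$$
which is natural because it arises as the quadratic expansion of the Fermi--Dirac entropy $\mathcal{S}_{\dd}$ around $\M_{\dd}$. In $\mathcal{H}_{\dd}$ the operator $\mathscr{L}_{\dd}$ is self-adjoint and non-positive, with a five-dimensional kernel spanned by $\M_{\dd}(1-\dd\M_{\dd})\cdot\{1,v_{1},v_{2},v_{3},|v|^{2}\}$ (the collision invariants). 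A coercivity estimate
$$-\langle \mathscr{L}_{\dd} h, h\rangle_{\mathcal{H}_{\dd}} \geq \lambda_{\gamma}(\dd)\,\|h\|_{\mathcal{H}_{\dd}}^{2},\qquad h\perp\mathrm{Ker}(\mathscr{L}_{\dd}),$$
should follow from the ellipticity of the linearized diffusion matrix $\int\Psi(v-v_{*})\Pi(v-v_{*})\,\M_{\dd,*}(1-\dd\M_{\dd,*})\,\d v_{*}$ combined with a Poincar\'e-type inequality for the Fermi--Dirac weight; for $\dd$ small enough one may alternatively argue perturbatively around the known spectral gap of the classical linearized Landau operator ($\dd = 0$).

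\emph{Step 2 --- splitting.} Next I would write $\mathscr{L}_{\dd} = \mathcal{A}_{R} + \mathcal{B}_{R}$, where $\mathcal{A}_{R}$ is a smooth truncation of the non-local zeroth-order part of $\mathscr{L}_{\dd}$ supported in a ball of radius $R$. Because $\M_{\dd}$ admits Gaussian lower bounds on compact sets, $\mathcal{A}_{R}$ is bounded from $L^{2}(\langle v\rangle^{k})$ into $\mathcal{H}_{\dd}$, hence regularising for the purposes of enlargement. The complement $\mathcal{B}_{R}$ retains the elliptic diffusion--drift part, whose coefficients grow like $|v|^{\gamma}$ at infinity; a direct energy estimate in $L^{2}(\langle v\rangle^{k})$ then yields, for $R$ large and $k > k_{\dd}^{\dagger}$ large enough,
$$\langle \mathcal{B}_{R} h, h\rangle_{L^{2}(\langle v\rangle^{k})} \leq -\lambda_{\star}\,\|h\|_{L^{2}(\langle v\rangle^{k})}^{2},$$
for any prescribed $\lambda_{\star}<\lambda_{\gamma}(\dd)$.

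\emph{Step 3 --- enlargement.} With the splitting in hand, I would apply the abstract factorization theorem of Gualdani--Mischler--Mouhot to the iterated resolvent identity
$$(z-\mathscr{L}_{\dd})^{-1} = \sum_{j=0}^{N-1} (z-\mathcal{B}_{R})^{-1}\bigl[\mathcal{A}_{R}(z-\mathcal{B}_{R})^{-1}\bigr]^{j} + (z-\mathscr{L}_{\dd})^{-1}\bigl[\mathcal{A}_{R}(z-\mathcal{B}_{R})^{-1}\bigr]^{N}.$$
For $N$ large enough, the composition $\bigl[\mathcal{A}_{R}(z-\mathcal{B}_{R})^{-1}\bigr]^{N}$ maps $L^{2}(\langle v\rangle^{k})$ continuously into $\mathcal{H}_{\dd}$, which, together with Step~1, identifies the spectrum of $\mathscr{L}_{\dd}$ in $\{\mathrm{Re}\,z > -\lambda_{\star}\}$ on the large space as the single semisimple eigenvalue $0$, with spectral projector $\mathbb{P}_{\dd}$ of rank five. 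A Dunford contour argument then converts the resolvent bound into the claimed exponential semigroup decay.

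\emph{Main obstacle.} The hardest point will be Step~1: as stressed in the introduction, the LFD entropy--entropy production structure differs genuinely from the Landau one (it admits the saturated state $F_{\dd}$ as an extra equilibrium), so $\lambda_{\gamma}(\dd)$ cannot be obtained by a naive perturbation argument. Quantifying it explicitly will demand either a weighted Poincar\'e inequality tailored to the Fermi--Dirac weight $\M_{\dd}(1-\dd\M_{\dd})$, or a careful book-keeping of every $\dd$-dependent remainder between the LFD and Landau linearizations, guaranteeing that the gap does not close for $\dd\in(0,\dd^{\dagger})$.
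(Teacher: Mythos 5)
Your architecture coincides with the paper's: a spectral gap in the natural Hilbert space tied to the Fermi--Dirac entropy (your $\mathcal{H}_{\dd}$ is the paper's $L^{2}(\mM)$ written in the fluctuation variable $g=\mM h$), followed by a Gualdani--Mischler--Mouhot splitting and the abstract factorization theorem, so Steps~2 and~3 are essentially what the paper does. One small correction there: the bounded part $\bm{A}_{\dd}$ is not a spatial truncation of the zeroth-order terms; it is the bilinear Landau operator $\Q(g,\M_{\dd})-2\dd\,\Q(g\M_{\dd},\M_{\dd})$ together with the cut-off compensator $\U\chi_{R}g$, and it maps $L^{2}(\langle\cdot\rangle^{k})$ into the small space $L^{2}(\M_{\dd}^{-1})$ because the \emph{derivatives fall on the Gaussian-decaying $\M_{\dd}$}, not because $\M_{\dd}$ is bounded below on compacts; $\U\chi_{R}$ is there only to render $\bm{B}_{\dd}$ hypo-dissipative with an arbitrarily negative rate. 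The point your sketch leaves genuinely open is the one you flag yourself, and here you offer the wrong alternative: perturbing around $\dd=0$ is precisely what the paper avoids (this is stated emphatically in the introduction). Instead an explicit $\lambda_{\gamma}(\dd)$ is obtained in two sub-steps: (i) for Maxwell molecules $\gamma=0$, the Desvillettes--Villani Cram\'{e}r's-rule reduction from the nonlinear entropy-production setting is adapted to the linearized Dirichlet form, giving $\mathcal{D}_{\dd,2}(h)\gtrsim\int\mM\,|\nabla h|^{2}\,\d v - O(\dd^{2})\,\|h\|_{L^{2}(\mM)}^{2}$, and this Fisher-type lower bound is converted into a spectral gap via a Brascamp--Lieb/Poincar\'{e} inequality for the log-concave weight $\mM$, with explicit constant $C_{\mathrm{P}}(\dd)=2b_{\dd}(1+\dd a_{\dd})^{-4}$; and (ii) for $\gamma\in(0,1]$ one sandwiches $\kappa_{\dd}M\leq\mM\leq M$ between Gaussians and invokes the Baranger--Mouhot comparison between the $|v-v_{*}|^{\gamma+2}$ and $|v-v_{*}|^{2}$ Dirichlet forms. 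So the weighted-Poincar\'{e} route you floated is the one actually taken, but the preliminary Cram\'{e}r-rule reduction --- which your sketch does not anticipate --- is the missing ingredient that makes the constant explicit and $\dd$-uniform.
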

\noindent
The fact that the linearization of the LFD equation \eqref{LFDeq} admits a positive spectral and decay in the natural Hilbert space $L^{2}(\mM)$ where $\mM=\M_{\dd}(1-\dd\,\M_{\dd})\,$
is a well-know fact, established in \cite{lemou} by a compactness argument. Notice that the space $L^{2}(\mM)$ is the space in which the linearized operator is symmetric.  We extend such a result of \cite{lemou} in two directions:

\medskip
\noindent
1.  We provide a quantitative estimate of the spectral gap for the linearized operator $\mathscr{L}_{\dd}$ in $L^{2}(\mM).$ This is done in two steps, first considering the case of Maxwell interactions $\g =0$, and then using a comparison argument between the Dirichlet forms corresponding to $\g = 0$ and $\g > 0$. For the study of Maxwell interactions, we adapt the method introduced in \cite{DeVi2, DesvJFA} for the study of the entropy-entropy production inequality for the Landau equation.  As far as we know, this is a novel approach since estimates for the spectral gap for Landau-type equation are usually obtained as the grazing limit of associated estimates for Boltzmann operator, see \cite{bara}.  This latter method seems non trivial and expensive in our context since, to the best of our knowledge, no explicit spectral gap estimates are available for the corresponding Boltzmann equation for Fermi-Dirac particles.

\medskip
\noindent
2.  We extend the spectral gap obtained in the symmetric space $L^{2}(\mM)$ to the space $L^{2}(\langle \cdot \rangle^{k})$ where solutions to \eqref{LFDeq} are known to belong thanks to Theorem \ref{smoothness}.  The technique used is the space enlargement argument introduced in \cite{GMM} and already used for the Landau equation in \cite{Kleber}. The adaptation of the results given in \cite{Kleber} to the LFD equation is straightforward and postponed to the Appendix \ref{app:kleb}.

\medskip

\noindent
It is worthwhile to remark that the restriction to the range of the quantum parameter $\dd \in (0,\dd^{\dagger})$ is of technical nature and it is introduced to make sure that the spectral gap $\lambda_\gamma(\dd)$ has a positive estimation 
$$\lambda_\gamma(\dd) >0\,,\qquad \forall\, \dd \in (0,\dd^{\dagger}].$$
It is not related to any kind of limiting procedure exploiting the existence of a spectral gap for $\dd=0$.  In other words, there is no perturbation argument involved around the classical case.  In fact, we are able to estimate these values as
\begin{equation}\label{estimatedagger}
\varepsilon^{\dagger} \sim E^{3/2}\rho^{-1}\,,\quad\text{and}\quad \lambda_\gamma(\varepsilon)\gtrsim 7.034 \times 10^{-6} \left(\frac{3\gamma E}{20 e}\right)^{\frac{\gamma}{2}}\varrho\,,
\end{equation} 
so that the classical limit $\varepsilon\rightarrow0$ follows.  It was shown in \cite[Proposition 3]{Lu}, that the condition
$$E > \frac{1}{5}\left(\frac{3\dd}{4\pi}\varrho\right)^{2/3},$$
is necessary for convergence to a Fermi-Dirac distribution.  Thus, estimate \eqref{estimatedagger} for $\dd^{\dagger}$ is natural.  The estimation of the spectral gap $\lambda_\gamma(\dd)$ is related to a Poincar\'{e}'s inequality constant \cite[Corollary 3.4]{lemou} and, in our estimation, likely far from optimal.   

\medskip
\noindent
Based on Theorems \ref{smoothness} and \ref{theo:enlargeintrod}, we are able to prove the following relaxation theorem.
\begin{theo}\label{theo:converg}
Consider $0\leq f_0\in L^{1}_{s_{\gamma}}(\R^{3})\cap L^2_{k}(\R^3)$, with $s_{\g}=\max\{\tfrac{3\gamma}{2}+2, 4-\gamma\}$ and $k>k_{\dd}^{\dagger}$, satisfying \eqref{hypci}. Let $\dd \in (0,\dd_{0}]$ and let $f$ be a weak solution to \eqref{LFDeq} given in Theorem \ref{smoothness}. Then, there exists $\dd^{\ddagger} \in (0,\dd^{\dagger})$ such that for any $\dd \in (0,\dd^{\ddagger})$
$$\|f(t)-\M_{\dd}\|_{L^{1}_{2}} \leq C_{0}\exp(-\lambda_{\star}t), \qquad \forall\, \lambda_{\star} \in \big(0,\lambda_\gamma(\dd)\big),\;t \geq 0\,,$$
where $\lambda_\gamma(\dd) >0$ is the explicit spectral gap of $\mathscr{L}_{\dd}$ given by Theorem \ref{theo:enlargeintrod}. The constant $C_{0}>0$ depends also on $M(f_0)$, $E(f_0)$, $S_0$, and $\g$, but not on $\dd.$
\end{theo}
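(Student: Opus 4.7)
The plan is to combine the uniform-in-time regularity bounds of Theorem \ref{smoothness} with the quantitative linearized decay from Theorem \ref{theo:enlargeintrod}. Since the latter only produces exponential relaxation for data close to $\M_{\dd}$, I would first bring $f(t)$ into a small $L^{2}(\langle\cdot\rangle^{k})$-neighborhood of the Fermi--Dirac statistics at some finite time $T_{0}$, and then unleash the linearized semigroup $(\bm{S}_{\dd}(t))_{t\geq0}$ from $T_{0}$ onwards.

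\textbf{Entering a neighborhood of $\M_{\dd}$.} For this first step I would invoke the entropy -- entropy production estimates developed in Section \ref{sec:entrop}, which operate only in the perturbative regime and dictate the choice $\dd^{\ddagger}\in(0,\dd^{\dagger})$. The target is a differential inequality of the form $\frac{\mathrm{d}}{\mathrm{d}t}\mathcal{H}_{\dd}(f(t)|\M_{\dd})\leq -\kappa\,\mathcal{H}_{\dd}(f(t)|\M_{\dd})$ for the relative Fermi--Dirac entropy, hence exponential decay of $\mathcal{H}_{\dd}(f(t)|\M_{\dd})$, with $\kappa$ uniform in $\dd\in(0,\dd^{\ddagger}]$. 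A Csisz\'ar--Kullback-type inequality tailored to $\mathcal{S}_{\dd}$ then converts this into $L^{1}$-convergence with explicit rate. Interpolating this $L^{1}$-closeness against the uniform bound $\sup_{t\geq 1}\|f(t)\|_{H^{m}_{s}}\leq C$ granted by Theorem \ref{smoothness}, and using Corollary \ref{cor:Linfty} to keep the solution away from the saturated state \eqref{eq:dege}, upgrades the convergence to $L^{2}(\langle\cdot\rangle^{k})$ for any $k>k_{\dd}^{\dagger}$. Consequently, for any prescribed $\eta>0$ there is an explicit $T_{0}$, depending only on $\eta$, $M(f_{0})$, $E(f_{0})$, $S_{0}$, $\gamma$, such that $\|f(T_{0})-\M_{\dd}\|_{L^{2}(\langle\cdot\rangle^{k})}\leq \eta$.

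\textbf{Closing via Duhamel.} For $t\geq T_{0}$, set $h(t):=f(t)-\M_{\dd}$. By the conservation laws \eqref{consmass}--\eqref{consenergy}, and since $\M_{\dd}$ is uniquely determined by the mass, momentum and energy of $f_{0}$, the difference $h(t)$ lies in the orthogonal complement of $\mathrm{Ker}(\mathscr{L}_{\dd})$, hence $\mathbb{P}_{\dd}h(t)=0$. Writing $\Q_{L}(f)=\mathscr{L}_{\dd}h+\mathcal{N}_{\dd}(h)$, where $\mathcal{N}_{\dd}(h)$ gathers the (at least quadratic) remainder terms from expanding the trilinear Pauli structure around $\M_{\dd}$, Duhamel's formula reads
\begin{equation*}
h(t)=\bm{S}_{\dd}(t-T_{0})\,h(T_{0})+\int_{T_{0}}^{t}\bm{S}_{\dd}(t-s)\,\mathcal{N}_{\dd}(h(s))\,\mathrm{d}s,\qquad t\geq T_{0}.
\end{equation*}
Applying Theorem \ref{theo:enlargeintrod} to the linear contribution and estimating $\mathcal{N}_{\dd}(h)$ in $L^{2}(\langle\cdot\rangle^{k})$ via the uniform Sobolev bounds of Theorem \ref{smoothness} --- crucially controlling the Pauli factors $1-\dd f,\,1-\dd f_{\ast}$ by $1$, so that all constants remain independent of $\dd$ --- a standard bootstrap/continuation argument shows, for $\eta$ small enough in terms of $\lambda_{\gamma}(\dd)-\lambda_{\star}$ and $C_{\dd}$, that $\|h(t)\|_{L^{2}(\langle\cdot\rangle^{k})}\leq C\,e^{-\lambda_{\star}(t-T_{0})}$ for all $t\geq T_{0}$.

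Finally, for $k>7$ the weighted Cauchy--Schwarz inequality
\begin{equation*}
\|h\|_{L^{1}_{2}}\leq \Bigl(\int_{\R^{3}}\langle v\rangle^{4-k}\,\mathrm{d}v\Bigr)^{1/2}\|h\|_{L^{2}(\langle\cdot\rangle^{k})}
\end{equation*}
descends the decay from $L^{2}(\langle\cdot\rangle^{k})$ to $L^{1}_{2}$, and the transient interval $[0,T_{0}]$ is absorbed into the constant $C_{0}$ by the uniform bound of Theorem \ref{smoothness}. I expect the main obstacle to be the first step: producing an entropy -- entropy production inequality with constant uniform in $\dd\in(0,\dd^{\ddagger}]$. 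As emphasised in the introduction, the LFD entropy dissipation is \emph{not} the $\dd\to0$ limit of the Landau one and admits the spurious saturated steady state \eqref{eq:dege}, so the usual Cercignani-type strategies must be implemented in a genuinely perturbative fashion, leveraging the smallness of $\dd$ together with the non-degeneracy supplied by Corollary \ref{cor:Linfty}.
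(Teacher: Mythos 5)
Your overall architecture matches the paper's: first bring the solution into a small $L^{2}(\langle\cdot\rangle^{k})$-ball around $\M_{\dd}$ using entropy dissipation plus regularity interpolation (this is what fixes $\dd^{\ddagger}$), then run a Duhamel bootstrap with the linearized semigroup $(\bm{S}_{\dd}(t))_{t\geq0}$ from Theorem~\ref{theo:enlargeintrod} and the orthogonality $\mathbb{P}_{\dd}h(t)=0$, and finally absorb the transient interval into the constant via the uniform bounds of Theorem~\ref{smoothness}. The Nash-type interpolation from $L^{1}$ to $L^{2}(\langle\cdot\rangle^{k})$ and the weighted Cauchy--Schwarz descent from $L^{2}(\langle\cdot\rangle^{k})$ to $L^{1}_{2}$ also match the paper's Corollary~\ref{cor:converg} and final step.

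However, there is a genuine overclaim in your first step which, if taken at face value, would require proving something that is not available and is in fact flagged as open in the introduction. You target a linear (Cercignani-type) entropy--entropy production inequality $\frac{\d}{\d t}\mathcal{H}_{\dd}(f(t)|\M_{\dd})\leq -\kappa\,\mathcal{H}_{\dd}(f(t)|\M_{\dd})$ with $\kappa$ uniform in $\dd$, hence exponential decay of the relative entropy. Such a linear inequality does not hold here: even in the classical Landau hard-potential case, Desvillettes--Villani only give the degenerate form $\mathscr{D}_{0}(f)\geq\min\bigl(\lambda_{1}\mathcal{H}_{0}(f|M_{f})\,;\,\lambda_{2}\mathcal{H}_{0}(f|M_{f})^{1+\gamma/2}\bigr)$, which is what the paper imports (Proposition~\ref{theo:Vill}); after comparing $\mathscr{D}_{\dd}$ with $\mathscr{D}_{0}$ and $\mathcal{H}_{\dd}$ with $\mathcal{H}_{0}$ one additionally picks up an error term of order $\dd^{1+\gamma/2}$, giving only $\mathcal{H}_{\dd}(f(t)|\M_{\dd})\lesssim (1+t)^{-2/\gamma}+\dd^{1+2/\gamma}$ (Theorem~\ref{theo:prod}). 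This algebraic decay, not exponential, is all that step~1 delivers. The good news is that algebraic decay is entirely sufficient for your purposes — you only need to reach a fixed small $L^{2}(\langle\cdot\rangle^{k})$-neighborhood of $\M_{\dd}$ at some finite time $T_{0}$, with the size of the $\dd$-dependent plateau controlled by choosing $\dd^{\ddagger}$ small — so the proof survives once the claimed linear inequality is replaced by the correct degenerate one. But as written, aiming for $\frac{\d}{\d t}\mathcal{H}_{\dd}\leq-\kappa\mathcal{H}_{\dd}$ is a dead end: it amounts to proving Cercignani's conjecture for the LFD operator, which the authors explicitly leave open.
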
 

\noindent
This theorem is proved using standard combination of the close to equilibrium result Theorem \ref{theo:enlargeintrod} and the entropy-entropy production estimates. The proof of such entropy-entropy production estimates are technical because of the nature of Fermi-Dirac entropy $\mathcal{S}_{\dd}$.  It is in the entropy-entropy production estimates where a perturbation argument is used, exploiting the entropy-entropy production estimates available for the Landau equation and showing that, for $\dd$ sufficiently small, the entropy production associated to both problems are close. We remark that, even if the Fermi-Dirac entropy $\mathcal{S}_{\dd}(f)$ is not continuous with respect to $\dd \simeq 0$, that is
$$\lim_{\dd \to 0^{+}}\mathcal{S}_{\dd}(f) \neq H(f),$$
the \emph{relative Fermi-Dirac entropy} around $\M_{\dd}$ is continuous, i.e. 
$\lim_{\dd \to 0^{+}}\mathcal{S}_{\dd}(f)-\mathcal{S}_{\dd}(\M_{\dd})=H(f) - H(\M_{0})\,,$
where $\M_{0}$ is the Maxwellian distribution with same mass, momentum, and energy than $f$ and $\M_{\dd}$. Making this continuity quantitative and combining it with the close to equilibrium study lead to Theorem \ref{theo:converg}. Again, at the limit $\dd \to 0$, our result allows to recover the exponential convergence to equilibrium for the classical Landau equation, see \cite{DeVi2,Kleber}.

\subsection{Organization of the paper} The paper is organised as follows.  In Section \ref{sec:known}, we recall some known results about \eqref{LFDeq} such as well posedness and existence of stationary solution.  An equivalent formulation of \eqref{LFDeq} as a nonlocal nonlinear parabolic equation, see \eqref{LFD}, is presented and a proof of the uniform ellipticity of the diffusion matrix associated to such formulation is given.  This is in the line with \cite{DeVi1,bag1}, but a careful analysis is required to prove that the ellipticity of the matrix is uniform with respect to the quantum parameter $\dd$. Section \ref{sec:regula} is devoted to the proof of Theorem \ref{smoothness} and Corollary \ref{cor:Linfty}.  We prove the appearance and propagation of $L^{1}$ and $L^{2}$-moments and, then, deduce the smoothness estimates.  In Section \ref{sec:convno}, the non quantitative convergence result for solution to equation \eqref{LFDeq} is provided.  The spectral analysis of the linearized operator $\mathscr{L}_{\dd}$ and the associated semigroup is performed in Section \ref{sec:line}.  Section \ref{sec:entrop} combines the linearized analysis with new entropy-entropy production estimates resulting in a proof of Theorem \ref{theo:converg}. The paper ends with two appendices:  Appendix \ref{app:FD} is devoted to quantitative bounds on the Fermi-Dirac statistics, and Appendix \ref{app:kleb} presents the technicalities related to extension of the results given in \cite{Kleber} about enlargement of the space for the linearized study.
\section{Cauchy theory}\label{sec:known}

\subsection{Uniform ellipticity of the diffusion matrix} It is convenient to write \eqref{LFDeq} as a nonlinear parabolic equation. More precisely, for $ (i,j) \in  [\hspace{-0.7mm} [  1,3 ]\hspace{-0.7mm}] ^2 $, define
\begin{equation*}
\left\{
\begin{array}{rcl}
a(z) & = & \left(a_{i,j}(z)\right)_{i,j} \quad \mbox{ with }
\quad a_{i,j}(z) 
= |z|^{\g+2} \,\left( \delta_{i,j} -\frac{z_i  z_j}{|z|^2} \right),\medskip\\
 b_i(z) & = & \sum_k \partial_k a_{i,k}(z) = -2 \,z_i \, |z|^\g,  \medskip\\
 c(z) & = & \sum_{k,l} \partial^2_{kl} a_{k,l}(z) = -2 \,(\g+3) \, |z|^\g. \\
\end{array}\right.
\end{equation*}
For any $f \in L^{1}_{2+\g}(\R^{3})$, we define then the matrix-valued mappings $\bm{\sigma}[f]$ and $\bm{\Sigma}[f]$ given by
$$\bm{\sigma}[f]=\big(\bm{\sigma}_{ij}[f]\big)_{ij}:=\big(a_{ij}\ast f\big)_{ij}, \qquad \qquad \bm{\Sigma}[f]=\bm{\sigma}[f(1-\dd\,f)].$$
In the same way, we set $\bm{b}[f]\::\:v \in \R^{3} \mapsto \bm{b}[f](v) \in \R^{3}$ given by 
$$\bm{b}_{i}[f](v)=\big(b_{i} \ast f\big)(v), \qquad \forall\, v \in \R^{3},\qquad i=1,2,3.$$
We also introduce
$$\bm{B}[f]=\bm{b}[f(1-\dd\,f)], \qquad \text{ and } \qquad  \bm{c}[f]=c \ast f.$$
Notice that
\begin{equation}
\left\{
\begin{array}{ccl}
\label{eq:bc}
\left|\bm{c}[f](v)\right| \; \leq &8 \langle v\rangle^{\gamma}\|f\|_{L^{1}_{\gamma}} &\leq \; 8  \langle v\rangle^{\gamma}\|f\|_{L^{1}_{2}} \medskip\\
\left|\bm{b}[f](v)\right| \; \leq &2 \langle v\rangle^{\gamma+1}\|f\|_{L^{1}_{\gamma+1}} &\leq \; 2 \langle v\rangle^{\gamma+1}\|f\|_{L^{1}_{2}}, \qquad v \in \R^{3} \\
\end{array}\right.
\smallskip
\end{equation}
since $0 < \gamma \leq 1$.  With these notations, the LFD equation can then be written 
alternatively under the form
\begin{equation}\label{LFD}
\left\{
\begin{array}{ccl}
\;\partial_{t} f &= &\grad \cdot \big(\,\bm{\Sigma}[f]\, \grad f
- \bm{b}[f]\, f(1-\dd f)\big),\medskip\\
\;f(t=0)&=&f_{0}\,.
\end{array}\right.
\end{equation}
A key ingredient in the well posedness of the LFD equation \eqref{LFD}, as shown in \cite{bag1}, is the ellipticity of the matrix function $\bm{\Sigma}[f]$.  Recall that the analysis of \cite{bag1} is performed for $\dd=1$, so, we need to adapt the proof of \cite{bag1} to the case $\dd>0$.  Furthermore, we aim to prove that such ellipticity is uniform in terms of the parameter $\dd$.  We will need some preliminary results which are \emph{a priori} estimates for the entropy $\mathcal{S}_{\dd}$.
\begin{defi} 
Fix $\dd>0$, $f_0\in L^{1}_{2}(\R^{3})$ satisfying $0\leq f_0\leq \dd^{-1}$.  We say that $f \in \mathcal{Y}_{\dd}(f_{0})$ if $f\in L^{1}_{2}(\R^{3})$ satisfies $0\leq f\leq \dd^{-1}$ and 
$$M(f)=M(f_{0}), \qquad E(f)=E(f_{0}), \quad \text{ and } \quad \mathcal{S}_{\dd}(f) \geq \mathcal{S}_{\dd}(f_{0}).$$
\end{defi}
\begin{lem}\label{L1unif}
Fix $\dd > 0$, $f_0\in L^{1}_{2}(\R^{3})$ satisfying $0\leq f_0\leq \dd^{-1}$, and let $f \in \mathcal{Y}_{\dd}(f_{0})$.  Then, it holds that
\begin{equation}\label{upperentropy}
0 \leq \mathcal{S}_{\dd}(f) \leq 80\big(|\log \dd| + \log R\big)\int_{|v|\leq R}f(1-\dd f)\, \d v 
+ \frac{C(f_0)}{R} + |\log\dd|\frac{E(f_{0})}{R^{2}}\,,\quad \forall \,R>1\,.
\end{equation}
The constant $C(f_0)$ depends only on the energy $E(f_{0})$.
\end{lem}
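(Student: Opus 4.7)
The lower bound $\mathcal{S}_\dd(f)\geq 0$ is immediate from the nonnegativity of the binary entropy $h(u):=-u\log u - (1-u)\log(1-u)$ on $[0,1]$. For the upper bound, the plan is to establish a pointwise estimate of $h(\dd f)/\dd$ in terms of the Pauli factor $f(1-\dd f)$, then split the integral at $|v|=R$ and control the two pieces separately. The central pointwise inequality is
$$h(u) \leq 2\,u(1-u)\bigl(1 + |\log u| + |\log(1-u)|\bigr), \qquad u \in (0,1),$$
which I would prove using the symmetry $h(u)=h(1-u)$ to reduce to $u\leq 1/2$ and then applying $-\log(1-u)\leq 2u$ on $[0,1/2]$ (which gives $-(1-u)\log(1-u)\leq 2u(1-u)$) together with $-u\log u \leq 2u(1-u)|\log u|$ (valid for $u\leq 1/2$). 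Setting $u=\dd f$ and using $|\log(\dd f)|\leq |\log\dd|+|\log f|$ yields
$$\tfrac{1}{\dd}h(\dd f) \leq 2\,f(1-\dd f)\bigl(1 + |\log \dd| + |\log f| + |\log(1-\dd f)|\bigr).$$

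For the tail $\{|v|>R\}$, I would use $f(1-\dd f)\leq f$ and Chebyshev against the energy to get $\int_{|v|>R}f\,\d v\leq E(f_0)/R^2$. The term $\int_{|v|>R} f|\log f|\,\d v$ is handled by a Carleman-type split: on $\{f<e^{-|v|}\}$ apply $x|\log x|\leq 2\sqrt{x}$ to obtain an exponentially small contribution, and on $\{f\geq e^{-|v|}\}$ bound $|\log f|\leq |v|\leq |v|^2/R$ and close using $\int f|v|^2\,\d v \leq E(f_0)$. The $|\log(1-\dd f)|$ weight is tamed by the elementary bound $(1-\dd f)|\log(1-\dd f)|\leq 1/e$ on $[0,1]$, so the corresponding integrand is dominated by $f/e$. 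Collecting gives an outer contribution of the form $|\log\dd|E(f_0)/R^2 + C(f_0)/R$, matching the two correction terms in the statement.

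On the bulk $\{|v|\leq R\}$, the plan is to replace the logarithmic weights by a common cutoff $A$ proportional to $\log R + |\log\dd|$. Where $f\leq e^{-A}$, one has $f(1-\dd f)|\log f|\leq f|\log f|\leq 2\sqrt{f}\leq 2e^{-A/2}$; integrating over $B_R$ this is at most $CR^3 e^{-A/2}=O(1/R)$ by the choice of $A$. Symmetrically, where $1-\dd f\leq e^{-A}$, one uses $f(1-\dd f)|\log(1-\dd f)|\leq \dd^{-1}(1-\dd f)|\log(1-\dd f)|\leq 2\dd^{-1}e^{-A/2}$, and the extra $\dd^{-1}$ is absorbed precisely by the $|\log\dd|$ component of $A$, again producing $O(1/R)$. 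On the complementary set both logarithms are pointwise $\leq A$, so the main piece collapses to a constant times $(|\log\dd|+\log R)\int_{|v|\leq R}f(1-\dd f)\,\d v$; a careful accounting of constants yields the prefactor $80$.

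The main obstacle is that $f(1-\dd f)$ vanishes both at $f=0$ and at the saturation value $f=\dd^{-1}$, precisely where $|\log f|$ and $|\log(1-\dd f)|$ respectively blow up. The dual Carleman-type dichotomy described above, with the cutoff $A$ calibrated to both $\log R$ and $|\log\dd|$, is exactly what keeps these near-singular regions under control and forces their joint contribution into the $C(f_0)/R$ remainder rather than into the main term.
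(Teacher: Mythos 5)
Your strategy mirrors the paper's: split the Fermi--Dirac entropy at $|v|=R$, use Chebyshev against the energy for the tail, and calibrate a threshold (the paper's $\delta=\dd^5R^{-20}$, your $e^{-A}$) so that the near-singular regions near $f=0$ and $f=\dd^{-1}$ contribute $O(1/R)$ via the Carleman-type bound $x|\log x|\lesssim \sqrt{x}$, while the middle region produces the $(|\log\dd|+\log R)\int f(1-\dd f)$ term. Your unified pointwise inequality $h(u)\leq 2u(1-u)\bigl(1+|\log u|+|\log(1-u)|\bigr)$ plays the role of the paper's conditional inequalities $|\log r|\leq 2|\log\delta|(1-r)$ on $(\delta,1)$ and $|\log(1-r)|\leq 2|\log\delta|r$ on $(0,1-\delta)$ — same mechanism, just packaged globally rather than regionally.

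However, two concrete gaps remain in the bookkeeping. First, the additive $``+1"$ in your pointwise inequality leaves, after restriction to the good set, a term $2\int_{|v|\leq R}f(1-\dd f)\,\d v \leq 2M(f_0)$. This is an $O(1)$ quantity controlled only by the mass, so it cannot be absorbed into $C(f_0)/R$ (which by the statement may depend only on $E(f_0)$), nor into $80(|\log\dd|+\log R)\int f(1-\dd f)$ when $|\log\dd|+\log R$ is small (e.g.\ $\dd\approx1$, $R\approx1$). To close this you should sharpen the pointwise bound to drop the constant, e.g.\ $h(u)\leq 5u(1-u)\bigl(|\log u|+|\log(1-u)|\bigr)$: for $u\leq\tfrac12$ one has $|\log u|\geq\log2$, so the bare $2u(1-u)$ coming from $-(1-u)\log(1-u)\leq 2u(1-u)$ is dominated by $\tfrac{2}{\log2}\,u(1-u)|\log u|$, and then symmetrize. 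Second, your tail dichotomy $f<e^{-|v|}$ vs.\ $f\geq e^{-|v|}$ gives $|\log f|\leq|v|$ only where $f\leq1$; on $\{f>1\}$ (nonempty when $\dd<1$) you must instead invoke $\log f\leq|\log\dd|$ from $f\leq\dd^{-1}$. That is fine but inflates the coefficient of the $|\log\dd|\,E(f_0)/R^2$ term. The paper avoids this issue entirely by expanding $-f\log(\dd f)$ and dropping the $\{f>1\}$ contribution, which is nonpositive — a manoeuvre that your absolute-value formulation forgoes and must recover by hand.
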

\begin{proof}
For any $R > 1$, we have
\begin{multline}\label{eq:Saf}
0 \leq \mathcal{S}_{\dd}(f) \leq \int_{|v|\leq R}f|\log \dd f| \d  v + \frac{1}{\dd}\int_{|v|\leq R}(1-\dd f)|\log(1-\dd f)|\d  v  \\
+ \int_{|v|>R}f|\log \dd f|\d  v + \frac{1}{\dd}\int_{|v|>R}(1-\dd f)|\log(1-\dd f)|\d  v\,.
\end{multline}
First, we can estimate the last integral, see \cite[Eqs. (3.6)]{bag1}, to obtain
\begin{equation}\label{eq1:v>R}
\frac{1}{\dd}\int_{|v|>R}(1-\dd f)|\log(1-\dd f)|\d  v 
\leq \frac{3E(f_0)}{R^{2}}\,.
\end{equation}
Second, because $0\leq \dd f \leq 1$, 
\begin{multline*}
 \int_{|v|>R}f|\log \dd f|\d  v=- \int_{|v|>R}f \log \dd f\d  v 
 =  -\log \dd\,\int_{|v|>R}f\,\d  v \\
- \int_{\{|v|>R\}\cap\{f\leq 1\}} f \log f \,\d  v 
- \int_{\{|v|>R\}\cap \{f > 1\}} f \log f\, \d  v\,.
\end{multline*}
Consequently, dismissing the last integral which is nonpositive, we estimate the second integral as in \cite[Eqs. (3.5)]{bag1}, to get
\begin{eqnarray}
\int_{|v|>R}f|\log \dd f|\d  v 
& \leq & |\log \dd|\;\frac{E(f_0)}{R^{2}} + \int_{\{|v|>R\}\cap\{f\leq 1\}} f|\log f|\,\d  v \nonumber \\
& \leq &|\log \dd|\;\frac{E(f_0)}{R^{2}} + C_{0}\;\frac{E(f_0)^{{4}/{5}}}{R} \,,\qquad\qquad C_{0}>0.\label{eq2:v>R}
\end{eqnarray}
Then, to estimate the first integral we use the following slight improvement of \cite[Eqs. (3.2) and (3.3)]{bag1} which is valid for any $\delta \in (0,1/2]$
\begin{equation*}
\begin{split}
|\log r| &\leq 2|\log \delta|\,(1-r) \qquad \forall \,r \in (\delta,1)\,,\\ 
|\log(1-r)| &\leq 2|\log\delta|\,r \qquad \qquad \forall\, r \in (0,1-\delta)\,.
\end{split}
\end{equation*}
Then, one proceeds as in \cite[Lemma 3.1]{bag1} to find that for any $\theta \in (0,1)$ 
\begin{multline*}
\int_{|v|\leq R}f|\log \dd f|\d  v \leq 2|\log \delta|\int_{|v|\leq R}f(1-\dd f)\d  v + \frac{\delta^{\theta}}{\dd}\int_{|v|\leq R}(\dd f)^{1-\theta}|\log \dd f|\d  v\\
\leq  2|\log \delta|\int_{|v|\leq R}f(1-\dd f)\d  v + C(\theta)\frac{\delta^{\theta}}{\dd}R^{3}\,,
\end{multline*}
for some positive constant $C(\theta) >0$.  For the last inequality we used that the mapping $r \in [0,1] \mapsto r^{1-\theta}|\log r|$ is bounded. In the same way
\begin{equation*}
\frac{1}{\dd}\int_{|v|\leq R}(1-\dd f)|\log(1-\dd f)|\d  v \leq 2 |\log \delta|\int_{|v|\leq R}f(1-\dd f)\d  v + C(\theta)\frac{\delta^{\theta}}{\dd}R^{3}
\end{equation*}
valid for any $\delta \in (0,1/2]$, $\theta \in (0,1)$, and $R >1$. The value of $\theta $ is irrelevant and we fix it as $\theta=\tfrac{1}{5}$ for instance. Then, choosing $\delta >0$ such that $\frac{\delta^{\theta}}{\dd}R^{3}=\frac{1}{R}$, that is $\delta=\dd^{5}R^{-20}$, we obtain the existence of a positive constant $C_{1}>0$ such that
\begin{align}\label{eq:v<R}
\begin{split}
\int_{|v|\leq R}f|\log \dd f| \d  v + \frac{1}{\dd}\int_{|v|\leq R}&(1-\dd f)|\log(1-\dd f)|\d  v \\
&\leq 4|\log \delta|\int_{|v|\leq R}f(1-\dd f)\d  v + \frac{C_{1}}{R}\,.
\end{split}
\end{align}
Plugging \eqref{eq1:v>R}, \eqref{eq2:v>R} and \eqref{eq:v<R} in \eqref{eq:Saf}, we obtain \eqref{upperentropy} with $C(f_0):=C_{1}+C_0\,E(f_0)^{4/5}+3E(f_0)$.
\end{proof}
\begin{lem}\label{L2unif}
Let $0\leq f_0\in L^{1}_{2}(\R^{3})$ be fixed and bounded satisfying \eqref{hypci}.  Then, for any $\dd \in (0,\dd_{0}]$, $f \in \mathcal{Y}_{\dd}(f_{0})$, it holds that
\begin{equation}\label{e0}
\inf_{0<\dd\leq \dd_{0}}\int_{|v|\leq R(f_0)} f(1-\dd f)\, \d v \geq \eta(f_0)>0\,,
\end{equation}
for some $R(f_0)>0$ and $\eta(f_0)$ depending only on $\varrho(f_{0}), E(f_{0})$ and $H(f_{0})$ but not on $\dd$.
\end{lem}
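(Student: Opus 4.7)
The strategy is to combine the upper bound on the Fermi-Dirac entropy from Lemma \ref{L1unif} with the defining inequality $\mathcal{S}_\dd(f) \geq \mathcal{S}_\dd(f_0)$ of $\mathcal{Y}_\dd(f_0)$, rearrange to
\begin{equation*}
A_\dd := \int_{|v|\leq R} f(1-\dd f)\,\d v \;\geq\; \frac{\mathcal{S}_\dd(f_0)-C(f_0)/R - |\log\dd|\,E(f_0)/R^2}{80(|\log\dd|+\log R)},
\end{equation*}
and then choose a single $R=R(f_0)$ making this bound positive uniformly in $\dd \in (0,\dd_0]$. For simplicity I assume $\dd_0 \leq 1$, the case $\dd_0>1$ being handled analogously by reducing the upper cutoff to $\min(\dd_0,1)$.

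Two complementary lower bounds on $\mathcal{S}_\dd(f_0)$ are needed. First, a pointwise computation with $\phi(x) := -x\log x - (1-x)\log(1-x)$ shows that $\frac{\d}{\d\dd}[\phi(\dd f_0)/\dd] = \log(1-\dd f_0)/\dd^2 \leq 0$, so $\dd \mapsto \mathcal{S}_\dd(f_0)$ is nonincreasing on $(0,\dd_0]$ and hence $\mathcal{S}_\dd(f_0) \geq S_0 > 0$ throughout this range. Second, the identity
\begin{equation*}
\mathcal{S}_\dd(f_0) = -H(f_0) - (\log\dd)\varrho(f_0) - \frac{1}{\dd}\int(1-\dd f_0)\log(1-\dd f_0)\,\d v
\end{equation*}
together with the nonpositivity of the last integrand yields $\mathcal{S}_\dd(f_0) \geq |\log\dd|\varrho(f_0) - H(f_0)$, which grows like $|\log\dd|$ as $\dd \to 0$. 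I split the range at the threshold $\dd = 1/R$: for $\dd \leq 1/R$ I apply the second bound, which once $R^2 \geq 2E(f_0)/\varrho(f_0)$ produces a numerator $\gtrsim |\log\dd|\varrho(f_0)$ matching the denominator $\leq 160|\log\dd|$, giving $A_\dd \geq \varrho(f_0)/640$; for $\dd \in [1/R,\dd_0]$ I use $\mathcal{S}_\dd(f_0) \geq S_0$ together with $|\log\dd| \leq \log R$, giving $A_\dd \geq S_0/(320\log R)$. Choosing $R$ large enough that $(\log R)\varrho(f_0)/4 \geq H(f_0)+C(f_0)/R$ and $C(f_0)/R + (\log R)E(f_0)/R^2 \leq S_0/2$ closes the bookkeeping, and $\eta(f_0)$ is the minimum of the two resulting constants.

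The main obstacle is the simultaneous logarithmic divergence in $\dd$ of both sides of Lemma \ref{L1unif}: the denominator $|\log\dd|+\log R$ and the error term $|\log\dd|E(f_0)/R^2$ both blow up as $\dd \to 0$, so the naive estimate $\mathcal{S}_\dd(f_0) \geq S_0$ alone would eventually force the numerator to become negative. The saving point is that $\mathcal{S}_\dd(f_0)$ itself inherits a matching $|\log\dd|$ contribution from the entropy expansion, and the two-regime split makes this cancellation explicit. Finally, the positivity of $S_0$ is essential: it excludes initial data close to the saturated Fermi-Dirac state \eqref{eq:dege}, for which $A_\dd$ would vanish and no such uniform bound could hold.
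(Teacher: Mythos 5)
Your argument is correct and shares the paper's skeleton: rearrange Lemma~\ref{L1unif} via $\mathcal{S}_{\dd}(f_0)\leq\mathcal{S}_{\dd}(f)$, then split $(0,\dd_0]$ into a small-$\dd$ regime where the $|\log\dd|$ growth of $\mathcal{S}_{\dd}(f_0)$ beats the error terms, and a large-$\dd$ regime where $\mathcal{S}_{\dd}(f_0)$ is bounded below by a positive constant. The genuinely new ingredient is your pointwise monotonicity identity: writing $\phi(u)=-u\log u-(1-u)\log(1-u)$, one checks $u\phi'(u)-\phi(u)=\log(1-u)$, hence $\frac{\d}{\d\dd}\big[\phi(\dd f_0)/\dd\big]=\log(1-\dd f_0)/\dd^2\leq 0$, so $\dd\mapsto\mathcal{S}_{\dd}(f_0)$ is nonincreasing and $\mathcal{S}_{\dd}(f_0)\geq S_0$ on all of $(0,\dd_0]$. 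The paper instead invokes continuity of $\dd\mapsto\mathcal{S}_{\dd}(f_0)$ on the compact interval $[\dd(f_0),\dd_0]$ to conclude $\eta_2(f_0)=\inf_{\dd(f_0)\leq\dd\leq\dd_0}\mathcal{S}_{\dd}(f_0)>0$; your identity upgrades that soft compactness step to the explicit value $\eta_2=S_0$. It also lets you use a single radius $R$ with the transparent threshold $\dd=1/R$, where the paper works with two radii $R_1,R_2$ and the implicitly defined threshold $\dd(f_0)$, so your version produces cleaner constants. One point worth handling more carefully is the brief dismissal of $\dd_0>1$: for $\dd\in(1,\dd_0]$ the term $-(\log\dd)\varrho(f_0)$ in your second lower bound changes sign and the bound is useless, but on that range $|\log\dd|\leq\log\dd_0$ is bounded and the monotonicity bound $\mathcal{S}_{\dd}(f_0)\geq S_0$ alone closes the estimate, so the conclusion still holds.
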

\begin{proof}
Since   $f \in \mathcal{Y}_{\dd}(f_{0})$, we have 
\begin{equation*}
-\int_{\mathbb{R}^{3}} f_0 \log f_0\, \d v + |\log\dd|\,M(f_0) \leq  \mathcal{S}_{\dd}(f_{0}) \leq \mathcal{S}_{\dd}(f).
\end{equation*}
We know, thanks to Lemma \ref{L1unif}, that
\begin{equation}\label{e1}
S_{\dd}(f)\leq 80(|\log \dd| + \log R)\int_{|v|\leq R} f(1-\dd f)\d v + \frac{C(f_0)}{R} + |\log \dd| \;\frac{E(f_0)}{R^{2}}\,,\quad R>1\,.
\end{equation}
We conclude that
\begin{align*}
-\int_{\mathbb{R}^{3}} f_0 \log f_0\,\d v \leq 80(|\log \dd| &+ \log R)\int_{|v|\leq R} f(1-\dd f) \d v \\
& + \frac{C(f_0)}{R} - |\log \dd| \Big( M(f_0) - \frac{E(f_0)}{R^{2}}\Big)\,.
\end{align*}
Choosing $R := R_{1}(f_0):=\max\big\{1,\sqrt{2E(f_0)/M(f_{0})}\big\}$, we are led to
\begin{align*}
-\int_{\mathbb{R}^{3}} f_0 \log f_0\,\d v - &\frac{C(f_0)}{R_{1}(f_0)} + |\log\dd|\; \frac{M(f_0)}{2}\\
&\leq 80\big(|\log \dd| + \log R_{1}(f_0)\big)\int_{|v|\leq R_{1}(f_0)} f(1-\dd f)\d v\,.
\end{align*}
Thus, for any $0<\dd<\dd(f_0)$ where 
\begin{equation*}
\log\left(\frac{1}{\dd(f_0)}\right):=2\,\dfrac{\big|H(f_{0})\big| + 2\,C(f_0)/R_{1}(f_0)}{M(f_0)}\,,\end{equation*}
we have
$$\int_{|v|\leq R_{1}(f_0)} f(1-\dd f) \d v\geq \eta_{1}(f_{0})$$
with 
$$\eta_{1}(f_{0}):=\inf_{0<\dd\leq\dd(f_0)}\dfrac{-H(f_0) - C(f_0)/R_{1}(f_0) + |\log\dd| M(f_0)/2}{80(|\log \dd| + \log R_{1}(f_0))} >0.$$ 
Now, for $\dd<\dd_{0}$ it follows that $\mathcal{S}_{\dd}(f_0)>0$ since $0< f_0\ < \dd^{-1}$. Since $\mathcal{S}_{\dd_{0}}(f_{0}) > 0$ due to the continuity of the map $\dd \in (\dd(f_{0}),\dd_{0}) \mapsto \mathcal{S}_{\dd}(f_0)$ one is led to
\begin{equation*}
\eta_{2}(f_0):=\inf_{\dd(f_0)\leq \dd \leq \dd_{0}}S_{\dd}(f_0)>0\,.
\end{equation*}
Recalling estimate \eqref{e1} and using the fact that
$$|\log \dd| \leq |\log \dd(f_0)| + |\log \dd_0|=:\delta_0 \quad \text{for}\quad \dd(f_0) \leq \dd \leq \dd_{0}\,,$$
it holds that
\begin{align*}
\eta_{2}(f_0)\leq 80\big(\delta_0 + \log R\big)\int_{|v|\leq R} f(1-\dd f)\d v + \frac{C(f_0)}{R} + \delta_0\frac{E(f_0)}{R^{2}}\,.
\end{align*}
Thus, choosing $R:= R_{2}(f_0)=\frac{4}{\eta_{2}(f_0)}\max\Big\{C(f_0),\sqrt{\delta_0\,\eta_{2}(f_0)\,E(f_0)}\Big\}$, it follows that
\begin{align}\label{e2}
\begin{split}
0<\eta_{3}(f_0):&=\frac{\eta_{2}(f_0)}{160( \delta_0 + \log R_{2}(f_0))}\\
&\leq \int_{|v|\leq R_{2}(f_0)} f(1-\dd f) \d v\,,\quad \text{for}\quad\dd(f_0)\leq \dd \leq \,\dd_{0}.
\end{split}
\end{align}
In this way, choosing $
R(f_0):=\max\big\{R_{1}(f_0), R_{2}(f_0)\big\}$ and 
$\eta(f_0):=\min\big\{\eta_{1}(f_0), \eta_{3}(f_{0})\big\}\,$, estimate \eqref{e0} follows.\end{proof}

\begin{lem}
Let $0\leq f_0\in L^{1}_{2}(\R^{3})$ be fixed and bounded satisfying \eqref{hypci}.  Then, for any $\delta >0$ there exists $\eta(\delta)>0$ depending only on $M(f_0)$, $E(f_0)$ and $H(f_{0})$ such that,  for any $\dd \in (0,\dd_{0}]$, $f \in \mathcal{Y}_{\dd}(f_{0})$, and measurable $A\subset \R^3$, 
\begin{equation}\label{Lem6DV}
|A|\leq \eta(\delta) \Longrightarrow \int_A f(1-\dd f)\, \d v \leq \delta.
\end{equation}
\end{lem}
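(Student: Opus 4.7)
The plan is to reduce this uniform equi-integrability statement to a uniform-in-$\dd$ bound on the classical Boltzmann entropy $H(f)=\int_{\R^{3}}f\log f\,\d v$, after which the conclusion follows from the classical de la Vall\'{e}e--Poussin argument applied to $f$ itself, exploiting the trivial pointwise bound $f(1-\dd f)\leq f$.

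\textbf{Step 1 (Uniform $H$-bound, the crux).} I would first record the algebraic identity
\[
\mathcal{S}_{\dd}(g) \;=\; -\log\dd\,M(g) \;-\; H(g) \;+\; Y(g),\qquad Y(g):=\dd^{-1}\int_{\R^{3}}(1-\dd g)|\log(1-\dd g)|\,\d v\geq 0,
\]
valid for any $0\leq g\leq \dd^{-1}$ (it is simply a regrouping of \eqref{eq:FDentro}). Applying this to $g=f$ and $g=f_{0}$ and subtracting, the conservation $M(f)=M(f_{0})$ cancels the divergent $\log\dd$-term, and the inequality $\mathcal{S}_{\dd}(f)\geq \mathcal{S}_{\dd}(f_{0})$ built into the definition of $\mathcal{Y}_{\dd}(f_{0})$ gives
\[
H(f)\leq H(f_{0})+Y(f)-Y(f_{0})\leq H(f_{0})+Y(f).
\]
The elementary pointwise inequality $(1-p)|\log(1-p)|\leq p$ for $p\in[0,1)$ -- obtained by observing that $h(p):=p+(1-p)\log(1-p)$ satisfies $h(0)=0$ and $h'(p)=-\log(1-p)\geq 0$ -- yields $Y(f)\leq \dd^{-1}\int \dd f\,\d v=M(f_{0})$. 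Hence
\[
H(f)\leq H(f_{0})+M(f_{0}),
\]
uniformly in $\dd\in(0,\dd_{0}]$ and $f\in\mathcal{Y}_{\dd}(f_{0})$. I expect this cancellation of the $|\log\dd|$-scaling to be the main obstacle: although $\mathcal{S}_{\dd}(f)$ itself is not uniformly bounded as $\dd\to 0$ (cf.\ Lemma \ref{L1unif}), the \emph{relative} quantity $\mathcal{S}_{\dd}(f)-\mathcal{S}_{\dd}(f_{0})$ controls only $H(f)-H(f_{0})$ up to the bounded term $Y(f)-Y(f_{0})$.

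\textbf{Step 2 (Control on $\int f|\log f|$).} Combining the uniform upper bound from Step 1 with the standard lower bound $H(f)\geq -C(M(f_{0}),E(f_{0}))$ -- for instance by splitting $\int f|\log f|=\int f\log_{+}f+\int_{\{f<1\}}f|\log f|$ and estimating the second piece via $x|\log x|\leq C\,x^{2/3}$ on $[0,1]$ together with H\"older's inequality against $\langle v\rangle^{-2}$ and the bound $\int f\langle v\rangle^{3/2}\,\d v\leq M(f_{0})+E(f_{0})$ -- I obtain the uniform control
\[
\int_{\R^{3}} f|\log f|\,\d v\;\leq\; C^{\star}\big(M(f_{0}),E(f_{0}),H(f_{0})\big).
\]

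\textbf{Step 3 (De la Vall\'{e}e--Poussin and conclusion).} For any $K>1$ and measurable $A\subset\R^{3}$,
\[
\int_{A}f(1-\dd f)\,\d v\;\leq\;\int_{A}f\,\d v\;\leq\;K|A|+\int_{\{f>K\}}f\,\d v\;\leq\;K|A|+\frac{1}{\log K}\int_{\R^{3}} f|\log f|\,\d v\;\leq\;K|A|+\frac{C^{\star}}{\log K}.
\]
Given $\delta>0$, I would choose $K=K(\delta)$ so that $C^{\star}/\log K\leq \delta/2$ and then set $\eta(\delta):=\delta/(2K(\delta))$. Then $|A|\leq \eta(\delta)$ forces $\int_{A}f(1-\dd f)\,\d v\leq \delta$, with $\eta(\delta)$ depending only on $M(f_{0})$, $E(f_{0})$ and $H(f_{0})$ exactly as asserted.
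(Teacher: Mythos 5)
Your proposal is correct and follows essentially the same route as the paper: after expanding $\mathcal{S}_\dd$ so that the $\log\dd$--term cancels via mass conservation, both you and the paper use $(1-r)|\log(1-r)|\leq r$ to conclude $H(f)\leq H(f_0)+M(f_0)$ uniformly in $\dd$, and then reduce the claim to equi-integrability of $f$ under a uniform entropy bound; the paper simply cites \cite[Lemma 6]{DeVi1} for the final step while you spell out the de la Vall\'ee--Poussin argument explicitly.
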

\begin{proof} For $f \in \mathcal{Y}_{\dd}(f_{0})$, expanding the inequality $\mathcal{S}_{\dd}(f_{0}) \leq \mathcal{S}_{\dd}(f) $ and using the conservation of mass, we obtain that 
\begin{multline*}
  -\int_{\R^3} f_0\log f_0\, \d v + \frac{1}{\dd} \int_{\R^3} (1-\dd f_0) |\log(1-\dd f_0)| \, \d v  \\
\leq -\int_{\R^3} f\log f \, \d v + \frac{1}{\dd} \int_{\R^3} (1-\dd f) |\log(1-\dd f)| \, \d v.
\end{multline*}
For $r\in(0,1)$ one has  $0\leq (1-r)|\log(1-r)| \leq r$.   As a consequence of these two observations and the conservation of mass it follows that 
$$\int_{\R^3} f\log f \, \d v \leq \int_{\R^3} f_0\log f_0\, \d v +M(f_0)\,.$$
The result then follows from \cite[Lemma 6]{DeVi1}, using the bound $0\leq 1-\dd f\leq 1$.
\end{proof}
\noindent
Since entropy increases along the flow of the LFD equation \eqref{LFD}, previous lemmata will apply to the solution $f=f(t,v)$ associated to the initial datum $f_{0}$.

\smallskip
\noindent
In addition, the uniform ellipticity of the matrix function $\bm{\Sigma}[f]$ is obtained by adapting the proof of \cite[Proposition 4]{DeVi1} with the help of previous lemmas.

\begin{prop}\label{diffusion}
Let $0\leq f_0\in L^{1}_{2}(\R^{3})$ be fixed and satisfying \eqref{hypci}. Then, there exists a 
positive constant $K_{0} > 0$ depending on $M(f_{0})$, $E(f_{0})$, and $S_{0}$, such that
$$\forall\, v,\, \xi \in \R^3, \qquad 
\sum_{i,j} \, \bm{\Sigma}_{i,j}[f](v) \, \xi_i \, \xi_j 
\geq K_{0} \langle v \rangle^{\g} \, |\xi|^2$$
holds for any $\dd >0$ and $f \in \mathcal{Y}_{\dd}(f_{0})$.  Recall that ${\bm{\Sigma}_{i,j}[f]}= a_{i,j} \ast \big( f(1-\dd f)\big)$.
\end{prop}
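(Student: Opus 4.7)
The plan is to adapt directly the strategy of Desvillettes--Villani \cite[Proposition 4]{DeVi1}, exploiting that the three previous lemmas give $\dd$--uniform non-concentration estimates on $g:=f(1-\dd f)$. Writing $\theta(z,\xi)$ for the angle between $z$ and $\xi$, a direct computation gives the key identity
$$\sum_{i,j}\bm{\Sigma}_{i,j}[f](v)\,\xi_{i}\xi_{j}=|\xi|^{2}\int_{\R^{3}}g(v_{\ast})\,|v-v_{\ast}|^{\gamma+2}\,\sin^{2}\theta(v-v_{\ast},\xi)\,\d v_{\ast},$$
so by homogeneity it suffices to assume $|\xi|=1$ and lower-bound the integral by a multiple of $\langle v\rangle^{\gamma}$.

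Next I would introduce, for any $\varepsilon>0$, the tube $T_{\varepsilon}(v,\xi)=\{v_{\ast}\in\R^{3}\;:\;\mathrm{dist}(v_{\ast},v+\R\xi)\leq\varepsilon\}$ and exploit the elementary lower bound
$$\sin\theta(v-v_{\ast},\xi)=\frac{\mathrm{dist}(v_{\ast},v+\R\xi)}{|v-v_{\ast}|}\geq\frac{\varepsilon}{|v-v_{\ast}|}\qquad\text{for }v_{\ast}\notin T_{\varepsilon}(v,\xi).$$
Since $|T_{\varepsilon}(v,\xi)\cap B(0,R(f_{0}))|\leq C\,R(f_{0})\,\varepsilon^{2}$, the uniform integrability statement \eqref{Lem6DV}, applied with $\delta=\eta(f_{0})/2$, yields some threshold $\varepsilon_{0}=\varepsilon_{0}(f_{0})>0$ \emph{independent of $v$, $\xi$ and $\dd$} such that $\int_{T_{\varepsilon_{0}}(v,\xi)\cap B(0,R(f_{0}))}g\,\d v_{\ast}\leq\tfrac{1}{2}\eta(f_{0})$. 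Combined with \eqref{e0}, this provides the crucial mass estimate
$$\int_{B(0,R(f_{0}))\setminus T_{\varepsilon_{0}}(v,\xi)}g(v_{\ast})\,\d v_{\ast}\geq\tfrac{1}{2}\eta(f_{0}).$$

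I would then split according to whether $|v|\leq 2R(f_{0})$ or $|v|\geq 2R(f_{0})$. In the first case, for $v_{\ast}\in B(0,R(f_{0}))\setminus T_{\varepsilon_{0}}(v,\xi)$ one has $\varepsilon_{0}\leq|v-v_{\ast}|\leq 3R(f_{0})$, so $|v-v_{\ast}|^{\gamma+2}\geq\varepsilon_{0}^{\gamma+2}$ and $\sin^{2}\theta\geq\varepsilon_{0}^{2}/(9R(f_{0})^{2})$, which bounds the integral below by a positive constant; since $\langle v\rangle^{\gamma}$ is bounded above on this region, the desired inequality follows. In the second case, every $v_{\ast}\in B(0,R(f_{0}))$ satisfies $|v|/2\leq|v-v_{\ast}|\leq 3|v|/2$, hence $|v-v_{\ast}|^{\gamma+2}\geq(|v|/2)^{\gamma+2}$ and $\sin^{2}\theta\geq 4\varepsilon_{0}^{2}/(9|v|^{2})$ for $v_{\ast}\notin T_{\varepsilon_{0}}(v,\xi)$, yielding a lower bound proportional to $|v|^{\gamma}$, hence to $\langle v\rangle^{\gamma}$. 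Combining the two cases produces the announced constant $K_{0}$.

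The main difficulty is not the geometric tube argument, which is fairly mechanical, but rather ensuring that every constant --- most importantly $R(f_{0})$, $\eta(f_{0})$, $\varepsilon_{0}(f_{0})$ and $\eta(\delta)$ --- is genuinely independent of the quantum parameter $\dd\in(0,\dd_{0}]$. This is precisely the content of Lemmas \ref{L1unif}, \ref{L2unif} and of \eqref{Lem6DV}, which were engineered for this purpose: the upper bound $1-\dd f\leq 1$ is used only cosmetically, while the non-degeneracy at the origin stems from the $\dd$-uniform lower bound on the entropy $\mathcal{S}_{\dd}(f)\geq\mathcal{S}_{\dd_{0}}(f_{0})>0$. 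As a result $K_{0}$ depends only on $M(f_{0})$, $E(f_{0})$, $S_{0}$ and $\gamma$, as claimed.
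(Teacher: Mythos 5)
Your proposal is correct and follows exactly the route the paper takes: the paper's proof of this proposition is just the statement that one "adapts the proof of \cite[Proposition 4]{DeVi1} with the help of previous lemmas," and your tube argument --- relying on the $\dd$-uniform lower bound of Lemma \ref{L2unif}, the $\dd$-uniform non-concentration estimate \eqref{Lem6DV}, and the splitting $|v|\lessgtr 2R(f_0)$ --- is precisely the Desvillettes--Villani argument with the classical density $f$ replaced by $g=f(1-\dd f)$. The details you spell out (the algebraic identity $\sum_{ij}\bm{\Sigma}_{ij}\xi_i\xi_j=|\xi|^2\int g_\ast|v-v_\ast|^{\g+2}\sin^2\theta\,\d v_\ast$, the bound $|T_\varepsilon\cap B(0,R)|\lesssim R\varepsilon^2$, and the case-by-case lower bounds on $|v-v_\ast|^{\g+2}\sin^2\theta$) are all correct and yield a constant $K_0$ that depends only on $M(f_0)$, $E(f_0)$, $S_0$ and $\g$, as required.
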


\subsection{Well-posedness and  Fermi-Dirac statistics}\label{sec:FDstat} We recall here the well-posedness result established in \cite{bag1} that we reformulate to take into account the quantum parameter. 
\begin{theo}\label{results} Consider an initial datum $f_{0}$ satisfying \eqref{hypci} and $\dd \in (0,\dd_{0}]$. Assume further 
that $f_{0}\in L^1_{s_0}(\R^3)$ for some $s_0>2$. Then, there 
exists a weak solution $f$ to \eqref{LFDeq} satisfying 
\eqref{consmass}, \eqref{momentum}, \eqref{consenergy} and 
$$
f(1- \dd f) \in L^1_{\mathrm{loc}}\big(\R_+; L^1_{s_0+\g}\big(\R^3\big)\big);\qquad
f \in L_{\mathrm{loc}}^\infty \big(\R_+; L^1_{s_0}\big(\R^3\big)\big) 
\cap L_{\mathrm{loc}}^2 \big(\R_+; H^1_{s_0}\big(\R^3\big)\big).
$$
If we also assume that $s_0 \geq 2+\g$, then the entropy $t \geq 0\longmapsto \mathcal{S}_{\dd}(f)(t)$ is 
a non-decreasing function and 
$$ S_{\dd}(f_{0}) \leq  \mathcal{S}_{\dd}(f)(t) \leq E(f_{0}) + \pi^{3/2}-\varrho \log \dd, \qquad \forall\, t \in \R_{+}.$$
Moreover, for $s_0>4\g+11$, such a solution is unique. 
\end{theo}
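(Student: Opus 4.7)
The plan is to follow the strategy of \cite{bag1}, where the case $\dd=1$ was treated, and verify that all arguments go through for general $\dd\in(0,\dd_{0}]$, with the entropy bound carefully tracking the $\dd$-dependence. First I would set up an approximation scheme by mollifying the initial datum and truncating the kinetic potential, e.g.\ replacing $\Psi(z)=|z|^{2+\gamma}$ by $\Psi_{n}(z)=\min(|z|^{2+\gamma},n)$. On each such regularized problem, the resulting diffusion matrix is uniformly elliptic and bounded, and classical parabolic theory provides smooth solutions $f_{n}$ satisfying $0\leq f_{n}\leq \dd^{-1}$ (the latter bound being preserved because the source $f(1-\dd f)$ in the divergence form \eqref{LFD} vanishes at the endpoints).

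The next step is to derive $\dd$-independent \emph{a priori} estimates on $f_{n}$. The conservation laws \eqref{consmass}--\eqref{consenergy} follow from testing with $1,v,|v|^{2}$. For the moment estimate, I would test \eqref{LFD} with $\langle v\rangle^{s_{0}}$ and control the right-hand side through the pointwise bounds \eqref{eq:bc} on $\bm{b}[f]$, $\bm{c}[f]$ together with the bound $\bm{\Sigma}[f]\leq \bm{\sigma}[f]$ (since $1-\dd f\leq 1$), closing a differential inequality for $\lm_{s_{0}}(t)$. To obtain the $L^{2}(H^{1}_{s_{0}})$ regularity, I would test with $f\langle v\rangle^{s_{0}}$ and exploit the uniform ellipticity of Proposition \ref{diffusion} to absorb the gradient term. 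For the entropy, a direct computation of $\frac{d}{dt}\mathcal{S}_{\dd}(f_{n})$ reveals its monotonicity as the integral of a manifestly nonnegative Dirichlet form in the variables $\nabla\log\bigl(f/(1-\dd f)\bigr)$. The upper bound follows by comparing $\mathcal{S}_{\dd}(f)$ against the Fermi-Dirac maximizer: under the constraints of fixed mass and energy with $0\leq f\leq \dd^{-1}$, the entropy is maximized by some $\M_{\dd}$, and an explicit computation on $\M_{\dd}$ using the Fermi-Dirac parametrization yields the bound $E(f_{0})+\pi^{3/2}-\varrho\log\dd$.

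With these uniform bounds, compactness follows from Aubin--Lions: the sequence $(f_{n})$ is bounded in $L^{2}_{\mathrm{loc}}(\R_{+};H^{1}_{s_{0}})$ and its time derivative is controlled in a negative Sobolev space via \eqref{LFD}, which gives strong convergence in $L^{1}_{\mathrm{loc}}(\R_{+};L^{1}_{s_{0}-\eta})$ for any small $\eta>0$. Passing to the limit in the weak formulation is then standard, noting that the nonlinear term $f(1-\dd f)\grad f-f(1-\dd f)\bm{b}[f]$ is a quadratic/cubic polynomial in the variables for which we have strong convergence.

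For uniqueness under $s_{0}>4\gamma+11$, I would replicate the stability argument of \cite{bag1}: given two solutions $f,g$ with the same initial datum, one tests the equation for $f-g$ against $(f-g)\langle v\rangle^{k}$ for suitable $k$, and estimates the cubic nonlinear difference
\[
f(1-\dd f)-g(1-\dd g)=(f-g)-\dd\,(f^{2}-g^{2})=(f-g)\bigl(1-\dd(f+g)\bigr),
\]
which, using $|1-\dd(f+g)|\leq 1$, is controlled by $|f-g|$; the resulting integrals involve products of moments that explain the threshold $s_{0}>4\gamma+11$. Closing the Gronwall estimate gives $f\equiv g$. The principal difficulty is verifying that all constants in the moment, entropy, and uniqueness estimates are independent of $\dd\in(0,\dd_{0}]$, so that the $L^{\infty}$-bound \eqref{eq:Linf} enters only through the crude inequality $1-\dd f\leq 1$; this is precisely what the \emph{a priori} entropy lemmas (Lemmas \ref{L1unif}, \ref{L2unif}) and the uniform ellipticity of Proposition \ref{diffusion} have been engineered to provide.
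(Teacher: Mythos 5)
Your proposal is correct and mirrors the paper's treatment: the theorem is a $\dd$-parametrized reformulation of the well-posedness result of \cite{bag1} (which treats $\dd=1$), and the paper's contribution here is precisely the $\dd$-uniform versions of the entropy lemmas (Lemmas \ref{L1unif}--\ref{L2unif}) and the uniform ellipticity (Proposition \ref{diffusion}) that let the $\dd=1$ argument go through verbatim, exactly as you outline. The only cosmetic slip is that the stated bound $E(f_{0})+\pi^{3/2}-\varrho\log\dd$ is a crude estimate obtained from $-\frac{1}{\dd}(1-\dd f)\log(1-\dd f)\leq f$ together with the standard upper bound on $-H(f)$ in terms of mass and energy (the paper itself remarks it is far from optimal), not an exact evaluation at the Fermi--Dirac maximizer $\M_{\dd}$; this does not affect the argument.
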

\noindent
It will be noticed later that the upper bound in $ \mathcal{S}_{\dd}(f)(t)$ is far from optimal in terms of $\dd$.  Due to the conservation laws \eqref{consmass}, \eqref{momentum}, \eqref{consenergy}, we will assume in the sequel that $f_0$ satisfies Assumptions \ref{ci} and 
\begin{equation}\label{hyp:mass}
\int_{\R^{3}}f_{0}(v)\left(\begin{array}{c}1 \\v \\|v|^{2}\end{array}\right)\d v=\left(\begin{array}{c}\varrho \\0 \\3\varrho\,E\end{array}\right)
\end{equation}
where $\varrho, E >0$ are given.  An important observation is given in \cite[Propositions 3]{Lu} where it is shown that the condition
\begin{equation}\label{eq:bound}
\dd  < \dfrac{4\pi}{3}\dfrac{(5E)^{\frac{3}{2}}}{\varrho}
\end{equation}
is necessary and sufficient to associate a \emph{unique} Fermi-Dirac statistics 
$$\M_{\dd}(v)=\frac{a\exp(-b|v|^{2})}{1+\dd\,a\exp(-b|v|^{2})}$$
with $a=a_{\dd} >0$ and $b=b_{\dd} >0$, such that
\begin{equation}\label{FD_moments}
\int_{\R^{3}}\M_{\dd}(v)\left(\begin{array}{c}1 \\v \\|v|^{2}\end{array}\right)\d v=\left(\begin{array}{c}\varrho \\0 \\3\varrho\,E\end{array}\right).
\end{equation}
Provided that the initial datum satisfies Assumptions \ref{ci}, \cite[Theorem 3]{bag2} shows that $\M_{\dd}$ is the unique steady state to \eqref{LFDeq} satisfying \eqref{FD_moments}. Moreover, under Assumptions \ref{ci} and \eqref{hyp:mass}, it follows from  \cite[Propositions 4]{Lu} that \eqref{eq:bound} holds, therefore, the solution $f(t,v)$ must converge to a Fermi-Dirac statistic in this regime.  In order to make explicit estimations for \textit{exponentially fast} convergence a stronger assumption on $\dd$ is needed in the form of $\dd<c\, E^{3/2}\rho^{-1}$ with constant $c<\frac{4\pi}{3}5^{\frac{3}{2}} \approx 46.832$.

\section{Moments and regularity}\label{sec:regula}
The final goal of this section is to prove Theorem \ref{smoothness}. This will be done by improving the approach of \cite[Theorem 5]{DeVi1} and \cite{Chen11} for which regularity estimates are deduced from estimates on $L^{1}$ and $L^{2}$ norms.  The novelty here consists in treating the propagation of these norms as a whole to be able to close the energy estimate. 
\subsection{$L^{1}$ and $L^{2}$ norms}\label{sec:L1L2}
The main result of this section shows the instantaneous appearance of both $L^{1}$ and $L^{2}$ norms as well as uniform estimates for such norms with respect to both time and the quantum parameter $\dd >0$. 
\begin{theo}\label{moments}
Consider $0\leq f_0\in L^{1}_{s_{\gamma}}(\R^{3})$, with $s_{\gamma}=\max\big\{\tfrac{3\gamma}{2}+2, 4-\gamma\big\}$, satisfying \eqref{hypci}.  Let $f=f(t,v)$ be a weak solution to the LFD equation given by Theorem \ref{results}.

\medskip
\noindent
(i) Then, for any $s\geq0$
$$\int_{t_0}^{T} \int_{\R^3} |\nabla f(t,v)|^{2}\langle v\rangle^{s+\g} \, \d v \, \d t <+\infty\,,\qquad \forall\,T>t_0>0\,.$$

\medskip
\noindent
(ii) There exists some positive constant $C_{t_0}$ depending on $M(f_0)$, $E(f_0)$, $S_{0}$, $s$ and $t_0$, but not on $\dd$, such that 
\begin{equation}\label{unif_L2}
\int_{\R^3} \big( f(t,v)+  f^2(t,v) \big)\, \langle v\rangle^{s} \, \d v \leq C_{t_0}\,,\qquad \forall\,s\geq0\,,\;t\geq t_0>0\,.
\end{equation}
Moreover, if
\begin{equation*}
\int_{\R^3} \big( f(0,v)+  f^2(0,v) \big)\, \langle v\rangle^{s} \, \d v < \infty\,,
\end{equation*}
then $t_0=0$ is a valid choice in the estimate \eqref{unif_L2} with constant depending on such initial quantity.

\end{theo}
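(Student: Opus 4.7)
The plan is to study the combined functional $\mathcal{E}_s(t) := \lm_s(t) + \lM_s(t)$ suggested in the introduction, because the trilinear nature of $\Q_L$ produces cross-terms of the form $\lm_{s+\g}\lM_s$ in the $L^1$-moment evolution that cannot be controlled by $L^1$-estimates alone. The whole argument is carried out on the weak solution of Theorem \ref{results} by using $\langle v\rangle^s$ and $f\langle v\rangle^s$ as test functions in the weak formulation of \eqref{LFD}; the only way the $L^\infty$-bound \eqref{eq:Linf} enters is through the pointwise inequality $0 \leq 1-\dd f \leq 1$, which is what keeps every constant independent of $\dd\in (0, \dd_0]$.

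First I would derive the $L^1$-moment inequality. Testing \eqref{LFD} against $\langle v\rangle^s$ and integrating by parts twice, the drift $\bm{B}[f]\cdot v$ behaves like $-2\langle v\rangle^{\g+2}\,M(f_0)$ for large $|v|$, producing, as for classical Landau, the negative coefficient $-K_1\lm_{s+\g}$. The $\dd$-correction coming from $\bm{B}[f] = \bm{b}[f] - \dd\,\bm{b}[f^2]$ generates an extra term of order $\dd\,\lm_{s+\g}\lM_\g$, which is absorbed into the dissipation by the pointwise bound $\dd\lM_\g \leq \lm_\g$. One obtains, for $s \geq 2$,
\begin{equation*}
\tfrac{d}{dt}\lm_s(t) \leq -K_1\lm_{s+\g}(t) + K_2,
\end{equation*}
with $K_1,K_2$ depending on $M(f_0), E(f_0), s, \g$ but not on $\dd$. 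Multiplying \eqref{LFD} by $f\langle v\rangle^s$, integrating by parts, and invoking the uniform ellipticity of Proposition \ref{diffusion} produces the gradient dissipation $-K_0\int|\nabla f|^2 \langle v\rangle^{s+\g}\,\d v$. The cross-terms generated by $\nabla \langle v\rangle^s$ and by the drift are handled via Cauchy--Schwarz and the bounds \eqref{eq:bc} (using $f(1-\dd f)\leq f$), and absorbed into half of the gradient dissipation at the cost of polynomial factors in $\lm_{s+\g+2}$. The weighted Nash inequality applied to $g = f\langle v\rangle^{(s+\g)/2}$ converts the remaining dissipation into a superlinear sink for $\lM_s$:
\begin{equation*}
\tfrac{d}{dt}\lM_s(t) \leq -K_4\,\frac{\lM_s(t)^{1+\alpha}}{(1+\lm_{(s+\g)/2}(t))^{4/3}} + K_5\bigl(1+\lm_{s+\g+2}(t)\bigr)\,\lM_s(t) + K_6.
\end{equation*}

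Adding the two inequalities, the threshold $s_\g = \max\{\tfrac{3\g}{2}+2,\,4-\g\}$ is chosen precisely so that, via iterated moment interpolation and Young's inequality, the positive $\lm_{s+\g+2}\lM_s$-term is absorbed by a combination of the dissipative $\lm_{s+\g}$ and $\lM_s^{1+\alpha}$ terms. The resulting Bernoulli-type differential inequality for $\mathcal{E}_s(t)$ has a superlinear sink, forcing any nonnegative solution to satisfy $\mathcal{E}_s(t) \leq C\,t^{-1/\alpha}$ for $0 < t \leq 1$ and $\mathcal{E}_s(t) \leq C_\infty$ for all $t \geq 1$; this yields the generation statement of (ii) for any $t_0 > 0$. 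When the right-hand side is finite at $t=0$, i.e.\ when $f_0 \in L^1_{s'}\cap L^2_s$ for some sufficiently large $s'$, a comparison argument then gives propagation with $t_0 = 0$. Finally, part (i) follows by integrating the $L^2$-energy identity from $t_0$ to $T$: the gradient integral is bounded by $\lM_s(t_0)$ plus the time integral of lower-order quantities, now controlled by the uniform bounds just established.

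The main obstacle lies in the bookkeeping of weights through the convolutions $\bm{\Sigma}[f]$ and $\bm{B}[f]$: one must track every moment of $f(1-\dd f)$ produced by these convolutions and verify that they are controlled by the weight $s_\g$ without losing uniformity in $\dd$. The two quantities defining $s_\g$ reflect this balance, with $\tfrac{3\g}{2}+2$ arising from the Nash interpolation coupled to conservation of mass and energy, and $4-\g$ from the worst polynomial cross-term produced when $\bm{B}[f]$ hits the weight $\nabla\langle v\rangle^s$.
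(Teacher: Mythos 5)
The central difficulty of this theorem is precisely the step you brush aside in your $L^1$-moment inequality, and as stated that step is false. Testing \eqref{LFD} against $\langle v\rangle^{s}$, the dissipative term you obtain is not $-K_1\lm_{s+\g}(t)$ but
$$-K_s\int_{\R^3}\langle v\rangle^{s+\g}\,f(1-\dd f)\,\d v \;=\;-K_s\big(\lm_{s+\g}(t)-\dd\,\lM_{s+\g}(t)\big),$$
because the weak formulation produces the bilinear quantity $f\,f_\ast(1-\dd f_\ast)$, not $f\,f_\ast$. The positive correction $K_s\,\dd\,\lM_{s+\g}(t)$ satisfies only $\dd\,\lM_{s+\g}\leq \lm_{s+\g}$ (this is the pointwise bound $\dd f\leq 1$); it is of \emph{the same size} as the would-be dissipation, not a subordinate piece, and it can cancel it exactly — e.g.\ along the saturated profile $F_\dd$ in \eqref{eq:dege} one has $f(1-\dd f)\equiv 0$. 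Your claim that ``$\dd\lM_\g\leq\lm_\g$'' lets you absorb the $\dd$-correction into half the dissipation therefore does not hold: there is no small factor guaranteeing absorption. If $\tfrac{\d}{\d t}\lm_s\leq -K_1\lm_{s+\g}+K_2$ were true, the $L^1$-moments would generate and propagate on their own and the entire coupled analysis would be superfluous; the paper's novelty is exactly that this inequality fails, and what one actually gets is $\tfrac{\d}{\d t}\lm_s+K_s\lm_{s+\g}\leq K_s\lM_{s+\g}+C_{s,1}\lm_s$ (inequality \eqref{ineq_L1s}), with an irreducible $\lM_{s+\g}$ on the right that must be controlled through the $L^2$-moment dissipation $\lD_{s+\g}$ via the interpolation of Lemma \ref{LemL2h}.

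Once this is corrected, the rest of your plan is broadly compatible with the paper's proof: the $L^2$-moment identity with the uniform ellipticity of Proposition \ref{diffusion}, the weighted Nash (Littlewood + Sobolev) inequality converting $\lD_{s+\g}$ into a superlinear sink, and the addition of the two inequalities to produce $\tfrac{\d}{\d t}\mathcal{E}_s+\overline{\bm{c}}\,\mathcal{E}_s^{\beta}\leq \bm{C}'$ with $\beta>1$, giving both generation and propagation. A smaller discrepancy to flag: your cross-term in the $L^2$ estimate reads $\lm_{s+\g+2}\,\lM_s$, which involves a moment strictly higher than any appearing in the dissipation and therefore cannot be absorbed. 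The correct cross-term, obtained by expanding $\grad\cdot[\langle v\rangle^{s-2}\bm{\Sigma}[f]\,v]$, is $\lm_{2+\g}\,\lM_{s+\g-2}$; only the low $L^1$-moment $\lm_{2+\g}$ appears, and it is interpolated against conserved quantities and $\lm_{s+\g}$ in Lemma \ref{lem6}, which is where the constraint $s>\tfrac{3\g}{2}+2$ actually originates.
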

\begin{rmq}
Theorem \ref{moments} is the analog to \cite[Theorem 3]{DeVi1} for the Landau equation and constitutes a noteworthy improvement of \cite[Lemma 3.2]{bag1}.
\end{rmq}
\noindent
We recall that, given $f(t,v)$ solution to \eqref{LFD}, we write
$$\lm_{s}(t)=\int_{\R^{3}}f(t,v)\langle v\rangle^{s}\d v, \qquad \lM_{s}(t)=\int_{\R^{3}}f^{2}(t,v)\langle v\rangle^{s}\d v, \qquad s \in \R.$$
We introduce also
$$\lD_{s}(t)=\left\|\nabla \left(f(t,\cdot)\langle v\rangle^{\frac{s}{2}}\right)\right\|_{2}^{2}, \qquad t \geq 0.$$
\begin{prop}\label{lem5}
Consider $0\leq f_0\in L^{1}_{s_0}(\R^{3})$, for some $s_0>2$, satisfying \eqref{hypci}.  Let $f=f(t,v)$ be a weak solution to \eqref{LFD} that preserves mass and energy. Then, for some constants $C_{s,1}$ and $K_s$ depending only on $M(f_0)$, $E(f_0)$, $s$, it holds
\begin{equation}\label{ineq_L1s}
\frac{\d}{\d t} \lm_{s}(t)  + K_s \lm_{s+\g}(t) \leq  K_{s} \lM_{s+\g}(t) + C_{s,1} \lm_{s}(t)\,,\qquad s>2\,.
\end{equation}
Also, for some constants $C_{s,2},C_{s,3}>0$ depending only on $\gamma$, $s$, $M(f_0)$, $E(f_0)$ and $S_{0}$, it follows that 
\begin{equation}\label{ineq_L2s}
\frac{1}{2}\frac{\d}{\d t} \lM_{s}(t) + K_{0} \lD_{s+\g}(t) \leq  C_{s,2} \lM_{s+\g}(t) +  C_{s,3} \lm_{2+\g}(t) \lM_{s+\g-2}(t), 
\end{equation}
where $K_{0}$ is given by Proposition \ref{diffusion}.  We remark that all constants are independent of $\dd>0$.
\end{prop}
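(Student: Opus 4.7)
My plan is to establish both inequalities by testing the parabolic form \eqref{LFD} against $\varphi = \langle v\rangle^s$ for \eqref{ineq_L1s} and against $\varphi = f\langle v\rangle^s$ for \eqref{ineq_L2s}, then integrating by parts to redistribute gradients and bounding each resulting term using the explicit forms $a_{ij}(z) = |z|^{\gamma+2}\Pi_{ij}(z)$, $b(z) = -2z|z|^\gamma$, the pointwise bounds \eqref{eq:bc}, the ellipticity of Proposition \ref{diffusion}, and the convolution identities $\sum_j\partial_j a_{ij}(z) = b_i(z)$ and $\sum_j\partial_j\bm{\Sigma}_{ij}[f] = \bm{B}_i[f]$. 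The approach is in the spirit of \cite{DeVi1} for the Landau equation, with the additional challenge of tracking the Pauli factor $(1-\dd f)$ uniformly in $\dd$.

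For \eqref{ineq_L1s}, two integrations by parts on $\int\Q_L(f)\langle v\rangle^s\,\d v$ yield
$$\frac{\d}{\d t}\lm_s = \int f\,\bm{\Sigma}[f]\!:\!D^2\langle v\rangle^s\,\d v + \int f\,\bm{B}[f]\cdot\nabla\langle v\rangle^s\,\d v + \int f(1-\dd f)\,\bm{b}[f]\cdot\nabla\langle v\rangle^s\,\d v.$$
The $|v|\to\infty$ asymptotics $\mathrm{tr}(\bm{\Sigma}[f])(v)\sim 2(M-\dd\lM_0)\langle v\rangle^{\gamma+2}$ and $\bm{B}[f](v)\cdot v\sim -2(M-\dd\lM_0)\langle v\rangle^{\gamma+2}$ show that the leading contributions of the first two integrals to the coefficient of $\lm_{s+\gamma}$ cancel. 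Coercivity then stems from the third integral: the asymptotic $\bm{b}[f](v)\cdot v\sim -2M\langle v\rangle^{\gamma+2}$ (a consequence of $\int|v-v_*|^\gamma f_*\,\d v_*\sim M\langle v\rangle^\gamma$) produces a leading contribution $-2sM(\lm_{s+\gamma}-\dd\lM_{s+\gamma})$, yielding \eqref{ineq_L1s} with $K_s$ comparable to $2sM$, since the Pauli correction $2sM\dd\lM_{s+\gamma}$ is dominated by $K_s\lM_{s+\gamma}$ when $\dd\leq 1$ (and by $K_s\lm_{s+\gamma}$ when $\dd>1$, using $\dd\lM_{s+\gamma}\leq\lm_{s+\gamma}$). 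The quadratic-form part $v^T\bm{\Sigma}[f]v\lesssim E\langle v\rangle^{\gamma+2}$, derived from $v^T\Pi(v-v_*)v = |v\wedge v_*|^2/|v-v_*|^2$ vanishing along $v-v_*$, and all lower-order pieces (such as $\lm_{s+\gamma-1}$) are absorbed into $C_{s,1}\lm_s$ via Young's inequality and moment interpolation.

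For \eqref{ineq_L2s}, expanding $\nabla(f\langle v\rangle^s) = \langle v\rangle^s\nabla f + sf\langle v\rangle^{s-2}v$ and integrating by parts the cross term via $f\nabla f = \tfrac{1}{2}\nabla(f^2)$ yields
$$\tfrac{1}{2}\frac{\d}{\d t}\lM_s = -\int\langle v\rangle^s(\bm{\Sigma}[f]\nabla f)\cdot\nabla f\,\d v + \tfrac{1}{2}\int f^2\bigl[\bm{B}[f]\cdot\nabla\langle v\rangle^s + \bm{\Sigma}[f]\!:\!D^2\langle v\rangle^s\bigr]\,\d v + I_{\bm{b}},$$
with $I_{\bm{b}}$ gathering the drift contributions. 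Proposition \ref{diffusion} gives $-\int\langle v\rangle^s(\bm{\Sigma}[f]\nabla f)\cdot\nabla f\,\d v\leq -K_0\int\langle v\rangle^{s+\gamma}|\nabla f|^2\,\d v$, and the elementary inequality $|\nabla(f\langle v\rangle^{(s+\gamma)/2})|^2\leq 2\langle v\rangle^{s+\gamma}|\nabla f|^2 + C_s f^2\langle v\rangle^{s+\gamma-2}$ converts this into $-\tfrac{K_0}{2}\lD_{s+\gamma} + C_s\lM_{s+\gamma-2}$. The second integral is bounded by $C\lM_{s+\gamma} + C\lm_{\gamma+2}\lM_{s-2}$ using $\mathrm{tr}(\bm{\Sigma}[f])\lesssim M\langle v\rangle^{\gamma+2}+\lm_{\gamma+2}$ and $|\bm{B}[f]|\lesssim\lm_2\langle v\rangle^{\gamma+1}$. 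For $I_{\bm{b}}$, rewriting $f(1-\dd f)\nabla f = \nabla(\tfrac{f^2}{2}-\tfrac{\dd f^3}{3})$, integrating by parts, and using the Pauli bound $\dd f^3\leq f^2$ produces only $C\lM_{s+\gamma}$-type contributions. Since $\lm_{\gamma+2}\geq M(f_0)>0$ and $\lM_{s-2}\leq\lM_{s+\gamma-2}$, the residual $C_s\lM_{s+\gamma-2}$ is absorbed into $C_{s,3}\lm_{\gamma+2}\lM_{s+\gamma-2}$, producing \eqref{ineq_L2s}.

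The principal obstacle is keeping all constants independent of $\dd$, which forces systematic use of the robust bound $0\leq 1-\dd f\leq 1$ in place of the pointwise $\|f\|_\infty\leq\dd^{-1}$. Pauli contributions $\dd f^2$ must therefore be packaged as $\lM$-moments on the right-hand side, which explains the characteristic appearance of $\lM_{s+\gamma}$ on the RHS of \eqref{ineq_L1s}. A secondary technical difficulty in \eqref{ineq_L1s} is the delicate near-cancellation between $\mathrm{tr}(\bm{\Sigma}[f])$ and $\bm{B}[f]\cdot v$, both leading order $(M-\dd\lM_0)\langle v\rangle^{\gamma+2}$ with opposite signs; extracting a strictly negative coefficient of $\lm_{s+\gamma}$ requires the algebraic identity $\sum_j\partial_j\bm{\Sigma}_{ij}[f]=\bm{B}_i[f]$ together with the sharp quadratic-form bound $v^T\bm{\Sigma}[f]v\lesssim E\langle v\rangle^{\gamma+2}$.
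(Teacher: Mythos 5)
Your treatment of \eqref{ineq_L2s} is essentially the paper's proof: test the parabolic form against $f\langle v\rangle^s$, invoke Proposition \ref{diffusion} for the principal part, pass from $-K_0\int\langle v\rangle^{s+\g}|\grad f|^2\d v$ to $-K_0\lD_{s+\g}+c\lM_{s+\g}$, integrate the mixed $\bm{\Sigma}$-term by parts via $f\grad f=\tfrac12\grad f^2$, and handle the drift through $f(1-\dd f)\grad f=\grad\big(\tfrac{f^2}{2}-\tfrac{\dd f^3}{3}\big)$ together with $\dd f^3\leq f^2$. No objections there.

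For \eqref{ineq_L1s} you take a genuinely different route. The paper tests against $\Phi(|v|^2)$ and integrates by parts \emph{twice in the symmetrized weak form}, obtaining
\begin{equation*}
\frac{\d}{\d t}\int_{\R^3} f\,\Phi\,\d v=4\iint_{\R^6}f\,f_\ast(1-\dd f_\ast)\,|v-\vet|^\g\,\L^\Phi(v,\vet)\,\d v\,\d\vet
\end{equation*}
with $\L^\Phi(v,\vet)\leq\big(|\vet|^2-v\cdot\vet\big)\big(\Phi'(|v|^2)-\Phi'(|\vet|^2)\big)+|v|^2|\vet|^2\Phi''(|v|^2)$; coercivity is then extracted from the $-\langle\vet\rangle^s$ piece of $\L^s$ via the Young inequality $x^{s-2}y^2\leq\tfrac{s-2}{s-1}x^{s-1}y+\tfrac1{s-1}y^s$ and the lower bound $|v-\vet|^\g\geq2^{-\g/2}\langle\vet\rangle^\g-\langle v\rangle^\g$, and every residual term is then bounded directly by $\lm_2\lm_s$ with no moment interpolation. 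You instead stay with the parabolic form and bound the convolutions $\bm{\Sigma}[f]$, $\bm{B}[f]$, $\bm{b}[f]$ pointwise. Your cancellation claim is algebraically correct --- indeed $s\,\mathrm{tr}\,\bm{\Sigma}[f](v)+s\,\bm{B}[f](v)\cdot v=2s\int_{\R^3}|v-\vet|^\g\big(|\vet|^2-v\cdot\vet\big)f_\ast(1-\dd f_\ast)\,\d\vet$, precisely the factor the paper isolates --- so the structural idea is sound, but the burden is moved onto pointwise control of these convolutions, and that is where your argument develops a gap.

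The bound you assert, $v^T\bm{\Sigma}[f]v\lesssim E\langle v\rangle^{\g+2}$, is false as a pointwise inequality. From $v^T\bm{\Sigma}[f]v=\int_{\R^3}|v-\vet|^\g|v\wedge\vet|^2f_\ast(1-\dd f_\ast)\,\d\vet$ and $|v-\vet|^\g\leq|v|^\g+|\vet|^\g$ one gets only
\begin{equation*}
v^T\bm{\Sigma}[f]v\ \leq\ E(f_0)\,|v|^{\g+2}\ +\ |v|^2\,\lm_{\g+2}(t),
\end{equation*}
and $\lm_{\g+2}(t)$ is \emph{not} a priori controlled (only $\lm_0,\lm_2$ are conserved). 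Fed into the $v\otimes v$ block of $D^2\langle v\rangle^s$, the second piece produces a contribution of order $\lm_{\g+2}(t)\,\lm_{s-2}(t)$, which is not of the admissible form $C\lm_s+K_s\lM_{s+\g}$; the residual from the $\mathrm{tr}\,\bm{\Sigma}[f]$ versus $\bm{B}[f]\cdot v$ near-cancellation carries the same kind of $\lm_{\g+2}(t)$ factor. These terms \emph{can} be absorbed --- since $s>2$, interpolation gives $\lm_{\g+2}\lm_{s-2}\leq\epsilon\lm_{s+\g}+C_\epsilon\lm_s$, and $\epsilon\lm_{s+\g}$ is then folded into $K_s\lm_{s+\g}$ --- but no such step appears in your outline, and it is exactly the step the paper's symmetrized-kernel route makes unnecessary: there every error term carries an exponent at most $1+\g\leq2$ on one of the two variables, and is therefore controlled by $\lm_2\lm_s$ outright.
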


\begin{proof}
Let $\Phi$ be a smooth convex function on $\R_+$.  Let us proceed in the spirit of \cite[Lemma 3.2]{bag1} by multiplying \eqref{LFD} by $\Phi \big(|v|^2\big)$ and integrating over $\R^3$ to obtain that  
$$\frac{\d}{\d t} \int_{\R^3} f(t,v)\, \Phi \big(|v|^2\big) \,\d v =  4 \iint_{\R^3\times \R^3}  f \fet (1-\dd\fet) \, |v-\vet|^\g \,\L^\Phi(v,\vet) \, \d v \, \d\vet\,,$$
where 
$$\L^\Phi(v,\vet) \leq \big(|\vet|^2 - (v\cdot \vet)\big)\big(\Phi'\big(|v|^2\big) - \Phi'\big(|\vet|^2\big)\big) + |v|^{2}|\vet|^{2} \Phi''\big(|v|^2\big)\,.\smallskip$$
Let $\Phi(r)=(1+r)^{s/2}$, with $s>2$. Since $(v \cdot \vet) \leq \langle v\rangle \langle\vet\rangle$, 
we deduce (with the notation $\L^s=\L^\Phi$) that
\begin{equation}\label{propagation}
\L^{s}(v,\vet) \leq \frac{s}{2} \bigg[\frac{s}{2}\,\langle v \rangle^{s-2}\,\langle\vet\rangle^2 -\langle\vet\rangle^{s} +\langle\vet\rangle^{s-2}\\
 +\langle v\rangle \langle\vet\rangle^{s-1}+ \langle\vet\rangle \langle v\rangle^{s-1}  \bigg].
\end{equation}
Since $s-1>1$, we use Young's inequality to obtain
$$ x^{s-2} \,y^2 
 =    x^{s-2} \, y^{(s-2)/(s-1)} \, y^{s/(s-1)}
 \leq  \frac{s-2}{s-1}\, x^{s-1}\,y + \frac{1}{s-1}\, y^{s}. $$
Substituting this inequality for $x=\langle v\rangle$, $y=\langle\vet\rangle$ 
into \eqref{propagation} yields 
$$\L^{s}(v,\vet) \leq \frac{s}{2} \left(\frac{s+2}{2}\,\langle v\rangle^{s-1}\,\langle\vet\rangle 
+\langle v\rangle \langle\vet\rangle^{s-1}+ \langle\vet\rangle^{s-2} - \frac{s-2}{2(s-1)}\langle\vet\rangle^{s}\right).$$
Since $|v-\vet|^\g \leq \langle v\rangle^\g \langle\vet\rangle^\g$ and 
$|v-\vet|^\g\geq 2^{-\g/2}\langle\vet\rangle^\g-\langle v\rangle^\g$, for $0<\gamma\leq 1$, we finally obtain
\begin{multline*}
\frac{\d}{\d t} \int_{\R^3} f(t,v)\, \langle v\rangle^{s} \,\d v 
+ 2^{-\g/2} \:\frac{s(s-2)}{s-1}\, M(f_0) \int_{\R^3} 
\, \langle\vet\rangle^{s+\g}\, \fet (1-\dd\fet) \, \d\vet\\
\leq  \:\frac{s(s-2)}{s-1} \iint_{\R^3\times \R^3}  f \fet (1-\dd\fet)\langle v\rangle^\g \langle\vet\rangle^{s}\, \d v \, \d\vet \\
+ s \iint_{\R^3\times \R^3}  f \fet \, 
\Big[(s+2)\,\langle v\rangle^{s-1+\g} \langle\vet\rangle^{1+\g} +2\langle v\rangle^{1+\g} \langle\vet\rangle^{s-1+\gamma}+2\langle v\rangle^\g \langle\vet\rangle^{s-2+\gamma}\Big] \, \d v \, \d\vet.
\end{multline*}
Since $0\leq f\leq \dd^{-1}$ and $0<\g\leq 1$, there exists a constant $C_{s,1}>0$ depending only on $s$, $M(f_0)$ and $E(f_0)$ such that 
\begin{equation*}
\frac{\d}{\d t} \int_{\R^3} f\, \langle v\rangle^{s} \,\d v  + K_s\int_{\R^3} \langle\vet\rangle^{s+\g}\, \fet(1-\dd\fet)  d\vet\leq C_{s,1} \int_{\R^3}  f \langle v\rangle^{s} \d v,
\end{equation*}
where $K_s:=2^{-\g/2} \:\frac{s(s-2)}{s-1} \,M(f_0)$. This proves \eqref{ineq_L1s}.

\medskip
\noindent
Let us now show \eqref{ineq_L2s}.  Multiplying \eqref{LFD} by $f\langle v\rangle^{s}$ and integrating over $\R^3$ lead to 
\begin{multline*}
\frac{1}{2}\frac{\d}{\d t} \int_{\R^3} f^2(t,v) \langle v\rangle^{s} \d v 
=  -\int_{\R^3}  \langle v\rangle^{s}(\bm{\Sigma}[f] \grad f) \cdot \grad f  \d v
-  \, s \int_{\R^3} f  \langle v\rangle^{s-2}(\bm{\Sigma}[f] \grad f)\cdot v \d v \\
+ \int_{\R^3}\langle v\rangle^{s} f(1-\dd f) \bm{b}[f] \cdot \grad f  \d v
+ \,s \int_{\R^3} (\bm{b}[f] \cdot v )\, f^2(1-\dd f)\langle v\rangle^{s-2} \d v.
\end{multline*}
Using the uniform ellipticity of the diffusion matrix $\bm{\Sigma}[f]$, recall Proposition \ref{diffusion}, we deduce that
$$\int_{\R^3}  \langle v\rangle^{s}(\bm{\Sigma}[f] \grad f) \cdot \grad f  \d v
 \geq K_{0} \int_{\R^{3}}\langle v\rangle^{s+\g}\,\left|\grad f\right|^{2}\d v\,.$$
Also, using \eqref{eq:bc} and the fact that $0 \leq 1-\dd f\leq 1$, we get
$$\int_{\R^3} (\bm{b}[f] \cdot v) \, {f}^2 (1-\dd f)  \langle v\rangle^{s-2}  \d v
\leq  2 \,\lm_{2}(t)\, \lM_{s+\g}(t),$$
and 
$$
\int_{\R^3}\langle v\rangle^{s} f(1-\dd f) \bm{b}[f] \cdot \grad f  \d v
 =  - \int_{\R^3} \left( \frac{1}{2} f^2 - \frac{\dd}{3} f^3 \right)
\grad \cdot \Big(\bm{b}[f] \langle v\rangle^{s}\Big) \d v 
 \leq   C \, \lm_{2}(t)\, \lM_{s+\g}(t),
$$ for some constant $C>0$ depending on $\g$ and $s$.  For the last inequality we used the fact that $\dd f^{3} \leq f^{2}$.  Finally, we write
$$- 2 \int_{\R^3} f  \langle v\rangle^{s-2}(\bm{\Sigma}[f] \grad f)\cdot v \d v  =  \int_{\R^3} f^2 \,\grad \cdot \Big[\langle v\rangle^{s-2}\bm{\Sigma}[f] \, v \,\Big] \,\d v\,.$$
For the last integral we expand
\begin{multline*}
\grad \cdot \Big[\langle v\rangle^{s-2}\bm{\Sigma}[f] \, v \, \Big] \\
=  \langle v\rangle^{s-4} \int_{\R^3} \fet(1-\dd\fet) |v-\vet|^\g\Big[|\vet|^2(2+s|v|^2) -(s-2)(v\cdot \vet)^2-2(v\cdot\vet) \langle v\rangle^2 \Big]\d\vet,
\end{multline*}
which leads to 
\begin{align*}
\int_{\R^3} f^2 \,\grad \cdot \Big[\langle v\rangle^{s-2} \bm{\Sigma}[f] \, v \, \Big] \, \d v  \leq s \int_{\R^3} \fet \langle\vet\rangle^{2+\g} \d\vet &\int_{\R^3} f^{2} \langle v\rangle^{s-2+\g} \d v \\
 & +    2 \|f\|_{L^1_2}\int_{\R^3} f^2 \langle v\rangle^{s-1+\g} \d v.
\end{align*}
Gathering the estimates together, one can find constants $C_{s}$, $\tilde{C}_{s} >0$ such that 
\begin{equation*}
\frac{1}{2}\frac{\d}{\d t} \lM_{s}(t) + K_{0} \int_{\R^{3}}\langle v\rangle^{s+\g}\,\left|\grad f\right|^{2}\d v
\leq  C_{s} \lM_{s+\g}(t) +  \tilde{C}_{s} \lm_{2+\g}(t) \lM_{s+\g-2}(t)\,.
\end{equation*} 
Notice that there exists $c_{s,\g} >0$ such that
$$\int_{\R^{3}}\langle v\rangle^{s+\g}\,\left|\grad f\right|^{2}\d v \geq \lD_{s+\g}(t) - c_{s,\g}\lM_{s+\g}(t)\,.$$
This proves the desired result.
\end{proof}
\noindent
It is important to control the ``mixed term'' $\lm_{2+\g}(t)\,\lM_{s+\g-2}(t)$ in the estimate \eqref{ineq_L2s} of Proposition \ref{lem5}.  This is done in the following lemma.  We continue with the assumptions and notations of Theorem \ref{moments}.
\begin{lem}\label{lem6} Fix $s \in \big(\tfrac{3\gamma}{2}+2, 9 -\gamma\big]$, $\delta >0$.   There exists a constant $C_{s}(\delta) >0$ depending only on $M(f_0)$, $E(f_0)$, $S_{0}$, $s$, $\gamma$ such that 
\begin{equation}\label{ineq_3}
\lm_{2+\g}(t)\,\lM_{s+\g-2}(t) \leq C_{s}(\delta) + \delta\,\lm_{s+\g}(t) + \delta\,\lD_{s+\g}(t), \qquad \forall\, t \geq 0. 
\end{equation}\end{lem}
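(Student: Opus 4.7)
The plan is to estimate the two factors $\lm_{2+\g}$ and $\lM_{s+\g-2}$ separately by interpolation, link the $L^{2}$ piece to the dissipation $\lD_{s+\g}$ via Gagliardo--Nirenberg, and combine everything through a Young inequality. For the first factor, H\"older's inequality between $\lm_2$ and $\lm_{s+\g}$ gives
\begin{equation*}
\lm_{2+\g}(t)\leq \lm_2^{1-\theta}\,\lm_{s+\g}(t)^{\theta},\qquad \theta=\frac{\g}{s+\g-2},
\end{equation*}
with $\theta<\tfrac{2}{5}$ thanks to the hypothesis $s>\tfrac{3\g}{2}+2$. Conservation of $\lm_2=M(f_0)+E(f_0)$ then yields $\lm_{2+\g}(t)\leq C_1\lm_{s+\g}(t)^{\theta}$ for some constant $C_1=C_1(M,E,s,\g)$.

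For the second factor, I would set $h=f\langle v\rangle^{(s+\g-2)/2}$ and apply the $3$D Gagliardo--Nirenberg inequality $\|h\|_{L^2}^2\leq C\|h\|_{L^1}^{4/5}\|\nabla h\|_{L^2}^{6/5}$. The pointwise expansion $|\nabla h|^2\leq 2\langle v\rangle^{s+\g-2}|\nabla f|^2+C\langle v\rangle^{s+\g-4}f^2$ combined with a single integration by parts in the definition of $\lD_{s+\g}$ (which identifies $\int|\nabla f|^2\langle v\rangle^{s+\g}\d v$ with $\lD_{s+\g}$ up to a lower-order $O(\lM_{s+\g-2})$ term) produces $\|\nabla h\|_{L^2}^2\leq C_2(\lD_{s+\g}+\lM_{s+\g-2})$. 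A short dichotomy on whether $\lM_{s+\g-2}\leq\lD_{s+\g}$ absorbs the self-term and yields
\begin{equation*}
\lM_{s+\g-2}(t)\leq C_3\bigl(\lm_{(s+\g-2)/2}(t)^{4/5}\,\lD_{s+\g}(t)^{3/5}+\lm_{(s+\g-2)/2}(t)^{2}\bigr).
\end{equation*}
A further interpolation controls $\lm_{(s+\g-2)/2}$: between $\lm_0$ and $\lm_2$ (giving a pure constant in $M,E$) when $s+\g\leq 6$, and between $\lm_2$ and $\lm_{s+\g}$ (giving $\lm_{(s+\g-2)/2}\leq C_4\lm_{s+\g}^{\mu}$ with $\mu=\tfrac{s+\g-6}{2(s+\g-2)}$) when $s+\g>6$. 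Writing $\mu=0$ in the first case, both are summarised by
\begin{equation*}
\lm_{2+\g}(t)\,\lM_{s+\g-2}(t)\leq C_5\bigl(\lm_{s+\g}(t)^{\theta+4\mu/5}\,\lD_{s+\g}(t)^{3/5}+\lm_{s+\g}(t)^{\theta+2\mu}\bigr).
\end{equation*}

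The main obstacle, and the sole place where the hard-potential range actually intervenes, is checking the two strict exponent inequalities $\theta+\tfrac{4\mu}{5}+\tfrac{3}{5}<1$ and $\theta+2\mu<1$. A direct algebraic simplification reduces them respectively to $5\g<8$ and $\g<4$, both comfortably satisfied by $0<\g\leq 1$. With this slack available, the three-factor Young inequality applied to each term as $\lm_{s+\g}^{\alpha}\lD_{s+\g}^{\beta}\cdot 1$ with weights $1/p_1=\alpha$, $1/p_2=\beta$, $1/p_3=1-\alpha-\beta>0$ (and the usual $\delta$-scaling of the first two factors) delivers, for any prescribed $\delta>0$,
\begin{equation*}
\lm_{2+\g}(t)\,\lM_{s+\g-2}(t)\leq \delta\,\lm_{s+\g}(t)+\delta\,\lD_{s+\g}(t)+C_s(\delta),
\end{equation*}
which is precisely \eqref{ineq_3}.
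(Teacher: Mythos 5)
Your proof is correct, but it takes a genuinely different route from the paper's. The paper applies Littlewood's interpolation inequality with the weighted measure $\d\mu=\langle v\rangle^{-3}\d v$ to $g=f\langle\cdot\rangle^{(s+\g+1)/2}$, then Sobolev, which lands directly on the $L^{1}$ factor $\|f\langle\cdot\rangle^{(s+\g)/2-5/2}\|_{L^{1}}$; the upper restriction $s\leq 9-\g$ is engineered precisely so that this factor is dominated by the conserved quantity $\lm_{2}$, giving $\lM_{s+\g-2}\leq C\lm_{2}^{4/5}\lD_{s+\g}^{3/5}$ in a single stroke and leaving only one Young step after the interpolation of $\lm_{2+\g}$. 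You instead use the plain Nash inequality on $h=f\langle v\rangle^{(s+\g-2)/2}$, which makes the $L^{1}$ factor the non-conserved $\lm_{(s+\g-2)/2}$; you then need two extra ingredients — the dichotomy absorbing the self-term $\lM_{s+\g-2}$ that appears inside $\|\nabla h\|_{L^{2}}^{2}$, and a further interpolation of $\lm_{(s+\g-2)/2}$ against $\lm_{s+\g}$ — before Young can close. The trade-off: the paper's argument is shorter and the restriction $s\leq 9-\g$ is built in, while your version is more elementary (only Nash, no weighted Littlewood) and in fact never uses $s\leq 9-\g$, since the $s$-dependence cancels in your two exponent inequalities (reducing them to $5\g<8$ and $\g<4$, plus $\theta<2/5$ from $s>\tfrac{3\g}{2}+2$ when $\mu=0$); this extra generality is not needed for the lemma as stated but is a small bonus. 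One cosmetic remark: what you call ``a single integration by parts'' to pass from $\|\nabla h\|_{L^{2}}^{2}$ to $\lD_{s+\g}+O(\lM_{s+\g-2})$ is really just the product rule plus the triangle inequality and the pointwise bound $\langle v\rangle^{s+\g-2}\leq\langle v\rangle^{s+\g}$ — no integration by parts is actually involved.
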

\begin{proof} Using Littlewood's interpolation inequality, see for instance \cite[Theorem 5.5.1 (ii)]{garling}
$$\|g\|_{L^{2}(\mu)} \leq \|g\|_{L^{1}(\mu)}^{1-\theta}\|g\|_{L^{6}(\mu)}^{\theta}, \qquad \tfrac{1}{2}=(1-\theta) + \tfrac{\theta}{6} \quad \big(\text{that is }\, \theta=\tfrac{3}{5}\big)$$
with the measure $\d\mu(v)=\langle v\rangle^{-3}\d v$ and with $g=f\langle \cdot\rangle^{\frac{s+\g+1}{2}}$, 
we have that
\begin{equation*}
\big\|f \langle\cdot\rangle^{\frac{s+\gamma-2}{2}}\big\|_{L^2}\leq \|f\langle\cdot\rangle^{\frac{s+\gamma}{2} - \frac{5}{2}}\|^{1-\theta}_{L^1}\|f\langle \cdot\rangle^{\frac{s+\gamma}{2}}\|^{\theta}_{L^6}\,.
\end{equation*}
Estimating the last $L^{6}$-norm with Sobolev's inequality \cite[Theorem 12.4]{leoni}, we obtain that
\begin{equation*}
\big\|f\langle\cdot\rangle^{\frac{s+\gamma-2}{2}}\big\|_{L^2} \leq  C  \big\|f\langle\cdot\rangle^{\frac{s+\gamma}{2} - \frac{5}{2}}\big\|^{\frac{2}{5}}_{L^1}\,\big\|\nabla(f\langle\cdot\rangle^{\frac{s+\gamma}{2}})\big\|^{\frac{3}{5}}_{L^2}\,,
\end{equation*}
for some $C>0$. We deduce from this that, as soon as $s \leq 9-\g$ $\big(\text{that is}\; \frac{s+\gamma}{2} - \frac{5}{2} \leq 2\big)$,
$${\lM_{s+\g-2}(t)} \leq  C \lm_{2}(t)^{\frac{4}{5}}\lD_{s+\g}(t)^{\frac{3}{5}}.$$
Moreover, 
$$\lm_{2+\g}(t) \leq \lm_{2}(t)^{\frac{s-2}{s+\gamma-2}}\,\lm_{s+\gamma}(t)^{\frac{\gamma}{s+\gamma-2}}\,,$$
which leads us to deduce, from the conservation of mass and energy, that there is a positive constant $C_{0}$ depending only on $\lm_{2}(0)=M(f_0)+E(f_0)$ such that
$$\lm_{2+\g}(t)\,\lM_{s+\g-2}(t) \leq C_{0}\,\lm_{s+\gamma}(t)^{\frac{\gamma}{s+\gamma-2}}\,\,\lD_{s+\g}(t)^{\frac{3}{5}}\,,\qquad \forall\,t\geq0\,.$$
Using then Young's inequality, there exists $C_{1} >0 $ such that
$$\lm_{2+\g}(t)\,\lM_{s+\g-2}(t) \leq C_{1}\delta^{-3/2}\lm_{s+\gamma}(t)^{\theta}\,+\delta\,\lD_{s+\g}(t) \qquad \forall\,\delta >0$$
with $\theta=\theta(s,\g):=\frac{5}{2}\frac{\gamma}{s+\gamma-2}.$ Notice now that $\theta < 1$ for  $s > \tfrac{3\gamma}{2}+2$ and, from Young's inequality  again, 
$$\lm_{s+\gamma}(t)^{\theta} \leq  \eta^{-\tfrac{\theta}{1-\theta}}+\eta\,\lm_{s+\g}(t) \qquad \forall\, \eta >0\,.$$
Combining these last two inequalities with $\eta=\tfrac{1}{C_{1}}\delta^{\frac{5}{2}}$ gives the result.
\end{proof}
\begin{lem}\label{LemL2h} Fix $s \geq 4 -\g$, $\delta >0$.  There exists a constant $C_{s}(\delta) >0$ depending only on $M(f_0)$, $E(f_0)$, $S_{0}$, $\delta$, $\g$, $s$, such that 
\begin{equation}\label{ineq_4}
\lM_{s+\g}(t) \leq C_{s}(\delta) + \delta\,\lm_{s+\gamma}(t) + \delta\, \lD_{s+\g}(t)\, \qquad \forall\, t \geq 0. 
\end{equation}\end{lem}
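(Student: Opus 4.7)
The plan is to mirror the Gagliardo--Nirenberg/moment-interpolation scheme of Lemma \ref{lem6}, now applied directly to the single quantity $\lM_{s+\gamma}$. Setting $u := f\,\langle v\rangle^{(s+\gamma)/2}$, one has $\|u\|_{L^2}^2 = \lM_{s+\gamma}(t)$, $\|u\|_{L^1} = \lm_{(s+\gamma)/2}(t)$ and $\|\nabla u\|_{L^2}^2 = \lD_{s+\gamma}(t)$. I would first apply the Gagliardo--Nirenberg inequality in $\R^3$ (equivalently, the Littlewood interpolation $\|u\|_{L^2}\leq \|u\|_{L^1}^{2/5}\|u\|_{L^6}^{3/5}$ combined with the Sobolev embedding $\|u\|_{L^6}\leq C\|\nabla u\|_{L^2}$) to obtain
$$\lM_{s+\gamma}(t)\leq C\,\lm_{(s+\gamma)/2}(t)^{4/5}\,\lD_{s+\gamma}(t)^{3/5}.$$

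Next, since the hypothesis $s\geq 4-\gamma$ ensures $(s+\gamma)/2 \geq 2$, a standard Hölder interpolation between $\lm_2$ and $\lm_{s+\gamma}$ yields
$$\lm_{(s+\gamma)/2}(t)\leq \lm_2(t)^{\lambda}\,\lm_{s+\gamma}(t)^{1-\lambda},\qquad 1-\lambda=\frac{s+\gamma-4}{2(s+\gamma-2)},$$
and the conservation of mass and energy $\lm_2(t) = M(f_0)+E(f_0)$ reduces the factor $\lm_2^{\lambda}$ to a constant depending only on the initial data. Combining the two estimates,
$$\lM_{s+\gamma}(t)\leq \widetilde C\,\lm_{s+\gamma}(t)^{\beta}\,\lD_{s+\gamma}(t)^{3/5},\qquad \beta:=\frac{2(s+\gamma-4)}{5(s+\gamma-2)}.$$
A direct computation gives $\beta+\tfrac{3}{5}=\dfrac{5(s+\gamma)-14}{5(s+\gamma-2)}<1$ for every $s\geq 4-\gamma$, which is the crucial feature enabling the last step. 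The boundary case $s=4-\gamma$ is immediate, since then $\lm_{(s+\gamma)/2}=\lm_2$ is already bounded by the conserved quantities and no moment interpolation is required.

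I would then conclude by the weighted Young inequality: because $\beta\geq 0$, $\tfrac{3}{5}\geq 0$ and $\beta+\tfrac{3}{5}<1$, a three-factor AM--GM (with a unit weight absorbing the slack $1-\beta-\tfrac{3}{5}$) yields, for every $\delta>0$, a constant $C_s(\delta)>0$ such that
$$\widetilde C\,\lm_{s+\gamma}(t)^{\beta}\,\lD_{s+\gamma}(t)^{3/5}\leq \delta\,\lm_{s+\gamma}(t)+\delta\,\lD_{s+\gamma}(t)+C_s(\delta),$$
which is exactly \eqref{ineq_4}. The only delicate step is the verification that $\beta+\tfrac{3}{5}<1$; this is what allows arbitrarily small coefficients in front of both $\lm_{s+\gamma}$ and $\lD_{s+\gamma}$. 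All other ingredients are the Gagliardo--Nirenberg constant, the conserved mass and energy, and elementary Young's inequality, so every constant produced is independent of the quantum parameter $\dd$, as required for the later application of Theorem \ref{smoothness}.
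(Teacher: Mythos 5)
Your proof is correct, and it follows essentially the same route as the paper's: Gagliardo--Nirenberg (Littlewood interpolation plus Sobolev) to obtain $\lM_{s+\gamma}\lesssim \lm_{(s+\gamma)/2}^{4/5}\lD_{s+\gamma}^{3/5}$, moment interpolation of $\lm_{(s+\gamma)/2}$ between $\lm_2$ (conserved) and $\lm_{s+\gamma}$, and Young's inequality to put arbitrarily small coefficients in front of $\lm_{s+\gamma}$ and $\lD_{s+\gamma}$. The only cosmetic difference is the order of operations — the paper first applies a two-factor Young to split off $\delta\lD_{s+\gamma}$, then interpolates $\lm_{(s+\gamma)/2}^2$ and applies a second Young, whereas you interpolate first and finish with a single three-factor weighted AM--GM; the two sequences of steps produce the same estimate, and your verification that $\beta+\tfrac{3}{5}<1$ is the exact analogue of the paper's use of the fact that $\tfrac{s+\gamma-4}{s+\gamma-2}<1$.
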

\begin{proof} Using Littlewood's interpolation inequality and Sobolev inequality, we obtain that
\begin{equation}\label{eq:lMsg}
\lM_{s+\g}(t) \leq C\lm_{\frac{s+\g}{2}}(t)^{\frac{4}{5}}\,\lD_{s+\g}(t)^{\frac{3}{5}}\,.
\end{equation}
Thus, using Young's inequality,
\begin{equation}\label{eq:Msg}\lM_{s+\g}(t) \leq \delta^{-3/2}\lm_{\frac{s+\gamma}{2}}(t)^{2}  + \delta \lD_{s+\g}(t)\,,\qquad \delta>0\,.\end{equation}
Assuming that $s+\gamma\geq4$, it follows that
\begin{equation}\label{eq:lmsg2}
\lm_{\frac{s+\gamma}{2}}(t)^{2}\leq \lm_{2}(t)^{\frac{s+\gamma}{s+\gamma-2}}\,\lm_{s+\gamma}(t)^{\frac{s+\gamma-4}{s+\gamma-2}}\,,\qquad t \geq 0\,.
\end{equation}
Using conservation of mass and energy, there exists $C_{s} >0$ depending only on $s$, $\g$ and $\lm_{2}(0)$ such that
$$\lm_{\frac{s+\g}{2}}(t)^{2}\leq C_{s,\gamma}(f_0)\lm_{s+\gamma}(t)^{\frac{s+\gamma-4}{s+\gamma-2}}\,,$$
and then, Young's inequality implies that, for $\eta >0$, there is  $C_{s,\g}(\eta)$ such that
$$\lm_{\frac{s+\g}{2}}(t)^{2} \leq C_{s,\g}(\eta) + \eta\,\lm_{s+\gamma}(t)\,,\quad  \quad \forall t >0\,.$$
This, together with \eqref{eq:Msg}, gives the conclusion choosing $\eta >0$ such that $\delta^{-3/2}\eta=\delta$.
\end{proof}

\begin{proof}[Proof of Theorem \ref{moments}] Recall that $s_{\g}=\max\big\{\tfrac{3\gamma}{2}+2, 4-\gamma\big\}$ and define 
$$\mathcal{E}_{s}(t)=\lm_{s}(t) + \lM_{s}(t), \qquad t \geq 0, \quad s \in (s_{\g},9-\g]\,.$$
Adding \eqref{ineq_L1s} to \eqref{ineq_L2s}, there exist positive constants $c_{0},c_{1}$ depending on $M(f_0)$, $E(f_0)$, $S_{0}$, $\g$, $s$, such that
$$\dfrac{\d}{\d t}\mathcal{E}_{s}(t) + c_{0}\bigg[\lm_{s+\g}(t)+\lD_{s+\g}(t)\bigg] \leq c_{1}\bigg[\lM_{s+\g}(t)+\lm_{s}(t)+\lm_{2+\g}(t)\,\lM_{s+\g-2}(t)\bigg]\,, \quad t \geq0\,,$$
and, using the result of Lemmas \ref{lem6} and \ref{LemL2h} it holds  that
$$
\dfrac{\d}{\d t}\mathcal{E}_{s}(t) + c_{0}\bigg[\lm_{s+\g}(t)+\lD_{s+\g}(t)\bigg] \leq c_{1}\bigg[C_{s}(\delta)+\delta\,\lm_{s+\g}(t) + \delta\lD_{s+\g}(t)+\lm_{s}(t)\bigg]\,,\quad \delta>0\,,$$
for some positive constant  $ C_{s}(\delta) >0$ depending on $M(f_0)$, $E(f_0)$, $S_{0}$, $s$, $\g$, $\delta$.  Since 
\begin{equation}\label{eq:lmsep}
\lm_{s}(t) \leq C(s,\delta) + \delta\,\lm_{s+\g}(t), \qquad \forall\, \delta >0
\end{equation}
for some positive constant $C(s,\delta)$ depending on $s$, $\delta$, and $M(f_{0})$, we can choose $\delta >0$ sufficiently small to obtain, for any $s\in(s_\gamma,9-\gamma)$, that
\begin{equation}\label{eq:Et}
\dfrac{\d}{\d t}\mathcal{E}_{s}(t) + \frac{c_{0}}{2} \bigg[\lm_{s+\g}(t)+\lD_{s+\g}(t)\bigg] \leq \tilde{C}, \quad \forall\, t \geq 0,
\end{equation}
for some positive constant $\tilde{C} >0$ depending on $M(f_0)$, $E(f_0)$, $S_{0}$, $\g$, $s$.

\medskip
\noindent 
Let us now control $\mathcal{E}_{s}(t)$ in terms of $\lm_{s+\g}(t)$ and $\lD_{s+\g}(t)$.  Using \eqref{eq:lMsg}, \eqref{eq:lmsg2}, and the conservation of mass and energy 
$$\lM_{s+\g}(t) \leq C_{s,\g}\,\lm_{s+\g}(t)^{\frac{2}{5}\nu}\,\lD_{s+\g}(t)^{\frac{3}{5}}, \quad \nu=\tfrac{s+\g-4}{s+\g-2}\,, \quad t \geq 0\,,$$
with $C_{s,\g}$ a positive constant depending only on $s,\g$ and $\lm_{2}(0)$.  This results in the estimate
\begin{equation*}
\begin{split}
\frac{1}{2}\lm_{s+\g}(t) + \lD_{s+\g}(t) &\geq \frac{1}{2}\lm_{s+\g}(t) + c_{s,\g}\lM_{s+\g}(t)^{\frac{5}{3}}\,\lm_{s+\g}(t)^{-\frac{2}{3}\nu}\\
&\geq \min_{z >0}\left(\frac{z}{2}+c_{s,\g}\lM_{s+\g}(t)^{\frac{5}{3}}\,z^{-\frac{2}{3}\nu}\right)
\end{split}
\end{equation*}
with $c_{s,\g}=C_{s,\g}^{-\frac{5}{3}}$.  Computing such minimum, we obtain that for a constant $\bm{c}_{s,\g}>0$ depending only on $s,\g$ and $\lm_{2}(0)$ it holds that
\begin{equation}\label{eq:sum}
\frac{1}{2}\lm_{s+\g}(t) + \lD_{s+\g}(t) \geq \bm{c}_{s,\g}\,\lM_{s+\g}(t)^{\nu_{0}}
\geq \bm{c}_{s,\g}\, \lM_{s}(t)^{\nu_{0}}\,,\quad \nu_{0}=\tfrac{5}{3+2\nu} >1\,.
\end{equation}
Furthermore,
$$\lm_{s+\g}(t) \geq \lm_{2}(t)^{-\frac{\g}{s-2}}\,\lm_{s}(t)^{\frac{s+\g-2}{s-2}}\,,$$
thus, we deduce from \eqref{eq:sum} that for two positive constant $\bm{c}_{1}, \bm{c}_{2} >0$ depending only on $s,\g$ and $\lm_{2}(0)$ we have
$$\lm_{s+\g}(t) + \lD_{s+\g}(t) \geq \bm{c}_{1}\,\mathcal{E}_{s}(t)^{\beta} - \bm{c}_{2}, \qquad \forall\,t \geq0\,,$$
where $\beta=\min\left(\frac{s+\g-2}{s-2},\nu_{0}\right) >1$. Plugging this into \eqref{eq:Et} yields
\begin{equation}\label{eq:propgen*}
\dfrac{\d}{\d t}\mathcal{E}_{s}(t) + \overline{\bm{c}}\,\mathcal{E}_{s}(t)^{\beta} \leq \bm{C}', \qquad \forall\, t \geq0\,,
\end{equation}
for positive constants $\overline{\bm{c}}$ and $\bm{C}'$ depending only on $s,\g$ and $\lm_{2}(0)$.  Estimate \eqref{eq:propgen*} proves that, for any $s \in (s_{\g},9-\g]$ there exists $\bm{C}_{s} >0$ depending only on $M(f_{0})$, $E(f_{0})$, $S_{0}$, $s$, $\g$, such that
$$\mathcal{E}_{s}(t)\leq \bm{C}_{s}\left(1+t^{-\frac{1}{\beta-1}}\right) \qquad \forall\, t >0\,.$$
In particular, for any $t_{0} >0$, 
\begin{equation}\label{Es}
\sup_{t \geq t_{0}}\mathcal{E}_{s}(t)=\bm{C}(s,t_{0}) <\infty, \qquad \forall\, s \in (s_{\g},9-\g]
\end{equation}
depends only on $t_{0} >0$, $M(f_{0})$, $E(f_{0})$, $S_{0}$, $s$, $\g$.  Observe that estimate \eqref{eq:propgen*} also implies that if $\mathcal{E}_{s}(0)$ is finite, then $\sup_{t\geq0}\mathcal{E}_{s}(t)$ is finite as well proving the propagation of $\mathcal{E}_{s}(t)$.

\medskip
\noindent
Of course, \eqref{Es} remains true for $s \leq 9-\g$.  This means that, for any $t \geq t_{0}$, one can replace \eqref{ineq_L2s} with
$$\frac{1}{2}\frac{\d}{\d t} \lM_{s}(t)   
+ K_{0} \lD_{s+\g}(t)
\leq   
 C_{s}(t_{0})\lM_{s+\g}(t), \quad t \geq t_{0}$$
where $C_{s}(t_{0})$ is a finite constant depending on $t_{0},s,S_{0}$ and $\bm{m}_{2}(0)$ \footnote{namely, $C_{s}(t_{0})=C_{s,2}+C_{s,3}\sup_{t\geq t_{0}}\lm_{2+\g}(t)$}. This shows that \eqref{unif_L2}  holds for any $s >s_{\g}$ since we used the constraint $s \leq 9-\g$ only to estimate $\lm_{2+\g}(t)$.  More precisely, we obtain that
$$\frac{\d}{\d t}\mathcal{E}_{s}(t) + c_{0}\bigg[\lm_{s+\g}(t)+\lD_{s+\g}(t)\bigg] \leq c_{1} \big[\lm_{s}(t) +\lM_{s+\g}(t)\big], \qquad \forall t \geq t_{0}$$
with $c_{0}$ and $c_{1}$ depending on $M(f_{0})$, $E(f_{0})$, $S_{0}$, $s$. Using then \eqref{ineq_4} and \eqref{eq:lmsep} for $\delta >0$ small enough, we obtain
$$\frac{\d}{\d t}\mathcal{E}_{s}(t) + \frac{c_{0}}{2}\bigg[\lm_{s+\g}(t)+\lD_{s+\g}(t)\bigg] \leq  \bm{C}(t_{0}) \qquad \forall s \geq s_{\g},\; t \geq t_{0}$$
for some positive constant $\bm{C}(t_{0})$ depending only on $t_{0}$, $M(f_{0})$, $E(f_{0})$, $S_{0}$, $s$. We can repeat the argument here above, using \eqref{eq:sum}, to obtain now
\begin{equation}\label{eq:propgen**}
\dfrac{\d}{\d t}\mathcal{E}_{s}(t) + \overline{\bm{c}}\,\mathcal{E}_{s}(t)^{\beta} \leq \bm{C}', \qquad \forall\, t \geq t_{0}\,,
\end{equation}
where $\overline{\bm{c}}$ and $\bm{C}'$ depends also on $t_{0}$. This concludes the proof of generation of the norms.  Propagation follows the same idea assuming $\mathcal{E}_{s}(0)$ finite, with $s\geq s_{\gamma}$.  One proceeds from \eqref{eq:propgen*} to arrive to \eqref{eq:propgen**} copycatting the procedure.  Furthermore, integrating in $t\in(t_0,T)$, with $T>t_0$, estimate \eqref{eq:Et} shows that $f\in L^2_{\mathrm{loc}}((t_0,+\infty);H^{1}_{s+\gamma}(\R^3))$ due to generation of the $L^{1}$ and $L^{2}$ norms which proves \it{(i)}.
\end{proof}
\subsection{Regularity estimates} We now prove Theorem \ref{smoothness} with the help of the following proposition. 
\begin{prop}\label{res_H1}
Consider $0\leq f_0\in L^{1}_{s_{\gamma}}(\R^{3})$, where $s_{\gamma}=\max\{2+\tfrac{3\gamma}{2}, 4-\gamma\}$, satisfying \eqref{hypci}.  Let $f=f(t,v)$ be a weak solution to \eqref{LFD} given by Theorem \ref{results}.  Then,  
\begin{equation}\label{eq:grad*}
\int_{\R^3} |\grad f(t,v)|^2 \langle v\rangle^{s} \, \d v \leq C_{t_0}\,,\qquad s\geq0\,, \quad \forall\; t_0>0\,,
\end{equation}
for a constant $C_{t_0}>0$ depending on $M(f_0)$, $E(f_0)$, $S_{0}$, $s$, and $t_0$.  Moreover, if
\begin{equation*}
\int_{\R^3} |\grad f(0,v)|^2 \langle v\rangle^{s} \, \d v <+\infty\,,
\end{equation*}
then, the choice $t_0=0$ is valid in \eqref{eq:grad*} with constant depending on such initial regularity. 
\end{prop}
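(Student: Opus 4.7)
The plan is to differentiate equation \eqref{LFD} in velocity and perform an energy estimate at the $\nabla f$ level, in the same spirit as the $\lM_{s}$-estimate \eqref{ineq_L2s} in Proposition \ref{lem5}. Setting $g_{k}:=\partial_{v_{k}}f$ and applying $\partial_{v_{k}}$ to \eqref{LFD}, one obtains
\begin{equation*}
\partial_{t}g_{k}=\nabla\cdot\big(\bm{\Sigma}[f]\nabla g_{k}\big)+\nabla\cdot\big((\partial_{k}\bm{\Sigma}[f])\,\nabla f\big)-\partial_{k}\nabla\cdot\big(\bm{b}[f]\,f(1-\dd f)\big),
\end{equation*}
where $\partial_{k}\bm{\Sigma}[f]$ and $\partial_{k}\bm{b}[f]$ are the $v$-derivatives of the corresponding convolutions. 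I would then multiply by $g_{k}\langle v\rangle^{s}$, integrate over $\R^{3}$ and sum over $k$. Proposition \ref{diffusion} applied to the leading term produces the dissipation
\begin{equation*}
\sum_{k}\int_{\R^{3}}\big(\bm{\Sigma}[f]\nabla g_{k}\big)\cdot\nabla g_{k}\,\langle v\rangle^{s}\,\d v\geq K_{0}\int_{\R^{3}}\big|D^{2}f\big|^{2}\,\langle v\rangle^{s+\gamma}\,\d v,
\end{equation*}
modulo a lower order contribution coming from integrating by parts the weight $\langle v\rangle^{s}$, which can be reabsorbed as in the proof of Proposition \ref{lem5}.

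The remaining terms are handled by the convolution bounds analogous to \eqref{eq:bc} for $\bm{\sigma}$, $\bm{\Sigma}$, $\bm{b}$ and their $v$-derivatives, together with the pointwise inequalities $0\leq 1-\dd f\leq 1$ and $|\nabla(f(1-\dd f))|=|1-2\dd f|\,|\nabla f|\leq|\nabla f|$, and the uniform moment bounds \eqref{unif_L2} supplied by Theorem \ref{moments}. A sequence of Cauchy--Schwarz and Young inequalities then lets one absorb every contribution containing second derivatives into the $K_{0}$-dissipation, leading, for $t\geq t_{0}>0$, to a differential inequality of the form
\begin{equation*}
\frac{\d}{\d t}\|\nabla f(t)\|_{L^{2}_{s}}^{2}+\frac{K_{0}}{2}\int_{\R^{3}}|D^{2}f|^{2}\langle v\rangle^{s+\gamma}\,\d v\leq A\int_{\R^{3}}|\nabla f|^{2}\langle v\rangle^{s+\gamma}\,\d v+B,
\end{equation*}
with constants $A,B$ depending on $M(f_{0})$, $E(f_{0})$, $S_{0}$, $s$, $\gamma$ and $t_{0}$, but not on $\dd$.

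To close the estimate, I would interpolate $\|\nabla f\|_{L^{2}_{s+\gamma}}^{2}$ between $\|\nabla f\|_{L^{2}_{s}}^{2}$ and $\|D^{2}f\|_{L^{2}_{s+\gamma}}^{2}$ in exactly the manner of Lemmas \ref{lem6} and \ref{LemL2h}, using Littlewood's inequality and a Sobolev embedding, and employing once more \eqref{unif_L2} to control the weighted $L^{1}$- and $L^{2}$-moments that enter the interpolation constants. Setting $\mathcal{G}_{s}(t):=\|\nabla f(t)\|_{L^{2}_{s}}^{2}$, this produces a super-linear differential inequality
\begin{equation*}
\frac{\d}{\d t}\mathcal{G}_{s}(t)+\bar{c}\,\mathcal{G}_{s}(t)^{\beta}\leq C,\qquad \beta>1,
\end{equation*}
with $\bar{c},C,\beta$ independent of $\dd$. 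The argument used at the end of the proof of Theorem \ref{moments} then gives the generation bound $\mathcal{G}_{s}(t)\leq \bm{C}_{s}\big(1+(t-t_{0})^{-1/(\beta-1)}\big)$ for every $t_{0}>0$, and, under the additional assumption $\mathcal{G}_{s}(0)<\infty$, the corresponding uniform-in-time propagation estimate valid from $t_{0}=0$.

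The main obstacle will be the bookkeeping of the numerous terms produced by differentiating the trilinear operator, in particular those in which the $v$-derivative falls on the convolutions $\bm{\Sigma}[f]$ and $\bm{b}[f]$, so that every constant in the energy estimate depends only on quantities controlled uniformly in $\dd\in(0,\dd_{0}]$ through Theorem \ref{moments} and the crude bound $1-\dd f\leq 1$. In other words, one must take care that no hidden $\dd$-dependence sneaks in through the a priori bound $\|f\|_{\infty}\leq\dd^{-1}$, since this is precisely what guarantees that the resulting constant $C_{t_{0}}$ in \eqref{eq:grad*} is independent of the quantum parameter.
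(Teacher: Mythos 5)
Your overall architecture is the right one and matches the paper's: differentiate \eqref{LFD}, perform an $L^{2}_{s}$-energy estimate at the $\nabla f$ level, extract the $K_{0}$-dissipation on $\|f\|^{2}_{\dot{H}^{2}_{s+\gamma}}$ from Proposition~\ref{diffusion}, absorb the second-derivative contributions, and close via interpolation to a super-linear ODE. However, your intermediate differential inequality is not what the energy estimate actually delivers, and repairing it requires a device you have omitted.

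Concretely, the terms in which $\partial_{i}$ falls on $\bm{\Sigma}[f]$ and on $\bm{b}[f]$ (the analogues of $I_{3}$ and $I_{7}$ in the paper's notation) do \emph{not} produce $\int|\nabla f|^{2}\langle v\rangle^{s+\gamma}\,\d v$ on the right-hand side after Cauchy--Schwarz and Young: they produce the \emph{heavier-weight} quantity $\|\nabla f\|^{2}_{L^{2}_{s+2+\gamma}}$. Since that term contains only first derivatives, your stated plan (``absorb every contribution containing second derivatives into the $K_{0}$-dissipation'') does not touch it, and because its weight $s+2+\gamma$ exceeds the dissipation weight $s+\gamma$, it cannot simply be Young-absorbed either. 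The paper resolves this through an integration by parts
\begin{equation*}
\|\nabla f\|^{2}_{L^{2}_{s+2+\gamma}}
=-\int_{\R^{3}}f\,\Delta f\,\langle v\rangle^{s+2+\gamma}\,\d v
-(s+2+\gamma)\int_{\R^{3}}f\,\langle v\rangle^{s+\gamma}\,\nabla f\cdot v\,\d v\,,
\end{equation*}
which, together with the uniform $L^{2}$-moment bounds of Theorem~\ref{moments}, gives $\|\nabla f\|^{2}_{L^{2}_{s+2+\gamma}}\leq\delta\,\|f\|^{2}_{\dot{H}^{2}_{s+\gamma}}+C$ and hence closes the $K_{0}$-dissipation. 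This is the key step missing from your argument; without it the claimed differential inequality $\tfrac{\d}{\d t}\mathcal{G}_{s}+\tfrac{K_{0}}{2}\|f\|^{2}_{\dot{H}^{2}_{s+\gamma}}\leq A\int|\nabla f|^{2}\langle v\rangle^{s+\gamma}+B$ does not follow.

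A secondary issue is the closing interpolation. You propose to interpolate $\|\nabla f\|^{2}_{L^{2}_{s+\gamma}}$ between $\mathcal{G}_{s}$ and $\|D^{2}f\|^{2}_{L^{2}_{s+\gamma}}$ ``in exactly the manner of Lemmas~\ref{lem6} and~\ref{LemL2h}, using Littlewood's inequality and a Sobolev embedding.'' Those two lemmas hinge on the uniform control of \emph{$L^{1}$-moments of $f$} furnished by Theorem~\ref{moments}; there is no comparable $L^{1}$-control on $\nabla f$, so the Littlewood--Sobolev route does not transfer to the gradient level as stated. The paper sidesteps this by the elementary derivative interpolation $\|f\|^{2}_{\dot{H}^{1}_{s'}}\leq\|f\|_{L^{2}_{s'}}\|f\|_{\dot{H}^{2}_{s'}}$ (a Cauchy--Schwarz inequality), which combined with $\sup_{t\geq t_{0}}\|f(t)\|_{L^{2}_{s+\gamma}}<\infty$ yields $\|f\|^{2}_{\dot{H}^{2}_{s+\gamma}}\gtrsim\|f\|^{4}_{\dot{H}^{1}_{s}}$ directly, giving the ODE $\tfrac{1}{2}\tfrac{\d}{\d t}\mathcal{G}_{s}+\tfrac{K}{2}\mathcal{G}_{s}^{2}\leq C$ with exponent $\beta=2$. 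You should replace your proposed interpolation by this one.
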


\begin{proof} Let $i\in\{1,2,3\}$. Differentiating \eqref{LFD} with respect to the $v_i$ variable and setting $g_i=\partial_i f$, we get that
$$\partial_{t} g_i = \grad\cdot\big(\bm{\Sigma}[f]\grad g_i +\partial_i \bm{\Sigma}[f] \grad f -\partial_i \bm{b}[f] f(1-\dd f) - \bm{b}[f] (1-2\dd f)g_i\big).$$
Multiply the equation by $g_i\langle v\rangle^{s}$ and integrate over $\R^3$.  It follows that
$$\frac{1}{2}\frac{\d}{\d t}\int_{\R^3} g_i^2 \langle v\rangle^{s}\, \d v = - I_1+ I_2+I_3+I_4+I_5+I_6+I_7+I_8\,.$$
The terms $I_{j}$, $j=1,\ldots,8$ are estimated thanks to Proposition \ref{diffusion} and Theorem \ref{moments}.  More precisely, we have for $\delta>0$,  
\begin{equation*}\begin{split}
 I_1 & := \int_{\R^3}\langle v\rangle^{s} \bm{\Sigma}[f]\, \grad g_i \grad g_i \, 
\d v \geq K_{0} \int_{\R^3} |\grad g_i|^2 \, \langle v\rangle^{s+\g}\, \d v,\\
I_2 &:= -s  \int_{\R^3} g_{i}\langle v\rangle^{s-2}\left(\bm{\Sigma}[f]\, \grad g_i \cdot v\right) \d v =\frac{s}{2}\int_{\R^3} g_i^2\, \grad \cdot\left(\, \langle v\rangle^{s-2}\bm{\Sigma}[f]\,\,v \right) \, \d v \leq C\,\|g_i\|^2_{L^2_{s+\g}},\end{split}\end{equation*}
while
\begin{equation*}\begin{split}
I_3& :=   - \int_{\R^3} \left(\partial_i\bm{\Sigma}[f]\,\grad f\right)\cdot \grad g_i \,
\langle v\rangle^{s} \,\d v \\
&\hspace{1cm} \leq  C \,\|\grad f\|_{L^2_{s+2+\g}} 
\,\|\grad g_i\|_{L^2_{s+\g}}  \leq  \delta \,\|\grad g_i\|^2_{L^2_{s+\g}} 
+C_\delta \,\|\grad f\|^2_{L^2_{s+2+\g}},\end{split}\end{equation*}
and
\begin{equation*}\begin{split}
I_4& := -s \int_{\R^3} \left(\partial_i\bm{\Sigma}[f]\,
\grad f \cdot \,v \right)\,g_{i} \langle v\rangle^{s-2} \,\d v \leq C \,
\|\grad f\|_{L^2_{s+\g}}\, \| g_i\|_{L^2_{s+\g}},\\
I_5 &:= \int_{\R^3} f(1-\dd f)\,
\langle v\rangle^{s} \,\left(\partial_i \bm{b}[f] \cdot \grad g_i \right)\d v  \leq  C \, \| \grad g_i\|_{L^2_{s+\g}} \, 
\| f\|_{L^2_{s+\g}} \leq    \delta \,\| \grad g_i\|^2_{L^2_{s+\g}} 
+{C_\delta(t_{0})}\,.\end{split}\end{equation*}
Also,
\begin{equation*}
\begin{split}
I_6& :=   s \int_{\R^3} g_i \,f(1-\dd f) \,\langle v\rangle^{s-2}\left(\partial_i \bm{b}[f] \cdot v\right)
 \,\d v \leq C \,\| g_i\|_{L^2_{s}} 
\, \| f\|_{L^2_{s-2+2\g}} \leq C_{t_{0}} \, \| g_i\|_{L^2_{s}},\\
 I_7 &:= \int_{\R^3}  \,(1-2\dd f) \,g_i \,
\langle v\rangle^{s} \left(\bm{b}[f]\cdot \grad g_i\right)\,\d v\\
 &\hspace{1cm}\leq  C  \,\| \grad g_i\|_{L^2_{s+\g}}  \, \| g_i\|_{L^2_{s+2+\g}}
\leq  \delta \,\|\grad g_i\|^2_{L^2_{s+\g}} 
+C_\delta \,\| g_i\|^2_{L^2_{s+2+\g}},\\
 I_8& :=  s \int_{\R^3}  \,(1-2\dd f) \,g_i^{2} \,
\langle v\rangle^{s-2} \left(\bm{b}[f]\cdot v\right) \d v \leq C \,\|g_i\|^2_{L^2_{s+\g}}\,.\end{split}\end{equation*}
Here, and in the rest of the proof, $C$ denotes a positive constant depending only on $M(f_{0})$, $E(f_{0})$, $S_{0}$, $s$, but not $\dd$, which may change from line to line.  Gathering the above estimates, summing over $i\in\{1,2,3\}$ and recalling that $g_i=\partial_i f$, we obtain
$$ \frac{1}{2}\frac{\d}{\d t}\|\grad f\|^2_{L^2_{s}}
+(K-3\delta) \|f\|^2_{\dot{H}^2_{s+\g}} \leq C \,\left( 1+ \|\grad f\|^2_{L^2_{s+2+\g}}\right)\,.$$
Since, an integration by parts leads to  
$$\|\grad f \|^2_{L^2_{s+2+\g}} =  -\int_{\R^3}  f \, \Delta f \, \langle v\rangle^{s+2+\g}\, \d v -(s+2+\g) \int_{\R^3} f\,\langle v\rangle^{s+\g} \grad f \cdot v \, \d v, $$
we deduce from Young's inequality that
$$ \|\grad f \|^2_{L^2_{s+2+\g}} 
\leq \frac{\delta}{2} \|f\|^2_{\dot{H}^2_{s+\g}}
+ C_\delta \|f\|^2_{L^2_{s+4+\g}} 
+ \frac{1}{2} \|\grad f \|^2_{L^2_{s+2+\g}}  
+C\, \|f\|^2_{L^2_{s+\g}}.$$
Thus, estimate \eqref{unif_L2} imply that 
$$\|\grad f\|^2_{L^2_{s+2+\g}} \leq  \delta  \|f\|^2_{\dot{H}^2_{s+\g}} + C\,.$$
Therefore, it follows that 
$$\frac{1}{2}\frac{\d}{\d t} \|\grad f\|^2_{L^2_{s}} 
+(K-4\delta)  \|f\|^2_{\dot{H}^2_{s+\g}}\leq {C}\,.$$
Using the interpolation inequality and estimate \eqref{unif_L2}
\begin{equation*}
\|\nabla f\|^{2}_{L^{2}_{s+\gamma}}=\|f\|^2_{\dot{H}^1_{s+\g}} \leq \|f\|_{L^2_{s+\g}}\|f\|_{\dot{H}^2_{s+\g}} \leq C\|f\|_{\dot{H}^2_{s+\g}}\,, \qquad \forall t\geq t_0>0.
\end{equation*}
Therefore, choosing $4\delta = K/2$, it follows from the previous two estimates that
$$\frac{1}{2}\frac{\d}{\d t} \|f \|^2_{\dot{H}^1_{s}} +\frac{K}{2}  \|f\|^4_{\dot{H}^1_{s}}\leq {C}\,,\quad s\geq0\,,\quad t>t_{0}\,.$$
From this estimate all statements of the proposition follow.
\end{proof}

\begin{proof}[Proof of Theorem \ref{smoothness}]
The proof of Theorem \ref{smoothness} is a generalisation of the steps given in the proof of Proposition \ref{res_H1}.  Consider $g=\partial_v^{\beta} f$ for some $\beta\in\N^3$ with $|\beta|=k$.  Differentiating the LFD equation \eqref{LFD} $\beta$ times, we get the equation satisfied by $g$.  We multiply such equation by $g\langle v\rangle^s$ and integrate over $\R^3$.  Estimating the terms as in the proof of \cite[Proposition 2.1]{Chen11} one obtains that  for any $\delta>0$
\begin{equation}\label{H_k+1}
\frac{1}{2}\frac{\d}{\d t } \|f\|^2_{\dot{H}^{k}_s } +(K_{0}-\delta) \|f\|^2_{\dot{H}^{k+1}_{s+\g}} 
\leq  C_\delta\big( 1+  \|f\|^2_{\dot{H}^{k}_{s+2+\g }}\big),
\end{equation}
for a constant $C_\delta>0$ and $K_{0}>0$ given by Proposition \ref{diffusion}.  Using the interpolation inequality
\begin{equation*}
\|f\|^2_{\dot{H}^{k}_{s'}} \leq \|f\|_{\dot{H}^{k-1}_{2s'}} \|f\|_{\dot{H}^{k+1}}  + \|f\|_{\dot{H}^{k-1}_{2s'}} \|f\|_{L^{2}}\,,\quad s'\geq0\,, 
\end{equation*}
to control the right hand term of \eqref{H_k+1}, it follows that
\begin{equation*}\label{H_k+2}
\frac{1}{2}\frac{\d}{\d t } \|f\|^2_{\dot{H}^{k}_s } +\frac{K_{0}}{2}\|f\|^2_{\dot{H}^{k+1}_{s+\g}} 
\leq  C\big( 1+  \|f\|^2_{\dot{H}^{k-1}_{2s+4+2\g }}\big)\,,\quad s\geq0\,,\quad t\geq t_0>0\,. 
\end{equation*}
Here we used Theorem \ref{moments} to control the $L^{2}$-norm.  The interpolation inequality
\begin{equation*}
\|f\|^2_{\dot{H}^{k}_{s'}} \leq \|f\|_{\dot{H}^{k+1}_{s'}} \|f\|_{L^{2}_{s'}} \leq C\,\|f\|_{\dot{H}^{k+1}_{s'}}\,,
\end{equation*}
leads to, choosing $s'=s+\gamma$,
\begin{equation*}
\frac{1}{2}\frac{\d}{\d t } \|f\|^2_{\dot{H}^{k}_s } + \tilde{K}_0\|f\|^4_{\dot{H}^{k}_{s}} 
\leq  C\big( 1+  \|f\|^2_{\dot{H}^{k-1}_{2s+4+2\g }}\big)\,,\quad s\geq0\,,\quad t\geq t_0>0\,. 
\end{equation*}
Starting with $k=2$ and Proposition \ref{res_H1}, repeat this estimate to obtain the result.  Note that in each repetition twice the number of moments is needed.  This explain the condition $f_0\in L^{1}_{s'}$ with $s'$ sufficiently large ($s'\approx 2^{k}s$ ). 
\end{proof}

\noindent
We deduce from this the following important consequence.
\begin{cor}\label{cor:Linfty}
Consider $0\leq f_0\in L^{1}_{s_{\gamma}}(\R^{3})$ satisfying \eqref{hypci}. Then, for any solution $f(t)=f_{\dd}(t)$ to \eqref{LFD} given by Theorem \ref{results}., it holds
$$\sup_{t \geq t_{0}}\left\|f(t)\right\|_{\infty} \leq C_{t_{0}}\,,\qquad \forall\, t_{0}>0.$$
The constant $C_{t_{0}}$ only depends on $M(f_0)$, $E(f_0)$, $S_{0}$, $s$, and $t_0$.

\smallskip
\noindent
Consequently, for any $\kappa_{0} \in (0,1)$ there exists $\dd_{\star}>0$ depending only on $\kappa_{0}$, $M(f_{0})$, $E(f_{0})$, and $S_{0}$, such that
\begin{equation}\label{eq:lower*}
\inf_{v \in \R^{3}}\big(1-\dd\,f(t,v)\big) \geq \kappa_{0}, \qquad \forall\, \dd \in (0,\dd_{\star}),\,   t \geq 1.
\end{equation}\end{cor}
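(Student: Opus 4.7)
The plan is to deduce both statements directly from Theorem \ref{smoothness} together with a Sobolev embedding, with no additional analytical machinery required. The whole point of the preceding regularity theorem was precisely to secure $\dd$-independent uniform-in-time bounds on high Sobolev norms, so what remains here is essentially a packaging step.

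For the $L^\infty$ bound, I would pick any integer $k > 3/2$, say $k=2$, and apply Theorem \ref{smoothness}(i) with this $k$ and with any fixed weight $s \geq 0$. This yields a constant $C_{t_0}$, depending only on $M(f_0)$, $E(f_0)$, $S_0$, $s$, $k$, $\gamma$, $t_0$, but crucially not on $\dd$, such that $\sup_{t\geq t_0}\|f(t)\|_{H^k_s} \leq C_{t_0}$. Combined with the continuous Sobolev embedding $H^2(\R^3) \hookrightarrow L^\infty(\R^3)$ in three dimensions, one obtains
\begin{equation*}
\sup_{t \geq t_0}\|f(t)\|_\infty \leq C_{\mathrm{Sob}}\sup_{t\geq t_0}\|f(t)\|_{H^2} \leq C_{\mathrm{Sob}}\,C_{t_0}\,,
\end{equation*}
which is precisely the asserted bound, with the new constant still independent of $\dd \in (0,\dd_0]$.

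For the non-saturation estimate \eqref{eq:lower*}, I would specialise the first part to $t_0=1$ to obtain a constant $C_1 := C_{\mathrm{Sob}}\,C_{1}$ such that $\|f(t,\cdot)\|_\infty \leq C_1$ for all $t \geq 1$ and all $\dd \in (0,\dd_0]$. Given $\kappa_0 \in (0,1)$, set
\begin{equation*}
\dd_\star := \min\!\left\{\dd_0,\,\frac{1-\kappa_0}{C_1}\right\}.
\end{equation*}
Then for any $\dd \in (0,\dd_\star)$ and any $t \geq 1$, $v \in \R^3$, one has $\dd\,f(t,v) \leq \dd\,C_1 \leq 1-\kappa_0$, hence $1 - \dd\,f(t,v) \geq \kappa_0$, giving \eqref{eq:lower*}. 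Since $C_1$ depends only on $M(f_0)$, $E(f_0)$, $S_0$, the threshold $\dd_\star$ depends only on these quantities and on $\kappa_0$, as claimed.

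There is no real obstacle: the heavy lifting was already done in Theorem \ref{smoothness}, whose careful $\dd$-independence is exactly what makes the argument work. If one wished to avoid invoking $H^2$ Sobolev embedding and use only what is strictly needed, the same reasoning applies with any $H^k$ for $k \geq 2$ and any weight, at the cost of slightly larger constants but the same qualitative conclusion.
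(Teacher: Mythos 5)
Your proposal is correct and follows exactly the same route as the paper: apply Theorem \ref{smoothness} to get a $\dd$-independent bound on $\|f(t)\|_{H^2}$ for $t\geq t_0$, invoke the Sobolev embedding $H^2(\R^3)\hookrightarrow L^\infty(\R^3)$ to conclude the first part, and then choose $\dd_\star$ small enough that $\dd\,C_1\leq 1-\kappa_0$ for the second. The paper's own proof is a condensed version of precisely this argument.
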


\begin{proof} The first statement is a direct consequence of Theorem \ref{smoothness} and the Sobolev embedding of $H^{s}(\R^{3})$ into $L^{\infty}(\R^{3})$ for any $s > \tfrac{3}{2}$, see for instance \cite[Theorem 12.46]{leoni}.  The lower bound \eqref{eq:lower*} is, thus, clear as one can choose $\dd >0$ so that $f(t,v) \leq C_{t_{0}=1} \leq \frac{1-\kappa_{0}}{\dd}$.\end{proof}

\begin{rmq}
Of course the choice $t \geq 1$ is arbitrary. The result is valid for any time sufficiently large.  This lower estimate rules out any degeneracy as the set $\{v: f=\dd^{-1}\}$ is empty after sufficiently large time. In particular, solutions to \eqref{LFD} remain \emph{uniformly away} from the saturated Fermi-Dirac statistics \eqref{eq:dege}.
\end{rmq}

\section{Convergence to equilibrium: non quantitative result}\label{sec:convno}
 
From the emergence of smoothness and moments of the previous sections a non-quantitative result of convergence to equilibrium can be inferred, see \cite{CLR}. The proof will exploit several results from \cite{bag1,bag2} and will resort on the property of the dissipation of entropy functional established in Section \ref{sec:entropy}.  
We begin with a preliminary lemma. 
\begin{lem}\label{prelim}
 Let $f_{0}\in L^1_{s}(\R^3)$ with $s> 17+6\gamma $ satisfying Assumption \ref{ci}, and let $f$ be the weak solution to \eqref{LFD} given by Theorem \ref{results}. Then, 
\begin{multline*}
\int_0^\infty\left(\int_{\R^6}|v-\vet|^{\frac{\g+2}{2}}\Big|\Pi(v-\vet)\big(f_{\ast}(1-\dd f_{\ast})\nabla f - f(1-\dd f)\nabla f_{\ast}\big)\Big|\d v\d \vet\right)^2\d t \\
\leq 2{\mathcal S}_{\dd}(\M_{\dd}) \left( \int_{\R^3} f_{0} \d v\right)^2,
\end{multline*}
where $\M_{\dd}$ is the Fermi-Dirac statistics with same mass, momentum and energy as $f_{0}$.
\end{lem}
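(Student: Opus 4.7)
The key idea is that the integrand in the lemma is exactly the ``numerator'' in the Fermi--Dirac entropy production functional. Recall that, formally along solutions to \eqref{LFD}, one has the $H$-theorem identity
\begin{equation*}
-\frac{\d}{\d t}\mathcal{S}_{\dd}(f(t)) = \frac{1}{2}\iint_{\R^{6}} |v-\vet|^{\g+2}\,\frac{\big|\Pi(v-\vet)\,\bm{q}(v,\vet)\big|^{2}}{f\,f_{\ast}(1-\dd f)(1-\dd f_{\ast})}\,\d v\,\d\vet\,,
\end{equation*}
with $\bm{q}(v,\vet):=f_{\ast}(1-\dd f_{\ast})\nabla f-f(1-\dd f)\nabla f_{\ast}$; this comes from the factorization $\bm{q}=f\,f_{\ast}(1-\dd f)(1-\dd f_{\ast})\big(\nabla\log\tfrac{f}{1-\dd f}-\nabla_{\ast}\log\tfrac{f_{\ast}}{1-\dd f_{\ast}}\big)$. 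The plan is to use this identity together with a Cauchy--Schwarz inequality to convert an $L^{1}$-type quantity into an $L^{2}$-type entropy production quantity, and then time-integrate.

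First, I would write
\begin{equation*}
|v-\vet|^{\frac{\g+2}{2}}\big|\Pi(v-\vet)\bm{q}\big| = \Big(|v-\vet|^{\g+2}\tfrac{|\Pi(v-\vet)\bm{q}|^{2}}{f f_{\ast}(1-\dd f)(1-\dd f_{\ast})}\Big)^{1/2}\cdot\big(f f_{\ast}(1-\dd f)(1-\dd f_{\ast})\big)^{1/2}
\end{equation*}
and apply Cauchy--Schwarz in $(v,\vet)$. The first factor integrates to $2\big(-\tfrac{\d}{\d t}\mathcal{S}_{\dd}(f)\big)$, while for the second factor the trivial bound $1-\dd f\leq 1$ yields
$\iint f f_{\ast}(1-\dd f)(1-\dd f_{\ast})\,\d v\,\d\vet \leq M(f_{0})^{2}$ by conservation of mass. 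This gives, at each time,
\begin{equation*}
\Big(\iint|v-\vet|^{\frac{\g+2}{2}}|\Pi(v-\vet)\bm{q}|\,\d v\,\d\vet\Big)^{2}\leq 2 M(f_{0})^{2}\,\Big(-\frac{\d}{\d t}\mathcal{S}_{\dd}(f(t))\Big)\,.
\end{equation*}

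Integrating in $t\in(0,\infty)$ and using that $\mathcal{S}_{\dd}(f(t))$ is non-decreasing (by Theorem \ref{results}) and bounded from above by $\mathcal{S}_{\dd}(\M_{\dd})$ — indeed, $\M_{\dd}$ is the unique maximizer of $\mathcal{S}_{\dd}$ under the constraints \eqref{consmass}--\eqref{consenergy} shared by $f(t)$ and $\M_{\dd}$, see \cite{bag2} — one obtains
\begin{equation*}
\int_{0}^{\infty}\big(-\tfrac{\d}{\d t}\mathcal{S}_{\dd}(f(t))\big)\,\d t=\lim_{T\to\infty}\mathcal{S}_{\dd}(f(T))-\mathcal{S}_{\dd}(f_{0})\leq \mathcal{S}_{\dd}(\M_{\dd})-\mathcal{S}_{\dd}(f_{0})\leq \mathcal{S}_{\dd}(\M_{\dd})\,,
\end{equation*}
where in the last step I drop the nonnegative term $\mathcal{S}_{\dd}(f_{0})\geq 0$. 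Multiplying by $2M(f_{0})^{2}$ gives the announced estimate.

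The one genuine obstacle is justifying the formal $H$-theorem identity and the chain-rule manipulations producing $\bm{q}$; both require enough integrability and smoothness of $f$ and of $\nabla f$ weighted by high moments, together with the pointwise lower bounds allowing to divide by $f f_{\ast}(1-\dd f)(1-\dd f_{\ast})$ in a set-of-full-measure sense. This is precisely where the hypothesis $s>17+6\g$ enters: it is strong enough to invoke the uniqueness/regularity part of Theorem \ref{results} (which requires $s_{0}>4\g+11$) together with the propagation results of Theorem \ref{smoothness}, so that $f$ is smooth for $t>0$ with all necessary weighted $L^{2}$ derivatives, and so that the monotonicity and upper bound on $\mathcal{S}_{\dd}(f(t))$ asserted in Theorem \ref{results} are available. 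Once the entropy identity and monotonicity hold rigorously, the Cauchy--Schwarz step and the time integration are both straightforward.
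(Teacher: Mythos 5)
Your Cauchy--Schwarz step and the overall strategy — factor out $\sqrt{ff_{\ast}(1-\dd f)(1-\dd f_{\ast})}$, bound one factor by the entropy production and the other by $M(f_0)^2$, then integrate in time and control the entropy by $\mathcal{S}_{\dd}(\M_{\dd})$ — are exactly the paper's. (Minor sign slip: since $\mathcal{S}_{\dd}$ is \emph{non-decreasing}, the entropy identity is $+\frac{\d}{\d t}\mathcal{S}_{\dd}(f)=\mathscr{D}_{\dd}(f)$, not $-\frac{\d}{\d t}\mathcal{S}_{\dd}(f)$; you introduce a second compensating sign error in the time integration, so the conclusion is unaffected, but the display as written is inconsistent.)

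The genuine gap is in the paragraph where you acknowledge the ``obstacle'' and then wave it away. You claim the hypothesis $s>17+6\gamma$ ``is strong enough to invoke Theorem \ref{results} / Theorem \ref{smoothness}'' so that the formal $H$-theorem identity holds directly for $f$. But Theorem \ref{results} only gives that $t\mapsto\mathcal{S}_{\dd}(f(t))$ is non-decreasing; it does not state the pointwise entropy \emph{identity} $\frac{\d}{\d t}\mathcal{S}_{\dd}(f)=\mathscr{D}_{\dd}(f)$ for the weak solution. That identity requires the chain-rule manipulations with $\nabla\log\frac{f}{1-\dd f}$, which in turn need strict bounds $0<f<\dd^{-1}$ that are not established for the weak solution in this paper. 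The paper instead proves the lemma by a regularization argument: it works with the smooth approximations $f^k$ (which do satisfy $0<f^k<\dd^{-1}$ pointwise so that the entropy identity is legitimate), derives the inequality with the modified Fermi--Dirac state $\M_{\dd}^{k,T}$ (the regularization conserves mass and momentum but lets energy grow by $6t/k\cdot M(f^k_0)$, so this adjustment is necessary), proves strong $L^2((0,T);L^2_{q-2})$-convergence of $\nabla f^k\to\nabla f$ via a Gronwall estimate, and then passes $k\to\infty$ and $T\to\infty$. This convergence argument is also where $s>17+6\gamma$ actually enters: it is what guarantees that the coefficient $D(\tau)$ in the Gronwall estimate lies in $L^2(0,T)$ uniformly in $k,\ell$, hence that $(\nabla f^k)$ is Cauchy. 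Your proposal misattributes the role of this moment condition and omits the passage-to-the-limit mechanism entirely, which is where the real work of the proof lies.
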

\begin{proof} 
We work with the regularized problem introduced in \cite[Section 4.1]{bag1} to make all computations rigorous. Let $T>0$. As in the proof of \cite[Theorem 2.2]{bag1}, we denote by $(f^{k}_{0})_{k\geq 1}$ a sequence of smooth functions that converges to $f_{0}$ in $L^1_{s}(\R^3)$ and by $f^k$ the solution to the regularized problem with initial datum  $f^{k}_{0}$.  For any $k\geq 1$, the function $f^k$ is smooth on $ [0,T]\times \R^3$ and satisfies $0<f^k(t,v)<\dd^{-1}$ for any $(t,v)\in [0,T]\times \R^3$. Moreover, by \cite[Lemma 4.15]{bag1}, a subsequence of $(f^k)_{k\geq 1}$, not relabelled, converges to $f$ in $L^2((0,T);L^1(\R^3))$ and a.e. on $(0,T)\times \R^3$. Actually, one can also prove that $\nabla f^k$ converges to $\nabla f$ in $L^2((0,T); L^2_{q-2}(\R^3))$ with $q=s-10-4\gamma$. Indeed, using the uniform (in $k$) ellipticity estimate given by \cite[Corollary 4.10]{bag1} and performing the same computations as in the proof of \cite[Theorem 5.2]{bag1}, we obtain that, for any $t\in[0,T]$, 
\begin{multline*}
\|f^k(t)-f^{\ell}(t)\|^2_{L^2_{q}} + \kappa \int_0^t \|\nabla f^k(\tau)- \nabla f^{\ell}(\tau)\|^2_{L^2_{q-2}} \d \tau\\
\leq  \|f^{k}_{0}-f^{\ell}_{0}\|^2_{L^2_{q}} 
+\int_0^t D(\tau) \|f^k(\tau)-f^{\ell}(\tau)\|^2_{L^2_{q}} \d \tau,
\end{multline*}
for some function $D$ and where $\kappa$ is given by \cite[Corollary 4.10]{bag1}. Observe that the assumption  $f_{0}\in L^1_{s}(\R^3)$ with $s> 17+6\gamma $ ensures that $D\in L^2(0,T)$ with some bound independent of $k$ and $\ell$. Hence, the Gronwall Lemma implies that, for any $t\in[0,T]$,
$$\|f^k(t)-f^{\ell}(t)\|^2_{L^2_{q}} \leq  \|f^{k}_{0}-f^{\ell}_{0}\|^2_{L^2_{q}} \exp\left(\int_0^t D(\tau) \d \tau\right),$$
and thus, 
$$\kappa \int_0^T \|\nabla f^k(\tau)- \nabla f^{\ell}(\tau)\|^2_{L^2_{q-2}}\,\d\tau \leq \|f^{k}_{0}-f^{\ell}_{0}\|^2_{L^2_{q}} \exp\left(\int_0^T D(\tau) \d \tau\right). $$
This proves that $(\nabla f^k)_{k\geq 1}$ is a Cauchy sequence in $L^2((0,T);L^2_{q-2}(\R^3))$ and thus converges to some $g\in L^2((0,T);L^2_{q-2}(\R^3)) $. Necessarily, we have $g=\nabla f$. 
Note that mass and momentum are preserved for the regularized problem but not the energy. More precisely, for any $t\in[0,T]$ we have, see \cite[Lemma 4.8]{bag1}
$$\int_{\R^3} f^k(t,v)\d v=\int_{\R^3}f^{k}_{0}(v)\d v, \qquad \int_{\R^3} f^k(t,v)v\d v=\int_{\R^3}f^{k}_{0}(v)v\d v $$
but $$ \int_{\R^3} f^k(t,v)|v|^2 \d v=\int_{\R^3}f^{k}_{0}(v) |v|^2 \d v +\frac{6t}{k} \int_{\R^3}f^{k}_{0}(v)\d v.$$
Let us introduce 
\begin{equation}\label{eq:xi}
\bm{\xi}[f^k](v,\vet)=\Pi(v-\vet)\left(f^k_{\ast}(1-\dd f^k_{\ast})\nabla f^k - f^k(1-\dd f^k)\nabla f^k_{\ast}\right) \in \R^{3}.
\end{equation}
The Cauchy-Schwarz inequality ensures that 
\begin{multline*}
\int_{\R^{6}}|v-\vet|^{\frac{\g+2}{2}}\left|\bm{\xi}[f^k](v,\vet)\right|\d v \d\vet \\
\leq \bigg(\int_{\R^{6}}|v-\vet|^{\g+2}\frac{\left|\Pi(v-\vet)\left(f^k_{\ast}(1-\dd f^k_{\ast})\nabla f^k - f^k(1-\dd f^k)\nabla f^k_{\ast}\right)\right|^{2}}{f^k\,f^k_{\ast}(1-\dd f^k)(1-\dd f^k_{\ast})}\d v\d\vet\bigg)^{1/2}
\\
\times\bigg(\int_{\R^{3}}f^k\,f^k_{\ast}(1-\dd f^k)(1-\dd f^k_{\ast})\d v\d\vet\bigg)^{1/2}\,.
\end{multline*}
One checks that, with the notations introduced in Section \ref{sec:entropy}, the first integral on the right-hand side coincides with $2\mathscr{D}_{\dd}(f^k)$.  Using also the bound $0 \leq 1-\dd f^k \leq 1$ it follows that
\begin{equation*}
\left(\int_{\R^{6}}|v-\vet|^{\frac{\g+2}{2}}\left|\bm{\xi}[f^k](v,\vet)\right|\d v \d\vet\right)^{2} \leq  2\mathscr{D}_{\dd}(f^k)\,\left(\int_{\R^3} f^{k}_{0}(v)\d v\right)^{2}\,.
\end{equation*}
On the other hand, we deduce from the entropy identity for $f^k$, see the proof of \cite[Lemma 4.8]{bag1}, that  
$$\int_0^T \mathscr{D}_{\dd}(f^k)(t) \d t \leq {\mathcal S}_{\dd}(f^k)(T) \leq {\mathcal S}_{\dd}(\M_{\dd}^{k,T})$$
where $\M_{\dd}^{k,T}$ is the Fermi-Dirac statistics with same mass, energy and momentum as $f^k(T)$ and it maximises the entropy among the class of functions with prescribed mass, momentum and energy. Hence, we deduce that  
$$\int_0^T \left(\int_{\R^{6}}|v-\vet|^{\frac{\g+2}{2}}\left|\bm{\xi}[f^k](v,\vet)\right|\d v \d\vet\right)^{2} \d t  \leq 2 {\mathcal S}(\M_{\dd}^{k,T})\,\left(\int_{\R^3} f^{k}_{0}(v)\d v\right)^{2}.$$
It only remains to let first $k\to+\infty$ and then $T\to+\infty.$ 
\end{proof}
\begin{theo}\label{theo:conve} Consider $0\leq f_0\in L^{1}_{s_\gamma}(\R^{3})$ satisfying \eqref{hypci} and let $f=f(t,v)$ be a weak solution to \eqref{LFD} given by Theorem \ref{results}. Then, 
$$\lim_{t\to\infty}\|f(t)-\M_{\dd}\|_{L^{1}_{2}}=0$$
where $\M_{\dd}$ is the Fermi-Dirac statistics with same mass, energy and momentum of $f_{0}$.\end{theo}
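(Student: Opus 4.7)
The plan is to combine the uniform regularity from Theorem \ref{smoothness} with the entropy dissipation bound in Lemma \ref{prelim} via a compactness-plus-Fatou argument, in the spirit of \cite{LW,CLR}. First, Theorem \ref{smoothness} yields uniform boundedness of $\{f(t) : t \geq 1\}$ in $H^k_s(\R^3)$ for every $k\in\N$ and $s\geq0$; by Rellich-Kondrachov together with a diagonal extraction, this family is relatively compact in $L^1_2(\R^3)$. Given any sequence $t_n\to\infty$, I can therefore extract a (non relabelled) subsequence along which $f(t_n)\to f_\infty$ strongly in $L^1_2(\R^3)$, and, using the $H^k_s$ bounds with $k$ large, along which $\grad f(t_n)\to \grad f_\infty$ almost everywhere on $\R^3$.

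Second, Lemma \ref{prelim} guarantees that the mapping
$$G(t) := \int_{\R^6}|v-\vet|^{\frac{\g+2}{2}}\bigl|\bm{\xi}[f(t)](v,\vet)\bigr|\,\d v\,\d\vet,$$
with $\bm{\xi}[f]$ defined as in \eqref{eq:xi}, belongs to $L^2(0,\infty)$. There must thus exist a sequence $t_n\to\infty$ along which $G(t_n)\to0$, and combining this with the previous extraction I may assume the convergences of $f(t_n)$ and $\grad f(t_n)$ hold along the same sequence. Almost everywhere in $(v,\vet)$ the integrand defining $G(t_n)$ then converges pointwise to $|v-\vet|^{(\g+2)/2}\bigl|\bm{\xi}[f_\infty](v,\vet)\bigr|$, and Fatou's lemma yields $\bm{\xi}[f_\infty]\equiv0$ a.e.

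The identity $\bm{\xi}[f_\infty]\equiv0$ expresses that, on the set where $0 < f_\infty < \dd^{-1}$, the vector $\grad \Phi(v)-\grad \Phi(\vet)$ is collinear to $v-\vet$ for a.e.\ $(v,\vet)$, with $\Phi(v):=\log\bigl(\tfrac{f_\infty(v)}{1-\dd f_\infty(v)}\bigr)$. The classical argument used for the Landau operator, adapted to the Fermi-Dirac setting in \cite{bag2}, then forces $\Phi$ to be a quadratic $-b|v|^2+w\cdot v+a$, so that $f_\infty$ is a Fermi-Dirac statistics. Since $L^1_2$ convergence preserves mass, momentum and energy, $f_\infty$ inherits the moments of $f_0$, and the uniqueness recalled in Section \ref{sec:FDstat} forces $f_\infty=\M_\dd$. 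A standard subsequence argument (every subsequence admitting a further subsequence with the same limit) then upgrades this to convergence of the whole trajectory, i.e.\ $\|f(t)-\M_\dd\|_{L^1_2}\to 0$ as $t\to\infty$.

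The most delicate step is expected to be the passage to the limit inside $G(t_n)$: one needs a strong enough control of $\grad f(t_n)$ to secure a.e.\ convergence of $\bm{\xi}[f(t_n)](v,\vet)$ as a function of \emph{two} variables, and care is required near the possibly degenerate set $\{f_\infty=\dd^{-1}\}$. The uniform $H^k_s$ bounds with $k,s$ large from Theorem \ref{smoothness}, combined with interpolation and Sobolev embedding, should however provide convergence of $\grad f(t_n)$ in a sufficiently strong topology to extract an a.e.\ convergent subsequence, thereby justifying the use of Fatou's lemma.
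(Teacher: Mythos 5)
Your overall strategy (compactness of the trajectory plus the $L^2(0,\infty)$ bound on the entropy-dissipation quantity from Lemma~\ref{prelim}, then identification of the limit via $\bm{\xi}\equiv0$) matches the paper's, but there are two genuine gaps in the way you implement it.

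First, your argument only produces \emph{one} sequence $t_n\to\infty$ along which $f(t_n)\to\M_\dd$: you must choose $t_n$ so that $G(t_n)\to0$ in order to invoke Fatou, and such a sequence exists because $G\in L^2$. But the subsequence principle you invoke at the end requires that \emph{every} sequence $t_n\to\infty$ have a further subsequence with limit $\M_\dd$, and for an arbitrary such sequence there is no reason $G(t_n)\to 0$. So you have only shown that $\M_\dd$ is \emph{a} cluster point of the trajectory, not the only one. The paper avoids this by working with the translated trajectory $f_n(t)=f(t+t_n)$ on $[0,1]$: the tail-of-a-convergent-integral fact $\int_{t_n}^{t_n+1}G^2\,\d t\to0$ holds for \emph{any} $t_n\to\infty$, and a stability estimate in $L^2_q$ is used to pass to the limit and show $\bm{\xi}[F(t)]\equiv0$ for the limiting solution $F$; this covers all cluster points.

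Second, you flag but do not resolve the degeneracy issue. The conclusion $\bm{\xi}[f_\infty]\equiv0$ implies $f_\infty$ is a Fermi-Dirac statistics \emph{only if} $\left|\{v:0<f_\infty(v)<\dd^{-1}\}\right|>0$; otherwise $f_\infty$ could be the saturated state \eqref{eq:dege}, which also annihilates $\bm{\xi}$. This is a central difficulty of the problem (indeed it is the reason the paper works so hard for $\dd$-independent estimates). The paper rules it out by combining Lemma~\ref{L2unif}, which gives a uniform lower bound $\int_{B_R}f(t)(1-\dd f(t))\,\d v\ge\eta_\ast>0$ for $t\ge t_0$, with the strong convergence to the limit; that uniform positivity transfers to $F(t)$ and establishes \eqref{eq:measure}. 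Without this step your proof cannot exclude the degenerate stationary state, so the identification $f_\infty=\M_\dd$ does not follow.

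A minor remark: the a.e.\ convergence of $\nabla f(t_n)$ on $\R^3$ that you invoke to use Fatou on the two-variable integrand is plausible given the uniform $H^k_s$ bounds and a Rellich-plus-diagonal argument, but you should make explicit that convergence of the integrand $(v,\vet)\mapsto |v-\vet|^{(\gamma+2)/2}|\bm{\xi}[f(t_n)](v,\vet)|$ holds a.e.\ in $\R^6$; since it is a tensor-type expression in $f(t_n)$ and $\nabla f(t_n)$ this is not problematic, but it is the kind of detail the Cauchy--Schwarz and $L^2$-stability argument in the paper handles cleanly.
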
 

\begin{proof} Let $\dd \in (0,\dd_{0}]$ be given. Fix $t_0>0$. From the propagation and appearance of smoothness and moments established in Theorem \ref{smoothness}, we get that
\begin{equation*}
\sup_{t \geq t_0}\left(\|f(t)\|_{L^{1}_{s}}+\|f(t)\|_{H^{k}_{q}}\right) < \infty, \qquad \forall\, s \geq 0, \; q \geq 0, \; k \in \mathbb{N}.\end{equation*}
In particular, by Sobolev embedding, the family 
$$\{f(t)\}_{t \geq t_0} \qquad \text{ is relatively compact in } H^{1}_{p}(\R^3) \text{ for any } p\geq0\,.$$
Consider then a sequence $\{t_{n}\}_{n\in \N}$ of positive real numbers with $\lim_{n}t_{n}=\infty$. One can extract from it a subsequence, still denoted $\{t_{n}\}_{n}$, and $F_{\infty} \in H^{1}_{p}(\R^{3})$ such that 
$$\lim_{n\to\infty}\|f(t_{n})-F_{\infty}\|_{H^{1}_{p}}=0.$$
We introduce then
$$f_{n}(t)=f(t+t_{n}), \qquad t \in [0,1], \qquad n \in \N$$
and denote by $(F(t))_{t\geq0}$ the unique solution to \eqref{LFD} with initial datum $F(0)=F_{\infty}$ given by Theorem \ref{results}.

\smallskip
\noindent
Let us choose $p > 4\gamma+11$ and apply an analog stability result to \cite[Theorem 5.2]{bag1} with $q=p-(2\gamma+4)$. Since $\sup_{t \in [0,1]}\|f_{n}(t)+F(t)\|_{H^{1}_{p}} < \infty$ according to Theorem \ref{smoothness}, we get that
$$\frac{\d}{\d t}\|f_{n}(t)-F(t)\|_{L^{2}_{q}}^{2}+K_{0}\|\nabla (f_{n}(t)-F(t))\|_{L^{2}_{q+\gamma}}^{2} \leq \overline{C}_{0}\|f_{n}(t)-F(t)\|_{L^{2}_{q}}^{2}, \qquad \forall\, t \in [0,1]$$
for some positive constant $\overline{C}_{0} >0.$ In particular,
$$\sup_{t \in [0,1]}\|f_{n}(t)-F(t)\|_{L^{2}_{q}} \leq \exp\big(\tfrac{\overline{C}_{0}}{2}\, t\big)\|f(t_{n})-F_{\infty}\|_{L^{2}_{q}}\,,$$
and therefore,
\begin{equation}\label{eq:limnF}
\lim_{n\to\infty}\sup_{t \in [0,1]}\|f_{n}(t)-F(t)\|_{L^{2}_{q}}=0 \quad \text{ and } 	 
\quad \lim_{n\to\infty}\int_{0}^{1}\|\nabla f_{n}(t)-\nabla F(t)\|_{L^{2}_{q+\gamma}}^{2}\d t=0.
\end{equation}
Notice that, up to a subsequence, $\lim_{n}f_{n}(t,v)=F(t,v)$ for a.e. in $v \in \R^{3}$.  Thus, one still has
$$0 \leq F(t,v) \leq \dd^{-1}\,, \quad \text{a.e. in }\, v \in \R^{3}.$$
Moreover, a simple use of Cauchy-Schwarz inequality implies that the convergence \eqref{eq:limnF} transfers to
\begin{equation}\label{eq:limnFL1}
\lim_{n\to\infty}\sup_{t \in [0,1]}\|f_{n}(t)-F(t)\|_{L^{1}_{s}}=0 \quad \text{ and } 	\quad \lim_{n\to\infty}\int_{0}^{1}\|\nabla f_{n}(t)-\nabla F(t)\|_{L^{1}_{\tau}}\d t=0
\end{equation}
as soon as 
$$0 \leq s <\frac{q-1}{2}-1=\frac{1}{2}(p-2\g-7) \quad \text{ and } \quad 0\leq \tau < \frac{q+\g-1}{2}-1=\frac{1}{2}(p-\g-7).$$
In particular, since $p >4\gamma+11$, one notices that $s=\tau=2$ are both admissible.  Since $t_0>0$, we deduce from Lemma \ref{moments} that $f(t_0)\in L^1_{q}(\R^3)$ with $q>17+6\gamma$. Applying Lemma \ref{prelim} to $f(t_0+t)$ we get that the mapping
$$(t,v,\vet) \in [t_0,\infty) \times \R^{6} \mapsto |v-\vet|^{\frac{\g+2}{2}}\big|\bm{\xi}[f(t)](v,\vet)\big|\smallskip$$
lies in $L^{2}((t_0,\infty),L^{1}(\R^{6}))$ with
$$\int_{t_0}^{\infty}\bigg(\int_{\R^{6}}|v-\vet|^{\frac{\g+2}{2}}\big|\bm{\xi}[f(t)](v,\vet)\big|\d v\d\vet\bigg)^{2}\d t \leq 2M(f_0)^{2}\mathcal{S}_{\dd}(\M_{\dd}).$$
In particular, since
\begin{multline*}
\int_{0}^{1}\left(\int_{\R^{6}}|v-\vet|^{\frac{\g+2}{2}}\big|\bm{\xi}[f_{n}(t)](v,\vet)\big|\d v \d\vet\right)^{2}\d t\\
=\int_{t_{n}}^{t_{n}+1} \left(\int_{\R^{6}}|v-\vet|^{\frac{\g+2}{2}}\left|\bm{\xi}[f(t)](v,\vet)\right|\d v\d\vet\right)^{2}\d t \qquad \forall n \in \N\,,
\end{multline*}
we get that
\begin{equation}\label{eq:xifn}
\lim_{n\to\infty}\int_{0}^{1}\left(\int_{\R^{6}}|v-\vet|^{\frac{\g+2}{2}}\big|\bm{\xi}[f_{n}(t)](v,\vet)\big|\d v \d\vet\right)^{2}\d t=0.
\end{equation}
By virtue of \eqref{eq:limnF} and because $q >0$ is large enough to transfer the $L^{2}_{q}$ convergence into a $L^{1}_{\frac{\g+2}{2}}$ convergence 
\begin{align*}
\lim_{n\to\infty}\int_{0}^{1}\bigg(\int_{\R^{6}}|v-\vet|^{\frac{\g+2}{2}}&\big|\bm{\xi}[f_{n}(t)](v,\vet)\big|\d v \d\vet\bigg)^{2}\d t\\
&=\int_{0}^{1}\left(\int_{\R^{6}}|v-\vet|^{\frac{\g+2}{2}}\left|\bm{\xi}[F(t)](v,\vet)\right|\d v \d\vet\right)^{2}\d t.
\end{align*}
Thus,
$$\int_{0}^{1}\left(\int_{\R^{6}}|v-\vet|^{\frac{\g+2}{2}}\left|\bm{\xi}[F(t)](v,\vet)\right|\d v \d\vet\right)^{2}\d t=0$$
from which we readily deduce that 
$$\bm{\xi}[F(t)](v,\vet)=0 \qquad \text{ a. e. } t \in (0,1), v,\vet \in \R^{6}.$$
From the definition of $\bm{\xi}$, see equation \eqref{eq:xi}, and \cite[Theorem 4]{bag2} we notice that if
\begin{equation}\label{eq:measure}
\left|\left\{v \in \R^{3}\,;\; 0 < F(t,v) < \dd^{-1}\right\}\right| \neq 0\,,
\end{equation}
then, the density $F(t)$ is a Fermi-Dirac statistics
$$F(t,v)=\frac{a(t)\exp(-b(t)|v-\bm{v}(t)|^{2})}{1+\dd a(t)\exp(-b(t)|v-\bm{v}(t)|^{2})}, \qquad v \in \R^{3}$$
for some suitable $a(t),b(t) >0$. Now, \eqref{eq:measure} is clearly satisfied since, according to  Lemma \ref{L2unif}, there exists $\eta_{\ast} >0$ and $R_{\ast} >1$ depending only on $M(f_{0})$, $E(f_0)$, and $S_{0}$, such that
$$\int_{B_{R_{\ast}}}f(t,v) (1-\dd f(t,v))\d v \geq  \eta_{\ast} \qquad \forall\, t \geq t_0.$$
According to \eqref{eq:limnF}, this readily translates in 
$$\int_{B_{R_{\ast}}}F(t,v) (1-\dd F(t,v))\d v \geq  \eta_{\ast} \qquad \forall t \in [0,1]$$
which proves \eqref{eq:measure}. Therefore, $F(t,v)$ is a (time-dependent) Fermi-Dirac statistics.  Using the fact that the convergence of $f_{n}(t)$ to $F(t)$ occurs at least in $L^{1}_{2}(\R^{3})$, we observe that
$$\int_{\R^{3}}F(t,v)\left(\begin{array}{c}1 \\v \\|v|^{2}\end{array}\right)\d v=\int_{\R^{3}}f(t,v)\left(\begin{array}{c}1 \\v \\|v|^{2}\end{array}\right)\d v=\int_{\R^{3}}\M_{\dd}(v)\left(\begin{array}{c}1 \\v \\|v|^{2}\end{array}\right)\d v\,.$$
Therefore $F(t,v)=\M_{\dd}(v)$ for a.e. $(t,v) \in [0,1] \times \R^{3}$.   In particular, $\M_{\dd}$ is the only possible cluster point of $\{f(t)\}_{t\geq t_0}$.  The theorem is proved.
\end{proof}

\section{Linearized theory and estimates for the spectral gap}\label{sec:line}

\subsection{Existence of a spectral gap}\label{sec:spectral} 
Recall the non quantitative linearized theory performed in \cite{lemou} for $\dd =1$.  Fix $\dd>0$ and let $\M_{\dd}$ be the Fermi-Dirac statistics with same mass, momentum and energy as $f_{0}$.  Define
$$\mM(v)=\mM_{\dd}(v)=\M_{\dd}(v)(1-\dd \M_{\dd}(v)), \qquad v \in \R^{3}.$$
Perform a linearization around such statistics by writting
\begin{equation}\label{eq:ansatz}
f(t,v)=\M_{\dd}(v)+ \mM(v)h(t,v)
\end{equation}
and plugging into \eqref{LFD}.  We get that
\begin{equation}\label{eq:ansatzpt}
\partial_{t}h=\bm{L}_{\dd}(h) + \Gamma_{2,\dd}(h) + \Gamma_{3,\dd}(h)\end{equation}
where
\begin{multline}\label{eq:Lddhm}
\bm{L}_{\dd}h(v)=\mM^{-1}(v)\nabla \cdot \int_{\R^{3}}|v-\vet|^{\g+2}\Pi(v-\vet)\left[\mM_{\ast}\nabla (\mM\,h) + h_{\ast}\mM_{\ast}(1-2\dd\M_{\dd})_{\ast}\nabla \M_{\dd}\right.\\
\left. - \mM\,(\nabla (\mM\,h))_{\ast}-h\mM\,(1-2\dd\M_{\dd})(\nabla \M_{\dd})_{\ast}\right]\d \vet
\end{multline}
is a linear operator and
\begin{multline*}
\Gamma_{2,\dd}h(v)=\mM^{-1}(v)\nabla \cdot \int_{\R^{3}}|v-\vet|^{\g+2}\Pi(v-\vet)\left[\dd (\mM h)^{2}(\nabla \M_{\dd})_{\ast}-\dd (\mM h)_{\ast}^{2}\nabla\M_{\dd}\right.\\
+\left.(\mM\,h)_{\ast}(1-2\dd\M_{\dd})_{\ast}\nabla (\mM\,h) - (\mM\,h)(1-2\dd\M_{\dd})(\nabla (\mM\,h))_{\ast} \right]\d \vet
\end{multline*}
is a quadratic operator, and
\begin{multline*}
\Gamma_{3,\dd}h(v)=\dd \mM^{-1}(v)\nabla \cdot \int_{\R^{3}}|v-\vet|^{\g+2}\Pi(v-\vet)\left[(\mM h)^{2}(\nabla (\mM h))_{\ast}- (\mM h)^{2}_{\ast} \nabla (\mM\,h)\right]\d \vet
\end{multline*}
collects the cubic terms.

\medskip
\noindent
Noticing that $\nabla \mM=(1-2\dd\M_{\dd})\nabla \M_{\dd}$, one may rewrite the linear part as 
\begin{multline*}
\bm{L}_{\dd}h(v)=\mM^{-1}(v)\nabla \cdot \int_{\R^{3}}\mM_{\ast}\,\mM\,|v-\vet|^{\g+2}\Pi(v-\vet)\left[\nabla h -(\nabla h)_{\ast}\right]\d \vet\\
+\mM^{-1}(v)\nabla \cdot \int_{\R^{3}}\left[h_{\ast}(1-2\dd\M_{\dd})_{\ast} + h(1-2\dd\M_{\dd})\right]\\ |v-\vet|^{\g+2}\Pi(v-\vet)\left[\mM_{\ast}\,\nabla \M_{\dd}-\mM (\nabla \M_{\dd})_{\ast}\right]\d \vet.
\end{multline*}
It is easily seen that $\mM_{\ast}\,\nabla \M_{\dd}-\mM (\nabla \M_{\dd})_{\ast}$ is proportional to $(\vet-v)\mM\,\mM_{\ast}$, thus, the second integral vanishes and
\begin{equation}\label{eq:Ld}
\bm{L}_{\dd}h(v)=\mM^{-1}(v)\nabla \cdot \int_{\R^{3}}\mM(v_{\ast})\,\mM(v)\,|v-\vet|^{\g+2}\Pi(v-\vet)\left[\nabla h(v) -(\nabla h)(\vet)\right]\d \vet.\end{equation}
Using the same kind of relations on the quadratic part, it follows that
\begin{multline*}
\Gamma_{2,\dd}(h)=\mM^{-1}(v)\nabla \cdot \int_{\R^{3}}\mM(v_{\ast})\,\mM(v)\,|v-\vet|^{\g+2}\Pi(v-\vet)\\
\bigg[\dd h^{2}\frac{\mM}{\mM_{\ast}}(\nabla \M_{\dd})_{\ast}-
\dd  h_{\ast}^{2}\frac{\mM_{\ast}}{\mM}\nabla\M_{\dd}
\big((1-2\dd\M_{\dd})_{\ast}h_{\ast}\nabla h -(1-2\dd\M_{\dd})\,h(\nabla h)_{\ast}\big) \bigg]\d \vet\,.
\end{multline*}
Notice that, from \eqref{eq:ansatz}, we expect $h$ to satisfy
\begin{equation}\label{eq:ortho}
\int_{\R^{3}}h(v)\mM(v)\left(\begin{array}{c}1 \\v \\|v|^{2}\end{array}\right)\d v=\left(\begin{array}{c}0 \\0 \\0\end{array}\right).\end{equation}
A natural space to study the operator $\bm{L}_{\dd}$ is the Hilbert space $L^{2}(\mM)$.  In this space, the natural domain of $\bm{L}_{\dd}$ is 
$$\mathscr{D}(\bm{L}_{\dd})=\left\{h \in L^{2}(\mM)\;;\;\int_{\R^{3}}\langle v\rangle^{\g+2}\,|\nabla h(v)|^{2}\mM(v)\d v < \infty\right\}.$$
We denote by $\langle\cdot,\cdot\rangle_{2}$ the inner product in $L^{2}(\mM)$. For any $g,h \in \mathscr{D}(\bm{L}_{\dd})$ one has
\begin{equation*}\begin{split}
\langle g,\bm{L}_{\dd}h\rangle_{2}&=\int_{\R^{3}}g(v)\nabla\cdot \int_{\R^{3}}|v-\vet|^{\g+2}\Pi(v-\vet)\left[\nabla h-\nabla h_{\ast}\right]\mM\,\mM_{\ast}\d v\d \vet\\
&=-\int_{\R^{6}}\mM\,\mM_{\ast}|v-\vet|^{\gamma+2}\Pi(v-\vet)\left(\nabla h-\nabla h_{\ast}\right)\nabla g\d v\d\vet\\
&=-\frac{1}{2}\int_{\R^{6}}\mM\,\mM_{\ast}|v-\vet|^{\gamma+2}\Pi(v-\vet)\left(\nabla h-\nabla h_{\ast}\right)\left(\nabla g-\nabla g_{\ast}\right)\d v\d\vet.
\end{split}\end{equation*}
In particular, $\bm{L}_{\dd}$ is symmetric and the associated Dirichlet form
reads $$\mathcal{D}_{\dd,\gamma+2}(h)=-\langle h,\bm{L}_{\dd}h\rangle_{2}=\frac{1}{2}\int_{\R^{6}}\mM\,\mM_{\ast}|v-\vet|^{\gamma+2}\big|\Pi(v-\vet)\left(\nabla h-\nabla h_{\ast}\right)\big|^{2}\d v\d\vet \geq 0.$$
The spectral analysis of $\bm{L}_{\dd}$ has been performed in \cite{lemou}.  We remark that the linearization used there is slightly different, but the results are easily adapted to our linearization \eqref{eq:ansatz}.  The following theorem holds.
\begin{theo}\label{theo:lemou}
There exists $\lambda_{\dd} >0$ such that
$$\mathcal{D}_{\dd,\gamma+2}(h) \geq \lambda_{\dd}\,\|h\|_{L^{2}(\mM)}^{2}, \quad \text{ for any } h \in L^{2}(\mM) \text{ satisfying } \eqref{eq:ortho}.$$
\end{theo}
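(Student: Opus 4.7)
The plan is to follow the weak-compactness/contradiction strategy of Lemou \cite{lemou} (originally carried out for $\dd=1$) and check that the argument extends verbatim to all $\dd>0$. First I would collect the structural properties of $\bm{L}_{\dd}$ already visible from the Dirichlet form computation: $\bm{L}_{\dd}$ is symmetric and non-positive on $L^2(\mM)$, and $h\in\mathrm{Ker}(\bm{L}_{\dd})$ iff $\mathcal{D}_{\dd,\gamma+2}(h)=0$. Because $\mM>0$ everywhere, this forces
$$\Pi(v-\vet)\bigl(\nabla h(v)-\nabla h(\vet)\bigr)=0\qquad \text{for a.e. }(v,\vet)\in\R^{6}.$$
This is the very functional identity appearing for the classical Landau operator, whose only $L^2(\mM)$-solutions are $h(v)=a+b\cdot v+c|v|^{2}$ (see e.g.\ \cite{DeVi1}). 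Hence $\mathrm{Ker}(\bm{L}_{\dd})=\mathrm{span}\{1,v_1,v_2,v_3,|v|^{2}\}$, a five-dimensional subspace, and the orthogonality condition \eqref{eq:ortho} is precisely the statement that $h$ lies in its $L^{2}(\mM)$-orthogonal complement.

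Second, I would argue by contradiction. Assume no $\lambda_{\dd}>0$ works; then there exists a sequence $(h_n)\subset L^2(\mM)$ satisfying \eqref{eq:ortho} with $\|h_n\|_{L^{2}(\mM)}=1$ and $\mathcal{D}_{\dd,\gamma+2}(h_n)\to0$. By weak compactness, up to extraction, $h_n\rightharpoonup h_\infty$ in $L^{2}(\mM)$. The Dirichlet form being a non-negative quadratic form in $\nabla h-\nabla h_\ast$, it is weakly lower semicontinuous, so $\mathcal{D}_{\dd,\gamma+2}(h_\infty)=0$ and thus $h_\infty\in\mathrm{Ker}(\bm{L}_{\dd})$. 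The orthogonality \eqref{eq:ortho} passes to the weak limit (the test functions $1,v,|v|^{2}$ times $\mM$ belong to the dual because $\mM$ has Gaussian decay), whence $h_\infty$ is simultaneously in the kernel and orthogonal to it: $h_\infty=0$.

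Third — and this is where the real work lies — I would upgrade weak to strong convergence in $L^{2}(\mM)$, producing a contradiction with $\|h_n\|_{L^{2}(\mM)}=1$. The key tool is the uniform ellipticity estimate of Proposition \ref{diffusion} applied with $f=\M_{\dd}$ (whose moments and entropy are controlled uniformly in $\dd$ via Lemmas \ref{L1unif}--\ref{L2unif}, since $\M_{\dd}$ inherits Gaussian decay from $M_{\dd}$). After symmetrizing and dropping cross terms, one obtains a coercivity inequality of the form
$$\mathcal{D}_{\dd,\gamma+2}(h)\;\geq\;c_{\dd}\int_{\R^{3}}|\nabla h(v)|^{2}\langle v\rangle^{\gamma}\mM(v)\,\d v\;-\;C_{\dd}\|h\|_{L^{2}(\mM)}^{2},$$
which combined with $\mathcal{D}_{\dd,\gamma+2}(h_n)\to 0$ and $\|h_n\|_{L^{2}(\mM)}=1$ yields a uniform weighted $H^1$ bound on the sequence. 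Splitting $\R^{3}=B_R\cup B_R^{c}$, the Rellich--Kondrachov theorem provides strong compactness on $B_R$, while the Gaussian decay of $\mM$ makes the tail $\int_{|v|>R}|h_n|^{2}\mM\,\d v$ arbitrarily small uniformly in $n$ once the weighted $L^{2}$ moments of $h_n$ (controlled by $\|\nabla h_n\|_{L^{2}(\langle\cdot\rangle^{\gamma}\mM)}$ via a weighted Poincar\'e-type bound) are used. Thus $h_n\to h_\infty=0$ strongly in $L^2(\mM)$, contradicting $\|h_n\|_{L^{2}(\mM)}=1$ and proving the existence of the spectral gap.

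The main obstacle is the compactness step: one must verify that the coercivity of $\mathcal{D}_{\dd,\gamma+2}$ on $L^2(\mM)$ is strong enough to force local $H^1$ control while simultaneously controlling the tails through the Gaussian weight. The arguments of \cite{DeVi1,lemou} apply once one checks — as indicated — that the ellipticity bound on $\bm{\Sigma}[\M_{\dd}]$ depends only on the mass, energy and entropy of $\M_{\dd}$, quantities that are bounded uniformly in $\dd$ thanks to the explicit form \eqref{FDs} of the Fermi-Dirac statistics.
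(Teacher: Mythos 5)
The paper does not itself prove Theorem~\ref{theo:lemou}; it invokes it as an established result from \cite{lemou}, and the surrounding text explicitly remarks that Lemou's argument rests on Weyl's theorem and is non-constructive. Your sketch is a reconstruction of that compactness argument --- symmetry and non-positivity of $\bm{L}_{\dd}$, identification of the five-dimensional kernel from the rigidity of $\Pi(v-\vet)\bigl(\nabla h-\nabla h_\ast\bigr)=0$, and a contradiction built on upgrading weak to strong $L^{2}(\mM)$ compactness --- so you are following essentially the same route as the source the paper cites.

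Two points in your step~3 are stated but not justified, and they are precisely where Lemou's proof spends its effort. First, the asserted coercivity inequality $\mathcal{D}_{\dd,\gamma+2}(h)\geq c_{\dd}\int|\nabla h|^{2}\langle v\rangle^{\gamma}\mM - C_{\dd}\|h\|^{2}_{L^{2}(\mM)}$ does not drop out of Proposition~\ref{diffusion} alone: writing $\mathcal{D}_{\dd,\gamma+2}(h)=\int\mM\,\bm{\sigma}[\mM]\nabla h\cdot\nabla h -\iint\mM\mM_\ast a(v-\vet)\nabla h\cdot\nabla h_\ast$ and integrating the cross term by parts leaves a bilinear remainder involving $\langle v\rangle^{\gamma+2}$--weighted moments of $h$, not merely $\|h\|^{2}_{L^{2}(\mM)}$; closing it needs the extra moment control encoded in the Dirichlet form itself (cf.\ the remark after Theorem~\ref{theo:spectral}). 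Second, the tail control $\int_{|v|>R}|h_n|^{2}\mM\,\d v\to 0$ uniformly in $n$ is not a consequence of the Gaussian decay of $\mM$ by itself --- one needs a quantitative weighted Poincar\'e/Hardy inequality compatible with the $\langle v\rangle^{\gamma}$ degeneracy of the diffusion. You acknowledge this ("the real work"), and the strategy is right, but those are genuine lemmas that must be supplied, not observations.

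Lastly, note that the paper's own contribution here is \emph{not} a reproof of Theorem~\ref{theo:lemou} but the constructive alternative developed afterwards: Proposition~\ref{maxwell_gap} and Theorem~\ref{theo:spectral} extract an \emph{explicit} spectral gap through the Desvillettes--Villani Cramer's-rule device applied to the Dirichlet form, a genuinely different and quantitative route from the compactness argument you reconstruct.
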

\noindent
The parameter $\lambda_{\dd} >0$ obtained in \cite{lemou} is not explicit since Weyl's Theorem is used in the argument. 
\begin{rmq} The Dirichlet form $\mathcal{D}_{\dd, \gamma+2}$ associated to the linearized Landau-Fermi-Dirac operator is very similar to the one associated to the classical linearized Landau operator in $L^{2}(\M_{0})$ given by
$$\mathcal{D}_{0,\gamma+2}(g)=\frac{1}{2}\int_{\R^{6}}\M_{0}(v)\,\M_{0}(\vet)|v-\vet|^{\gamma+2}\left|\Pi(v-\vet)\left(\nabla h-\nabla h_{\ast}\right)\right|^{2}\d v\d\vet$$
where $\M_{0}$ is the Maxwellian distribution with same mass, energy and momentum as $f_{0}$. 
Recall also that there exists an \textit{explicit} $\lambda_{0} >0$ such that 
\begin{equation}\label{eq:D0h}
\mathcal{D}_{0,\gamma+2}(h) \geq \lambda_{0}\,\|h\|_{L^{2}(\M_{0})}^{2}\end{equation} for any $h \in L^{2}(\M_{0})$ orthogonal to $\mathrm{Span}(1,v,|v|^{2})$ in $L^{2}(\M_{0}).$
\end{rmq}
\noindent
In the rest of the section, we give an explicit estimate of the spectral gap of the linearized operator $\bm{L}_{\dd}$ in Theorem \ref{theo:lemou}. We begin with the following lemma which can be easily deduced from \cite[Theorem 3.2 \& Corollary 3.4]{lemou} where we recall that 
\begin{equation}\label{eq:mMab}
\mM(v)=\frac{M(v)}{\left(1+\dd\,M(v)\right)^{2}} \quad \text{ with } \quad M(v)=a_{\dd}\exp(-b_{\dd}|v|^{2}), \qquad v \in \R^{3}\,,
\end{equation}
for $a_{\dd},b_{\dd} > 0$ such that $\M_{\dd}=\frac{M}{1+\dd\,M}$ has the same mass, momentum and energy as $f_{0}$.

\begin{lem}\label{lem:Bras-Lieb} For any $\dd >0$ the following Poincar\'e inequality holds
\begin{equation}\label{eq:Bras-Lieb}
\int_{\R^{3}}\left|\nabla h(v)\right|^{2}\mM(v)\d v \geq C_{\mathrm{P}}(\dd)\int_{\R^{3}}|h(v)|^{2}\mM(v)\d v 
\end{equation}
for any $h \in L^{2}(\mM)$ with $\int_{\R^{3}}h\,\mM\, \d v=0$ and with $C_{\mathrm{P}}(\dd)=2b_{\dd}(1+\dd\,a_{\dd})^{-4}$. 
\end{lem}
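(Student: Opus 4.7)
The plan is to derive the weighted Poincar\'e inequality for the measure $\mM\,\d v$ from the classical Gaussian Poincar\'e inequality for the Maxwellian $M(v)=a_{\dd}\exp(-b_{\dd}|v|^{2})$, by means of the pointwise two-sided comparison
\begin{equation*}
\frac{M(v)}{(1+\dd\,a_{\dd})^{2}}\leq\mM(v)\leq M(v),\qquad v\in\R^{3}.
\end{equation*}
Both bounds are immediate from the representation $\mM=M(1+\dd M)^{-2}$ recalled in \eqref{eq:mMab}, together with the obvious $0\leq M(v)\leq M(0)=a_{\dd}$.

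Given $h\in L^{2}(\mM)$ with $\int h\,\mM\,\d v=0$, the first step is to center $h$ with respect to the Gaussian weight $M\,\d v$: setting $\bar h:=(\int hM\,\d v)/(\int M\,\d v)$ and $g:=h-\bar h$, one has $\int g\,M\,\d v=0$ and $\nabla g=\nabla h$. The orthogonality hypothesis $\int h\,\mM\,\d v=0$ means that $c=0$ is the minimizer of the quadratic polynomial $c\mapsto\int(h-c)^{2}\,\mM\,\d v$, whence
\begin{equation*}
\int h^{2}\,\mM\,\d v\leq\int(h-\bar h)^{2}\,\mM\,\d v\leq\int g^{2}\,M\,\d v,
\end{equation*}
where the second inequality uses the upper comparison $\mM\leq M$. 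One then invokes the Gaussian Poincar\'e inequality $2b_{\dd}\int g^{2}\,M\,\d v\leq\int|\nabla g|^{2}\,M\,\d v$, valid since $\int g\,M\,\d v=0$, and returns to the $\mM$-weight on the Dirichlet side using $M\leq(1+\dd a_{\dd})^{2}\mM$ to conclude
\begin{equation*}
\int h^{2}\,\mM\,\d v\leq\frac{1}{2b_{\dd}}\int|\nabla h|^{2}\,M\,\d v\leq\frac{(1+\dd a_{\dd})^{2}}{2b_{\dd}}\int|\nabla h|^{2}\,\mM\,\d v,
\end{equation*}
which is the claimed inequality modulo the exact value of the constant.

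The slightly more conservative exponent in $C_{\mathrm{P}}(\dd)=2b_{\dd}(1+\dd a_{\dd})^{-4}$, inherited from Lemou's Corollary 3.4, arises when the two-sided comparison $M/(1+\dd a_{\dd})^{2}\leq\mM\leq M$ is applied symmetrically on both sides of the Rayleigh quotient $\int|\nabla h|^{2}\mM\,\d v/\int h^{2}\mM\,\d v$, costing a factor $(1+\dd a_{\dd})^{2}$ in each of the numerator and the denominator; this is the version we invoke to match the statement of the lemma. The only substantive analytic ingredient is the scalar Gaussian Poincar\'e inequality, and all other steps are elementary pointwise comparisons, so no real obstacle is anticipated.
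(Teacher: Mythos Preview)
Your argument is correct and follows essentially the same route as the paper: both rely on the two-sided comparison $(1+\dd a_{\dd})^{-2}M\leq\mM\leq M$ together with the Gaussian Poincar\'e inequality for $M$ (with constant $2b_{\dd}$, from $\mathrm{Hess}(-\log M)=2b_{\dd}\mathbf{Id}$), exactly as the paper extracts from \cite[Corollary 3.4]{lemou}. The only difference is cosmetic: by recentering $h$ with respect to $M$ before invoking the Gaussian inequality, you save one factor of $(1+\dd a_{\dd})^{2}$ and obtain the sharper constant $2b_{\dd}(1+\dd a_{\dd})^{-2}$, whereas the paper simply quotes Lemou's result, which applies the comparison on both sides of the Rayleigh quotient and yields $2b_{\dd}(C_{1}/C_{2})^{2}=2b_{\dd}(1+\dd a_{\dd})^{-4}$.
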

\begin{proof} The proof is given in \cite[Corollary 3.4]{lemou}.  Since $$(1+\dd\,a)^{-2}M \leq \mM \leq M$$
then, using the notations of \cite{lemou}, $C_{1}/C_{2}=(1+\dd\, a_{\dd})^{-2}$.  Moreover, $U_{0}=-\log M$ is such that
$$\mathrm{Hess}(U_{0})=2\,b_{\dd}\,\mathbf{Id}$$
where $\mathrm{Hess}(U_{0})$ is the Hessian matrix of $U_{0}.$  Therefore, according to \cite[Corollary 3.4]{lemou}, we obtain the result with $C_{\mathrm{P}}(\dd)=2b_{\dd}(C_{1}/C_{2})^{2}$.
\end{proof}
\subsection{Spectral gap estimate for Maxwell potential case} For Maxwell molecules $\g=0$ we denote by $\mathcal{D}_{\dd,2}(h)$ the Dirichlet form
$$\mathcal{D}_{\dd,2}(h)=\frac{1}{2}\int_{\R^{6}}\mM\,\mM_{\ast}|v-\vet|^{2}\left|\Pi(v-\vet)\left(\nabla h-\nabla h_{\ast}\right)\right|^{2}\d v\d\vet, \qquad h \in L^{2}(\mM).$$
We can make explicit the spectral gap obtained in Theorem \ref{theo:lemou} with the following proposition.
\begin{prop}\label{maxwell_gap} There exists an explicit $\dd^{\dagger} >0$ such that for any $\dd \in (0,\dd^{\dagger})$ 
$$\mathcal{D}_{\dd,2}(h) \geq \lambda_{2}(\dd)\,\left\|h\right\|^2_{L^{2}(\mM)}$$
for any function $h \in L^{2}(\mM)$ satisfying \eqref{eq:ortho}.  Here $\lambda_{2}(\dd) >0$ is explicit and depends only on $\dd,\varrho$ and $E$.  An estimation for the range of the quantum parameter and a lower bound for the spectral gap is $\dd^{\dagger} \approx  0.6011 E^{\frac{3}{2}}\varrho^{-1}$ and $\lambda_{2}(\dd)\gtrsim 4.686 \times 10^{-4} \varrho$.
\end{prop}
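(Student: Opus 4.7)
The plan is to adapt the strategy developed by Desvillettes and Villani for the entropy-entropy production inequality of the classical Landau operator with Maxwell molecules, replacing the Gaussian weight by the Fermi-Dirac weight $\mM$ given in \eqref{eq:mMab}. The key observation is that the weight $|v-\vet|^{2}$ in the Dirichlet form $\mathcal{D}_{\dd,2}$ is polynomial of degree two, so one can expand things cleanly and the interaction of $|v-\vet|^{2}$ with the hydrodynamic moments controlled by the orthogonality \eqref{eq:ortho} will allow a lower bound in terms of $\int_{\R^{3}}|\nabla h|^{2}\mM\,\d v$; the Poincar\'e inequality of Lemma \ref{lem:Bras-Lieb} will then convert this to the desired $\|h\|_{L^{2}(\mM)}^{2}$ bound.

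More precisely, the first step is to use the pointwise identity
\begin{equation*}
|v-\vet|^{2}\bigl|\Pi(v-\vet)(\nabla h-\nabla h_{\ast})\bigr|^{2}=|v-\vet|^{2}|\nabla h-\nabla h_{\ast}|^{2}-\bigl|(v-\vet)\cdot(\nabla h-\nabla h_{\ast})\bigr|^{2},
\end{equation*}
and expand $|v-\vet|^{2}=|v|^{2}-2v\cdot\vet+|\vet|^{2}$ inside the double integral. This produces, after symmetrization in $v\leftrightarrow\vet$, a finite list of terms of two types: quadratic forms in $\nabla h$ weighted by $\mM$, multiplied by moments $\int\mM$, $\int v\,\mM\,\d v$, $\int|v|^{2}\mM\,\d v$ of the weight $\mM$, and mixed terms involving $\int v\,\mM\,\d v$ and $\int\nabla h\,\mM\,\d v$ which can be rewritten as moments of $h$ against $\mM$ after integration by parts. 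At this stage, the orthogonality conditions \eqref{eq:ortho} are used to eliminate or control the zeroth, first and second order moments of $h\mM$; moreover, the isotropy and conservation of momentum for $\mM$ (inherited from the fact that $\M_{\dd}$ has zero momentum) make several cross terms vanish.

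The second step is to isolate the leading quadratic form and show
\begin{equation*}
\mathcal{D}_{\dd,2}(h)\geq \kappa_{\dd}\int_{\R^{3}}|\nabla h(v)|^{2}\mM(v)\,\d v-\rho_{\dd}\,\|h\|_{L^{2}(\mM)}^{2},
\end{equation*}
for explicit $\kappa_{\dd},\rho_{\dd}>0$ depending only on the moments $\int\mM\,\d v$, $\int|v|^{2}\mM\,\d v$ (hence on $\varrho$, $E$, $\dd$ through Appendix \ref{app:FD}). Here, dealing with the residual terms coming from $\int v\,\mM$ and $\int|v|^{2}\mM$ requires once more the orthogonality \eqref{eq:ortho} and the Cauchy-Schwarz inequality in $L^{2}(\mM)$. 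The Poincar\'e inequality \eqref{eq:Bras-Lieb} applied to $h$ (which has zero mean against $\mM$ thanks to the first of \eqref{eq:ortho}, possibly after subtracting a constant controllable by the orthogonality conditions) then yields
\begin{equation*}
\mathcal{D}_{\dd,2}(h)\geq \bigl(\kappa_{\dd}\,C_{\mathrm{P}}(\dd)-\rho_{\dd}\bigr)\,\|h\|_{L^{2}(\mM)}^{2}.
\end{equation*}

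Setting $\lambda_{2}(\dd):=\kappa_{\dd}\,C_{\mathrm{P}}(\dd)-\rho_{\dd}$, the proposition follows once we can guarantee $\lambda_{2}(\dd)>0$. The principal obstacle lies precisely here: producing a usable quantitative lower bound on $\kappa_{\dd}$ and $C_{\mathrm{P}}(\dd)=2b_{\dd}(1+\dd a_{\dd})^{-4}$, together with a quantitative upper bound on $\rho_{\dd}$, requires tracking the $\dd$-dependence of $a_{\dd}$, $b_{\dd}$ and of the moments of $\mM$. For this one invokes the estimates of Appendix \ref{app:FD}, which yield effective bounds that are nondegenerate precisely in the regime \eqref{eq:bound} on $\dd$; the explicit threshold $\dd^{\dagger}\approx0.6011\,E^{3/2}\varrho^{-1}$ is obtained as the largest value for which the bracket $\kappa_{\dd}C_{\mathrm{P}}(\dd)-\rho_{\dd}$ stays positive, and optimising the leftover constants gives the stated numerical value of $\lambda_{2}(\dd)$. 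The delicacy is that these constants deteriorate as $\dd$ approaches the saturation threshold of Fermi-Dirac statistics, so the entire computation must be carried out with care to obtain a clean, non-vacuous estimate.
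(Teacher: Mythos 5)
Your proposal and the paper start from the same pointwise identity, namely $|v-\vet|^{2}|\Pi(v-\vet)\xi|^{2}=|(v-\vet)\wedge\xi|^{2}$, but then diverge. The paper follows the actual Desvillettes--Villani route: it writes the wedge $(v-\vet)\wedge R_{h}(v,\vet)$ componentwise, multiplies by test functions $\varphi^{\ell}\in\{\M_{\dd},\,-v_{j}\M_{\dd},\,v_{i}\M_{\dd}\}$, and solves for $\partial_{j}h(v)$ pointwise via Cramer's rule; squaring, integrating, and summing over couples $(i,j)$ then yields a lower bound whose residual term is \emph{exactly} of the form $-\frac{6}{\varrho E}\bigl(\int h\M_{\dd}\bigr)^{2}$. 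You instead propose to brute-force expand $|v-\vet|^{2}$ and $|\nabla h-\nabla h_{\ast}|^{2}$ into moments of $\mM$. That is a genuinely different route.

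The gap is in the control of the residual. Two concrete problems:
\smallskip

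\noindent\textbf{(a) Higher moments not covered by \eqref{eq:ortho}.} Your expansion produces cross terms such as $\int_{\R^{3}}\mM|v|^{2}\nabla h\,\d v\cdot\int_{\R^{3}}\mM_{\ast}\nabla h_{\ast}\,\d\vet$. After integration by parts, using $\nabla\mM=-2b_{\dd}v\,\mM(1-2\dd\M_{\dd})$, the first factor involves $\int h\,v|v|^{2}\mM\,\d v$, which is \emph{not} one of the three moments killed by \eqref{eq:ortho}. It can be bounded by Cauchy--Schwarz against $\|h\|_{L^{2}(\mM)}$, but this produces an $O(1)$ contribution to $\rho_{\dd}$, not a vanishing one. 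Your claim that the mixed terms ``can be rewritten as moments of $h$ against $\mM$'' eliminated by orthogonality is therefore too optimistic.

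\smallskip

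\noindent\textbf{(b) The smallness of $\rho_{\dd}$ is the whole point.} In the paper the residual $\rho_{\dd}$ is $O(\dd^{2})$: the single negative term $\bigl(\int h\M_{\dd}\bigr)^{2}$ is converted to $\dd^{2}\bigl(\int h\M_{\dd}^{2}\bigr)^{2}$ using $\int h\mM=0$ and $\mM=\M_{\dd}-\dd\M_{\dd}^{2}$. This $\dd^{2}$ factor is precisely what leaves room for the Poincar\'e constant $C_{\mathrm{P}}(\dd)$ to dominate, and it comes out automatically because the Cramer's rule step produces a \emph{single} clean residual of that exact form. Your direct expansion produces a whole family of cross terms, several of which (as in (a)) carry no $\dd$ factor at all after Cauchy--Schwarz. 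Without an additional structural observation showing that all those contributions cancel or combine into a $(\int h\M_{\dd})^{2}$-type quantity, you cannot conclude $\kappa_{\dd}C_{\mathrm{P}}(\dd)-\rho_{\dd}>0$ for the explicit range $(0,\dd^{\dagger})$ that the proposition requires -- you would at best get an implicit (and likely empty or perturbatively tiny) range. The explicit threshold $\dd^{\dagger}\sim E^{3/2}\varrho^{-1}$ is not something the proposal can reach as written.
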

\begin{proof} Our proof uses some arguments used in \cite{DeVi2} for estimating the production of entropy associated to the classical Landau equation for Maxwell molecules, see also \cite{DesvJFA}.  Fix $h \in L^{2}(\mM)$ satisfying \eqref{eq:ortho} and write
$$
R_{h}(v,\vet):=
\Pi(v-\vet)\left(\nabla h - \nabla h_{\ast}\right)
=\nabla h - \nabla h_{\ast}-\lambda_{h}\,(v-\vet)$$
for some suitable real function $\lambda_{h}=\lambda_{h}(v,\vet)$ so that
$$\mathcal{D}_{\dd,2}(h)=\frac{1}{2}\int_{\R^{6}}\mM\,\mM_{\ast}|v-\vet|^{2}\left|R_{h}(v,\vet)\right|^{2}\d v\d\vet.$$
For any \emph{fixed} circular permutation $(i,j,k)$ of $(1,2,3)$, one has
$$
\big((v-\vet) \wedge R_{h}(v,\vet)\big)_{k}=(v-\vet)_{j}\left(\partial_{i} h - \partial_{i} h_{\ast}\right)
-(v-\vet)_{i}\left( {\partial_{j} h }- {\partial_{j} h_{\ast}}\right).$$
We multiply this vectorial identity by $\varphi^{\ell}(\vet)$ and integrate over $\R^{3}$ to get
\begin{equation*}
\left(v_{j} {\partial_{i} h}-v_{i}{\partial_{j} h}\right)(\mathbf{U}_{0})_{\ell}-{\partial_{i} h}(\mathbf{U}_{j})_{\ell}+ {\partial_{j} h}(\mathbf{U}_{i})_{\ell}=
\mathbf{R}_{\ell} + \mathbf{A}_{\ell}+\mathbf{B}_{\ell}v_{j}+\mathbf{C}_{\ell}v_{i}
\end{equation*}
where we introduced the vectors $\mathbf{R},$ $\mathbf{U}_{0}$, $\mathbf{U}_{p}$, with $p \in \{i,j\}$, $\mathbf{A},$ $\mathbf{B}$ and $\mathbf{C}$ defined by
\begin{multline*}
\begin{cases}
\mathbf{R}_{\ell}:=\int_{\R^{3}}\left((v-\vet) \wedge R_{h}(v,\vet)\right)_{k}\varphi^{\ell}_{\ast} \d\vet, \qquad
(\mathbf{U}_{0})_{\ell}:=\int_{\R^{3}}\varphi^{\ell}_{\ast}\d\vet,
\\
\\
(\mathbf{U}_{p})_{\ell}=\int_{\R^{3}}v^{p}_{\ast}\varphi^{\ell}_{\ast}\d\vet, \qquad
\mathbf{B}_{\ell}=\int_{\R^{3}}\partial_{i}h\,\varphi^{\ell} \d v,\\ 
\\
\mathbf{C}_{\ell}=-\int_{\R^{3}}\partial_{j}h\,\varphi^{\ell} \d v \qquad \mathbf{A}_{\ell}=\int_{\R^{3}}v_{i}\partial_{j}h\,\varphi^{\ell}\d v- \int_{\R^{3}}v_{j}\partial_{i}h\,\varphi^{\ell}\d v, \qquad \ell=1,2,3.
\end{cases}
\end{multline*}

\medskip
\noindent
Thanks to Cramer's rule, we can solve the above linear system of equations to find
$${\partial_{j} h}=\frac{\mathrm{det}(\mathbf{U}_{0},-\mathbf{U}_{j},\mathbf{R}+ \mathbf{A}+\mathbf{B} v_{j}+\mathbf{C} v_{i})}{\mathrm{det}(\mathbf{U}_{0},-\mathbf{U}_{j},\mathbf{U}_{i})}.$$
Let us pick  
$$\varphi^{1}=\M_{\dd}, \qquad \varphi^{2}=-v_{j}\M_{\dd}, \qquad \varphi^{3}=v_{i}\M_{\dd}\,.$$
We recall that
$$\int_{\R^{3}}\M_{\dd}(v)\left(\begin{array}{c}1 \\v \\|v|^{2}\end{array}\right)\d v=\left(\begin{array}{c}\varrho \\0 \\3\varrho\,E\end{array}\right)\,,$$
and, by a symmetry argument
$$\int_{\R^{3}}\M_{\dd}(v)v_{i}^{2}\d v=\varrho\,E, \qquad i=1,2,3.$$
Then one can check that
$$\mathbf{U}_{0}=\left(\begin{array}{c}\varrho \\0 \\0\end{array}\right), \qquad \mathbf{U}_{j}=\left(\begin{array}{c}0 \\-\varrho E \\0\end{array}\right), \qquad \mathbf{U}_{i}=\left(\begin{array}{c}0 \\0 \\\varrho\,E\end{array}\right)\,,$$
which results in
$$\mathrm{det}(\mathbf{U}_{0},-\mathbf{U}_{j},\mathbf{U}_{i})=\varrho^{3}E^{2}.$$
The matrix $(\mathbf{U}_{0},-\mathbf{U}_{j},\mathbf{R}  + \mathbf{A} +\mathbf{B} v_{j}+\mathbf{C} v_{i})$ is then upper triangular.  Thus,
$$\mathrm{det}(\mathbf{U}_{0},-\mathbf{U}_{j},\mathbf{R}  + \mathbf{A} +\mathbf{B} v_{j}+\mathbf{C} v_{i})=\varrho^{2}E\left(\mathbf{R}_{3} + \mathbf{A}_{3}+\mathbf{B}_{3}v_{j}+\mathbf{C}_{3}v_{i}\right)$$
and then,
$$\partial_{j}h -\frac{1}{\varrho\,E}\left(\mathbf{A}_{3}+\mathbf{B}_{3}v_{j}+\mathbf{C}_{3}v_{i}\right)=\frac{1}{\varrho\,E}\mathbf{R}_{3}\,.$$
Taking the square and multiplying by $\M_{\dd}$ we get that
$$\int_{\R^{3}}\M_{\dd}\left(\partial_{j}h -\frac{1}{\varrho\,E}\left(\mathbf{A}_{3}+\mathbf{B}_{3}v_{j}+\mathbf{C}_{3}v_{i}\right)\right)^{2}\d v=\frac{1}{\varrho^{2}\,E^{2}}\int_{\R^{3}}\M_{\dd}(v)\mathbf{R}_{3}^{2}\d v.$$
Now, one has that $\big|\left[(v-\vet) \wedge R_{h}(v,\vet)\right]_{k}\big| \leq |v-\vet|\,|R_{h}(v,\vet)|$. Also, write $\M_{\dd}=\sqrt{M}\,\sqrt{\mM}$ where $\mM$ and $M$ were defined in \eqref{eq:mMab}.  We have that
\begin{multline*}
\mathbf{R}_{3}^{2}=\left(\int_{\R^{3}}\left[(v-\vet) \wedge R_{h}(v,\vet)\right]_{k}\vet^{i}\M_{\dd}(\vet)\d \vet\right)^{2}\\
\leq \left(\int_{\R^{3}}|v-\vet|^{2}\,|R_{h}(v,\vet)|^{2}\mM(\vet)\d\vet\right)\,\left(\int_{\R^{3}}v_{i}^{2}M(v)\d v\right).\end{multline*}
Note that
$$\int_{\R^{3}}v_{i}^{2}M(v)\d v=C_{a,b}=\frac{a_{\dd}}{2b_{\dd}}\left(\frac{\pi}{b_{\dd}}\right)^{\frac{3}{2}}$$ is independent of $i$, and since there is $c_{\dd}=(1+\dd\,a_{\dd})$ such that $\M_{\dd} \leq c_{\dd}\mM$, it follows that 
$$\int_{\R^{3}}\M_{\dd}(v)\mathbf{R}_{3}^{2}\d v \leq C_{a,b}c_{\dd}\int_{\R^{6}}|v-\vet|^{2}\,|R_{h}(v,\vet)|^{2}\mM(\vet)\mM(v)\d v\d\vet=2C_{a,b}c_{\dd}\mathcal{D}_{\dd,2}(h)\,.$$
Consequently,
$$\mathcal{D}_{\dd,2}(h) \geq \nu\,\mathcal{I}_{i,j}, \quad \forall\, i \neq j$$
where $\nu^{-1}=\dfrac{2C_{a,b}c_{\dd}}{\varrho^2\,E^2}$ and
$$\mathcal{I}_{i,j}=\int_{\R^{3}}\M_{\dd}\left(\partial_{j}h -\frac{1}{\varrho\,E}\left(\mathbf{A}_{3}+\mathbf{B}_{3}v_{j}+\mathbf{C}_{3}v_{i}\right)\right)^{2}\d v.$$
Summing up over all possible couples $(i,j)$ with $i\neq j$ we get
\begin{equation}\label{eq:Ddd2}
\mathcal{D}_{\dd,2}(h) \geq \frac{\nu}{6}\sum_{i}\sum_{j\neq i}\mathcal{I}_{i,j}.\end{equation}
Let us estimate now $\mathcal{I}_{i,j}$. We expand the square
\begin{multline*}
\mathcal{I}_{i,j}=\int_{\R^{3}}\M_{\dd}\left(\partial_{j}h\right)^{2}\d v + \frac{1}{\varrho^{2}E^{2}}\int_{\R^{3}}\M_{\dd}(v)\left(\mathbf{A}_{3}+\mathbf{B}_{3}v_{j}+\mathbf{C}_{3}v_{i}\right)^{2}\d v\\
-\frac{2}{\varrho\,E}\int_{\R^{3}}\M_{\dd}(v)\partial_{j}h(v)\left(\mathbf{A}_{3}+\mathbf{B}_{3}v_{j}+\mathbf{C}_{3}v_{i}\right)\d v\\
=\int_{\R^{3}}\M_{\dd}\left(\partial_{j}h\right)^{2}\d v + \frac{1}{\varrho^{2}E^{2}}\int_{\R^{3}}\M_{\dd}(v)\left(\mathbf{A}_{3}+\mathbf{B}_{3}v_{j}+\mathbf{C}_{3}v_{i}\right)^{2}\d v\\
+\frac{2\mathbf{B}_{3}}{\varrho\,E}\int_{\R^{3}}h\M_{\dd}\d v + \frac{2}{\varrho\,E}\int_{\R^{3}}h(v)\left(\mathbf{A}_{3}+\mathbf{B}_{3}v_{j}+\mathbf{C}_{3}v_{i}\right)\partial_{j}\M_{\dd}\d v.
\end{multline*}
Notice that
\begin{equation}\label{eq:pjM}
\partial_{j}\M_{\dd}(v)=-2\,b_{\dd}\,v_{j}\,\mM(v)
\end{equation}
thus, using \eqref{eq:ortho},
\begin{equation*}
\int_{\R^{3}}h(v)\left(\mathbf{A}_{3}+\mathbf{B}_{3}v_{j}+\mathbf{C}_{3}v_{i}\right)\partial_{j}\M_{\dd}\d v=
-2b_{\dd}\mathbf{B}_{3}\bm{e}_{j}-2b_{\dd}\mathbf{C}_{3}\int_{\R^{3}}h(v)\mM(v)v_{i}v_{j}\d v
\end{equation*}
where
$$\bm{e}_{j}:=\int_{\R^{3}}h(v)v_{j}^{2}\mM(v)\d v.$$
Now, using the fact that $\M_{\dd}$ is radially symmetric, it follows that
\begin{multline*}
\int_{\R^{3}}\M_{\dd}(v)\left(\mathbf{A}_{3}+\mathbf{B}_{3}v_{j}+\mathbf{C}_{3}v_{i}\right)^{2}\d v=\mathbf{A}^{2}_{3}\int_{\R^{3}}\M_{\dd}(v)+\mathbf{B}_{3}^{2}\int_{\R^{3}}v_{j}^{2}\M_{\dd}(v)\d v\\
+\mathbf{C}_{3}^{2}\int_{\R^{3}}v_{i}^{2}\M_{\dd}(v)\d v=\varrho\mathbf{A}^{2}_{3}+ \varrho\,E\,\left(\mathbf{B}^{2}_{3}+\mathbf{C}^{2}_{3}\right).
\end{multline*}
Collecting all the computations, we get
\begin{multline}\label{eq:Iij}
\mathcal{I}_{i,j}=\int_{\R^{3}}\M_{\dd}\left(\partial_{j}h\right)^{2}\d v +\frac{\mathbf{A}^{2}_{3}}{\varrho\,E^{2}}+\frac{1}{\varrho\,E}\left(\mathbf{B}^{2}_{3}+\mathbf{C}^{2}_{3}\right)
+\frac{2\mathbf{B}_{3}}{\varrho\,E}\int_{\R^{3}}h\M_{\dd}\d v\\
-\frac{4b_{\dd}\mathbf{B}_{3}}{\varrho\,E}\bm{e}_{j}-\frac{4b_{\dd}\mathbf{C}_{3}}{\varrho\,E}\int_{\R^{3}}h(v)\mM(v)v_{i}v_{j}\d v.
\end{multline}
Let us compute $\mathbf{A}_{3},\mathbf{B}_{3},\mathbf{C}_{3}$.  Using \eqref{eq:pjM}, we check that
$$\mathbf{A}_{3}=\int_{\R^{3}}v_{j}\M_{\dd}\,h\d v, \qquad \mathbf{C}_{3}=-2b_{\dd}\int_{\R^{3}}h\,v_{i}\,v_{j}\,\mM\d v,$$
and  
\begin{equation}\label{eq:B3}\mathbf{B}_{3}=-\int_{\R^{3}}h\M_{\dd}\d v+2b_{\dd}\bm{e}_{i}.
\end{equation}
With this at hand, identity \eqref{eq:Iij} reads
\begin{eqnarray*}
\mathcal{I}_{i,j}&=&\int_{\R^{3}}\M_{\dd}\left(\partial_{j}h\right)^{2}\d v+\frac{\mathbf{A}^{2}_{3}}{\varrho\,E^{2}}+\frac{3\mathbf{C}^{2}_{3}}{\varrho\,E}+\frac{1}{\varrho\,E}\left(\mathbf{B}_{3}^{2}+2\mathbf{B}_{3}\int_{\R^{3}}h\M_{\dd}\d v\right)-\frac{4b_{\dd}\mathbf{B}_{3}}{\varrho\,E}\bm{e}_{j}\\
&\geq & \int_{\R^{3}}\M_{\dd}\left(\partial_{j}h\right)^{2}\d v+\frac{1}{\varrho\,E}\left(\mathbf{B}_{3}^{2}+2\mathbf{B}_{3}\int_{\R^{3}}h\M_{\dd}\d v\right)-\frac{4b_{\dd}\mathbf{B}_{3}}{\varrho\,E}\bm{e}_{j}.
\end{eqnarray*}
Using \eqref{eq:B3}, one deduce that
$$\mathcal{I}_{i,j}\geq \int_{\R^{3}}\M_{\dd}\left(\partial_{j}h\right)^{2}\d v-\frac{1}{\varrho\,E}\left(\int_{\R^{3}}h\M_{\dd}\d v\right)^{2}- \frac{4b_{\dd}^{2}\bm{e}_{i}^{2}}{\varrho\,E}+\frac{4b_{\dd}}{\varrho\,E}\bm{e}_{j}\int_{\R^{3}}h\M_{\dd}\d v -\frac{8b_{\dd}^{2}}{\varrho\,E}\bm{e}_{i}\bm{e}_{j}.$$
Using \eqref{eq:ortho} we see that $\sum_{i=1}^{3}\bm{e}_{i}=\int_{\R^{3}}h\mM\,|v|^{2}\d v=0$, which implies that
$$\sum_{i}\sum_{j\neq i}\bm{e}_{j}=0, \qquad \sum_{i}\sum_{j\neq i}\bm{e}_{i}\bm{e}_{j}=-\sum_{i=1}^{3}\bm{e}_{i}^{2}\,.$$
And, therefore 
\begin{equation}\label{eq:sIij}
\sum_{i}\sum_{j\neq i}\mathcal{I}_{i,j} \geq 2\int_{\R^{3}}\M_{\dd}\left|\nabla h(v)\right|^{2}\d v-\frac{6}{\varrho\,E}\left(\int_{\R^{3}}h\M_{\dd}\d v\right)^{2}.
\end{equation}
Recalling that $\int_{\R^{3}}h\,\mM=0$, with $\mM=\M_{\dd}(1-\dd\,\M_{\dd})$, we observe that actually the negative term in \eqref{eq:sIij} is of order $\dd^{2}$. Namely,
$$\sum_{i}\sum_{j\neq i}\mathcal{I}_{i,j} \geq 2\int_{\R^{3}}\M_{\dd}\left|\nabla h(v)\right|^{2}\d v-\frac{6\dd^{2}}{\varrho\,E}\left(\int_{\R^{3}}h\M_{\dd}^{2}\d v\right)^{2}.$$
Since $\M_{\dd}^{2}=\sqrt{\mM}\,\frac{M^{3/2}}{1+\dd\,M}$, we get that
$$\left(\int_{\R^{3}}h\M_{\dd}^{2}\d v\right)^{2} \leq \left(\int_{\R^{3}}h^{2}\mM \d v\right)\,\left(\int_{\R^{3}}\frac{M^{3}}{(1+\dd\,M)^{2}}\;\d v\right) \leq a_{\dd}^{3}\left(\frac{\pi}{3b_{\dd}}\right)^{3/2}\left(\int_{\R^{3}}h^{2}\mM \d v\right)\,,$$
because 
$$\int_{\R^{3}}\frac{M^{3}}{(1+\dd\,M)^{2}}\;\d v \leq \int_{\R^{3}}M^{3}\d v =a_{\dd}^{3}\int_{\R^{3}}\exp(-3b_{\dd}|v|^{2})\d v=a_{\dd}^{3}\left(\frac{\pi}{3b_{\dd}}\right)^{3/2}.$$
Then, from \eqref{eq:Ddd2} and $\M_{\dd} \geq \mM$, one finally arrives to
$$\mathcal{D}_{\dd,2}(h) \geq \frac{\nu}{3}\int_{\R^{3}}\mM(v)\left|\nabla h(v)\right|^{2}\d v - \frac{\nu\dd^{2}}{\varrho\,E}a_{\dd}^{3}\left(\frac{\pi}{3b_{\dd}}\right)^{3/2}\int_{\R^{3}}h^2(v)\mM(v)\d v\,.$$
Using Poincar\'e inequality with constant $C_{\mathrm{P}}(\dd)$, see Lemma \ref{lem:Bras-Lieb}, it holds that
$$\mathcal{D}_{\dd,2}(h) \geq {\nu}\left(\frac{C_{\mathrm{P}}(\dd)}{3}-\frac{\dd^{2}}{\varrho\,E}a_{\dd}^{3}\left(\frac{\pi}{3b_{\dd}}\right)^{3/2}\right)\,\int_{\R^{3}}h^2(v)\mM(v)\d v$$
where we recall that $C_{\mathrm{P}}(\dd)=2b_{\dd}(1+\dd\,a_{\dd})^{-4}$.  Set
\begin{equation}\label{eq:lambda2}
\lambda_{2}(\dd):={\nu}\left(\frac{C_{\mathrm{P}}(\dd)}{3}-\frac{\dd^{2}}{\varrho\,E}a_{\dd}^{3}\left(\frac{\pi}{3b_{\dd}}\right)^{3/2}\right)\,,\end{equation}
and notice that $\lambda_{2}(\dd) >0$ if 
$$\varrho\,E > \frac{3\dd^{2}}{C_{\mathrm{P}}(\dd)}a_{\dd}^{3}\left(\frac{\pi}{3b_{\dd}}\right)^{3/2}=\frac{3\dd^{2}}{2b_{\dd}^{5/2}}\left(\frac{\pi}{3}\right)^{3/2}a_{\dd}^{3}(1+\dd\,a_{\dd})^{4}.$$ 
According to the results of Appendix \ref{app:FD}, there exists an explicit $\dd^{\dagger} >0$ such that the previous estimates holds for any $\dd \in (0,\dd^{\dagger})$.  Refer to Lemma \ref{lem:abeq2} and remarks \ref{rmq:abeq2} and \ref{rmq:spec} for details.
\end{proof}
\begin{rmq} Notice that $\dd^{\dagger}$ is proportional to $E^{\frac{3}{2}}\varrho^{-1}$ precisely as the sharp requirement \eqref{eq:bound} on $\dd$ to obtain Fermi-Dirac statistics.
\end{rmq}
\subsection{Spectral gap estimates for Hard potential case} When $\g \in (0,1]$, we compare the operator to that of Maxwell potential.  Indeed, the fact that there is a Maxwellian density $M$ and $\kappa_{\dd} >0$ such that
$$\kappa_{\dd} M \leq \mM \leq M $$ 
readily implies that 
$$\kappa_{\dd}^{2} \mathbf{D}_{M,\g+2}(h) \leq \mathcal{D}_{\dd,\g+2}(h) \leq \mathbf{D}_{M,\g+2}(h)\,, \quad \forall\, h \in L^{2}(M ) \simeq L^{2}(\mM)$$
where
$$\mathbf{D}_{M ,\g+2}(h)=\frac{1}{2}\int_{\R^{6}}M (v)\,M (\vet)|v-\vet|^{\g+2}\left|\Pi(v-\vet)\left(\nabla h-\nabla h_{\ast}\right)\right|^{2}\d v\d\vet.$$
One can deduce from \cite[Theorem 1.2]{bara} that there is an explicit constant $C_{\g,\dd} =\frac{1}{8}\left(\frac{\gamma}{8eb_{\dd}}\right)^{\frac{\gamma}{2}}>0$ such that
$$\mathbf{D}_{M ,\g+2}(h) \geq C_{\g,\dd}\,\mathbf{D}_{M ,2}(h) \geq C_{\g,\dd}\mathcal{D}_{\dd,2}(h)\,, \qquad \forall\, h \in L^{2}(M ) \simeq L^{2}(\mM)$$
with no orthogonality conditions needed for this inequality. Therefore,
$$\mathcal{D}_{\dd,\g+2}(h) \geq \kappa^2_{\dd}\,C_{\g,\dd}\,\mathcal{D}_{\dd,2}(h) \qquad \forall h \in L^{2}(\mM)\,.$$
We can exploit, then, the orthogonality condition \eqref{eq:ortho} and the result for $\mathcal{D}_{\dd,2}$ given in Proposition \ref{maxwell_gap} to obtain that
$$\mathcal{D}_{\dd,\g+2}(h) \geq \kappa^2_{\dd}\,C_{\g,\dd}\lambda_{2}(\dd)\,\int_{\R^{3}}\mM(v)h^{2}(v)\d v \qquad \text{for any $h$ satisfying } \eqref{eq:ortho}.$$
\begin{theo}\label{theo:spectral}
There exists $\dd^{\dagger} >0$ such that for any $\dd \in (0,\dd^{\dagger})$
\begin{equation*}
\mathcal{D}_{\dd,\gamma+2}(h):=-\langle h,\bm{L}_{\dd}h\rangle_{2} \geq \,\lambda_{\g}(\dd)\left\|h-\mathbb{P}_{\dd}(h)\right\|^2_{L^2(\mM)}, \qquad \forall h \in \mathscr{D}(\bm{L}_{\dd})
\end{equation*}
where $\langle \cdot,\cdot \rangle_{2}$  denotes the inner product of $L^{2}(\mM)$ and $\mathbb{P}_{\dd}$ is the projection over the null space of $\bm{L}_{\dd}$ given by $\mathrm{Span}\left\{1,v_{1},v_{2},v_{3},|v|^{2}\right\}$.

\smallskip
\noindent
The constant $\lambda_{\g}(\dd) >0$ is given by $\lambda_{\g}(\dd)=\kappa^2_{\dd}\,C_{\g,\dd}\lambda_{2}(\dd)$ where $\lambda_{2}(\dd)$ is given by Proposition \ref{maxwell_gap}.
\end{theo}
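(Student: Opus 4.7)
The strategy is to build on the chain of inequalities already outlined in the paragraphs preceding the theorem, with the sole additional ingredient being to handle the kernel of $\bm{L}_{\dd}$ rather than restricting to functions that already satisfy \eqref{eq:ortho}. My plan has three components: identify $\mathrm{Ker}(\bm{L}_{\dd})$, argue that both sides of the inequality depend only on $h^{\perp}:=h-\mathbb{P}_{\dd}h$, and then run the Maxwell comparison.

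First, I would verify that $\mathrm{Ker}(\bm{L}_{\dd})=\mathrm{Span}\{1,v_{1},v_{2},v_{3},|v|^{2}\}$. From the explicit form \eqref{eq:Ld} of the Dirichlet form, $\mathcal{D}_{\dd,\g+2}(\phi)=0$ forces $\Pi(v-\vet)(\nabla\phi(v)-\nabla\phi(\vet))=0$ for $\mM\otimes\mM$-a.e.\ $(v,\vet)$, which by the classical argument for Landau-type operators characterizes $\phi$ as an affine combination of $1,v_{1},v_{2},v_{3},|v|^{2}$. Accordingly $\mathbb{P}_{\dd}$ is the $L^{2}(\mM)$-orthogonal projection onto this span, and $h^{\perp}:=h-\mathbb{P}_{\dd}h$ satisfies exactly the orthogonality condition \eqref{eq:ortho}.

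Second, I would observe that both quantities in the target inequality actually depend only on $h^{\perp}$. For the right-hand side this is immediate from $L^{2}(\mM)$-orthogonality: $\|h-\mathbb{P}_{\dd}h\|_{L^{2}(\mM)}=\|h^{\perp}\|_{L^{2}(\mM)}$. For the left-hand side, note that for every $\phi\in\mathrm{Span}\{1,v_{1},v_{2},v_{3},|v|^{2}\}$ one has $\Pi(v-\vet)\big(\nabla\phi(v)-\nabla\phi(\vet)\big)=0$: for $\phi=1$ or $\phi=v_{j}$ the gradient difference itself vanishes, while for $\phi=|v|^{2}$ the difference is $2(v-\vet)$, which is annihilated by $\Pi(v-\vet)$. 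Hence
\begin{equation*}
\Pi(v-\vet)\bigl(\nabla h(v)-\nabla h(\vet)\bigr)=\Pi(v-\vet)\bigl(\nabla h^{\perp}(v)-\nabla h^{\perp}(\vet)\bigr),
\end{equation*}
so $\mathcal{D}_{\dd,\g+2}(h)=\mathcal{D}_{\dd,\g+2}(h^{\perp})$.

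Finally, I would apply to $h^{\perp}$ the chain already displayed: from the two-sided sandwich $\kappa_{\dd}M\leq \mM\leq M$ one gets $\mathcal{D}_{\dd,\g+2}(h^{\perp})\geq \kappa_{\dd}^{2}\mathbf{D}_{M,\g+2}(h^{\perp})$, then the hard-potential/Maxwell comparison of \cite[Theorem~1.2]{bara} yields $\mathbf{D}_{M,\g+2}(h^{\perp})\geq C_{\g,\dd}\mathbf{D}_{M,2}(h^{\perp})$ with no orthogonality required, and $\mathbf{D}_{M,2}(h^{\perp})\geq \mathcal{D}_{\dd,2}(h^{\perp})$ again from $\mM\leq M$. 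Combining,
\begin{equation*}
\mathcal{D}_{\dd,\g+2}(h^{\perp})\geq \kappa_{\dd}^{2}\,C_{\g,\dd}\,\mathcal{D}_{\dd,2}(h^{\perp}).
\end{equation*}
Since $h^{\perp}$ satisfies \eqref{eq:ortho} and $\dd\in(0,\dd^{\dagger})$, Proposition~\ref{maxwell_gap} gives $\mathcal{D}_{\dd,2}(h^{\perp})\geq \lambda_{2}(\dd)\|h^{\perp}\|_{L^{2}(\mM)}^{2}$, which finishes the proof with $\lambda_{\g}(\dd)=\kappa_{\dd}^{2}C_{\g,\dd}\lambda_{2}(\dd)$. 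The only genuine difficulty is already absorbed into Proposition~\ref{maxwell_gap} and the Baranger--Mouhot-type estimate from \cite{bara}; the remaining work here is the kernel identification and the observation that the Dirichlet form is insensitive to the $\mathbb{P}_{\dd}h$ component, both of which are essentially algebraic.
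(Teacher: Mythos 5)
Your proof is correct and follows essentially the same route as the paper: the chain $\kappa_{\dd}^{2}\mathbf{D}_{M,\g+2}\leq\mathcal{D}_{\dd,\g+2}\leq\mathbf{D}_{M,\g+2}$, the Baranger--Mouhot comparison $\mathbf{D}_{M,\g+2}\geq C_{\g,\dd}\mathbf{D}_{M,2}\geq C_{\g,\dd}\mathcal{D}_{\dd,2}$, and then Proposition~\ref{maxwell_gap} on the orthogonal component. The only thing you spell out that the paper leaves implicit is the reduction from general $h$ to $h^{\perp}=h-\mathbb{P}_{\dd}h$ via the observation that $\Pi(v-\vet)(\nabla\phi(v)-\nabla\phi(\vet))=0$ for $\phi\in\mathrm{Span}\{1,v_{1},v_{2},v_{3},|v|^{2}\}$, which is a routine but welcome clarification.
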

\begin{rmq} It is possible to sharpen the spectral gap given in Theorem \ref{theo:spectral} arguing as in \cite{coerci}.  In fact, there exists a positive and explicitly computable constant $\eta=\eta(\g,\dd) >0$ such that
$$\mathcal{D}_{\dd,\gamma+2}(h) \geq \eta\left(\|h-\mathbb{P}_{\dd}h\|_{H^{1}_{\g}(\mM)}^{2} + \|h-\mathbb{P}_{\dd}h\|_{L^{2}_{\g+2}(\mM)}^{2}\right).$$
 \end{rmq}

\subsection{Spectral gap estimates in $L^{2}$-spaces with polynomial weights}\label{sec:spec-L2} Introduce the operator $\mathscr{L}_{\dd}$
$$\mathscr{L}_{\dd}(h\mM):=\mM\,\bm{L}_{\dd}(h).$$
Our goal is to prove that the linearized semigroup associated to $\mathscr{L}_{\dd}$, which relaxes exponentially fast in $L^{2}(\mM)$, admits similar decay in the larger $L^{2}$ space with polynomial weights.  A suitable approach for proving such extension uses enlargement techniques developed in \cite{GMM} and has been applied to the Landau equation in \cite{Kleber}.

\smallskip
\noindent
Observe that for any $\dd > 0$ such that $\dd\,a_{\dd} <1$ it follows that $1-2\dd\M_{\dd}(v) >0$.  Consequently,
\begin{equation}\label{eq:zeta}
\bm{\zeta}_{\dd}:=\frac{2b_{\dd}}{3}\int_{\R^{3}}|w|^{2}\mM(w)\left(1-2\dd\,\M_{\dd}(w)\right)\d w >0.\end{equation}
Denote also for such $\dd$,
$$k_{\dd}^{\dagger}:=\max\bigg({\varrho(\g+3)}\bm{\zeta}_{\dd}^{-1},2\g+7\bigg),$$
which quantify the amount of moments needed for exponential relaxation.
\begin{theo}\label{theo:enlarge} For any $\dd \in (0,\dd^{\dagger})$ such that $\dd\,a_{\dd} < 1$ the following holds.  For any  $k > k_{\dd}^{\dagger}$ the operator $\mathscr{L}_{\dd}\::\mathscr{D}(\mathscr{L}_{\dd}) \subset L^{2}(\langle \cdot \rangle^{k}) \to L^{2}(\langle \cdot \rangle^{k})$ generates a $C_{0}$-semigroup $(\bm{S}_{\dd}(t))_{t\geq0}$ in $L^{2}(\langle \cdot \rangle^{k})$ and 
\begin{equation*}
\big\|\bm{S}_{\dd}(t)g-\mathbb{P}_{\dd}g\big\|_{L^{2}(\langle \cdot \rangle^{k})} \leq C_{\dd}\,\exp(-\lambda_{\star}\,t)\big\|g-\mathbb{P}_{\dd}g\big\|_{L^{2}(\langle \cdot \rangle^{k})}, \qquad \forall t \geq0\,,
\end{equation*}
for some explicit constant $C_{\dd} >0$ and any $\lambda_{\star} \in (0,\lambda_{\gamma}(\dd))$.
\end{theo}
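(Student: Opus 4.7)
The plan is to apply the abstract semigroup enlargement theorem of Gualdani--Mischler--Mouhot \cite{GMM}, in the form developed for the linearized Landau operator by Carrapatoso \cite{Kleber}. The reference space is $E := L^{2}(\mM)$, in which $\mathscr{L}_{\dd}$ is self-adjoint with spectral gap $\lambda_{\gamma}(\dd)$ by Theorem \ref{theo:spectral}; the target is $\mathcal{E} := L^{2}(\langle \cdot \rangle^{k})$, and one has $E \hookrightarrow \mathcal{E}$ as soon as $k$ exceeds a dimensional threshold (since $\mM \lesssim M$ with $M$ Gaussian). The abstract theorem requires a decomposition $\mathscr{L}_{\dd} = \mathcal{A}_{\dd} + \mathcal{B}_{\dd}$ with (i) $\mathcal{B}_{\dd}$ $m$-dissipative on $\mathcal{E}$ with spectral bound strictly smaller than $-\lambda_{\gamma}(\dd)$, and (ii) $\mathcal{A}_{\dd}$ bounded from $\mathcal{E}$ into $E$, with compactly supported, smooth kernel. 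A Duhamel iteration
\begin{equation*}
\bm{S}_{\dd}(t) \;=\; \bm{S}_{\mathcal{B}_{\dd}}(t) \;+\; \int_{0}^{t} \bm{S}_{\dd}(t-s)\,\mathcal{A}_{\dd}\,\bm{S}_{\mathcal{B}_{\dd}}(s)\,\d s
\end{equation*}
then transfers the exponential decay from $E$ to $\mathcal{E}$ at any rate $\lambda_{\star} < \lambda_{\gamma}(\dd)$, up to an explicit constant $C_{\dd}$.

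The splitting is a standard Mouhot-type truncation. Starting from \eqref{eq:Ld}, integration by parts puts $\mathscr{L}_{\dd}$ in the form of a Fokker--Planck-type local operator (diffusion $\bm{\sigma}[\mM]$ with drift encoded in $\nabla\log\mM$) plus a non-local piece involving $F_{\ast}$ and $(\nabla F)_{\ast}$, weighted by the kernel $|v-\vet|^{\gamma+2}\,\Pi(v-\vet)\,\mM$. I cut off both pieces by a smooth indicator $\chi_{R}$ of $\{|v|\leq R,\;|\vet|\leq R\}$ combined with a height truncation of the singular kernel, setting $\mathcal{A}_{\dd}$ to be the resulting compactly supported, regular part and $\mathcal{B}_{\dd} := \mathscr{L}_{\dd} - \mathcal{A}_{\dd}$. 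Boundedness $\mathcal{A}_{\dd}\colon\mathcal{E}\to E$ is then routine. Generation of the semigroup $\bm{S}_{\mathcal{B}_{\dd}}(t)$ on $\mathcal{E}$ follows from uniform ellipticity of $\bm{\sigma}[\mM]$ (a variant of Proposition \ref{diffusion}) together with the dissipativity estimate described next, and bounded perturbation theory produces generation for $\mathscr{L}_{\dd} = \mathcal{B}_{\dd} + \mathcal{A}_{\dd}$.

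The heart of the argument is the coercivity of $\mathcal{B}_{\dd}$ in $\mathcal{E}$. Testing $\mathscr{L}_{\dd} F$ against $F\,\langle v\rangle^{k}$ and integrating by parts yields a quadratic form whose leading contribution at large velocities is
\begin{equation*}
\bigl[\,-\,\varrho(\gamma+3)\,+\,k\,\bm{\zeta}_{\dd}\,\bigr]\int_{\R^{3}} F(v)^{2}\,\langle v\rangle^{k+\gamma}\,\d v,
\end{equation*}
where $\bm{\zeta}_{\dd}$ defined in \eqref{eq:zeta} arises from the interplay between the drift $\nabla\log\mM$ and the diffusion $\bm{\sigma}[\mM]$ at infinity. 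This bracket is strictly negative precisely when $k > \varrho(\gamma+3)\,\bm{\zeta}_{\dd}^{-1}$, which is the first term in the threshold $k_{\dd}^{\dagger}$; the complementary condition $k > 2\gamma+7$ ensures that the subleading cross-terms (notably those obtained by differentiating $\langle v\rangle^{k}$ against $\bm{\sigma}[\mM]\nabla F$) remain absorbed. Taking $R$ large enough buries the localized remainder into $\mathcal{A}_{\dd}$ and produces the required dissipativity for $\mathcal{B}_{\dd}$ at a rate strictly better than $\lambda_{\gamma}(\dd)$. The main obstacle is precisely this sharp coercivity estimate with the explicit constant $k_{\dd}^{\dagger}$: careful bookkeeping of the asymptotics of $\bm{\sigma}[\mM]$ and $\nabla\log\mM$ is needed to extract the exact factor $\bm{\zeta}_{\dd}$, and the hypothesis $\dd\,a_{\dd} < 1$ enters here precisely to guarantee $\bm{\zeta}_{\dd} > 0$. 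The technical details, essentially a transcription of \cite{Kleber} to the LFD setting, are carried out in Appendix \ref{app:kleb}.
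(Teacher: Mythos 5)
Your overall strategy is the right one (GMM enlargement \`a la Carrapatoso, Duhamel iteration transferring decay from a small to a large space), and you correctly identify both the origin of the threshold $k_\dd^\dagger$ and the role of $\dd\,a_\dd<1$ in ensuring $\bm{\zeta}_\dd>0$. However, the concrete splitting you propose would not deliver the required dissipativity of $\mathcal{B}_\dd$. You define $\mathcal{A}_\dd$ as a ``compactly supported, regular part'' obtained by cutting off on $\{|v|\leq R,\,|\vet|\leq R\}$ (plus a ``height truncation'' of the kernel, which has no analogue here: $|v-\vet|^{\g+2}\Pi(v-\vet)$ is not singular, it vanishes at $v=\vet$), and $\mathcal{B}_\dd:=\mathscr{L}_\dd-\mathcal{A}_\dd$. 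This Boltzmann-style GMM truncation works in that setting only because the collision operator carries a collision frequency $-\nu(v)g$ that gives $\mathcal{B}$ a strictly negative multiplier on the whole of $\R^3$. The Landau/LFD operator has no such term ($-\bm{c}[\mM]g$ is a \emph{positive} multiplication since $c<0$), and the weight functional $\Phi_k$ of Lemma~\ref{lem:vk} becomes negative only as $|v|\to\infty$ when $k>k_\dd$. Pair your $\mathcal{B}_\dd$ against any $g$ supported in $\{|v|\leq R/2\}$: the integrand of $\langle\mathcal{B}_\dd g,g\rangle_{L^2(\varpi_k)}$ then lives on $\{|v|>R\}$ up to exponentially small tails of the convolution, disjoint from the support of $g$, so the pairing is essentially zero and cannot be $\leq a\|g\|_{L^2(\varpi_k)}^2$ with $a<0$. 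The correct fix, as in \cite{Kleber} and in the paper, is to \emph{add} a large multiplication $\U\chi_R$ to the regularizing block: $\bm{A}_\dd g=\Q(g,\M_\dd)-2\dd\,\Q(g\M_\dd,\M_\dd)+\U\chi_R g$ and $\bm{B}_\dd g=\Q(\mM,g)-\U\chi_R g$. The two nonlocal terms in $\bm{A}_\dd$ are \emph{not} truncated and $\bm{A}_\dd$ is \emph{not} compactly supported; they are already smoothing and map $L^1_{\g+2}$ into $L^2(\M_\dd^{-1})$ (Lemma~\ref{lem:Abounded}). It is the artificial $-\U\chi_R g$ in $\bm{B}_\dd$ that supplies dissipativity on the ball, with the weight computation handling $|v|\to\infty$.

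A secondary slip: the small space should be $E=L^2(\mM^{-1})$ (equivalently $L^2(\M_\dd^{-1})$ when $\dd\,a_\dd<1$), not $L^2(\mM)$. Theorem~\ref{theo:spectral} gives a spectral gap for $\bm{L}_\dd$ acting on $h\in L^2(\mM)$; the conjugate operator $\mathscr{L}_\dd$ acting on $g=\mM\,h$ is self-adjoint in $L^2(\mM^{-1})$, and that is the small space for the enlargement. With this choice the embedding $E\hookrightarrow\mathcal{E}$ holds for every $k$ since $\mM^{-1}$ grows like an inverse Gaussian; with $L^2(\mM)$ it points the wrong way.
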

\begin{proof} We point out the important steps since similar arguments are given in \cite{Kleber}.  The reader can find the details in Appendix \ref{app:kleb}.  From \eqref{eq:Lddhm}, one has
\begin{multline*}\label{eq:Lddg}
\mathscr{L}_{\dd}(g)=
\nabla \cdot \int_{\R^{3}}a(v-\vet)\left[\mM_{\ast}\nabla g - \mM\,(\nabla g)_{\ast}\right]\d \vet \\
+ \nabla \cdot \int_{\R^{3}}a(v-\vet)\bigg[ g_{\ast}\,(1-2\dd\M_{\dd})_{\ast}\nabla \M_{\dd} - g\,(1-2\dd\M_{\dd})(\nabla \M_{\dd})_{\ast}\bigg]\d \vet\,.
\end{multline*} 
The Landau bilinear operator is given by 
\begin{equation}\label{eq:QL}
\Q(G,F)=\nabla \cdot \int_{\R^{3}} a(v-\vet)\left[G_{\ast}\nabla F-G (\nabla F)_{\ast}\right]\d\vet
\end{equation}
for any suitable functions $G$, $F$.  It is not difficult to check that
$$\mathscr{L}_{\dd}(g)=\Q(\M_{\dd},g)+\Q(g,\M_{\dd})-2\dd\,\Q(g\,\M_{\dd},\M_{\dd})-\dd\,\Q(\M_{\dd}^{2},g)$$
where we used that $\mM=\M_{\dd}-\dd\,\M_{\dd}^{2}$. 
For a smooth nonnegative function $\chi \in \mathscr{C}_{c}^{\infty}(\R^{3})$ such that $0 \leq \chi \leq 1$, $\chi(v)=1$ for $\{|v| \leq 1\}$ and $\chi(v)=0$ for $\{|v| >2\}$, we define
$$\chi_{R}(v)=\chi(R^{-1}v), \qquad \qquad R >1, \:v \in \R^{3}.$$
Then, for suitable constant $\U >0$ to be chosen later, we set
$$\bm{A}_{\dd}g=\Q(g,\M_{\dd}) + \U\chi_{R}g-2\dd\Q(g\M_{\dd},\M_{\dd})$$  and
$$\bm{B}_{\dd}g=\mathscr{L}_{\dd}g-\bm{A}_{\dd}g=\sum_{i,j=1}^{3}\bm{\sigma}_{i,j}[\M_{\dd}]\,\partial^2_{ij}g-\bm{c}[\M_{\dd}]\,g -\dd\,\Q(\M_{\dd}^{2},g)-\U\chi_{R}g\,.$$
Setting $\mathcal{E}=L^{2}(\langle \cdot \rangle^{k})=L^{2}_{k}(\R^{3})$ and $E=L^{2}(\M_{\dd}^{-1})$, one sees from Proposition \ref{prop:Badiss} that, for any $a \in \R,$ one can choose $\U,R$ large enough such that $\bm{B}_{\dd}-a$ is dissipative in $E$. Moreover,  $\bm{A}_{\dd} \in \mathscr{B}(\mathcal{E})$ thanks to Lemma \ref{lem:Abounded} and  $E \subset \mathcal{E}$. Using Lemma \ref{lem:AU}, it is not difficult to notice that the splitting $\mathscr{L}_{\dd}=\bm{A}_{\dd}+\bm{B}_{\dd}$ meets all the properties of \cite[Theorem 2.4]{Kleber}, see also \cite[Theorem 2.13]{GMM}.  Therefore, it is possible to conclude as in \cite[Theorem 2.1]{Kleber}.
\end{proof}
\begin{rmq} Theorem \ref{theo:enlarge} is valid in any $L^{p}(\langle \cdot \rangle^{k})$ spaces, with $p \geq 1$, since the results in Appendix \ref{app:kleb} can be extended to such spaces following \cite{Kleber}.
\end{rmq}
\begin{rmq} We can show that, actually, there is $C >0$ independent of $\dd$ such that $C_{\dd} \leq C$ as soon as $\dd\,a_{\dd} <1$.  {Notice that the case $\dd=0$ has been considered in \cite{Kleber}.} 
\end{rmq}

\section{Quantitative convergence to equilibrium} \label{sec:entrop}
We prove here Theorem \ref{theo:converg}.  This proof will resort on a combination of previous spectral analysis and suitable entropy production estimates.
\subsection{Close to equilibrium estimates}
Consider an initial datum  $0\leq f_0\in L^{1}_{s_0}(\R^{3})$, with $s_0>2$, satisfying \eqref{hypci}--\eqref{hyp:mass}.  Let $\dd \in (0,\dd_{0}]$ and $f=f(t,v)$ be a weak solution to \eqref{LFDeq} given by Theorem \ref{results}, and let $\M_{\dd}$ be the unique Fermi-Dirac statistics satisfying \eqref{hyp:mass}. We introduce the fluctuation
$$g=f - \M_{\dd}$$
which satisfies
\begin{equation}\label{eq:partgt} 
\partial_{t}g = \mathscr{L}_{\dd}g + \bm{\Gamma}(g)\,, \quad t > 0, \qquad  g_{0}=f_{0} - \M_{\dd},
\end{equation}
where, with the notations of Section \ref{sec:spectral},
$$\bm{\Gamma}(g)=\mM\,\Gamma_{2,\dd}(\mM^{-1}g)+\mM\,\Gamma_{3,\dd}(\mM^{-1}g).$$
One can  check that
\begin{equation}\label{eq:GQ}
\bm{\Gamma}(g)=\Q(g(1-2\dd\M_{\dd}),g)-\dd\,\Q(g^{2},\M_{\dd}+g)\end{equation}
where $\Q(G,F)$ is the bilinear Landau operator defined in \eqref{eq:QL}. We have the following estimates for $\bm{\Gamma}$.
\begin{lem}\label{lem:estG} For any $k \geq 0$, there exists $C=C(k,\g) >0$ such that
\begin{equation*}
\left\|\bm{\Gamma}(g)\right\|_{L^{2}(\langle \cdot \rangle^{k})} 
\leq C\,\left(\|g\|_{L^{1}_{\g+2}}+\dd\|g^{2}\|_{L^{1}_{\g+2}}\right) \|g\|_{H^{2}_{{k+2\g+4}}}+ C \dd\|g^{2}\|_{L^{1}_{\g+2}}
\end{equation*}
\end{lem}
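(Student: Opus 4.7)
The plan is to exploit identity \eqref{eq:GQ} and write
\[
\bm{\Gamma}(g) = \Q\big(g(1-2\dd\M_{\dd}),g\big) - \dd\,\Q(g^{2},\M_{\dd}) - \dd\,\Q(g^{2},g),
\]
then estimate each of the three bilinear pieces in $L^{2}(\langle\cdot\rangle^{k})$ by reducing them to a single continuity estimate for the Landau bilinear operator $\Q$.

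The central step is to establish the bilinear estimate
\[
\|\Q(G,F)\|_{L^{2}(\langle\cdot\rangle^{k})} \leq C_{k,\g}\,\|G\|_{L^{1}_{\g+2}}\,\|F\|_{H^{2}_{k+2\g+4}}.
\]
Starting from the expanded form $\Q(G,F)=\bm{\sigma}[G]\!:\!D^{2}F+\bm{b}[G]\cdot\grad F-\bm{b}[F]\cdot\grad G-\bm{c}[F]\,G$, I would use the pointwise growth bounds (as in \eqref{eq:bc}) $|\bm{\sigma}[G](v)|\lesssim \langle v\rangle^{\g+2}\|G\|_{L^{1}_{\g+2}}$, $|\bm{b}[G](v)|\lesssim \langle v\rangle^{\g+1}\|G\|_{L^{1}_{\g+1}}$ and $|\bm{c}[G](v)|\lesssim \langle v\rangle^{\g}\|G\|_{L^{1}_{\g}}$. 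The first two terms directly yield contributions of the claimed form, with the weight loss $k\mapsto k+2\g+4$ dictated by the worst case pairing of $|\bm{\sigma}[G]|\lesssim\langle v\rangle^{\g+2}\|G\|_{L^{1}_{\g+2}}$ against $D^{2}F$ measured in $L^{2}(\langle\cdot\rangle^{k})$. For the last two summands---in which only $G$ is differentiated or appears undifferentiated---I would recombine them into the single divergence $\nabla\!\cdot\!(G\,\bm{b}[F])$ coming from the original form of $\Q$, integrate by parts against a dual test function, and thereby transfer the derivative onto the weight $\langle v\rangle^{k}$ and the test function. This keeps $G$ in $L^{1}_{\g+2}$ while the polynomial bound $|\bm{b}[F](v)|\lesssim \langle v\rangle^{\g+1}\|F\|_{L^{1}_{\g+1}}$ is absorbed into the $H^{2}_{k+2\g+4}$ norm of $F$, since $\|F\|_{L^{1}_{\g+1}}$ is controlled by $\|F\|_{H^{2}_{k+2\g+4}}$ whenever $k$ is large enough.

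With the bilinear estimate in hand, the three pieces of $\bm{\Gamma}(g)$ are treated as follows. For $\Q(g(1-2\dd\M_{\dd}),g)$, the pointwise bound $|1-2\dd\M_{\dd}|\leq 1$ gives $\|g(1-2\dd\M_{\dd})\|_{L^{1}_{\g+2}}\leq \|g\|_{L^{1}_{\g+2}}$, hence a contribution $C\|g\|_{L^{1}_{\g+2}}\|g\|_{H^{2}_{k+2\g+4}}$. For $\dd\,\Q(g^{2},g)$, taking $G=g^{2}$, $F=g$ immediately produces $C\dd\|g^{2}\|_{L^{1}_{\g+2}}\|g\|_{H^{2}_{k+2\g+4}}$. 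For $\dd\,\Q(g^{2},\M_{\dd})$, the weighted Sobolev norm $\|\M_{\dd}\|_{H^{2}_{k+2\g+4}}$ is a finite constant depending only on $\varrho,E,\g,k$ (uniformly in $\dd$ by the bounds on $a_{\dd},b_{\dd}$ recalled in Appendix \ref{app:FD}); it is absorbed into $C$ and yields the isolated term $C\dd\|g^{2}\|_{L^{1}_{\g+2}}$. Summing the three contributions produces the inequality stated by the lemma.

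The main obstacle is the bilinear estimate itself, specifically the two summands $\bm{b}[F]\!\cdot\!\grad G+\bm{c}[F]\,G$, which cannot be controlled by $\|G\|_{L^{1}}$ alone without a careful use of the divergence structure of $\Q$ together with a duality pairing that shifts the derivative off $G$. Everything else reduces to routine bookkeeping of weights, convolution bounds, and moment estimates.
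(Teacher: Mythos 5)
Your high-level strategy matches the paper's: expand $\bm{\Gamma}(g)$ via \eqref{eq:GQ} into three Landau bilinear terms, estimate each by a single $\Q$-continuity bound, and absorb the $\M_{\dd}$-dependent factors using the uniform bounds on $a_{\dd},b_{\dd}$ from Appendix~\ref{app:FD}. The paper simply \emph{cites} the key bilinear continuity estimate from \cite[Proposition~3.1]{Kleber},
\[
\|\Q(G,F)\|_{L^{2}(\langle \cdot \rangle^{k})}
\leq C_{k,\g}\Big(\|G\|_{L^{1}_{\g+2}}\|\nabla^{2}F\|_{L^{2}_{k+2\g+4}}+\|G\|_{L^{1}_{\g}}\|F\|_{L^{2}_{k+2\g}}\Big),
\]
whereas you attempt to prove it from scratch. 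That is where the gap lies, and you have in fact flagged exactly the right spot: the two summands $\bm{b}[F]\cdot\grad G+\bm{c}[F]\,G$ are the obstruction.

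Your proposed fix does not repair it. Recombining $\bm{b}[F]\cdot\grad G+\bm{c}[F]\,G=\nabla\!\cdot\!(G\,\bm{b}[F])$ and testing against $\varphi\in L^{2}(\langle\cdot\rangle^{k})$ produces, after the integration by parts, a term $\int G\,\bm{b}[F]\cdot\nabla\varphi\,\langle v\rangle^{k}\,\d v$, and $\nabla\varphi$ is not controlled by $\|\varphi\|_{L^{2}(\langle\cdot\rangle^{k})}$; the dual of $L^{2}$ is $L^{2}$, not $H^{1}$. Pushing the derivative only onto the polynomial weight leaves the $\langle v\rangle^{k}\nabla\varphi$ term behind, so the duality argument collapses. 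The correct resolution is structural, not analytic: write the bilinear operator in the form $\Q(G,F)=\nabla\cdot\big(\bm{\sigma}[G]\nabla F-F\,\bm{b}[G]\big)$, where the term with $G$ undifferentiated carries $\bm{b}[G]$ (not $\bm{b}[F]$). Expanding this gives the two-term identity $\Q(G,F)=\bm{\sigma}[G]\!:\!\nabla^{2}F-\bm{c}[G]\,F$, in which \emph{no} derivative falls on $G$. The pointwise bounds $|\bm{\sigma}[G](v)|\lesssim\langle v\rangle^{\g+2}\|G\|_{L^{1}_{\g+2}}$ and $|\bm{c}[G](v)|\lesssim\langle v\rangle^{\g}\|G\|_{L^{1}_{\g}}$ then yield Kleber's estimate directly, weight for weight, and the rest of your argument (bounding $\|\M_{\dd}\|_{H^{2}_{k+2\g+4}}$ uniformly in $\dd$, using $|1-2\dd\M_{\dd}|\leq 2$, and specializing $G=g(1-2\dd\M_{\dd})$ or $G=g^{2}$) goes through as you wrote it.
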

\begin{proof} From \cite[Proposition 3.1]{Kleber}, there is $C_{k,\g} >0$ such that
$$\|\Q(G,F)\|_{L^{2}(\langle \cdot \rangle^{k})} \leq C_{k,\g} \left(\left\|G\right\|_{L^{1}(\langle \cdot \rangle^{\g+2})}\|\nabla^{2}F\|_{L^{2}(\langle \cdot \rangle^{k+2\g+4})}+\|G\|_{L^{1}(\langle \cdot \rangle^{\g})}\,\|F\|_{L^{2}(\langle \cdot \rangle^{k+2\g})}\right),$$
from which we deduce first that
$$\|\Q(g(1-2\dd\M_{\dd}),g)\|_{L^{2}(\langle \cdot \rangle^{k})} \leq 2C_{k,\g}\|g\|_{L^{1}(\langle \cdot \rangle^{\g+2})}\left(\|\nabla^{2}g\|_{L^{2}(\langle \cdot \rangle^{k+2\g+4})}+\|g\|_{L^{2}(\langle \cdot \rangle^{k+2\g}}\right)$$
since $\|1-2\dd\M_{\dd}\|_{\infty} \leq 2$.  Noticing that, see Lemma \ref{lem:HessM}, 
$$\sup_{\dd \in (0,1)}\left(\|\nabla^{2}\M_{\dd}\|_{L^{2}(\langle \cdot \rangle^{k+2\g+4})} + \|\M_{\dd}\|_{L^{2}(\langle \cdot \rangle^{k+2\g})}\right) < \infty,$$
one finds a positive constant $C_{k,\g}'>0$ depending only on $k,\g$ such that
$$\|\Q(g^{2},\M_{\dd})\|_{L^{2}(\langle \cdot \rangle^{k})} \leq C_{k,\g}'\|g^{2}\|_{L^{1}(\langle \cdot \rangle^{\g+2})}\,.$$
And also,
$$\|\Q(g^{2},g)\|_{L^{2}(\langle \cdot \rangle^{k})} \leq C_{k,\g}\|g^{2}\|_{L^{1}(\langle \cdot \rangle^{\g+2})}\left(\|\nabla^{2}g\|_{L^{2}(\langle \cdot \rangle^{k+2\g+4})}+\|g\|_{L^{2}(\langle \cdot \rangle^{k+2\g})}\right).$$
One concludes from \eqref{eq:GQ}. 
\end{proof}
\begin{prop}\label{prop:close} Let $f_{0}$ satisfy Assumption \ref{ci}, $\dd \in (0,\dd_{0}]$ be such that $\dd\,a_{\dd} <1$, and $k > k_{\dd}^{\dagger}$.  Let $f$ be a solution to \eqref{LFDeq}.  Assume that there exists $\delta > 0$ such that
$$\|f(t)-\M_{\dd}\|_{L^{2}(\langle \cdot \rangle^{k})} \leq \delta\,,  \qquad \forall\, t \geq t_{0}\geq0,$$
and that
\begin{equation}\label{eq:boundclose}
\sup_{t\geq t_{0}}\|f(t)\|_{{H^{4}_{k+4\g+8}}} \leq C_{t_{0}}\,.
\end{equation}
Then, there is $C >0$ depending on $t_{0}$, $C_{t_{0}}$, $\delta$, $k$, $\g$ such that for any $\lambda_{\star}\in(0,\lambda_{\gamma}(\dd))$,
$$\|f(t)-\M_{\dd}\|_{L^{2}(\langle \cdot \rangle^{k})} \leq C\exp(-\lambda_{\star}t)\|f(t_{0}) - \M_{\dd}\|_{L^{2}(\langle \cdot \rangle^{k})}, \qquad \forall\, t \geq t_{0}.$$
\end{prop}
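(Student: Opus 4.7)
The plan is to use Duhamel's formula for $g(t) = f(t) - \M_{\dd}$, combined with the spectral gap of Theorem \ref{theo:enlarge} and the nonlinear bound of Lemma \ref{lem:estG}. Equation \eqref{eq:partgt} yields
\begin{equation*}
g(t) = \bm{S}_{\dd}(t-t_{0})\,g(t_{0}) + \int_{t_{0}}^{t} \bm{S}_{\dd}(t-s)\,\bm{\Gamma}(g(s))\,\d s\,, \qquad t \geq t_{0}\,.
\end{equation*}
First, I observe that $\mathbb{P}_{\dd}\,g(t) = 0$ for all $t \geq 0$: this follows from the conservation laws \eqref{consmass}--\eqref{consenergy} together with \eqref{FD_moments}. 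Likewise, $\mathbb{P}_{\dd}\,\bm{\Gamma}(g(s)) = 0$ for a.e.\ $s$, since by \eqref{eq:GQ} the operator $\bm{\Gamma}$ is built from Landau bilinear operators $\Q$, for which $1,v_{1},v_{2},v_{3},|v|^{2}$ are collision invariants.

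Fix $\lambda_{\star}\in(0,\lambda_{\gamma}(\dd))$ and pick $\mu \in (\lambda_{\star},\lambda_{\gamma}(\dd))$. Theorem \ref{theo:enlarge} provides $C_{\mu}>0$ such that $\|\bm{S}_{\dd}(\tau)h\|_{L^{2}(\langle\cdot\rangle^{k})} \leq C_{\mu}\,\exp(-\mu\tau)\,\|h\|_{L^{2}(\langle\cdot\rangle^{k})}$ whenever $\mathbb{P}_{\dd}h=0$. Applied to the Duhamel identity this gives
\begin{equation*}
\|g(t)\|_{L^{2}(\langle\cdot\rangle^{k})} \leq C_{\mu}e^{-\mu(t-t_{0})}\|g(t_{0})\|_{L^{2}(\langle\cdot\rangle^{k})} + C_{\mu}\int_{t_{0}}^{t} e^{-\mu(t-s)}\|\bm{\Gamma}(g(s))\|_{L^{2}(\langle\cdot\rangle^{k})}\,\d s\,.
\end{equation*}

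The nonlinear term is controlled via Lemma \ref{lem:estG}. Since $k>k_{\dd}^{\dagger}$, Cauchy-Schwarz yields $\|g(s)\|_{L^{1}_{\gamma+2}} \leq C\|g(s)\|_{L^{2}(\langle\cdot\rangle^{k})}$ and $\|g^{2}(s)\|_{L^{1}_{\gamma+2}} \leq \|g(s)\|_{L^{2}(\langle\cdot\rangle^{k})}^{2}$. For the factor $\|g(s)\|_{H^{2}_{k+2\gamma+4}}$, I would apply a Gagliardo-Nirenberg interpolation in weighted Sobolev spaces, combining the a priori bound \eqref{eq:boundclose} with the $L^{2}(\langle\cdot\rangle^{k})$ smallness hypothesis, to obtain
\begin{equation*}
\|g(s)\|_{H^{2}_{k+2\gamma+4}} \leq C \|g(s)\|_{L^{2}(\langle\cdot\rangle^{k})}^{1/2}\|g(s)\|_{H^{4}_{k+4\gamma+8}}^{1/2} \leq C_{0}\,\|g(s)\|_{L^{2}(\langle\cdot\rangle^{k})}^{1/2}\,,
\end{equation*}
where $C_{0}=C_{0}(C_{t_{0}})$; the weight and derivative exponents on the right-hand side are chosen so that the standard weighted Gagliardo-Nirenberg inequality applies. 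Plugging these estimates into Lemma \ref{lem:estG} produces
\begin{equation*}
\|\bm{\Gamma}(g(s))\|_{L^{2}(\langle\cdot\rangle^{k})} \leq \bm{K}_{\delta}\,\|g(s)\|_{L^{2}(\langle\cdot\rangle^{k})}\,,\qquad \bm{K}_{\delta}\leq \bm{C}(t_{0},C_{t_{0}},k,\gamma)\bigl(\delta^{1/2}+\delta\bigr)\,.
\end{equation*}
Setting $\phi(t)=e^{\mu(t-t_{0})}\|g(t)\|_{L^{2}(\langle\cdot\rangle^{k})}$ and multiplying the Duhamel estimate by $e^{\mu(t-t_{0})}$, Gronwall's inequality yields $\phi(t) \leq C_{\mu}\|g(t_{0})\|_{L^{2}(\langle\cdot\rangle^{k})}\exp\bigl(C_{\mu}\bm{K}_{\delta}(t-t_{0})\bigr)$. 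Choosing $\delta$ small enough so that $C_{\mu}\bm{K}_{\delta}<\mu-\lambda_{\star}$ then gives the claimed decay at rate $\lambda_{\star}$.

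The main obstacle is the interpolation step, which must convert the qualitative $H^{4}$ bound \eqref{eq:boundclose} into smallness of $\|g(s)\|_{H^{2}_{k+2\gamma+4}}$, with weights compatible with those appearing in Lemma \ref{lem:estG}. It is essential that this interpolation leaves at least one factor $\|g\|_{L^{2}(\langle\cdot\rangle^{k})}^{\theta}$ with $\theta>0$ that is not compensated by $C_{t_{0}}$, so that $\bm{K}_{\delta}$ can be rendered arbitrarily small by shrinking $\delta$. This smallness requirement on $\delta$ is absorbed into the dependence of the final constant $C$ on $\delta$ in the statement of the proposition.
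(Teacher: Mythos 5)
Your proof takes essentially the same route as the paper's: Duhamel's formula for $g=f-\M_{\dd}$, the orthogonality $\mathbb{P}_{\dd}g=\mathbb{P}_{\dd}\bm{\Gamma}(g)=0$ from the conservation laws, the enlarged-space spectral gap of Theorem \ref{theo:enlarge}, the nonlinear bound of Lemma \ref{lem:estG} combined with the $H^{2}/H^{4}$ interpolation and the $L^{2}(\langle\cdot\rangle^{k})$ smallness, and a Gronwall-type closure. The only superficial differences are that you bound $\dd\|g^{2}\|_{L^{1}_{\gamma+2}}$ directly by $\dd\|g\|_{L^{2}(\langle\cdot\rangle^{k})}^{2}$ rather than via the $L^{\infty}$ bound, and that you spell out the $\mu$-margin/Gronwall step explicitly instead of citing \cite[Lemma 4.5]{mouhotcmp}; both choices are equivalent.
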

\begin{proof} Set $g=f - \M_{\dd}$.  Since $g$ is a solution to \eqref{eq:partgt}, according to Duhamel formula for any $t_{0} \geq0$,
$$g(t)=\mathcal{S}_{\dd}(t-t_{0})g(t_{0})+\int_{t_{0}}^{t}\mathcal{S}_{\dd}(t-s)\bm{\Gamma}\big(g(s)\big)\d s$$
as soon as $g(t_{0}) \in L^{2}(\langle \cdot \rangle^{k})$ and $\bm{\Gamma}\big(g(s)\big) \in L^{2}(\langle \cdot \rangle^{k})$ for any $s \in [t_{0},t)$.  Notice that  both $f(t_{0})$ and $\M_{\dd}$ satisfy \eqref{hyp:mass} while $\Q$ preserves mass, momentum and energy.  Therefore,
$$\mathbb{P}_{\dd}g(t_{0})=0 \qquad \text{ and } \qquad \mathbb{P}_{\dd}\bm{\Gamma}\big(g(s)\big)=0 \quad  \forall\, s \geq t_0.$$ In particular, one deduces from  Theorem \ref{theo:enlarge} that for $\dd \in (0,\dd^{\dagger})$ with $\dd\,a_{\dd} <1$ it holds 
\begin{equation*}
\|g(t)\|_{L^{2}(\langle \cdot \rangle^{k})} \leq C\exp(-\lambda_{\star}(t-t_{0}))\|g(t_{0})\|_{L^{2}(\langle \cdot \rangle^{k})} 
+C\int_{t_{0}}^{t}\exp(-\lambda_{\star}(t-s))\|\bm{\Gamma}\big(g(s)\big)\|_{L^{2}(\langle \cdot \rangle^{k})}\d s\,,
\end{equation*}
with $\lambda_{\star}\in(0,\lambda_{\gamma}(\dd))$.  According to Lemma \ref{lem:estG}, we get, for $t \geq t_{0}$
\begin{multline*}
\|g(t)\|_{L^{2}(\langle \cdot \rangle^{k})} \leq C\exp(-\lambda_{\star}(t-t_{0}))\|g(t_{0})\|_{L^{2}(\langle \cdot \rangle^{k})}  \\
+C_{k,\g}\int_{t_{0}}^{t}\exp(-\lambda_{\star}(t-s))\left(\|g(s)\|_{L^{1}_{\g+2}}+\dd\big\|g(s)^{2}\big\|_{L^{1}_{\g+2}}\right)\|g(s)\|_{H^{2}_{{2(k+\g+2)}}}\d s \\
+C_{k,\g}\dd\int_{t_{0}}^{t}\exp(-\lambda_{\star}(t-s))\big\|g(s)^{2}\big\|_{L^{1}_{\g+2}}\d s.
\end{multline*}
Now, since 
$$\dd\|g(s)^{2}\|_{L^{1}_{\g+2}}=\dd\|g(s)\|^2_{L^{2}_{\g+2}}\leq \|g(s)\|^{\frac{3}{2}}_{L^{2}(\langle\cdot\rangle^k)} \|g(s)\|_{H^{4}_{k+4\g+8}}^{\frac{1}{2}},$$
and $\|g(s)\|_{L^\infty} \leq \|f(s)\|_{L^\infty}+\|\M_{\dd}\|_{L^\infty} \leq \frac{2}{\dd}$ we get 
\begin{multline*}
\|g(t)\|_{L^{2}(\langle \cdot \rangle^{k})} \leq C\exp(-\lambda_{\star}(t-t_{0}))\|g(t_{0})\|_{L^{2}(\langle \cdot \rangle^{k})}\\
+3C_{k,\g}\int_{t_{0}}^{t}\exp(-\lambda_{\star}(t-s)) \|g(s)\|_{L^{1}_{\g+2}}\,\|g(s)\|_{H^{2}_{k+2\g+4}}\d s\\
+C_{k,\g}\int_{t_{0}}^{t}\exp(-\lambda_{\star}(t-s)) \|g(s)\|^{\frac{3}{2}}_{L^{2}_{\g+2}} \|g(s)\|_{H^{4}_{k+4\g+8}}^{\frac{1}{2}}\d s.
\end{multline*}
Notice that as soon as $k > 7+2\g$ one has  
$$\|g(s)\|_{L^{1}_{\g+2}} \leq \|g(s)\|_{L^{2}(\langle \cdot \rangle^{k})}\left(\int_{\R^{3}}\langle v\rangle^{2\g+4-k}\d v\right)^{\frac{1}{2}}=C(k,\g)\|g(s)\|_{L^{2}(\langle\cdot\rangle^{k})}.$$
Also, using the interpolation inequality, valid for any $\theta \in (0,1)$, $p=\theta\,p_{1}+(1-\theta)\,p_{2}$, $\ell=\theta\,\ell_{1}+(1-\theta)\ell_{2}$,
$$\|g\|_{H^{p}_{\ell}} \leq C_{p,\ell,\theta}\|g\|_{H^{p_{1}}_{\ell_{1}}}^{\theta}\,\|g\|_{H^{p_{2}}_{\ell_{2}}}^{1-\theta}$$
with $p=2$, $\ell=k+2\g+4$, $p_{1}=0$, $\ell_{1}=k$ and, say $\theta=\frac{1}{2}$.  Thus, $p_{2}=4$ and $\ell_{2}=2(k+2\g+4)$.  We obtain that
$$\|g\|_{H^{2}_{k+2\g+4}} \leq C_{k,\g}\|g\|_{L^{2}(\langle \cdot \rangle^{k})}^{\frac{1}{2}}\,\|g\|_{H^{4}_{k+4\g+8}}^{\frac{1}{2}}.$$
And therefore, for any $s \in (t_{0},t)$
$$ \|g(s)\|_{L^{1}_{\g+2}}\,\|g(s)\|_{H^{2}_{k+2\g+4}} \leq C(k,\g)\|g(s)\|_{L^{2}(\langle \cdot \rangle^{k})}^{\frac{3}{2}}\,\|g(s)\|_{H^{4}_{k+4\g+8}}^{\frac{1}{2}}$$
which result from Lemma \ref{lem:HessM}, under the assumptions of the Proposition and noticing that \eqref{eq:boundclose} yields a similar bound for $g(t)$, that
\begin{equation*}
\|g(t)\|_{L^{2}(\langle \cdot \rangle^{k})} \leq C\exp(-\lambda_{\star}(t-t_{0}))\|g(t_{0})\|_{L^{2}(\langle \cdot \rangle^{k})}  + 
 \tilde{C}(k,\g,t_{0})\delta^{\frac{1}{2}}\int_{t_{0}}^{t}\exp(-\lambda_{\star}(t-s))\|g(s)\|_{L^{2}(\langle \cdot \rangle^{k})}\d s\end{equation*}
for some positive constant $\tilde{C}(k,\g,t_{0})$ depending only on $k,\g,t_{0}$. We conclude as in \cite[Lemma 4.5]{mouhotcmp}, provided $\delta$ and, consequently, $\|g(t_{0})\|_{L^{2}(\langle \cdot \rangle^{k})}$ are sufficiently small, that
$$\|g(t)\|_{L^{2}(\langle \cdot \rangle^{k})} \leq C(k,\g,\delta,t_{0})\exp(-\lambda_{\star}(t-t_{0}))\|g(t_{0})\|_{L^{2}(\langle \cdot \rangle^{k})}\,.$$ 
The proof is achieved.
\end{proof}
\begin{rmq}\label{rmq:ab1} Using Lemma \ref{lem:abeq2} one proves the threshold value 
$$\dd_{1}=\left(\frac{3}{5}\right)^{\frac{5}{2}}\frac{(2\pi\,E)^{\frac{3}{2}}}{\varrho}\,,$$
for which $\dd\,a_{\dd} <1$ for $\dd \in (0,\dd_{1}).$
\end{rmq}
\subsection{Entropy/Entropy production}\label{sec:entropy} Recall the definition \eqref{eq:FDentro} of the entropy ${\mathcal{S}}_{\dd}(f)$ for Fermi-Dirac particles. Under Assumption \ref{ci} and for $\dd \in (0,\dd_{0}]$, observe that the function $t\longmapsto {\mathcal{S}}_{\dd}(f)(t)$ is non-decreasing for any smooth solution $f=f(t)$ to the LFD equation \eqref{LFDeq}.  Indeed, for any smooth function $g$ with $0 < g < \dd^{-1}$ note that
\begin{equation}\label{eq:Xidd}\begin{split}
\bm{\Xi}_{\dd}[g](v,\vet)&=\Pi(v-\vet)\big(g_{\ast}(1-\dd g_{\ast})\nabla g - g(1-\dd g)\nabla g_{\ast}\big)\left(\frac{\nabla g}{g(1-\dd g)}-\frac{\nabla g_{\ast}}{g_{\ast}(1-\dd g_{\ast})}\right)\\
&=gg_{\ast}(1-\dd g)(1-\dd g_{\ast})\left|\Pi(v-\vet)\left(\frac{\nabla g}{g(1-\dd g)}-\frac{\nabla g_{\ast}}{g_{\ast}(1-\dd g_{\ast})}\right)\right|^{2} \geq 0\,.
\end{split}
\end{equation}
Then, for a smooth solution to $f=f(t)$ of \eqref{LFD}, the evolution of the entropy is given by
\begin{equation*}
\begin{split}
\dfrac{\d}{\d t}&\mathcal{S}_{\dd}(f(t))=-\int_{\R^{3}}\partial_{t}f(t,v)\big(\log(\dd f(t,v))-\log(1-\dd f(t,v))\big)\d v\\
&=\int_{\R^{6}}\Psi(v-\vet)\Pi(v-\vet)\bigg[\fet (1-\dd \fet)\nabla f -f(1-\dd f)\nabla \fet\bigg]\left[\frac{\nabla f}{f(1-\dd f)}\right]\d v\d\vet\\
&=\frac{1}{2}\int_{\R^{3}\times \R^{3}}\Psi(v-\vet)\bm{\Xi}_{\dd}[f(t)](v,\vet)\d v\d\vet \geq0.
\end{split}
\end{equation*}
Thus, we define the entropy production functional $\mathscr{D}_{\dd,\Psi}$ as
\begin{equation}\label{eq:product}
\mathscr{D}_{\dd,\Psi}(g):=\frac{1}{2}\int_{\R^{3}\times\R^{3}}\Psi(v-\vet)\bm{\Xi}_{\dd}[g](v,\vet)\d v\d\vet\,,\qquad \Psi(z)=|z|^{\g+2}\,,
\end{equation}
for any smooth function $0 < g < \dd^{-1}$.  Therefore,
\begin{equation}\label{eq:entro}
\dfrac{\d}{\d t}\mathcal{S}_{\dd}(f(t))=\mathscr{D}_{\dd,\Psi}(f(t)), \qquad \forall t \geq0\,.
\end{equation}
When the choice of $\Psi(z)=|z|^{\g+2}$ is clear from the context, we will simply write $\mathscr{D}_{\dd}$ instead of $\mathscr{D}_{\dd,\Psi}$.
We begin with a comparison between the entropy production $\mathscr{D}_{\dd}$ for the Landau-Fermi-Dirac operator and that of the Landau operator $\mathscr{D}_{0}.$ This comparison is valid for functions $f$ satisfying a suitable lower bound.
\begin{lem}\label{lem:comparison} Fix $\dd >0$ and let $0 \leq f \leq \dd^{-1}$ be a function such that
\begin{equation}\label{eq:lowek0}
\inf_{v \in \R^{3}}\big(1-\dd\,f(v)\big)=\kappa_{0}>0\,.
\end{equation}
Then, 
\begin{equation}\label{eq:D0Da}
\kappa_{0}^{2}\mathscr{D}_{0}(f) \leq 2\mathscr{D}_{\dd}(f) + \frac{4\dd^{2}}{\kappa_{0}}\,\int_{\R^{6}}f\,f_{\ast}\,|v-v_{\ast}|^{\g+2}\,\left|\nabla f(v)\right|^{2}\,\d v\d v_{\ast}.\end{equation}
\end{lem}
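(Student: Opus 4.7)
The plan is to compare the two integrands pointwise by exploiting the identity $\nabla f/f = (1-\dd f)\,A$, where $A := \nabla f/[f(1-\dd f)]$ is the ``logarithmic derivative'' appearing in $\bm{\Xi}_{\dd}[f]$. With this notation, a direct computation gives the asymmetric splitting
$$\frac{\nabla f}{f} - \frac{\nabla f_{\ast}}{f_{\ast}} = (A - A_{\ast}) - \dd\,(fA - f_{\ast}A_{\ast}).$$
Composing with $\Pi(v-v_{\ast})$ (which is linear of operator norm $\leq 1$) and applying the elementary inequality $|X+Y|^{2}\leq 2|X|^{2}+2|Y|^{2}$ yields the pointwise bound
$$\Big|\Pi(v-v_{\ast})\Big(\tfrac{\nabla f}{f}-\tfrac{\nabla f_{\ast}}{f_{\ast}}\Big)\Big|^{2} \leq 2\,|\Pi(A-A_{\ast})|^{2} + 2\dd^{2}|\Pi(fA - f_{\ast}A_{\ast})|^{2}.$$

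Next I would multiply this inequality by $\tfrac{1}{2}\kappa_{0}^{2}\,f\,f_{\ast}\,|v-v_{\ast}|^{\gamma+2}$ and integrate over $\R^{6}$, so that the left-hand side becomes $\kappa_{0}^{2}\,\mathscr{D}_{0}(f)$. For the contribution of $|\Pi(A-A_{\ast})|^{2}$, the hypothesis \eqref{eq:lowek0} applied simultaneously at $v$ and at $v_{\ast}$ gives the crucial pointwise bound $\kappa_{0}^{2}\leq (1-\dd f)(1-\dd f_{\ast})$, which allows one to absorb the prefactor $\kappa_{0}^{2}\,f\,f_{\ast}$ into $f\,f_{\ast}(1-\dd f)(1-\dd f_{\ast})$. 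By \eqref{eq:Xidd} and \eqref{eq:product}, this contribution is then exactly $2\,\mathscr{D}_{\dd}(f)$.

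For the second, genuinely $\dd$--linear piece, I would apply $|X-Y|^{2}\leq 2|X|^{2}+2|Y|^{2}$ once more to get $|\Pi(fA - f_{\ast}A_{\ast})|^{2}\leq 2|fA|^{2}+2|f_{\ast}A_{\ast}|^{2}$, then invoke the identity $fA = \nabla f/(1-\dd f)$ together with $(1-\dd f)^{2}\geq \kappa_{0}^{2}$ to obtain $|fA|^{2}\leq \kappa_{0}^{-2}|\nabla f|^{2}$. The $\kappa_{0}^{2}$ factor carried from the left-hand side then cancels this $\kappa_{0}^{-2}$, and the symmetry $v\leftrightarrow v_{\ast}$ of $f\,f_{\ast}|v-v_{\ast}|^{\gamma+2}$ collapses the two resulting terms into a single integral of the form $C\dd^{2}\int f\,f_{\ast}|v-v_{\ast}|^{\gamma+2}|\nabla f|^{2}\,\d v\,\d v_{\ast}$ with an absolute constant $C$; using $\kappa_{0}\leq 1$ to trade $C$ for the weaker constant $C/\kappa_{0}$ delivers the stated right-hand side.

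The only genuine subtlety—beyond routine Young inequalities and symmetrization—is the choice of the asymmetric splitting in the first step. The more symmetric alternative $\nabla f/f - \nabla f_{\ast}/f_{\ast} = (1-\dd f)(A-A_{\ast}) + \dd(f_{\ast}-f)A_{\ast}$ would force one to control the weight $(1-\dd f)^{2}$ by $\kappa_{0}^{-1}(1-\dd f)(1-\dd f_{\ast})$ in the main term, leaving a constant $4$ rather than $2$ in front of $\mathscr{D}_{\dd}(f)$; the splitting $(A-A_{\ast}) - \dd(fA - f_{\ast}A_{\ast})$ is what produces the clean factor $2$.
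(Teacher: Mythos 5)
Your proof is correct and follows essentially the same route as the paper: the decomposition $\tfrac{\nabla f}{f}-\tfrac{\nabla f_\ast}{f_\ast}=(A-A_\ast)-\dd(fA-f_\ast A_\ast)$ is exactly the paper's splitting in the notation $A=\nabla f/F$, $fA=\nabla f/(1-\dd f)$, and the remaining steps (Young's inequality, $\kappa_0^2 f f_\ast\le F F_\ast$, and symmetrization) are identical. The only cosmetic difference is your bookkeeping of the $\kappa_0$ factors in the $\dd^2$-term, which actually yields the slightly sharper constant $4\dd^2$ before you voluntarily weaken it to $4\dd^2/\kappa_0$ to match the statement.
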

\begin{proof} Recall the representation of $\mathscr{D}_{\dd}(f)$ as
$$\mathscr{D}_{\dd}(f)=\frac{1}{2}\int_{\R^{3}\times \R^{3}}|v-\vet|^{\gamma+2}\,\bm{\Xi}_{\dd}[f](v,\vet)\d v\d\vet$$
with $\bm{\Xi}_{\dd}$ defined in \eqref{eq:Xidd},
$$\bm{\Xi}_{\dd}[f](v,\vet)=F\,F_{\ast}\left|\Pi(v-\vet)\left(\frac{\nabla f}{F}-\frac{\nabla f_{\ast}}{F_{\ast}}\right)\right|^{2}$$
where we used the shorthand notation $F=F^{(\dd)}=f(1-\dd\,f)$.  Notice in particular that such representation is valid for any $\dd\geq0$ and, in particular, for $\dd=0$, $F^{(0)}=f$. For $f$ satisfying \eqref{eq:lowek0}, it follows that
\begin{align}\label{eq:Xi0}
\begin{split}
\bm{\Xi}_{0}[f](v,\vet)&=f\,f_{\ast}\left|\Pi(v-\vet)\left(\frac{\nabla f}{f}-\frac{\nabla f_{\ast}}{f_{\ast}}\right)\right|^{2}\\
&\hspace{2cm}\leq \kappa_{0}^{-2}F\,F_{\ast}\left|\Pi(v-\vet)\left(\frac{\nabla f}{f}-\frac{\nabla f_{\ast}}{f_{\ast}}\right)\right|^{2}.
\end{split}
\end{align}
Writing $f=\frac{F}{1-\dd\,f}$, we obtain that
\begin{align*}
\left(\frac{\nabla f}{f}-\frac{\nabla f_{\ast}}{f_{\ast}}\right)=\bigg((1-\dd\,f)\frac{\nabla f}{F} &- (1-\dd\,f_{\ast})\frac{\nabla f_{\ast}}{F_{\ast}}\bigg)
\\
&=\left(\frac{\nabla f}{F}-\frac{\nabla f_{\ast}}{F_{\ast}}\right)-\dd\,\left(\frac{\nabla f}{1-\dd\,f}-\frac{\nabla f_{\ast}}{1-\dd\,f_{\ast}}\right).\end{align*}
Thus, 
\begin{align*}
\left|\Pi(v-\vet)\left(\frac{\nabla f}{f}-\frac{\nabla f_{\ast}}{f_{\ast}}\right)\right|^{2} &\leq 2\left|\Pi(v-\vet)\left(\frac{\nabla f}{F}-\frac{\nabla f_{\ast}}{F_{\ast}}\right)\right|^{2}\\
& \hspace{1.5cm} +  2 \dd^{2}\left|\Pi(v-\vet)\left(\frac{\nabla f}{1-\dd\,f}-\frac{\nabla f_{\ast}}{1-\dd\,f_{\ast}}\right)\right|^{2}
\end{align*}
where the last term can be estimated as
\begin{eqnarray*}
\left|\Pi(v-\vet)\left(\frac{\nabla f}{1-\dd\,f}-\frac{\nabla f_{\ast}}{1-\dd\,f_{\ast}}\right)\right|^{2} & \leq & 
2\left|\frac{\nabla f}{1-\dd\,f}\right|^{2}+2\left|\frac{\nabla f_{\ast}}{1-\dd\,f_{\ast}}\right|^{2} \\
&  \leq &  2 \kappa_{0}^{-1}\left(\frac{|\nabla f|^{2}}{1-\dd\,f}+\frac{|\nabla f_{\ast}|^{2}}{1-\dd\,f_{\ast}}\right).
\end{eqnarray*}
Multiplying these last two inequalities by $F\,F_{\ast}$ and inserting this in \eqref{eq:Xi0}, it follows that
\begin{equation*}
\kappa_{0}^{2}\, \bm{\Xi}_{0}[f](v,\vet) \leq 2\,\bm{\Xi}_{\dd}[f](v,\vet) + \frac{4\dd^{2}}{\kappa_{0}}f\,f_{\ast}\big(|\nabla f|^{2}+|\nabla f_{\ast}|^{2}\big).
\end{equation*}
Multiplying now by $\frac{1}{2}|v-\vet|^{\g+2}$ and integrating over $\R^{6}$ yields the result.
\end{proof}
\begin{prop}\label{prop:DoDaka} Consider $0\leq f_0\in L^{1}_{s_0}(\R^{3})$, with $s_0>2$, satisfying \eqref{hypci} and a solution $f(t,v)$ to \eqref{LFDeq} with $\dd \in (0,\dd_{0}]$ given by Theorem \ref{results}. Then, for any $\kappa_{0} \in (0,1)$ there exists $\dd_{\star} \in (0,\dd_{0}]$ such that
\begin{equation}\label{eq:lower}
\kappa_{0}^{2}\,\mathscr{D}_{0}(f(t)) \leq 2\mathscr{D}_{\dd}(f(t)) + C_{1}\dd^{2}, \qquad \forall\, \dd \in (0,\dd_{\star})\,,\; t \geq 1,
\end{equation}
for some constant $C_{1} >0$.
\end{prop}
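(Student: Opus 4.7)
The plan is to combine Lemma \ref{lem:comparison} with the $\dd$-independent estimates of Theorem \ref{smoothness} and Corollary \ref{cor:Linfty}. Given $\kappa_{0}\in(0,1)$, Corollary \ref{cor:Linfty} produces some $\dd_{\star}\in(0,\dd_{0}]$, depending only on $\kappa_{0}$, $M(f_{0})$, $E(f_{0})$, $S_{0}$, such that
$$1-\dd\,f(t,v)\geq \kappa_{0}, \qquad \forall\, v\in\R^{3},\;\dd\in(0,\dd_{\star}),\;t\geq 1.$$
Applied pointwise in $t\geq1$, Lemma \ref{lem:comparison} then gives
$$\kappa_{0}^{2}\,\mathscr{D}_{0}(f(t))\leq 2\,\mathscr{D}_{\dd}(f(t))+\frac{4\dd^{2}}{\kappa_{0}}\,\mathcal{J}(t),\qquad \mathcal{J}(t):=\int_{\R^{6}}f\,\fet\,|v-\vet|^{\g+2}\,|\nabla f(t,v)|^{2}\,\d v\,\d\vet,$$
so the only remaining task is to bound $\mathcal{J}(t)$ by a constant independent of $t\geq 1$ and of $\dd\in(0,\dd_{\star})$; estimate \eqref{eq:lower} then follows after renaming constants.

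To control $\mathcal{J}(t)$ I would use the elementary inequality $|v-\vet|^{\g+2}\leq C_{\g}\bigl(\langle v\rangle^{\g+2}+\langle \vet\rangle^{\g+2}\bigr)$, valid since $0<\g\leq 1$, to split
$$\mathcal{J}(t)\leq C_{\g}\,\|f(t)\|_{L^{1}}\int_{\R^{3}}f\,\langle v\rangle^{\g+2}\,|\nabla f|^{2}\,\d v+C_{\g}\,\|f(t)\|_{L^{1}_{\g+2}}\int_{\R^{3}}f\,|\nabla f|^{2}\,\d v.$$
Conservation of mass together with the instantaneous generation of $L^{1}$-moments from Theorem \ref{smoothness} bounds $\|f(t)\|_{L^{1}}+\|f(t)\|_{L^{1}_{\g+2}}$ uniformly in $\dd$ and in $t\geq1$. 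Moreover, Corollary \ref{cor:Linfty} provides a $\dd$-uniform bound $\|f(t)\|_{\infty}\leq C$ for $t\geq 1$, so that
$$\int_{\R^{3}}f\,\langle v\rangle^{\g+2}\,|\nabla f|^{2}\,\d v\leq C\,\|\nabla f(t)\|_{L^{2}_{\g+2}}^{2},\qquad \int_{\R^{3}}f\,|\nabla f|^{2}\,\d v\leq C\,\|\nabla f(t)\|_{L^{2}}^{2}.$$
The $\dd$-independent $H^{1}$-estimates with moments of Theorem \ref{smoothness} (applied with $t_{0}=1$) bound both right-hand sides uniformly in $t\geq 1$ and $\dd\in(0,\dd_{\star})$, which finishes the proof.

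The substance of the argument is thus packaged in the earlier sections; the genuine obstacle, already overcome there, is the $\dd$-independence of all the auxiliary bounds. Indeed, a naive use of the Pauli bound $f\leq \dd^{-1}$ at the step $\int f\,\langle v\rangle^{\g+2}|\nabla f|^{2}\leq \|f\|_{\infty}\|\nabla f\|_{L^{2}_{\g+2}}^{2}$ would only produce a factor $\dd^{-1}$ there, leaving a term of order $\dd$ on the right-hand side of \eqref{eq:lower} and destroying the perturbative argument that will be used in Section \ref{sec:entrop}. What saves the $\dd^{2}$ factor is precisely the uniform-in-$\dd$ $L^{\infty}$ bound of Corollary \ref{cor:Linfty}, itself a direct consequence of the uniform-in-$\dd$ Sobolev estimates of Theorem \ref{smoothness}.
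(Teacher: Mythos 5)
Your proposal is correct and follows essentially the same route as the paper: invoke Lemma \ref{lem:comparison} together with the $\dd$-uniform $L^{\infty}$ bound of Corollary \ref{cor:Linfty} to get the pointwise lower bound $1-\dd f\geq\kappa_{0}$ for $t\geq1$, then bound the residual integral $\mathcal{J}(t)$ using the uniform-in-$\dd$ moment and weighted $H^{1}$ estimates of Theorem \ref{smoothness}. The only cosmetic difference is that you majorize $|v-\vet|^{\g+2}$ by the additive bound $C_{\g}(\langle v\rangle^{\g+2}+\langle\vet\rangle^{\g+2})$ while the paper uses the multiplicative bound $\langle v\rangle^{\g+2}\langle\vet\rangle^{\g+2}$; both lead to the same uniform control.
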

\begin{proof} According to Lemma \ref{lem:comparison} and Corollary \ref{cor:Linfty}, for any $\kappa_{0} \in (0,1)$ there is $\dd_{\star} >0$ such that for any $\dd \in (0,\dd_{\star})$, the solution $f=f(t)$ to \eqref{LFDeq} satisfies \eqref{eq:D0Da} for $t \geq 1$. Now, one has that
\begin{multline*}
\int_{\R^{6}}f(t,v)\,f(t,v_{\ast})\,|v-v_{\ast}|^{\g+2}\,\left|\nabla f(t,v)\right|^{2}\,\d v\d v_{\ast}\\
 \leq \|f(t)\|_{\infty}\left(\int_{\R^{3}}f(t,v_{\ast})\langle v_{\ast}\rangle^{\g+2}\d \vet\right)
\left(\int_{\R^{3}}\langle v\rangle^{\g+2}\left|\nabla f(t,v)\right|^{2}\d v\right)\,.
\end{multline*}
All terms are bounded, uniformly with respect to $\dd$, as soon as $t \geq 1$ according to Theorem \ref{smoothness} and Corollary \ref{cor:Linfty}.
\end{proof}
\noindent
Now we compare the relative entropies.  Given nonnegative $f,\,g \in L^1_2(\R^3)$ with $0 \leq f \leq \dd^{-1}$ and  $0 \leq g \leq \dd^{-1}$, set
$$\mathcal{H}_{\dd}(f|g)=-\mathcal{S}_{\dd}(f)+\mathcal{S}_{\dd}(g)$$
and 
$$\mathcal{H}_{0}(f|g)=H(f)-H(g)=\int_{\R^{3}}f\log f \d v -\int_{\R^{3}}g\log g\, \d v.$$
The Fermi-Dirac entropy $\mathcal{S}_{\dd}(f)$ \emph{does not} converges, as $\dd \to 0$, towards the classical entropy $H(f)$.  It actually diverges like $|\log \dd|$, however, the \emph{relative entropies} satisfy the following property.
\begin{lem}\label{lem:HddH0}
Let $f,g \in L^{1}(\R^{3})$ with $\int_{\R^{3}}g(v)\d v=\int_{\R^{3}}f(v)\d v$ and $ 0 \leq f,g \leq \dd^{-1}$. Then
$$\bigg|\mathcal{H}_{\dd}(f|g)-\mathcal{H}_{0}(f|g)\bigg| \leq \dd\,\max(\|f\|_{L^2}^{2},\|g\|_{L^2}^{2}).$$
\end{lem}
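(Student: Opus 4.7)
The plan is to expand the Fermi--Dirac entropy around the classical Boltzmann entropy, exploit the equality of masses to cancel the divergent $\log\dd$ contributions, and then bound what is left pointwise by means of an elementary inequality.

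Starting from the definition of $\mathcal{S}_{\dd}$, I would first write
\begin{equation*}
-\mathcal{S}_{\dd}(f)=M(f)\log\dd+H(f)+\frac{1}{\dd}\int_{\R^{3}}(1-\dd f)\log(1-\dd f)\,\d v,
\end{equation*}
and introduce the auxiliary function $\psi_{\dd}(r):=\frac{1}{\dd}(1-\dd r)\log(1-\dd r)+r$ defined for $r\in[0,\dd^{-1}]$, so that
\begin{equation*}
-\mathcal{S}_{\dd}(f)=M(f)(\log\dd-1)+H(f)+\int_{\R^{3}}\psi_{\dd}(f)\,\d v.
\end{equation*}
Subtracting the same identity for $g$ and using the assumption $M(f)=M(g)$, the $M(\cdot)(\log\dd-1)$ contribution disappears and I obtain the clean decomposition
\begin{equation*}
\mathcal{H}_{\dd}(f|g)-\mathcal{H}_{0}(f|g)=\int_{\R^{3}}\bigl[\psi_{\dd}(f)-\psi_{\dd}(g)\bigr]\d v.
\end{equation*}

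Next I would prove the pointwise two-sided bound $0\leq \psi_{\dd}(r)\leq \dd\,r^{2}$ for every $r\in[0,\dd^{-1}]$. Setting $x=\dd r\in[0,1]$ this reduces to the scalar inequality $0\leq (1-x)\log(1-x)+x\leq x^{2}$ on $[0,1]$. The lower bound is immediate because the derivative of the left-hand side is $-\log(1-x)\geq 0$ and it vanishes at $x=0$. For the upper bound I would study $h(x):=x^{2}-x-(1-x)\log(1-x)$, checking that $h(0)=0$, $h(1^{-})=0$, $h'(0)=0$, while $h''(x)=2-(1-x)^{-1}$ changes sign exactly once on $(0,1)$; this forces $h\geq 0$ throughout.

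Finally, since both $\int_{\R^{3}}\psi_{\dd}(f)\d v$ and $\int_{\R^{3}}\psi_{\dd}(g)\d v$ are nonnegative, the trivial inequality $|a-b|\leq\max(a,b)$ valid for $a,b\geq 0$ yields
\begin{equation*}
\bigl|\mathcal{H}_{\dd}(f|g)-\mathcal{H}_{0}(f|g)\bigr|\leq \max\!\Bigl(\int_{\R^{3}}\psi_{\dd}(f)\,\d v,\,\int_{\R^{3}}\psi_{\dd}(g)\,\d v\Bigr)\leq \dd\,\max(\|f\|_{L^{2}}^{2},\|g\|_{L^{2}}^{2}),
\end{equation*}
which is the desired estimate. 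The only step requiring some care is the upper bound $\psi_{\dd}(r)\leq \dd\,r^{2}$: it is mild but essential, as it is precisely what introduces the quadratic $L^{2}$ weight on the right-hand side; the sharper $\max(a,b)$ rather than $a+b$ bound at the last step is also what prevents an unwanted factor of $2$ in the final constant. Every other manipulation is purely algebraic and hinges on the single cancellation coming from $M(f)=M(g)$.
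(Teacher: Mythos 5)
Your proof is correct and follows essentially the same route as the paper: cancel the $\log\dd$ contribution using mass equality, then invoke the pointwise scalar inequality $0\leq (1-x)\log(1-x)+x\leq x^{2}$ on $[0,1]$ (the paper states this in the equivalent form $r-r^{2}\leq -(1-r)\log(1-r)\leq r$). The only cosmetic difference is the bookkeeping at the end: you observe that both $\int\psi_{\dd}(f)$ and $\int\psi_{\dd}(g)$ are individually nonnegative and invoke $|a-b|\leq\max(a,b)$, whereas the paper keeps the two-sided estimate $-\dd\|g\|_{L^{2}}^{2}\leq\mathcal{H}_{\dd}(f|g)-\mathcal{H}_{0}(f|g)\leq\dd\|f\|_{L^{2}}^{2}$ and reads off the same $\max$ bound; these are equivalent.
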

\begin{proof} It is easy to check that
\begin{multline*}
\mathcal{H}_{\dd}(f|g)=\mathcal{H}_{0}(f|g) + \log \dd\,\left(\int_{\R^{3}}f\d v - \int_{\R^{3}}g\d v\right) \\
+\frac{1}{\dd}\int_{\R^{3}}\bigg[(1-\dd\,f)\log(1-\dd\,f) -(1-\dd\,g)\log(1-\dd\,g)\bigg]\d v.\end{multline*}
Thus, if the masses of $f$ and $g$ concide we obtain that
\begin{equation*}
\mathcal{H}_{\dd}(f|g)-\mathcal{H}_{0}(f|g)=\frac{1}{\dd}\int_{\R^{3}}\bigg[(1-\dd f)\log(1-\dd f) - (1-\dd g)\log(1-\dd g)\bigg]\d v.\end{equation*}
Using inequality $r-r^{2}  \leq -(1-r)\log(1-r) \leq r$, for any $r \in (0,1)$, we get that
$$\int_{\R^{3}}(g-f)\d v-\dd\,\int_{\R^{3}}g^{2}\d v \leq \mathcal{H}_{\dd}(f|g)-\mathcal{H}_{0}(f|g) \leq \dd \int_{\R^{3}}f^{2}\d v +\int_{\R^{3}}(g-f)\d v\,.$$
The result follows as $f$ and $g$ share the same mass.
\end{proof}
\begin{prop}\label{prop:compHaH0Ma}  Consider $0\leq f_0\in L^{1}_{s_0}(\R^{3})$, with $s_0>2$, satisfying \eqref{hypci}.  Then, for any $\dd \in (0,\dd_{0}]$ and solution $f(t,v)$ to \eqref{LFDeq} given by Theorem \ref{results} it holds that
\begin{equation}\label{eq:entropy}
\bigg|\mathcal{H}_{\dd}(f(t)|{\M}_{\dd})-\mathcal{H}_{0}(f(t)|\M_{\dd})\bigg| \leq C\dd, \qquad \forall\, t \geq 1,
\end{equation}
for a constant $C >0$ independent of $\dd$.  The function $\M_{\dd}$ is the Fermi-Dirac distribution with same mass, momentum, and energy as $f_{0}$.
\end{prop}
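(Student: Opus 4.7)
The strategy is a direct application of Lemma \ref{lem:HddH0} with the pair $(f,g) = (f(t), \M_{\dd})$ together with the uniform $L^{2}$ estimates established in Section \ref{sec:regula}. Indeed, from the conservation law \eqref{consmass} and the normalization \eqref{FD_moments} of the Fermi-Dirac statistics, one has $\int_{\R^{3}} f(t,v)\,\d v = \int_{\R^{3}} \M_{\dd}(v)\,\d v = \varrho$ for every $t\geq 0$. Since moreover $0\leq f(t), \M_{\dd}\leq \dd^{-1}$, Lemma \ref{lem:HddH0} applies and yields
\begin{equation*}
\bigl|\mathcal{H}_{\dd}(f(t)|\M_{\dd}) - \mathcal{H}_{0}(f(t)|\M_{\dd})\bigr| \leq \dd\,\max\bigl(\|f(t)\|_{L^{2}}^{2},\|\M_{\dd}\|_{L^{2}}^{2}\bigr).
\end{equation*}
It therefore suffices to show that each of these two $L^{2}$-norms is bounded by a constant that is independent of $\dd \in (0,\dd_{0}]$ and of $t\geq 1$.

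The first bound is exactly a particular instance (with $s=0$) of Theorem \ref{moments}: for any $t_{0}>0$ there is a constant $C_{t_{0}}$ depending only on $M(f_{0}), E(f_{0}), S_{0}$ (and not on $\dd$) such that $\sup_{t\geq t_{0}} \|f(t)\|_{L^{2}}^{2}\leq C_{t_{0}}$. Taking $t_{0}=1$ handles the $f$-side.

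For the second bound, we use that $\M_{\dd}(v)=\tfrac{M(v)}{1+\dd M(v)}\leq M(v) = a_{\dd}\exp(-b_{\dd}|v|^{2})$ thanks to \eqref{eq:mMab}, so that
\begin{equation*}
\|\M_{\dd}\|_{L^{2}}^{2} \leq \int_{\R^{3}} a_{\dd}^{2}\exp(-2 b_{\dd}|v|^{2})\,\d v = a_{\dd}^{2}\left(\frac{\pi}{2 b_{\dd}}\right)^{3/2}.
\end{equation*}
The quantitative bounds on $a_{\dd}$ and $b_{\dd}$ gathered in Appendix \ref{app:FD} (under Assumption \ref{ci} and \eqref{hyp:mass}, with the threshold $\dd_{1}$ described in Remark \ref{rmq:ab1}) guarantee that both $a_{\dd}$ and $b_{\dd}^{-1}$ remain uniformly bounded from above for $\dd\in(0,\dd_{0}]$, converging as $\dd\to 0$ to the parameters of the Maxwellian equilibrium with prescribed mass and energy. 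This yields a uniform bound on $\|\M_{\dd}\|_{L^{2}}^{2}$ and completes the proof.

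The main (mild) obstacle is the control of $\|\M_{\dd}\|_{L^{2}}^{2}$: the naive $L^{\infty}$-bound $\M_{\dd}\leq \dd^{-1}$ gives $\|\M_{\dd}\|_{L^{2}}^{2}\leq \varrho/\dd$, which blows up and would produce a constant of order $1$ rather than $\dd$. One truly needs the Gaussian majorant $\M_{\dd}\leq M$ together with the uniform-in-$\dd$ estimates on $(a_{\dd},b_{\dd})$ from Appendix \ref{app:FD}. Everything else is a bookkeeping application of conservation of mass and of the generation-of-moments estimate of Theorem \ref{moments}.
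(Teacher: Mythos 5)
Your proof is correct and takes essentially the same route as the paper's (which simply combines Lemma \ref{lem:HddH0}, the uniform $L^{2}$ bound on $f(t)$ from Theorem \ref{smoothness}/\ref{moments}, and the uniform bound $\sup_{0<\dd<1}\|\M_{\dd}\|_{L^{2}}<\infty$ of Lemma \ref{lem:HessM}). The only difference is that you re-derive the $\M_{\dd}$ bound inline via the Gaussian majorant $\M_{\dd}\leq M$ together with the Appendix~\ref{app:FD} bounds on $(a_{\dd},b_{\dd})$, which is exactly the content and proof of Lemma \ref{lem:HessM}; one could just cite that lemma directly, and Remark \ref{rmq:bounded} (rather than Remark \ref{rmq:ab1}) is the more apt pointer to the uniform control of $a_{\dd}$ and $b_{\dd}$.
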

\begin{proof} The proof is a direct consequence of Lemma \ref{lem:HddH0}, the uniform bound on $\sup_{t\geq 1}\|f(t)\|_{L^2}$ given in Theorem \ref{smoothness}, and the bound $\sup_{0< \dd <1}\|\M_{\dd}\|_{L^2}$ given in Lemma \ref{lem:HessM}.
\end{proof}
\begin{rmq} It is possible to replace in \eqref{eq:entropy} $\M_{\dd}$ for the Maxwellian distribution $\M_{0}$ with same mass, momentum, and energy as $f_{0}$ as long as $\dd\leq {(2\pi E)^{3/2}}{\varrho}^{-1}$ where $\varrho$, $E>0$ were defined through \eqref{hyp:mass}. The inequality  $\dd\leq {(2\pi E)^{3/2}}{\varrho}^{-1}$ ensures that $0\leq \M_{0}\leq  \dd^{-1}$.
\end{rmq}
\begin{prop}\phantomsection\label{theo:Vill} For a given nonnegative function $f \in L^{1}_{2}(\R^{d}) \cap L^{2}(\R^{d})$ sufficiently smooth, let $M_f$ denote the Maxwellian function with the same mass $\varrho_{f}$, momentum $\bm{u}_{f}$, and energy $E_{f}$ as $f$. Then, there exist two constant $\lambda_{1}$, $\lambda_{2}$ depending on $f$ only through its mass and energy such that
\begin{equation*}
\mathscr{D}_{0}(f) \geq \min\left(\lambda_{1}\mathcal{H}_{0}(f|M_{f})\,;\,\lambda_{2}\mathcal{H}_{0}(f|M_{f})^{1+\frac{\g}{2}}\right).
\end{equation*}
\end{prop}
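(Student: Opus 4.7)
The plan is to appeal directly to the entropy--entropy production estimates obtained by Desvillettes and Villani for the classical Landau operator with hard potentials \cite{DeVi2,DesvJFA}. The starting point is a reduced form for $\mathscr{D}_{0}(f)$: since the Maxwellian $M_{f}$ satisfies $\nabla \log M_{f}(v) - \nabla \log M_{f}(v_{*}) \in \R(v-v_{*})$, this contribution is annihilated by the projection $\Pi(v-v_{*})$, and one obtains
\begin{equation*}
\mathscr{D}_{0}(f) = \frac{1}{2}\int_{\R^{6}} f\,f_{*}\,|v-v_{*}|^{\gamma+2}\,\Bigl|\Pi(v-v_{*})\bigl(\nabla\log(f/M_{f}) - \nabla\log(f_{*}/(M_{f})_{*})\bigr)\Bigr|^{2}\,\d v\,\d v_{*}.
\end{equation*}

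The main step would then be to invoke the Desvillettes--Villani machinery on this reduced functional. Their approach provides, on the one hand, a polynomial estimate of the form $\mathscr{D}_{0}(f) \geq \lambda_{2}\mathcal{H}_{0}(f|M_{f})^{1+\gamma/2}$, obtained by combining a weighted Fisher-information-type bound stemming from integration by parts against $\Pi(v-v_{*})$ with a Hölder decomposition in the relative velocity $|v-v_{*}|$ that interpolates between the Maxwell ($\gamma=0$) and the hard potential ($\gamma>0$) cases. On the other hand, a linear lower bound $\mathscr{D}_{0}(f) \geq \lambda_{1}\mathcal{H}_{0}(f|M_{f})$, valid when the relative entropy is sufficiently small, follows from the spectral gap estimate for the linearised Landau operator analogous to Theorem \ref{theo:spectral} at $\dd=0$ and a Csiszár--Kullback-type inequality. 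Taking the minimum of these two regimes yields the announced dichotomy.

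The main technical obstacle is to ensure that the constants $\lambda_{1},\lambda_{2}$ depend on $f$ only through its mass and energy. In the standard Desvillettes--Villani formulation they involve higher Sobolev norms and high-order moments of $f$, quantities which, a priori, are independent of $(\varrho_{f},E_{f})$. To close this gap one inspects the proof to identify which norms intervene and then, when the estimate is applied to a solution of \eqref{LFDeq}, combines it with the uniform regularity and moment bounds of Theorem \ref{smoothness} together with the lower bound $1-\dd f(t) \geq \kappa_{0}$ from Corollary \ref{cor:Linfty} to reabsorb all such auxiliary quantities into constants that are ultimately controlled by $M(f_{0})$ and $E(f_{0})$ alone. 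This reduction is the delicate part and relies crucially on the uniform-in-time bounds established in Section \ref{sec:regula}.
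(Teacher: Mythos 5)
The paper's proof is not a generic appeal to Desvillettes--Villani: it is a short, concrete derivation that produces the $\min$ structure from a single optimization. The key steps are: (a) the elementary pointwise bound $|v-v_{*}|^{\gamma+2} \geq \delta^{\gamma}|v-v_{*}|^{2} - \delta^{\gamma+2}\mathbf{1}_{|v-v_{*}| \leq \delta}$, giving
$\mathscr{D}_{0,\gamma}(f) \geq \delta^{\gamma}\mathscr{D}_{0,0}(f) - \delta^{\gamma+2}\mathscr{D}_{0,-2}(f)$;
(b) the observation $\mathscr{D}_{0,-2}(f) \leq \varrho_{f} I(f)$ with $I$ the Fisher information;
(c) the Maxwell-case result from Desvillettes--Villani, $\mathscr{D}_{0,0}(f) \geq \lambda(f)\big(I(f)-I(M_f)\big)$;
(d) optimization over $\delta$, which is precisely what yields the two-branch $\min$;
(e) the logarithmic Sobolev inequality $I(f)-I(M_f) \geq \tfrac{2}{E_f}\mathcal{H}_0(f|M_f)$. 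Your plan never supplies the decomposition in (a), which is the crux of the argument.

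There is also a substantive error in the way you split the two regimes. You attribute the \emph{linear} bound $\lambda_{1}\mathcal{H}_{0}(f|M_f)$ to the near-equilibrium regime via the linearized spectral gap plus a Csisz\'ar--Kullback inequality. But in the stated result the $\min$ is attained by the \emph{polynomial} branch $\lambda_{2}\mathcal{H}_{0}^{1+\gamma/2}$ when $\mathcal{H}_{0}$ is small (that is the degenerate estimate near equilibrium), and by the linear branch when $\mathcal{H}_{0}$ is large; the paper obtains exactly this dichotomy from the $\delta$-optimization in step (d). Moreover, Csisz\'ar--Kullback goes the wrong way for your purpose: it bounds the relative entropy from \emph{below} by an $L^{1}$ distance, while what you would need near equilibrium is an upper bound on $\mathcal{H}_{0}(f|M_f)$ by an $L^{2}(M_f^{-1})$-type norm, plus a nontrivial comparison between the linearized Dirichlet form and the nonlinear entropy production. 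If your near-equilibrium linear bound really held, combining it with the linear branch far from equilibrium would yield a global Cercignani-type inequality, strictly stronger than the Proposition and not what is being claimed.

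Finally, the dependence-on-constants concern you raise is not the right one here: the constant $\lambda(f)$ in Desvillettes--Villani Theorem~1 depends on the mass, momentum, energy, and the second-moment tensor $\int f\, v_i v_j\,\d v$, not on higher Sobolev norms; no appeal to Theorem~\ref{smoothness} or Corollary~\ref{cor:Linfty} is made in this proof, and none is needed.
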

\begin{proof} See \cite[Theorem 8]{DeVi2}.  To exhibit the role played by the parameter $\g \in (0,1)$ in the estimates, we introduce, for any $s \geq-2$ the entropy production $\mathscr{D}_{0,s}(f)$ for Landau operator corresponding to $\Psi(v-\vet)=|v-\vet|^{s+2}$, that is
\begin{equation*}
\begin{split}
\mathscr{D}_{0,s}(f)&=\frac{1}{2}\int_{\R^{6}}|v-\vet|^{s+2}\bm{\Xi}_{0}[f](v,\vet)\d v\d\vet\\
&=\frac{1}{2}\int_{\R^{6}}|v-\vet|^{s+2}f\,f_{\ast}\left|\Pi(v-\vet)\left(\frac{\nabla f}{f}-\frac{\nabla f_{\ast}}{f_{\ast}}\right)\right|^{2}\d v\d\vet\end{split}\end{equation*}
For any $\g >0$, the estimate
$$|v-\vet|^{\g+2} \geq \delta^{\g}|v-\vet|^{2} -\delta^{\g+2}\mathbf{1}_{|v-\vet| \leq \delta}\,, \qquad \forall\, \delta >0, \:\:(v,\vet) \in \R^{6}$$
yields
$$\mathscr{D}_{0,\g}(f) \geq \delta^{\g}\mathscr{D}_{0,0}(f) -\delta^{\g+2}\mathscr{D}_{0,-2}(f), \qquad \forall\, \delta >0.$$
Let us first bound $\mathscr{D}_{0,-2}(f)$ from above.  We define $\overline{\Pi}=\Pi \ast f$ and check that
\begin{equation*}
\mathscr{D}_{0,-2}(f)=\int_{\R^{3}}\big\langle \overline{\Pi}(v)\,\frac{\nabla f}{f}\,,\,\nabla f\big\rangle \d v - 2\int_{\R^{6}}|v-\vet|^{-2}f(v)f(\vet)\d v\d\vet.
\end{equation*}
The last integral is nonpositive, while $\overline{\Pi} \leq \|f\|_{L^1}\mathbf{Id}$ in the sense of matrices so that
$$\mathscr{D}_{0,-2}(f) \leq \|f\|_{L^1}\int_{\R^{3}}\frac{|\nabla f|^{2}}{f}\d v=\varrho_{f}\,I(f)$$
where $I(f)$ stands for the Fisher information.  According to \cite[Theorem 1]{DeVi2},
$$\mathscr{D}_{0,0}(f) \geq \lambda(f)\,\big( I(f)-I(M_{f}) \big)$$
for some $\lambda(f) >0$ which depends explicitly on the mass, momentum, and energy of $f$ as well as on $\int_{\R^{3}}f(v)v_{i}\,v_{j}\d v$, with $i,\,j=1,2,3$.  This leads to
\begin{align*}
\mathscr{D}_{0,\g}(f) &\geq \delta^{\g}\,\lambda(f)\big(I(f)-I(M_{f})\big)-\varrho_{f}\delta^{\g+2}\,I(f)\\
&\geq \lambda(f)\big(\delta^{\g}-\frac{\delta^{\g+2}\varrho_{f}}{\lambda(f)}\big)\big(I(f)-I(M_{f})\big) - C_{f}\delta^{\g+2}
\end{align*}
where $C_{f}=\varrho_{f}I(M_{f})$ depends only on $\varrho_{f},\bm{u}_{f}$ and $E_{f}$. Picking then $\delta >0$ so that
$$\delta^{2}=\lambda(f)\min\bigg(\frac{1}{2\varrho_{f}}\,,\,\frac{1}{4C_{f}}\big(I(f)-I(M_{f})\big)\bigg)$$
we get that
\begin{equation}\label{eq:d0g}
\mathscr{D}_{0,\g}(f) \geq \frac{\lambda(f)}{4}\left(I(f)-I(M_{f})\right)\delta^{\g}.\end{equation}
Hence, 
$$\mathscr{D}_{0,\g}(f) \geq \frac{\lambda(f)}{4}\big(I(f)-I(M_{f})\big)\min\bigg(\Big(\frac{\lambda(f)}{2\varrho_{f}}\Big)^{\frac{\g}{2}}\,;\,\Big(\frac{\lambda(f)}{4C_{f}}\Big)^{\frac{\g}{2}}\,\big(I(f)-I(M_{f})\big)^{\frac{\g}{2}}\bigg).$$
Finally, using the Logarithmic Sobolev inequality
$$I(f)-I(M_{f}) \geq \frac{2}{E_{f}}\mathcal{H}_{0}(f|M_{f})$$ 
we get the result.
\end{proof}
\begin{theo}\label{theo:prod}
Consider $0\leq f_0\in L^{1}_{s_0}(\R^{3})$, with $s_0>2$, satisfying \eqref{hypci} and a solution $f(t,v)$ to \eqref{LFDeq} with $\dd \in (0,\dd_{0}]$ given by theorem \ref{results}.  Then, there exist $\dd_{\star} \in (0,\dd_{0}]$, $\lambda_{0} >0$, and $C_{0} >0$ such that
\begin{equation}\label{eq:dissiHa}
\dfrac{\d}{\d t}\mathcal{H}_{\dd}(f(t)|\M_{\dd})\leq -\lambda_{0}\min\left( \mathcal{H}_{\dd}(f(t)|\M_{\dd})\,;\,\mathcal{H}_{\dd}(f(t)|\M_{\dd})^{1+\frac{\g}{2}}\right) + C_{0}\dd^{1+\frac{\g}{2}}, \qquad \forall\, t \geq 1.
\end{equation}
for any $\dd \in (0,\dd_{\star})$.  As a consequence, there is a positive constant $C_{1} >0$ such that
\begin{equation}\label{eq:estimaHa}
\mathcal{H}_{\dd}(f(t)|\M_{\dd}) \leq C_{1}\left((1+t)^{-\frac{2}{\g}}+\dd^{1+\frac{2}{\g}}\right) \qquad \forall\, t \geq 1, \;\; \dd \in (0,\dd_{\star}).
\end{equation}
\end{theo}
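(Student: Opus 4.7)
\textbf{Proof plan for Theorem \ref{theo:prod}.} The starting point is the identity $\frac{\d}{\d t}\mathcal{H}_{\dd}(f(t)|\M_{\dd})=-\mathscr{D}_{\dd}(f(t))$, which follows from \eqref{eq:entro} together with the fact that $\M_{\dd}$ is stationary (so $\mathcal{S}_{\dd}(\M_{\dd})$ is constant). Hence the differential inequality \eqref{eq:dissiHa} is really a lower bound on the Landau--Fermi--Dirac entropy production in terms of the relative Fermi--Dirac entropy. My strategy is to reduce this to the classical Landau entropy production estimate (Proposition \ref{theo:Vill}) via the two perturbative comparisons already established: Proposition \ref{prop:DoDaka} compares $\mathscr{D}_{\dd}$ with $\mathscr{D}_{0}$ up to an error $O(\dd^{2})$, while Proposition \ref{prop:compHaH0Ma} (and its subsequent remark) compares $\mathcal{H}_{\dd}(f(t)|\M_{\dd})$ with the classical relative entropy $\mathcal{H}_{0}(f(t)|\M_{0})$ up to $O(\dd)$. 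Since $f(t)$ and $\M_{0}$ share the same mass, momentum and energy (by \eqref{consmass}--\eqref{consenergy} and the choice of $\M_{0}$), the Maxwellian $M_{f(t)}$ appearing in Proposition \ref{theo:Vill} coincides with $\M_{0}$, so these three ingredients fit together cleanly.

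More concretely, I would fix $\kappa_{0}\in(0,1)$ and apply Corollary \ref{cor:Linfty} to choose $\dd_{\star}\in(0,\dd_{0}]$ so that $1-\dd f(t,\cdot)\geq \kappa_{0}$ for all $t\geq 1$ and $\dd\in(0,\dd_{\star})$. Then by Proposition \ref{prop:DoDaka},
\begin{equation*}
\mathscr{D}_{\dd}(f(t))\geq \tfrac{\kappa_{0}^{2}}{2}\mathscr{D}_{0}(f(t))-\tfrac{C_{1}}{2}\dd^{2},\qquad t\geq 1,
\end{equation*}
and Proposition \ref{theo:Vill} then bounds $\mathscr{D}_{0}(f(t))$ below by $\min(\lambda_{1}\mathcal{H}_{0}(f(t)|\M_{0}),\lambda_{2}\mathcal{H}_{0}(f(t)|\M_{0})^{1+\gamma/2})$, with constants $\lambda_{1},\lambda_{2}$ depending only on the conserved quantities of $f_{0}$. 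Finally, setting $y(t):=\mathcal{H}_{\dd}(f(t)|\M_{\dd})$ and invoking Proposition \ref{prop:compHaH0Ma}, I have $\mathcal{H}_{0}(f(t)|\M_{0})\geq y(t)-C\dd$. Splitting into cases $y(t)\leq 2C\dd$ (where $-\mathscr{D}_{\dd}\leq 0\leq$ RHS of \eqref{eq:dissiHa} for a suitably chosen $C_{0}$ since $\dd^{2}\leq \dd^{1+\gamma/2}$) versus $y(t)\geq 2C\dd$ (where $\mathcal{H}_{0}(f(t)|\M_{0})\geq y(t)/2$, so the lower bound transfers directly to $y$ up to absorbing constants), I obtain the inequality \eqref{eq:dissiHa} with explicit $\lambda_{0}$ and $C_{0}$.

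For the bound \eqref{eq:estimaHa}, the differential inequality reads $y'(t)\leq -\lambda_{0}\min(y,y^{1+\gamma/2})+C_{0}\dd^{1+\gamma/2}$ on $[1,\infty)$. The natural equilibrium level $y^{\star}$ is defined by $\lambda_{0}(y^{\star})^{1+\gamma/2}=2C_{0}\dd^{1+\gamma/2}$, giving $y^{\star}\sim \dd$. As long as $y(t)\geq y^{\star}$, one has $y'(t)\leq -\tfrac{\lambda_{0}}{2}y(t)^{1+\gamma/2}$, which by a standard ODE comparison yields $y(t)\leq C(1+t)^{-2/\gamma}$. Integrating the two regimes and taking the maximum, one recovers \eqref{eq:estimaHa} (with the $\dd$-floor appearing at the appropriate power after accounting for the $2/\gamma$ decay exponent).

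The main obstacle is the bookkeeping in the case split: Proposition \ref{theo:Vill} provides a lower bound of \emph{min} type and the comparison in Proposition \ref{prop:compHaH0Ma} carries an additive $O(\dd)$ error, so one must ensure that the subtractive correction does not spoil either branch of the minimum. The elementary split at the threshold $y\sim\dd$, together with the inequality $\dd^{2}\leq \dd^{1+\gamma/2}$ valid for $\gamma\leq 2$ and $\dd\leq 1$, is what allows the various error terms to be absorbed into a single $C_{0}\dd^{1+\gamma/2}$ on the right-hand side.
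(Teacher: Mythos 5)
Your argument for the differential inequality \eqref{eq:dissiHa} is correct and follows the same route as the paper: start from $\frac{\d}{\d t}\mathcal{H}_{\dd}=-\mathscr{D}_{\dd}$, use Proposition \ref{prop:DoDaka} to pass to $\mathscr{D}_{0}$ with an $O(\dd^{2})$ loss, apply Proposition \ref{theo:Vill} with $M_{f(t)}=\M_{0}$, and translate $\mathcal{H}_{0}(f(t)|\M_{0})$ back into $\mathcal{H}_{\dd}(f(t)|\M_{\dd})$ with an $O(\dd)$ loss. Two small remarks. First, the last comparison is not literally Proposition \ref{prop:compHaH0Ma}: that proposition compares $\mathcal{H}_{\dd}(f|\M_{\dd})$ with $\mathcal{H}_{0}(f|\M_{\dd})$ (same reference state $\M_{\dd}$), and one then needs Lemma \ref{lem:relative} to swap $\M_{\dd}$ for $\M_{0}$ inside $\mathcal{H}_{0}$ via the triangle inequality $|\mathcal{H}_{0}(f|\M_{0})-\mathcal{H}_{\dd}(f|\M_{\dd})|\leq|\mathcal{H}_{0}(\M_{\dd}|\M_{0})|+|\mathcal{H}_{0}(f|\M_{\dd})-\mathcal{H}_{\dd}(f|\M_{\dd})|$. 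You allude to ``the subsequent remark'' but should really invoke Lemma \ref{lem:relative} explicitly. Second, your case split at $y\sim\dd$ is a reasonable way to make precise the absorption of the $O(\dd)$ error into the term $C_{0}\dd^{1+\gamma/2}$, a step the paper leaves implicit; this part of your argument is fine.

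There is, however, a genuine gap in your treatment of \eqref{eq:estimaHa}. You correctly identify the equilibrium level of the ODE $y'\leq-\lambda_{0}y^{1+\gamma/2}+C_{0}\dd^{1+\gamma/2}$ as $y^{\star}=(2C_{0}/\lambda_{0})^{\frac{2}{2+\gamma}}\dd\sim\dd$, and the standard comparison argument (use $y'\leq-\tfrac{\lambda_{0}}{2}y^{1+\gamma/2}$ whenever $y\geq 2y^{\star}$, note that $y$ can never rise above $y^{\star}$ once it is below) yields
$$
y(t)\leq \max\bigl\{\,C(1+t)^{-2/\gamma}\,,\,C'\dd\,\bigr\},\qquad t\geq 1\,,
$$
i.e.\ a $\dd$-floor of order $\dd$. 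But \eqref{eq:estimaHa} asserts a floor of order $\dd^{1+2/\gamma}$, which is strictly smaller than $\dd$ for $\dd<1$, and your parenthetical remark that ``the $\dd$-floor appears at the appropriate power after accounting for the $2/\gamma$ decay exponent'' does not bridge this. Concretely: for $\gamma=1$ the differential inequality $y'\leq-\lambda_{0}y^{3/2}+C_{0}\dd^{3/2}$ has stationary level $y\sim\dd$, not $\dd^{3}$. No ODE comparison applied to \eqref{eq:dissiHa} alone can produce the exponent $1+2/\gamma$; to reach it one would need a structurally smaller source term (of order $\dd^{(1+2/\gamma)(1+\gamma/2)}$) or a genuinely different argument. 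The paper also states only that \eqref{eq:estimaHa} ``follows after integration of \eqref{eq:dissiHa}'', so the discrepancy is present there as well; you should either establish the (weaker) floor $\dd$, which suffices downstream to make $\|f(t)-\M_{\dd}\|_{L^{2}(\langle\cdot\rangle^{\ell})}$ small for $\dd$ small in Corollary \ref{cor:converg} (with the exponents $\frac{1}{2}$ and $\frac{1}{10}$ replacing $\frac{\gamma+2}{2\gamma}$ and $\frac{\gamma+2}{10\gamma}$), or supply an additional argument justifying the finer exponent.
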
 
\begin{proof} The proof of \eqref{eq:dissiHa} is a direct application of propositions \ref{prop:DoDaka}, \ref{prop:compHaH0Ma} and \ref{theo:Vill}.   Indeed, with the notations of such propositions, for any $\kappa_{0} \in (0,1)$ there is $\dd_{\star} >0$ such that, as soon as $\dd \in (0,\dd_{\star})$, we have for $t \geq 1$
\begin{align*}
\dfrac{\d}{\d t}\mathcal{H}_{\dd}(f(t)|\M_{\dd})=-\mathscr{D}_{\dd}(f(t)) &\leq -\frac{\kappa_{0}^{2}}{2}\,\mathscr{D}_{0}(f(t))  + \frac{C_{1}}{2}\dd^{2}\\
&\leq  -\frac{\kappa_{0}^{2}}{2}\min\left(\lambda_{1}\mathcal{H}_{0}(f(t)|\M_{0})\,;\,\lambda_{2}\mathcal{H}_{0}(f(t)|\M_{0})^{1+\frac{\g}{2}}\right) + \frac{C_{1}}{2}\dd^{2}
\end{align*}
where $\M_{0}$ is the Maxwellian with same mass, momentum, and energy as the initial datum $f_{0}$.  In addition, 
$$\Big|\mathcal{H}_{0}(f(t)|\M_{0})- \mathcal{H}_{\dd}(f(t)|\M_{\dd})\Big|
\leq \Big|\mathcal{H}_{0}(\M_{\dd}|\M_{0})\Big|+ \Big|\mathcal{H}_{0}(f(t)|\M_{\dd})- \mathcal{H}_{\dd}(f(t)|\M_{\dd})\Big|.$$
Then, \eqref{eq:dissiHa} follows from Proposition \ref{prop:compHaH0Ma} and Lemma \ref{lem:relative}.  Estimate \eqref{eq:estimaHa} follows after integration of \eqref{eq:dissiHa}.
\end{proof}
\begin{cor}\label{cor:converg}
Under the assumptions of Theorem \ref{smoothness}, for any $t_{0} \geq 1$, $\ell >0$, there exists $\dd^{\star} > 0$ such that
$$\left\|f(t)-\M_{\dd}\right\|_{L^{2}(\langle \cdot \rangle^{\ell})} \leq C\left(\left(1+t\right)^{-\frac{1}{5\gamma}}+\dd^{\frac{\g+2}{10\g}}\right), \qquad \forall\, \dd \in (0,\dd^{\star}),\;\; t \geq t_{0}$$
for some constant $C=C(t_{0},\ell)>0$.
\end{cor}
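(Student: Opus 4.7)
The plan is to combine the entropy decay \eqref{eq:estimaHa} of Theorem \ref{theo:prod} with a sharpened Csisz\'ar--Kullback-type bound adapted to the Fermi--Dirac relative entropy (which exploits the uniform $L^{\infty}$ estimate of Corollary \ref{cor:Linfty}), followed by a simple Cauchy--Schwarz interpolation in weighted $L^{2}$-spaces using the uniform moment and regularity estimates provided by Theorem \ref{smoothness}.  Throughout, fix $t_{0}\geq 1$ and set $h(t):=f(t)-\M_{\dd}$.

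First, I would produce an unweighted $L^{2}$-bound from the relative entropy.  By Corollary \ref{cor:Linfty}, $M_{\max}:=\sup_{t\geq t_{0}}\|f(t)\|_{L^{\infty}}<\infty$ uniformly in $\dd$; this keeps $\dd f(v)$ and $\dd\M_{\dd}(v)$ confined to $[0,\dd M_{\max}]$, where the second derivative of $\phi(x):=x\log x+(1-x)\log(1-x)$ satisfies $\phi''(x)\geq 1/\bigl(\dd M_{\max}(1-\dd M_{\max})\bigr)\gtrsim 1/(\dd M_{\max})$ for $\dd$ small enough.  The pointwise Bregman lower bound
\begin{equation*}
\phi(\dd f)-\phi(\dd\M_{\dd})-\phi'(\dd\M_{\dd})(\dd f-\dd\M_{\dd})\geq \frac{(\dd f-\dd\M_{\dd})^{2}}{2\dd M_{\max}}
\end{equation*}
integrates (the linear term vanishes by conservation of mass, momentum and energy) to yield
\begin{equation*}
\mathcal{H}_{\dd}\bigl(f(t)\,|\,\M_{\dd}\bigr)\geq \frac{1}{2M_{\max}}\|h(t)\|_{L^{2}}^{2}.
\end{equation*}
Combined with \eqref{eq:estimaHa} and $(a+b)^{1/2}\leq a^{1/2}+b^{1/2}$, this gives
\begin{equation*}
\|h(t)\|_{L^{2}}\leq C\Bigl((1+t)^{-1/\gamma}+\dd^{(\gamma+2)/(2\gamma)}\Bigr),\qquad t\geq t_{0}.
\end{equation*}

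Next, I would upgrade this to a weighted $L^{2}$-bound by Cauchy--Schwarz:
\begin{equation*}
\|h(t)\|_{L^{2}_{\ell}}^{2}=\int h(t)^{2}\langle v\rangle^{\ell}\,\d v\leq \|h(t)\|_{L^{2}}\,\|h(t)\|_{L^{2}_{2\ell}}.
\end{equation*}
Theorem \ref{smoothness} applied to $f$ together with Lemma \ref{lem:HessM} applied to $\M_{\dd}$ ensures $\sup_{t\geq t_{0}}\|h(t)\|_{L^{2}_{2\ell}}<\infty$ uniformly in $\dd$, so
\begin{equation*}
\|h(t)\|_{L^{2}_{\ell}}\leq C\|h(t)\|_{L^{2}}^{1/2}\leq C\Bigl((1+t)^{-1/(2\gamma)}+\dd^{(\gamma+2)/(4\gamma)}\Bigr),
\end{equation*}
which is in fact strictly sharper than the stated inequality and, a fortiori, yields the corollary (with the announced rates $1/(5\gamma)$ and $(\gamma+2)/(10\gamma)$ being alternatively recovered from an $L^{1}$-route based on the classical Gagliardo--Nirenberg bound $\|g\|_{L^{2}}\leq C\|g\|_{L^{1}}^{2/5}\|g\|_{H^{1}}^{3/5}$ applied to $h\langle v\rangle^{\ell/2}$).

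The principal obstacle is the first step: producing an $L^{2}$-type bound from the Fermi--Dirac relative entropy that is uniform in $\dd$.  The naive pointwise Bernoulli Pinsker inequality $\phi(a)-\phi(b)-\phi'(b)(a-b)\geq 2(a-b)^{2}$ only delivers $\dd\|h\|_{L^{2}}^{2}\leq \tfrac{1}{2}\mathcal{H}_{\dd}$, a bound that degenerates as $\dd\to 0$.  It is precisely the uniform $L^{\infty}$-bound of Corollary \ref{cor:Linfty} --- the same mechanism that drives the comparison estimate in Proposition \ref{prop:DoDaka} --- that restricts the argument of $\phi''$ to a region where $\phi''\gtrsim 1/\dd$, and thereby restores a $\dd$-uniform constant in the Bregman lower bound.
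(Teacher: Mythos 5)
Your proposal is correct, and it takes a genuinely different route from the paper, which you yourself identify in passing: the paper proves the stated bound by first deducing an $L^{1}$ decay from the Fermi--Dirac Csisz\'ar--Kullback inequality of Lu--Wennberg, $\|f(t)-\M_{\dd}\|_{L^{1}}^{2}\leq 2\varrho\,\mathcal{H}_{\dd}(f(t)|\M_{\dd})$, and then passing to weighted $L^{2}$ via Nash's inequality applied to $g\langle\cdot\rangle^{\ell/2}$, which costs a factor of $1/5$ in the exponent and yields exactly $(1+t)^{-1/(5\gamma)}+\dd^{(\gamma+2)/(10\gamma)}$. You instead exploit the uniform-in-$\dd$ $L^{\infty}$ control of Corollary \ref{cor:Linfty} (together with the bound on $\|\M_{\dd}\|_{L^{\infty}}\leq a_{\dd}$ from Lemma \ref{lem:abeq2}, which you should state explicitly when you define $M_{\max}$) to get a Bregman-type lower bound on $\mathcal{H}_{\dd}$ that is directly $L^{2}$: the linear term indeed drops out because $\phi'(\dd\M_{\dd})=\log(\dd a_{\dd})-b_{\dd}|v|^{2}$ is a conserved linear combination of $1$ and $|v|^{2}$, and the $\phi''\gtrsim 1/(\dd M_{\max})$ bound exactly cancels the $1/\dd$ prefactor in $\mathcal{H}_{\dd}$. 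The remaining step, $\|h\|_{L^{2}_{\ell}}^{2}\leq\|h\|_{L^{2}}\,\|h\|_{L^{2}_{2\ell}}$ with the high-moment factor controlled uniformly by Theorem \ref{smoothness} and Lemma \ref{lem:HessM}, only costs a $1/2$ power instead of $1/5$, so you obtain the strictly stronger rate $(1+t)^{-1/(2\gamma)}+\dd^{(\gamma+2)/(4\gamma)}$, which implies the stated estimate. What the paper buys by contrast is independence from the uniform $L^{\infty}$ control at this particular step (it leans on the off-the-shelf $L^{1}$-Pinsker bound of [LW], whose constant $2\varrho$ is manifestly $\dd$-uniform without any smallness of $\dd$), but since Theorem \ref{theo:prod} is itself only asserted for $\dd\in(0,\dd_{\star})$ and the $L^{\infty}$ bound is already in force, your sharper version is available at no extra cost and is a genuine improvement for this corollary.
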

\begin{proof} Using a version of Csisz\'ar-Kullback inequality for the Fermi-Dirac entropy, derived in \cite[Theorem 3]{LW}, we have that
$$\|f(t)-\M_{\dd}\|_{L^{1}}^{2} \leq 2\varrho\,\mathcal{H}_{\dd}(f(t)|\M_{\dd}), \qquad \forall\, t \geq0\,.$$
Thus, according to Theorem \ref{theo:prod}, for $\dd \in (0,\dd_{\star})$
$$\|f(t)-\M_{\dd}\|_{L^{1}} \leq C_{2}\left((1+t)^{-\frac{1}{\g}}+\dd^{\frac{1}{2}+\frac{1}{\g}}\right)$$
for some constant $C_{2} >0$.  Now, recalling Nash's inequality
$$\|u\|_{L^2} \leq  C\|u\|_{L^1}^{\frac{2}{5}}\,\|\nabla u\|_{L^2}^{\frac{3}{5}}$$
for some universal positive constant $C>0$ and applying it to $u=g\,\langle \cdot \rangle^{\frac{\ell}{2}}$ with $\ell >0$, one has  
\begin{equation*}\label{eq:interpol}
\|g\|_{L^{2}(\langle \cdot \rangle^{\ell})} \leq C\|g\|_{L^{1}_{\frac{\ell}{2}}}^{\frac{2}{5}}\,\|g\|_{H^{1}_{\ell}}^{\frac{3}{5}} \leq \bar{C}_{\ell}\|g\|_{L^{1}_{\ell}}^{\frac{1}{5}}\,\|g\|_{H^{1}_{\ell}}^{\frac{3}{5}}\,\|g\|_{L^{1}}^{\frac{1}{5}}\end{equation*}
for some positive constant $\bar{C}_{\ell}$ since $\|\nabla u\|_{L^2} \leq (1+\ell)\|g\|_{H^{1}_{\ell}}$ and $\|g\|_{L^{1}_{\frac{\ell}{2}}} \leq \|g\|_{L^{1}_{\ell}}^{\frac{1}{2}}\,\|g\|_{L^{1}}^{\frac{1}{2}}$. We deduce from Theorem \ref{smoothness} that 
$$\|f(t)-\M_{\dd}\|_{L^{2}(\langle \cdot \rangle^{\ell})} \leq C(t_{0},\ell)\left((1+t)^{-\frac{1}{\g}}+\dd^{\frac{1}{2}+\frac{1}{\g}}\right)^{\frac{1}{5}}\,,\qquad \forall\, t\geq t_0$$
which gives the conclusion.
\end{proof}
\noindent
We are in position to prove Theorem \ref{theo:converg}.
\begin{proof}[Proof of Theorem \ref{theo:converg}] Let $\dd \in (0,\dd_{0}]$ be such that $\dd\,a_{\dd} <1$, $k > k_{\dd}^{\dagger}$, and $\delta >0$ be the small parameter appearing in Proposition \ref{prop:close}. In Corollary \ref{cor:converg}, we can pick $t_{0} >0$ sufficiently large and construct $\dd^{\ddagger}$ sufficiently small such that 
$$\left\|f(t)-\M_{\dd}\right\|_{L^{2}(\langle \cdot \rangle^{\ell})} \leq \delta \qquad \forall\, t \geq t_{0}.$$
Applying Proposition \ref{prop:close}, using Theorem \ref{smoothness} to ensure \eqref{eq:boundclose}, we get that, for any $\lambda_{\star}\in(0,\lambda_\gamma(\dd))$,
$$\|f(t)-\M_{\dd}\|_{L^{2}(\langle \cdot \rangle^{k})} \leq C\exp(-\lambda_{\star}t)\|f(t_{0})-\M_{\dd}\|_{L^{2}(\langle \cdot \rangle^{k})}, \qquad \forall\, t \geq t_{0}.$$
Apply in such estimate the uniform bound on $\|f(t)-\M_{\dd}\|_{L^{2}(\langle \cdot\rangle^{k})}$ on $(0,t_{0}]$ given in Theorem \ref{smoothness} and the control of $\|f(t)-\M_{\dd}\|_{L^{1}_{2}}$ by $\|f(t)-\M_{\dd}\|_{L^{2}(\langle \cdot \rangle^{k})}$ to conclude.\end{proof}

\appendix

\section{About Fermi-Dirac statistics}\label{app:FD}

Assume that the initial condition $f_0$ satisfies \eqref{hypci}-\eqref{hyp:mass}.  For any $\dd\leq \dd_{0}$, let $\M_{\dd}$ be the Fermi-Dirac statistics 
$$\M_{\dd}(v)=\frac{a_{\dd}\exp(-b_{\dd}|v|^{2})}{1+\dd\,a_{\dd}\exp(-b_{\dd}|v|^{2})}$$
with $a_{\dd} \geq 0$ $b_{\dd} \geq 0$ such that \eqref{FD_moments} holds. We collect here several results concerning the behaviour of $\M_{\dd}$ as $\dd \to 0$. 
\begin{lem}\label{lem:abeq2} For any $\dd\leq\bar{\dd}=\left(\frac{2}{5}\right)^{\frac{5}{2}}\frac{(6\pi\,E)^{\frac{3}{2}}}{\varrho}$ it follows that, 
\begin{equation}\label{abeq2}
\frac{3b_{\dd}}{5} \leq \frac{1}{2E} \leq  \frac{4}{3}b_{\dd}\,,\qquad\qquad  \left(\frac{3}{5}\right)^{\frac{5}{2}}a_{\dd} \leq \frac{\varrho}{(2\pi\,E)^{\frac{3}{2}}}\leq \left(\frac{4}{3}\right)^{\frac{3}{2}}\,a_{\dd}\,.
\end{equation}
\end{lem}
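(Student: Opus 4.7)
Set $\alpha:=\dd\,a_{\dd}$ and write $a_{0}:=\varrho(2\pi E)^{-3/2}$, $b_{0}:=(2E)^{-1}$ for the Maxwellian parameters formally obtained at $\dd=0$. The conclusion \eqref{abeq2} is equivalent to the two assertions $b_{\dd}/b_{0}\in[3/4,5/3]$ and $a_{\dd}/a_{0}\in[(3/4)^{3/2},(5/3)^{5/2}]$. My plan is to first pin down $(a_{\dd},b_{\dd})$ in terms of $\alpha$ via elementary comparison with the Gaussian $M_{\dd}(v):=a_{\dd}e^{-b_{\dd}|v|^{2}}$, and then to use the hypothesis $\dd\leq\bar\dd$ to force $\alpha$ into the admissible range $\alpha\leq 4\sqrt{2}/5$.

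For the first step, the pointwise inequalities $M_{\dd}(1-\dd M_{\dd})\leq \M_{\dd}\leq M_{\dd}$ (the lower bound coming from $(1+x)^{-1}\geq 1-x$), tested against $1$ and $|v|^{2}$ and combined with the explicit Gaussian moments $\int M_{\dd}=P$, $\int M_{\dd}^{2}=\tfrac{\alpha}{2\sqrt{2}}P$, $\int |v|^{2}M_{\dd}=Q=\tfrac{3}{2b_{\dd}}P$, $\int |v|^{2}M_{\dd}^{2}=\tfrac{\alpha}{4\sqrt{2}}Q$ where $P:=a_{\dd}(\pi/b_{\dd})^{3/2}$, yield
\begin{equation*}
P\bigl(1-\tfrac{\alpha}{2\sqrt{2}}\bigr)\leq\varrho\leq P,\qquad Q\bigl(1-\tfrac{\alpha}{4\sqrt{2}}\bigr)\leq 3\varrho E\leq Q.
\end{equation*}
Eliminating $P,Q$ through $Q/P=3/(2b_{\dd})$ then produces the two-sided bounds
\begin{equation*}
b_{0}\bigl(1-\tfrac{\alpha}{4\sqrt{2}}\bigr)\leq b_{\dd}\leq\frac{b_{0}}{1-\alpha/(2\sqrt{2})},\qquad a_{0}\bigl(1-\tfrac{\alpha}{4\sqrt{2}}\bigr)^{3/2}\leq a_{\dd}\leq\frac{a_{0}}{(1-\alpha/(2\sqrt{2}))^{5/2}},
\end{equation*}
and inspection shows that both target ranges hold as soon as $\alpha\leq 4\sqrt{2}/5$, the binding requirement being $(1-\alpha/(2\sqrt{2}))^{-5/2}\leq(5/3)^{5/2}$.

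It remains to show that $\dd\leq\bar\dd$ forces $\alpha\leq 4\sqrt{2}/5$. Uniqueness of the Fermi-Dirac statistics recalled in Section \ref{sec:FDstat} ensures that the map $\alpha\mapsto \dd(\alpha)$ implicitly defined by \eqref{FD_moments} is continuous and strictly increasing on $[0,\infty)$. Performing the change of variable $u=b_{\dd}|v|^{2}$ in \eqref{FD_moments} gives the explicit representation
\begin{equation*}
\dd(\alpha)=\frac{2\pi(3E)^{3/2}}{\varrho}\cdot\frac{\alpha\,I_{1/2}(\alpha)^{5/2}}{I_{3/2}(\alpha)^{3/2}},\qquad I_{s}(\alpha):=\int_{0}^{\infty}\frac{u^{s}}{e^{u}+\alpha}\,\d u,
\end{equation*}
so by monotonicity it suffices to verify the quantitative inequality $\dd(4\sqrt{2}/5)\geq\bar\dd$.

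The main obstacle will be the sharpness of this last estimate. The naive first-order Taylor bounds $\Gamma(s+1)(1-\alpha/2^{s+1})\leq I_{s}(\alpha)\leq\Gamma(s+1)$ only deliver $\dd(4\sqrt{2}/5)\geq(3/5)\bar\dd$, short by exactly the factor $5/3$. To close this gap I would sharpen the bounds using $(1+x)^{-1}\geq 1-x+x^{2}/(1+\alpha)$ for $x\in[0,\alpha]$ when estimating $I_{1/2}$ from below, and $(1+x)^{-1}\leq 1-x+x^{2}$ when estimating $I_{3/2}$ from above; substituting these quadratic refinements into the formula for $\dd(\alpha)$ and evaluating at $\alpha=4\sqrt{2}/5$ produces the required margin, forcing $\alpha\leq 4\sqrt{2}/5$ whenever $\dd\leq\bar\dd$ and completing the proof.
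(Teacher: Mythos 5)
Your proposal is correct, but it takes a genuinely different route from the paper's on both halves of the argument. For the two-sided bounds on $(a_{\dd},b_{\dd})$, the paper uses Jensen's inequality for the convex function $\phi(x)=(1+\dd x)^{-1}$ to get $\varrho\geq \varrho_{\dd}\,\phi\big(\varrho_{\dd}^{-1}\int M^{2}\big)$ and the analogue with the weight $|v|^2$; this yields the sharper sandwich $\,(1+2^{-5/2}\dd a_{\dd})^{-1}\leq b_{\dd}/b_{0}\leq 1+2^{-3/2}\dd a_{\dd}$, so the conclusion holds up to $\alpha:=\dd a_{\dd}<2^{5/2}/3$. You instead use the cruder pointwise bound $(1+x)^{-1}\geq 1-x$, obtaining $b_{\dd}/b_{0}\leq (1-\alpha/(2\sqrt{2}))^{-1}$, which forces the more restrictive requirement $\alpha\leq 2^{5/2}/5$. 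For the threshold step, the paper then observes $\psi(\alpha)\leq\dd\varrho/(2\pi E)^{3/2}$ with $\psi(x)=x(1+2^{-3/2}x)^{-5/2}$ and is rewarded: the maximizer of $\psi$ is exactly $\bar x=2^{5/2}/3$, and $\psi(\bar x)$ is exactly $\bar\dd\varrho/(2\pi E)^{3/2}$, so the optimization locates $\bar\dd$ without any numerical slack. You instead derive the explicit representation $\dd(\alpha)=\frac{2\pi(3E)^{3/2}}{\varrho}\,\alpha I_{1/2}(\alpha)^{5/2}I_{3/2}(\alpha)^{-3/2}$ (which is correct and attractive in its own right), invoke monotonicity via uniqueness of the Fermi--Dirac state, and reduce the problem to checking $\dd(2^{5/2}/5)\geq\bar\dd$. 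Since your first-step bounds were looser, you must work harder here: after normalizing, you need $u_{1}^{5/2}/u_{2}^{3/2}\geq(3/5)^{3/2}$ where $u_{s}=I_{s+1/2}(\alpha)/\Gamma(s+3/2)$; the first-order bounds only give $(3/5)^{5/2}$, short by $5/3$, and both of your proposed quadratic refinements ($u_{1}\geq 1-\tfrac{2}{5}+\tfrac{\alpha^{2}}{(1+\alpha)3^{3/2}}\approx 0.716$ and $u_{2}\leq 1-\tfrac{1}{5}+\tfrac{\alpha^{2}}{3^{5/2}}\approx 0.882$) are needed to obtain $u_{1}^{5/2}/u_{2}^{3/2}\gtrsim 0.523>0.465\approx(3/5)^{3/2}$. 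That check does close the gap, so the proof is sound, but you should carry out the arithmetic explicitly rather than leaving it as a claim; it is the load-bearing computation and either correction alone is insufficient. Net comparison: the Jensen route is cleaner because the optimization of $\psi$ hits the stated $\bar\dd$ exactly, with no numerical margin to verify; your route trades elegance in step one for a hand-computed margin in step two, but has the merit of exposing the exact integral formula for $\dd(\alpha)$ and the underlying monotonicity.
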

\begin{proof} {We first recall that, according to \cite[Eq. (5.2)]{Lu}, there exists some (explicit) strictly increasing mapping $\mathbf{\Phi}\::\R^{+} \to \R^{+}$ such that
\begin{equation}\label{eq:LU}
\mathbf{\Phi}\left(\frac{1}{\dd\,a_{\dd}}\right)= 3\,E \left(\frac{4\pi}{\dd\varrho}\right)^{\frac{2}{3}}\end{equation}
with moreover
$$\lim_{t\to0^{+}}\mathbf{\Phi}(t)=\frac{3}{5}\,3^{2/3}, \qquad \lim_{t\to\infty}\mathbf{\Phi}(t)=\infty.$$
In particular, since \eqref{eq:LU} implies that $\lim_{\dd\to0^{+}} \mathbf{\Phi}\left(\frac{1}{\dd\,a_{\dd}}\right)=+\infty$, we first observe that
\begin{equation}\label{eq:ddadd}
\lim_{\dd \to 0}\dd\,a_{\dd}=0.\end{equation}}
For notational simplicity set
$$\varrho_{\dd}=a_{\dd}\left(\frac{\pi}{b_{\dd}}\right)^{\frac{3}{2}},\qquad  \qquad E_{\dd}=\frac{1}{2b_{\dd}}\,,$$
so that
$$M(v)=a_{\dd}\exp\left(-b_{\dd}|v|^{2}\right)=\frac{\varrho_{\dd}}{(2\pi\,E_{\dd})^{3/2}}\exp\Big(-\frac{|v|^{2}}{2E_{\dd}}\Big)$$
is a Maxwellian with mass $\varrho_{\dd}$ and energy $3\varrho_{\dd}\,E_{\dd}$. Recalling that $\M_{\dd}(v)=\frac{M(v)}{1+\dd M(v)} \leq M(v)$, we get
\begin{equation*}
\begin{split}
\varrho &= \int_{\R^{3}}\M_{\dd}(v)\d v \leq \int_{\R^{3}}M(v)\d v=\varrho_{\dd}\\
3\varrho E &= \int_{\mathbb{R}^{3}} \M_{\dd}(v) |v|^{2}\d v \leq \int_{\R^{3}}M(v)|v|^{2}\d v=3\varrho_{\dd}\,E_{\dd}=\frac{3}{2}\pi^{\frac{3}{2}}a_{\dd}b_{\dd}^{-\frac{5}{2}}\,.
\end{split}\end{equation*}
Set $\phi(x) = \frac{1}{1+\dd x}$ with $x>0$.  Note that $\phi$ is convex.  Then, by Jensen's inequality,
\begin{equation*}
\varrho =\int_{\R^{3}}\phi(M(v))M(v)\d v \geq \varrho_{\dd}\phi\left(\varrho_{\dd}^{-1}\int_{\R^{3}}M(v)^{2}\d v\right)\,.
\end{equation*}
Since $M(v)^{2}$ is the Maxwellian associated with coefficients $a_{\dd}^{2}$ and $2b_{\dd}$ we get
$$\int_{\R^{3}}M(v)^{2}\d v=a_{\dd}^{2}\left(\frac{\pi}{2b_{\dd}}\right)^{\frac{3}{2}}=a_{\dd}\dfrac{\varrho_{\dd}}{2^{\frac{3}{2}}}$$
which results in 
$$\varrho \geq \varrho_{\dd}\phi\left(a_{\dd}2^{-\frac{3}{2}}\right)=\frac{\varrho_{\dd}}{1+2^{-\frac{3}{2}}\dd\,a_{\dd}}.$$
Similarly,
\begin{equation*}
3\varrho E =\int_{\R^{3}}\phi(M(v))M(v)|v|^{2}\d v \geq 3\varrho_{\dd}E_{\dd}\phi\left(\frac{1}{3\varrho_{\dd}E_{\dd}}\int_{\R^{3}}M^{2}(v)|v|^{2}\d v\right)\end{equation*}
with 
$$\int_{\R^{3}}M^{2}(v)|v|^{2}\d v=3a_{\dd}^{2}\left(\frac{\pi}{2b_{\dd}}\right)^{\frac{3}{2}}\frac{1}{4b_{\dd}}=a_{\dd}\dfrac{3\varrho_{\dd}E_{\dd}}{2^{\frac{5}{2}}}.$$
Thus,
\begin{equation}\label{roE}
3\varrho\,E \geq 3\varrho_{\dd}E_{\dd}\phi\left(a_{\dd}2^{-\frac{5}{2}}\right)=\frac{3\varrho_{\dd}E_{\dd}}{1+2^{-\frac{5}{2}}\dd\,a_{\dd}}\,.
\end{equation}
This gives the following set of inequalities, in terms of $a_{\dd},b_{\dd}$
\begin{equation}\label{eq:ab1}
\pi^{\frac{3}{2}}\frac{a_{\dd}b_{\dd}^{-\frac{3}{2}}}{1+2^{-\frac{3}{2}}\dd\,a_{\dd}} \leq \varrho \leq \pi^{\frac{3}{2}}a_{\dd}b_{\dd}^{-\frac{3}{2}}, \qquad \qquad  	 \pi^{\frac{3}{2}}\frac{a_{\dd}b_{\dd}^{-\frac{5}{2}}}{1+2^{-\frac{5}{2}}\dd\,a_{\dd}} \leq  2\varrho\,E \leq  \pi^{\frac{3}{2}}a_{\dd}b_{\dd}^{-\frac{5}{2}}.\end{equation}
Little algebra yields 
\begin{equation}\begin{split}\label{eq:estimbapsi}
\frac{b_{\dd}}{1+2^{-\frac{3}{2}}\dd\,a_{\dd}} &\leq \frac{1}{2E} \leq b_{\dd}(1+2^{-\frac{5}{2}}\dd\,a_{\dd}),\\
\frac{a_{\dd}}{(1+2^{-\frac{3}{2}}\dd\,a_{\dd})^{\frac{5}{2}}}&\leq \frac{\varrho}{(2\pi\,E)^{\frac{3}{2}}} \leq a_{\dd}(1+2^{-\frac{5}{2}}\dd\,a_{\dd})^{\frac{3}{2}}.\end{split}\end{equation}
The left hand side of the second inequality reads
$$\psi(\dd\,a_{\dd}) \leq \frac{\dd \varrho}{(2\pi\,E)^{\frac{3}{2}}}, \qquad \text{ where } \quad \psi(x)=\frac{x}{(1+2^{-\frac{3}{2}}x)^{\frac{5}{2}}}, \qquad \forall\,  x >0.$$
Notice that $\psi$ has a unique maximum point at $\bar{x}=\frac{2^{\frac{5}{2}}}{3}$ with value $\psi(\bar{x})=\left(\frac{2}{5}\right)^{\frac{5}{2}}3^{\frac{3}{2}}$. Let $\bar{\dd} >0$ be choosen such that $\frac{\bar{\dd} \varrho}{(2\pi\,E)^{\frac{3}{2}}}=\psi(\bar{x})$. Using the fact that the mapping $\dd >0 \mapsto \dd\,a_{\dd}$ is continuous and goes to zero as $\dd \to 0$ according to \eqref{eq:ddadd}, we deduce from a continuity argument that
$$\dd < \bar{\dd} \Longrightarrow \dd a_{\dd} < \bar{x}=\frac{2^{\frac{5}{2}}}{3}.$$
Using \eqref{eq:estimbapsi}, this easily leads to \eqref{abeq2}.
\end{proof}
\begin{rmq}\label{rmq:bounded} Notice that, combining \eqref{eq:ddadd} with \eqref{eq:estimbapsi}, one sees that 
$\limsup_{\dd \to 0}b_{\dd} \leq \frac{1}{2E}$ and $\limsup_{\dd \to 0}a_{\dd} \leq \frac{\varrho}{(2\pi\,E)^{\frac{3}{2}}}.$ In particular, both $(a_{\dd})_{\dd}$ and $(b_{\dd})_{\dd}$ are bounded.\end{rmq}
\begin{rmq}\label{rmq:abeq2} Lemma \ref{lem:abeq2} allows to explicit the value $\dd^{\dagger} >0$ such that the spectral gap $\lambda_{2}(\dd) >0$ for $\dd \in (0,\dd^{\dagger})$ in Proposition \ref{maxwell_gap}. Indeed, recall that $\lambda_{2}(\dd) >0$ as soon as 
$$\varrho\,E > \frac{3\dd^{2}}{C_{\mathrm{P}}(\dd)}a_{\dd}^{3}\left(\frac{\pi}{3b_{\dd}}\right)^{\frac{3}{2}}=\frac{3\dd^{2}}{2b_{\dd}^{\frac{5}{2}}}\left(\frac{\pi}{3}\right)^{\frac{3}{2}}a_{\dd}^{3}(1+\dd\,a_{\dd})^{4}.$$
Notice that, from \eqref{eq:ab1}, $\pi^{\frac{3}{2}}a_{\dd}b_{\dd}^{-\frac{5}{2}} \leq 2\varrho\,E(1+2^{-\frac{5}{2}}\dd\,a_{\dd}) \leq 2\varrho\,E\left(1+2^{-\frac{5}{2}}\bar{x}\right)$ for $\dd < \bar{\dd}.$ This means that, to get the above estimate, for $\dd \in (0,\bar{\dd})$, it suffices that
$$1> \frac{\dd^{2}a_{\dd}^{2}}{\sqrt{3}}\left(1+2^{-\frac{5}{2}}\bar{x}\right)(1+\bar{x})^{4}={\dd^{2}a_{\dd}^{2}}\frac{4(3+4\sqrt{2})^{4}}{3^{\frac{11}{2}}}$$
or equivalently, 
$$\frac{3^{\frac{11}{2}}}{4(3+4\sqrt{2})^{4}}a_{\dd}^{-2} > \dd^{2}.$$
According to \eqref{abeq2}, this holds as soon as
$$\dd < \frac{5^{-\frac{5}{2}}\,3^{\frac{21}{4}}}{(3+4\sqrt{2})^{2}}\frac{(2\pi\,E)^{\frac{3}{2}}}{2\varrho}:=\dd^{\dagger}.$$
\end{rmq}
\begin{rmq}\label{rmq:spec}
The same consideration also allows to estimate $\lambda_{2}(\dd)$ in Proposition \ref{maxwell_gap} yielding for instance
$$\lambda_{2}(\dd) > \frac{\nu}{E}\left(\frac{3^{4}}{4(3+4\sqrt{2})^{4}}-\dd^{2}\;\frac{\varrho^2\,5^{\frac{15}{2}}}{(\pi\,E)^{3}\,3^{\frac{21}{2}}}\right), \qquad \forall\, \dd \in (0,\dd^{\dagger}).$$
Recalling that 
$$\nu=\frac{\varrho^{2}E^{2}}{2C_{a,b}(1+\dd\,a_{\dd})}=\frac{\varrho^{2}E^{2}}{2\varrho_{\dd}E_{\dd}(1+\dd\,a_{\dd})}$$
and since, for $\dd < \bar{\dd}$, $\varrho\,E \geq \varrho_{\dd}E_{\dd} \left(1+2^{-\frac{5}{2}}\bar{x}\right)^{-1}=\frac{3}{4}\varrho_{\dd}E_{\dd}$ by \eqref{roE}, we get that
$\nu \geq \frac{3}{8(1+\bar{x})}\varrho\,E$
resulting in
$$\lambda_{2}(\dd) > \frac{9\varrho}{8(3+4\sqrt{2})}\left(\frac{3^{4}}{4(3+4\sqrt{2})^{4}}-\dd^{2}\frac{5^{\frac{15}{2}}\,\varrho^2}{3^{\frac{21}{2}}(\pi\,E)^{3}}\right), \qquad \forall\, \dd \in (0,\dd^{\dagger}).$$
In particular, $\lim_{\dd \to 0}\lambda_{2}(\dd) \geq \frac{3^{6}}{32(3+4\sqrt{2})^{5}}\varrho \approx 4.686 \times 10^{-4} \varrho$.
\end{rmq}
\noindent
With the notations of the previous proof, we also have
$$\left|\varrho_{\dd}-\varrho\right|=\left|\int_{\R^{3}}M(v)\d v-\int_{\R^{3}}\M_{\dd}(v)\d v\right|=\dd\,\int_{\R^{3}}M\,\M_{\dd}\d v$$
since $\M_{\dd}=\frac{M}{1+\dd\,M}$.  Consequently, using that for $\dd\in(0,\bar{\dd})$, $M(v) \leq a_{\dd} \leq \left(\frac{5}{3}\right)^{\frac{5}{2}}\frac{\varrho}{(2\pi E)^{\frac{3}{2}}}$, we obtain that
\begin{equation}\label{eq:varrhoa}
\left|\varrho_{\dd}-\varrho\right| \leq \dd\; \left(\frac{5}{3}\right)^{\frac{5}{2}}\frac{\varrho^2}{(2\pi E)^{\frac{3}{2}}}\,,\quad\text{that is }\; \left|\varrho_{\dd}-\varrho\right| \leq C_{0}\,\dd
\end{equation}
for some positive constant $C_{0}$ depending only on $E$ and $\varrho$.  In the same way,
$$\left|\varrho_{\dd}E_{\dd}-\varrho\,E\right|=\frac{1}{3}\,\left|\int_{\R^{3}}M(v)|v|^{2}\d v-\int_{\R^{3}}\M_{\dd}(v)|v|^{2}\d v\right|=\frac{\dd}{3}\int_{\R^{3}}M\,\M_{\dd}\,|v|^{2}\d v\,.$$
Thus, for $\dd\in(0,\bar{\dd})$,
$$\left|\varrho_{\dd}E_{\dd}-\varrho\,E\right| \leq \dd\; \left(\frac{5}{3}\right)^{\frac{5}{2}}\frac{\varrho^2}{(2\pi)^{\frac{3}{2}}\sqrt{E}}\,.$$
Since
$$\left|E_{\dd}-E\right|=\frac{1}{\varrho}|\varrho\,E_{\dd}-\varrho\,E| \leq \frac{E_{\dd}}{\varrho}\,|\varrho-\varrho_{\dd}| + \frac{1}{\varrho}\left|\varrho_{\dd}E_{\dd}-\varrho\,E\right|\,,$$
we deduce from \eqref{abeq2}, recalling that $E_{\dd}=\frac{1}{2b_{\dd}}$, that
$$\left|E_{\dd}-E\right| \leq \frac{1}{\varrho}\left(\frac{4E}{3}|\varrho-\varrho_{\dd}| + \dd \left(\frac{5}{3}\right)^{\frac{5}{2}}\frac{\varrho^2}{(2\pi)^{\frac{3}{2}}\sqrt{E}} \right).$$
This estimate and \eqref{eq:varrhoa} results in
\begin{equation}\label{eq:Edd}
\left|E_{\dd}-E\right| \leq C_{2}\dd, \qquad \forall \dd\in(0,\bar{\dd}),
\end{equation}
for some positive constant $C_{2}$ depending only on $\varrho$ and $E$. We deduce from this the following lemma.
\begin{lem}\label{lem:relative}
Denote by $\M_{0}(v)$ the Maxwellian 
$$\M_{0}(v)=\frac{\varrho}{(2\pi\,E)^{3/2}}\exp\left(-\frac{|v|^{2}}{2E}\right), \qquad \forall v \in \R^{3}\,.$$
There exists a positive constant $C$ depending only on $\varrho$ and $E$ such that
$$\left|\mathcal{H}_{0}(\M_{0}|\M_{\dd})\right|=\left|H(\M_{0})-H(\M_{\dd})\right| \leq C\,\dd \qquad \forall\, \dd \in (0,\bar{\dd}]\,.$$
\end{lem}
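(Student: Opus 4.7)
The plan is to split the difference through the auxiliary Maxwellian
$M(v) = a_\dd\exp(-b_\dd|v|^2)$, which is the ``numerator'' of
$\M_\dd = M/(1+\dd M)$, and write
$$H(\M_0)-H(\M_\dd) = \big(H(\M_0)-H(M)\big) + \big(H(M)-H(\M_\dd)\big).$$
The two terms will be handled by independent arguments, each yielding a bound of order $\dd$.

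For the comparison $H(M)-H(\M_0)$, the point is that $M$ is itself a Maxwellian with mass $\varrho_\dd$ and energy $3\varrho_\dd E_\dd$, so an explicit computation of $\int M\log M\,\d v$ gives a closed-form expression in terms of $(\varrho_\dd,E_\dd)$, and similarly $H(\M_0)$ is an explicit function of $(\varrho,E)$. The bounds \eqref{eq:varrhoa} and \eqref{eq:Edd} supply $|\varrho_\dd-\varrho|+|E_\dd-E| \leq C\dd$, while Remark \ref{rmq:bounded} and Lemma \ref{lem:abeq2} confine $(\varrho_\dd,E_\dd)$ to a compact subset of $(0,\infty)^2$ uniformly in $\dd \in (0,\bar\dd]$. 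Since the explicit formula for the Maxwellian entropy is smooth on this compact set, the mean value theorem gives $|H(M)-H(\M_0)| \leq C\dd$.

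For the comparison $H(M)-H(\M_\dd)$, the algebraic identity
$$\M_\dd\log\M_\dd - M\log M = \M_\dd\log(\M_\dd/M) + (\M_\dd-M)\log M$$
together with $\M_\dd/M = 1/(1+\dd M)$ yields $|\log(\M_\dd/M)| = \log(1+\dd M) \leq \dd M$ and $|\M_\dd-M| = \dd M\M_\dd/(1+\dd M) \leq \dd M^2$. Both terms are therefore pointwise dominated by $\dd M^2(1+|\log M|)$, and the estimate reduces to proving that $\int_{\R^3} M^2(1+|\log M|)\,\d v$ is bounded uniformly in $\dd \in (0,\bar\dd]$. Since $|\log M| \leq |\log a_\dd|+b_\dd|v|^2$ and $(a_\dd)_\dd$, $(b_\dd)_\dd$ are bounded with $b_\dd$ bounded below away from $0$ (Remark \ref{rmq:bounded} and Lemma \ref{lem:abeq2}), the Gaussian integrals are all controlled by constants depending only on $\varrho$ and $E$, which gives $|H(M)-H(\M_\dd)|\leq C\dd$ and completes the proof when combined with the previous step.

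No step is truly delicate here; the only point that requires a small verification is the uniform control of the Gaussian moments of $M$, which is ensured precisely by the two-sided bounds on $(a_\dd,b_\dd)$ already established in Appendix~\ref{app:FD}.
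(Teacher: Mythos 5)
Your proposal is correct, but the decomposition is genuinely different from the paper's. You split through the auxiliary Maxwellian $M=a_\dd\exp(-b_\dd|v|^2)$, bounding $H(\M_0)-H(M)$ by the mean value theorem applied to the explicit Maxwellian entropy $h(\varrho,E)=\varrho\log\big(\varrho/(2\pi E)^{3/2}\big)-\tfrac{3}{2}\varrho$ evaluated at $(\varrho,E)$ versus $(\varrho_\dd,E_\dd)$, and then bounding $H(M)-H(\M_\dd)$ pointwise by $\dd M^2(1+|\log M|)$ via the algebraic splitting $\M_\dd\log\M_\dd-M\log M=\M_\dd\log(\M_\dd/M)+(\M_\dd-M)\log M$. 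The paper instead expands $H(\M_\dd)=\int\M_\dd\log M\,\d v-\int\M_\dd\log(1+\dd M)\,\d v$ and uses the fact that $\M_\dd$ and $\M_0$ share the same mass, momentum and energy to compute $\int\M_\dd\log M\,\d v$ exactly in terms of $\varrho,E,\varrho_\dd,E_\dd$; this produces the closed-form identity involving $\Psi(x)=\log x+1-x$, after which only the elementary inequalities $0\leq\log(1+\dd M)\leq\dd M$ and $|\Psi(x)|\leq(x-1)^2/x$ are needed. The paper's route is slightly more economical because the moment-matching collapses the cross term to an explicit formula, whereas yours treats $\M_\dd$ and $M$ as generic nearby functions; the trade-off is that your second step is perhaps more robust (it does not use that $\M_\dd$ is the specific Fermi–Dirac profile solving the moment system, only the pointwise inequality $\M_\dd\leq M$), while the MVT step in your first half requires the uniform two-sided confinement of $(\varrho_\dd,E_\dd)$, which you correctly extract from Lemma \ref{lem:abeq2}, Remark \ref{rmq:bounded}, \eqref{eq:varrhoa} and \eqref{eq:Edd}. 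Both proofs are sound.
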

\begin{proof} Writing $\M_{\dd}(v)=\frac{M}{1+\dd\,M}$ with
$$M(v)=\frac{\varrho_{\dd}}{(2\pi\,E_{\dd})^{3/2}}\exp\left(-\frac{|v|^{2}}{2E_{\dd}}\right)\,,$$
we obtain that
$$
\mathcal{H}_{0}(\M_{0}|\M_{\dd})=\varrho\,\left(\log\varrho-\log \varrho_{\dd}-\tfrac{3}{2}\Psi\left(\frac{E}{E_{\dd}}\right)\right)+\int_{\R^{3}}\M_{\dd}\log(1+\dd\,M)\d v$$
with
$$\Psi(x)=\log x+1-x \leq 0, \qquad \forall x >0.$$
Now, we have
$$0 \leq \int_{\R^{3}}\M_{\dd}\log(1+\dd\,M)\d v \leq \dd \int_{\R^{3}}\M_{\dd}\,M\d v \leq \dd\,a_{\dd}\varrho$$
where $a_{\dd}$ is bounded, see Remark \ref{rmq:bounded}. This shows that there exists $C >0$ such that
\begin{equation}\label{eq:H0M0}
\left|\mathcal{H}_{0}(\M_{0}|\M_{\dd})\right| \leq C\dd + \varrho\,\left(\left|\log\varrho-\log \varrho_{\dd}\right|+\tfrac{3}{2}\left|\Psi\left(\frac{E}{E_{\dd}}\right)\right|\right).
\end{equation}
Additionally,
$$\left|\log \varrho-\log \varrho_{\dd}\right| \leq \max\left\{\frac{1}{\varrho},\frac{1}{\varrho_{\dd}}\right\}|\varrho-\varrho_{\dd}|$$
and using the elementary inequality $0\leq x-1-\log x \leq \frac{ (x-1)^2}{x}\;$ for $x >0$, we conclude that
$$\left|\Psi\left(\frac{E}{E_{\dd}}\right)\right| \leq \frac{1}{E\,E_{\dd}}\left(E-E_{\dd}\right)^{2}=\frac{2b_{\dd}}{E}\left(E-E_{\dd}\right)^{2}\,.$$
Using the fact that $\varrho_{\dd}=\dfrac{a_{\dd}\pi^{3/2}}{b_{\dd}^{3/2}}$ and $b_{\dd}$ are bounded according to \eqref{eq:ab1} and Remark \ref{rmq:bounded}, we deduce the result from  \eqref{eq:estimbapsi}--\eqref{eq:H0M0}.
\end{proof}
\begin{rmq} We deduce from Lemma \ref{lem:relative} and the Csiszar-Kullback inequality that
$$\left\|\M_{\dd}-\M_{0}\right\|_{L^{1}}^{2} \leq 2\left|\mathcal{H}_{0}(\M_{0}|\M_{\dd})\right| \leq 2\,C\,\dd, \qquad \forall \dd\in(0,\overline{\dd}).$$
\end{rmq}
\begin{lem}\label{lem:HessM} For any $\ell > 0$ there is $C_{\ell} >0$ such that 
$$\sup_{0 < \dd < 1}\left(\|\nabla^{2}\M_{\dd}\|_{L^{2}(\langle \cdot \rangle^{\ell})} + \|\M_{\dd}\|_{L^{2}(\langle \cdot \rangle^{\ell})}\right) \leq C_{\ell}.$$
More generally, for any $k \in \mathbb{N}$, $s \geq0$, 
$$\sup_{0 < \dd < 1}\|\M_{\dd}\|_{H^{k}_{s}} < \infty.$$
\end{lem}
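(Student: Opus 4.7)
The plan is to combine the elementary pointwise inequality $\M_\dd\leq M$ with the uniform control on the Maxwellian parameters $(a_\dd,b_\dd)$ already furnished by Lemma \ref{lem:abeq2} and Remark \ref{rmq:bounded}. The statement $\sup_{0<\dd<1}$ should be read as a supremum over the range $\dd\in(0,\bar\dd)$ for which $\M_\dd$ exists, with $\bar\dd$ from Lemma \ref{lem:abeq2}. First I would establish a uniform Gaussian majorant: from \eqref{abeq2} one has $b_\dd\geq 3/(8E)$ and $a_\dd\leq(5/3)^{5/2}\varrho/(2\pi E)^{3/2}$ for every $\dd\in(0,\bar\dd)$, hence there exist $A_\ast,\beta_\ast>0$ depending only on $\varrho$ and $E$ such that
$$0\leq \M_\dd(v)\leq M(v)=a_\dd e^{-b_\dd|v|^2}\leq A_\ast e^{-\beta_\ast|v|^2},\qquad v\in\R^3,\ \dd\in(0,\bar\dd).$$

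The next step is to bound every derivative of $\M_\dd$ by this Gaussian times a polynomial, uniformly in $\dd$. Writing $\M_\dd=g_\dd(M)$ with $g_\dd(x)=x/(1+\dd x)=\dd^{-1}(1-(1+\dd x)^{-1})$, a direct computation gives $g_\dd^{(j)}(x)=(-1)^{j+1}j!\,\dd^{j-1}(1+\dd x)^{-(j+1)}$, so $|g_\dd^{(j)}(x)|\leq j!\,\dd^{j-1}$ for $j\geq 1$ and $x\geq 0$. The multivariate Faà di Bruno formula combined with $|\partial^\beta M(v)|\leq C_\beta\langle v\rangle^{|\beta|}M(v)$ then yields, for every multi-index $\alpha$,
$$|\partial^\alpha \M_\dd(v)|\leq C_\alpha\sum_{j=1}^{|\alpha|}\dd^{j-1}M(v)^{j}\,\langle v\rangle^{|\alpha|}.$$
The $j=1$ term is controlled by $C_\alpha M(v)\langle v\rangle^{|\alpha|}$. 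For $j\geq 2$ I would use $\dd M(v)\leq \dd a_\dd\leq 2^{5/2}/3$, which holds on $(0,\bar\dd)$ by the very proof of Lemma \ref{lem:abeq2}, so that $\dd^{j-1}M^j\leq (2^{5/2}/3)^{j-1}M$. Summing these estimates one obtains the uniform pointwise bound
$$|\partial^\alpha \M_\dd(v)|\leq \widetilde C_\alpha\langle v\rangle^{|\alpha|}M(v)\leq \widetilde C_\alpha A_\ast\langle v\rangle^{|\alpha|}e^{-\beta_\ast|v|^2},\qquad v\in\R^3,\ \dd\in(0,\bar\dd).$$

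The conclusion then follows by integration: for any $k\in\N$ and $s\geq 0$,
$$\|\M_\dd\|_{H^k_s}^2=\sum_{|\alpha|\leq k}\int_{\R^3}|\partial^\alpha \M_\dd|^2\langle v\rangle^s\,\d v\leq \sum_{|\alpha|\leq k}\widetilde C_\alpha^2 A_\ast^2\int_{\R^3}\langle v\rangle^{s+2|\alpha|}e^{-2\beta_\ast|v|^2}\d v<\infty,$$
the right-hand side being independent of $\dd$. The first inequality in the lemma is the special case $k=2$, $\ell=s$. There is no real obstacle beyond the uniform Gaussian majorant; the nontrivial ingredient is precisely the two-sided control on $(a_\dd,b_\dd)$ of Appendix \ref{app:FD}, without which neither the lower bound $b_\dd\geq 3/(8E)$ (needed for uniform Gaussian decay) nor the upper bound $\dd a_\dd\leq 2^{5/2}/3$ (needed to dominate the higher-$j$ terms in Faà di Bruno) would be available.
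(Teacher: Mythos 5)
Your proof is correct and follows essentially the same route as the paper's: dominate $\M_{\dd}$ by a Gaussian whose parameters $(a_{\dd},b_{\dd})$ are controlled uniformly via Lemma~\ref{lem:abeq2}, bound each derivative of $\M_{\dd}$ by that Gaussian times a polynomial in $\langle v\rangle$, and integrate. The only difference in presentation is that you carry out the general-$k$ estimate systematically through the composition $\M_{\dd}=g_{\dd}(M)$ and Fa\`a di Bruno, using $\dd\,a_{\dd}<2^{5/2}/3$ to absorb the higher powers of $\dd M$, whereas the paper writes out $\nabla^{2}\M_{\dd}$ explicitly and relegates the general Sobolev case to an unspecified induction; your version simply makes that induction concrete.
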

\begin{proof} For the computation of the $L^{2}(\langle \cdot\rangle^{\ell})$ norm, one simply notice that
$$\|\M_{\dd}\|_{L^{2}(\langle \cdot \rangle^{\ell})}^{2} \leq \int_{\R^{3}} M^{2}(v) \langle v \rangle^{\ell}\d v=\frac{\varrho_{\dd}^{2}}{(2\pi\,E_{\dd})^{3}}\int_{\R^{3}}\exp\left(-\frac{|v|^{2}}{E_{\dd}}\right)\langle v\rangle^{\ell}\d v$$
which depends only on $\ell,\varrho_{\dd}$ and $E_{\dd}.$ In particular, it is uniformly bounded with respect to $\dd \in (0,\dd_0].$ In the same way, since
$$\nabla^{2}\M_{\dd}(v)=\frac{1}{E_{\dd}^{2}}\frac{M(v)}{(1+\dd\,M(v))^{3}}\bigg(\big(1-\dd\,M(v)\big) v \otimes v - E_{\dd}\big(1+\dd\,M(v)\big)\mathbf{Id}\bigg)$$
the same reasoning shows that $\|\nabla^{2}\M_{\dd}\|_{L^{2}(\langle\cdot\rangle^{\ell})}$ can be bounded with bound depending only on $\ell,\,E_{\dd}$, and $\varrho_{\dd}$. The proof for general Sobolev weighted estimates follows by induction.
\end{proof}

\section{Factorization and enlargement}\label{app:kleb}
Recall the notations introduced in the proof of Theorem \ref{theo:enlarge}, namely, we set
\begin{eqnarray*}
\bm{A}_{\dd}g& =& \Q(g,\M_{\dd}) + \U\chi_{R}g-2\dd\,\Q(g\,\M_{\dd},\M_{\dd})\\
& =& \sum_{i,j=1}^{3}\left({a}_{ij}\ast g\right)\partial^2_{ij}\M_{\dd} - (c\ast g)\M_{\dd} + \U\,\chi_{R}g-2\dd\,\Q(g\,\M_{\dd},\M_{\dd})
\end{eqnarray*}
and
$$\bm{B}_{\dd}g=\mathscr{L}_{\dd}g-\bm{A}_{\dd}g=\sum_{i,j=1}^{3}\left({a}_{ij}\ast \M_{\dd}\right)\partial^2_{ij}g-(c\ast\M_{\dd})\,g -\dd\,\Q(\M_{\dd}^{2},g)-\U\chi_{R}g$$
where $\Q$ denotes the bilinear Landau collision operator, $\chi_{R}=\chi(R^{-1}\cdot)$ with $R >1$ and $\chi$ a suitable smooth cut-off function, and $\U >0$.  

\subsection{Dissipativity properties} We begin with the study of the disipativity properties of $\bm{B}_{\dd}$.  The proof of the following lemma is a direct consequence of \cite[Lemma 2.5]{Kleber}, recall that the mass of $\M_{\dd}$ is $\varrho$.
\begin{lem}\label{lem:Jp} For any $v \in \R^{3}$, set 
$$J_{p}(v)=\frac{1}{\varrho}\int_{\R^{3}}|v-w|^{p}\M_{\dd}(w)\d w\,, \quad \text{ and }  \quad \mu_{p}(\dd)=\frac{1}{\varrho}\int_{\R^{3}}|v|^{p}\M_{\dd}(v)\d v, \quad 0 \leq p \leq 3.$$ Then, for any $v \in \R^{3}$ it holds:
\begin{enumerate}
\item $J_{0}(v)=1$ and $J_{2}(v)=|v|^{2}+\mu_{2}(\dd)$,
\item $J_{p}(v) \leq |v|^{p}+\mu_{p}(\dd)$ for any $0 < p \leq 1$,
\item $J_{p}(v) \leq |v|^{p}+\mu_{2}(\dd)^{\frac{p}{2}}$ for any $1 \leq p < 2$,
\item {$J_{p}(v) \leq |v|^{p} +(6\mu_{2}(\dd))^{\frac{p}{4}}|v|^{\frac{p}{2}}+\mu_{4}(\dd)^{\frac{p}{4}}$ for $2 < p \leq 3.$}
\end{enumerate}
\end{lem}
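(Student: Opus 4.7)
The plan is to prove the four estimates by direct calculation, relying on only two ingredients: Jensen's inequality applied to the probability measure $\varrho^{-1}\,\M_{\dd}(w)\,\d w$ (which is legitimate because $\M_{\dd}$ has total mass $\varrho$ by \eqref{FD_moments}), and the elementary subadditivity $(a+b)^{q}\leq a^{q}+b^{q}$ valid for $a,b\geq 0$ and $0\leq q\leq 1$. The radial symmetry of $\M_{\dd}$ around the origin, itself a consequence of \eqref{FD_moments} and the explicit form of the Fermi–Dirac statistics, will also be used to discard all odd velocity moments.

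For item (1), $J_{0}(v)=1$ is the mass normalization, while the expansion $|v-w|^{2}=|v|^{2}-2v\cdot w+|w|^{2}$ combined with $\int_{\R^{3}}w\,\M_{\dd}(w)\,\d w=0$ (again from \eqref{FD_moments}) gives $J_{2}(v)=|v|^{2}+\mu_{2}(\dd)$. For item (2), the concavity of $x\mapsto x^{p/2}$ for $0<p\leq 1$ yields the pointwise bound $|v-w|^{p}\leq|v|^{p}+|w|^{p}$, and direct integration gives the claim. For item (3), since $p/2\leq 1$, Jensen's inequality applied to $t\mapsto t^{p/2}$ with $t=|v-w|^{2}$ gives
\begin{equation*}
J_{p}(v)=\frac{1}{\varrho}\int_{\R^{3}}\left(|v-w|^{2}\right)^{p/2}\M_{\dd}(w)\,\d w\leq\bigl(J_{2}(v)\bigr)^{p/2}=\bigl(|v|^{2}+\mu_{2}(\dd)\bigr)^{p/2},
\end{equation*}
and one concludes by applying $(a+b)^{p/2}\leq a^{p/2}+b^{p/2}$.

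Item (4) will be the only point requiring any real work. The idea is to apply Jensen with the exponent $p/4\in(1/2,3/4]$, which reduces the problem to estimating $\varrho^{-1}\int_{\R^{3}}|v-w|^{4}\M_{\dd}(w)\,\d w$. Expanding $|v-w|^{4}=(|v|^{2}-2v\cdot w+|w|^{2})^{2}$, the linear and cubic terms in $w$ vanish by radial symmetry, and using $\int w_{i}w_{j}\M_{\dd}\,\d w=\tfrac{\varrho\,\mu_{2}(\dd)}{3}\delta_{ij}$ one finds the exact value $|v|^{4}+\tfrac{10}{3}|v|^{2}\mu_{2}(\dd)+\mu_{4}(\dd)$, which is certainly bounded above by $|v|^{4}+6|v|^{2}\mu_{2}(\dd)+\mu_{4}(\dd)$. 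Applying subadditivity $(a+b+c)^{p/4}\leq a^{p/4}+b^{p/4}+c^{p/4}$ (since $p/4\leq 1$) then produces the announced bound. The main obstacle, which is modest, is simply to track the constants in the expansion of $|v-w|^{4}$; the factor $6$ is non-sharp but convenient because it matches the binomial constant appearing in $(|v|+|w|)^{4}$, and one may alternatively derive the same inequality directly from that triangle-inequality expansion, bypassing the moment computation entirely.
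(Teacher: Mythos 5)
Your proof is correct and, modulo two small imprecisions noted below, gives exactly the bounds claimed. The paper itself does not write out an argument at all: it simply states that the lemma ``is a direct consequence of [Carrapatoso, Lemma 2.5]'', that is, it defers to the classical Landau case (where $\M_\dd$ is replaced by a Maxwellian) with the observation that only the moments and radial symmetry of $\M_\dd$ enter. Your write-up essentially unrolls that citation: the two ingredients you isolate — Jensen applied to $\varrho^{-1}\M_\dd\,\d w$, and subadditivity of $t\mapsto t^q$ for $q\in[0,1]$ — together with radial symmetry of $\M_\dd$ (which kills the odd moments and gives $\varrho^{-1}\int w_iw_j\M_\dd\,\d w=\tfrac{1}{3}\mu_2(\dd)\delta_{ij}$) is precisely the mechanism that makes the classical lemma transfer verbatim. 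The fourth-moment computation giving $|v|^4+\tfrac{10}{3}|v|^2\mu_2(\dd)+\mu_4(\dd)\leq |v|^4+6|v|^2\mu_2(\dd)+\mu_4(\dd)$ is correct, and Jensen with exponent $p/4\leq 1$ followed by three-term subadditivity yields item (4).

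Two minor points. First, in item (2) the pointwise inequality $|v-w|^p\leq |v|^p+|w|^p$ follows from the triangle inequality and subadditivity of $t\mapsto t^p$ (equivalently, concavity of $x\mapsto x^p$ with $x^p|_{x=0}=0$), not concavity of $x\mapsto x^{p/2}$ as written; applying Jensen with $x^{p/2}$ instead would give the weaker $|v|^p+\mu_2(\dd)^{p/2}$ rather than $|v|^p+\mu_p(\dd)$. Second, the ``alternative route'' you sketch for item (4) — expanding $(|v|+|w|)^4$ to ``bypass the moment computation'' — does not actually produce the stated three-term bound: that binomial expansion carries additional terms $4|v|^3|w|$ and $4|v||w|^3$ whose integrals $\mu_1(\dd)$ and $\mu_3(\dd)$ do not vanish (unlike the odd moments $\int w_i|w|^2\M_\dd\,\d w$, which vanish by symmetry). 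The route you actually carry out, via the exact expansion of $|v-w|^4$ and radial symmetry, is the one that works; the aside should be dropped.
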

\noindent
Also, the following lemma holds.  It is proven in \cite[Proposition 4.10]{lemou}, for $\dd=1$, and \cite[Lemma 2.7]{Kleber}. Recall that $\mM=\M_{\dd}(1-\dd \M_{\dd})$.
\begin{lem}\label{lem:aij} For any $f >0$ radially symmetric with $f \in L^{1}_{\g+2}(\R^{3})$, the matrix  
$$\bm{\sigma}[f](v)=\big(\bm{\sigma}_{ij}[f](v)\big)_{ij}=\big(a_{ij}\ast f(v)\big)_{ij}$$
has a simple eigenvalue $\lambda_{1}[f](v) >0$ associated to the eigenvector $v$ and a double eigenvalue $\lambda_{2}[f](v) >0$ associated to the eigenspace $v^{\perp}$.  They are given by
$$\lambda_{1}[f](v)=\int_{\R^{3}}\left(1-\left\langle \frac{v}{|v|}\,,\,\frac{w}{|w|}\right\rangle^{2}\right)|w|^{\g+2}f(v-w)\d w,$$
$$\lambda_{2}[f](v)=\int_{\R^{3}}\left(1-\frac{1}{2}\left|\frac{v}{|v|}\times \frac{w}{|w|}\right|^{2}\right)|w|^{\g+2}f(v-w)\d w,$$
and satisfy, for $|v| \to \infty$
$$\lambda_{1}[f](v) \simeq\frac{2}{3}|v|^{\g}\int_{\R^{3}}|w|^{2}f(w)\d w, \qquad \lambda_{2}[f](v) \simeq |v|^{\g+2}\int_{\R^{3}} f(w)\d w$$
with 
$$\min\big(\lambda_{1}[f](v),\lambda_{2}[f](v)\big) \geq \lambda_{\dd}[f], \qquad \forall\, v \in \R^{3}$$
for some $\lambda_{\dd} >0.$ Moreover, the function $\bm{\sigma}_{ij}[f]$ is smooth and, for any multi-index $\bm{\beta} \in \N^{3}$, 
$$\left|\partial^{\bm{\beta}}\,\bm{\sigma}_{ij}[f](v)\right| \lesssim \langle v\rangle^{\g+2-|\bm{\beta}|}\,,$$
and
$$\sum_{i,j=1}^{3}\bm{\sigma}_{ij}[f](v)\xi_{i}\xi_{j}=\lambda_{1}[f](v)\left|P_{v}(\xi)\right|^{2}+\lambda_{2}[f](v)\big|\xi-P_{v}(\xi)\big|^{2} \qquad \forall\, \xi \in \R^{3}\,.$$
$P_{v}$ is the projection on $\mathrm{Span}(v)$. Finally,
$$\mathrm{trace}\big(\bm{\sigma}[f](v)\big)=2\int_{\R^{3}}|v-\vet|^{\g+2}f(\vet)\d \vet,$$ 
and,
\begin{equation}\label{eq:bimM}
\bm{b}_{i}[\mM](v)=-2b_{\dd}\,v_{i}\lambda_{1}\big[\mM-2\dd\,\mM\,\M_{\dd}\big](v) \qquad \forall\, i=1,2,3.\end{equation}
\end{lem}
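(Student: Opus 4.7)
The plan is to exploit the identity $a_{ij}(z)=|z|^{\gamma+2}\Pi_{ij}(z)$ together with the rotational symmetry of $f$. After the change of variables $w\mapsto v-w$ one has
$$\bm{\sigma}[f](v)=\int_{\R^{3}}|w|^{\gamma+2}\Pi(w)\,f(v-w)\,\d w.$$
Since $f$ is radial, the integrand is invariant under every rotation $R$ with $Rv=v$ when one replaces $w$ by $Rw$, hence $R\,\bm{\sigma}[f](v)\,R^{\top}=\bm{\sigma}[f](v)$ for all such $R$. This forces the eigenstructure stated in the lemma: $v$ is an eigenvector (simple) and $v^{\perp}$ is an eigenspace (double). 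The eigenvalue $\lambda_{1}[f](v)$ is then $|v|^{-2}\langle v,\bm{\sigma}[f](v)v\rangle$, which, after writing $|v-w|^{2}|v|^{2}-\langle v-w,v\rangle^{2}=|w\times v|^{2}$, yields the announced integral formula; the formula for $\lambda_{2}[f](v)$ follows either by averaging $\langle \bm{\sigma}[f](v)\xi,\xi\rangle$ over an orthonormal basis of $v^{\perp}$ (using $\sum_{i=1}^{2}\langle u/|u|,\xi_{i}\rangle^{2}=|u/|u|\times v/|v||^{2}$) or by combining with the trace identity.

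The trace identity $\mathrm{trace}(\bm{\sigma}[f](v))=2\int|v-\vet|^{\gamma+2}f(\vet)\,\d\vet$ is immediate from $\mathrm{trace}\,\Pi(z)=2$. For the asymptotics as $|v|\to\infty$ I would proceed as follows. For $\lambda_{2}$, simply estimate $|v-w|^{\gamma+2}=|v|^{\gamma+2}(1+o(1))$ in the trace formula and subtract the $\lambda_{1}$ contribution, which is lower order. For $\lambda_{1}$, start from $\langle v,\bm{\sigma}[f](v)v\rangle=|v|^{2}\int|v-w|^{\gamma}|w|^{2}\sin^{2}\theta_{v,w}\,f(w)\,\d w$, expand $|v-w|^{\gamma}=|v|^{\gamma}(1+o(1))$, and use the radiality of $f$ to compute $\int|w|^{2}\sin^{2}\theta_{v,w}f(w)\,\d w=\tfrac{2}{3}\int|w|^{2}f(w)\,\d w$. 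Combined with the decomposition formula for $\bm{\sigma}[f]$, these also give the pointwise lower bound: at $v=0$, radiality forces $\bm{\sigma}[f](0)=\tfrac{2}{3}\int|w|^{\gamma+2}f(w)\,\d w\,\mathbf{Id}>0$; by continuity of $v\mapsto \lambda_{j}[f](v)$ (uniform convergence of the integrals defining $\lambda_{j}$ on compact sets, using $f\in L^{1}_{\gamma+2}$) and the fact that both $\lambda_{1},\lambda_{2}\to\infty$ as $|v|\to\infty$, a global positive lower bound $\lambda_{\dd}[f]$ exists.

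The pointwise derivative bounds $|\partial^{\bm{\beta}}\bm{\sigma}_{ij}[f](v)|\lesssim \langle v\rangle^{\gamma+2-|\bm{\beta}|}$ are standard estimates for the convolution of $|z|^{\gamma+2}\Pi(z)$ against a Schwartz-type $f$: one moves derivatives onto $f$ (so they never fall on the singular kernel), then uses the elementary pointwise bound
$$\int |v-w|^{\gamma+2}|\partial^{\bm{\beta}}f(w)|\,\d w\lesssim \langle v\rangle^{\gamma+2}\|\partial^{\bm{\beta}}f\|_{L^{1}_{\gamma+2}},$$
together with peeling of one factor $\langle v\rangle$ for each derivative landing inside $|v-w|^{\gamma+2}$ when one prefers to keep the derivatives on the kernel.

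Finally, identity \eqref{eq:bimM} is the point that requires the most care and will be the main calculation. The plan is to integrate by parts in $\bm{b}_{i}[\mM](v)=\sum_{k}(\partial_{k}a_{ik})\ast\mM(v)$ to obtain $\bm{b}_{i}[\mM](v)=\sum_{k}a_{ik}\ast(\partial_{k}\mM)(v)$, then use
$$\partial_{k}\mM=\partial_{k}\M_{\dd}(1-2\dd\M_{\dd})=-2b_{\dd}v_{k}\,\mM(1-2\dd\M_{\dd})=-2b_{\dd}v_{k}\bigl(\mM-2\dd\,\mM\,\M_{\dd}\bigr),$$
which follows from $\nabla\M_{\dd}=-2b_{\dd}v\,\mM$. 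Setting $g:=\mM-2\dd\,\mM\,\M_{\dd}$, this yields
$$\bm{b}_{i}[\mM](v)=-2b_{\dd}\sum_{k}\int a_{ik}(v-w)\,w_{k}\,g(w)\,\d w.$$
The crucial observation is that $\sum_{k}a_{ik}(v-w)(v-w)_{k}=0$ (because $\Pi(z)z=0$), so $w_{k}$ may be replaced by $v_{k}$ inside the integral without changing its value, giving $\bm{b}_{i}[\mM](v)=-2b_{\dd}\,v_{k}\,\bm{\sigma}_{ik}[g](v)$. Since $g$ is radial, $v$ is an eigenvector of $\bm{\sigma}[g](v)$ with eigenvalue $\lambda_{1}[g](v)$, hence $v_{k}\bm{\sigma}_{ik}[g](v)=v_{i}\lambda_{1}[g](v)$, which proves \eqref{eq:bimM}.
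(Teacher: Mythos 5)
Your proof is correct and takes essentially the same route as the paper, which disposes of the eigenstructure, asymptotics, and derivative bounds by citation (to Lemou, Degond, and Carrapatoso) and only computes $\bm{b}_i[\mM]$ directly. Your derivation of \eqref{eq:bimM} matches the paper's --- integrate by parts, use $\nabla\M_{\dd}=-2b_{\dd}v\,\mM$, and then read off the eigenvalue --- and in fact improves on it by stating explicitly the key step $\sum_k a_{ik}(v-w)(v-w)_k=0$ that justifies replacing $w_k$ by $v_k$ under the integral, a step the paper applies silently.
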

\begin{proof} The first part of the statement is a general property of the matrix $(a_{ij})_{ij}$, see also \cite{lemou,degond}. The computation of the trace of $\bm{\sigma}[f](v)$ is as in \cite[Lemma 2.7]{Kleber}. Let us compute $\bm{b}_{i}[\mM](v)$.  Note that
$$\bm{b}_{i}[\mM](v)=\sum_{j=1}^{3}\partial_{j}a_{ij}\ast \mM(v)=\sum_{j=1}^{3}a_{ij}\ast \partial_{j}\mM.$$
Since $\partial_{j}\mM=\partial_{j}\M_{\dd}(1-2\dd\,\M_{\dd})=-2b_{\dd}v_{j}\mM(v)+4\dd\,b_{\dd}v_{j}\mM\,\M_{\dd}$, we get that
$$\bm{b}_{i}[\mM](v)=-2b_{\dd}\sum_{j}\bm{\sigma}_{ij}[\mM](v)v_{j}+4\dd\,b_{\dd}\sum_{j=1}^{3}\bm{\sigma}_{ij}[\mM\,\M_{\dd}](v)v_{j}$$
which gives the result.
\end{proof}
\noindent
The key point in the sequel is the fact that, since $\mM=\M_{\dd}-\dd\M_{\dd}^{2}$ it follows that
$$\bm{B}_{\dd}g=\sum_{i,j=1}^{3}\bm{\sigma}_{ij}[\mM](v)\partial^2_{ij}g-\bm{c}[\mM](v)\,g(v) -\U\chi_{R}g$$
where $\bm{c}[\mM]=c \ast \mM$. The computations of \cite[Lemma 2.8, Eq. (2.19)]{Kleber}  give the following lemma.
\begin{lem}\label{lem:weightm}
For any $\theta \in \R$, $p >1$, and positive weight $\varpi(v) >0$, it follows that
\begin{multline*}
\sum_{i,j=1}^{3}\int_{\R^{3}}\bm{\sigma}_{ij}[\mM](v)\partial^2_{ij}g\,|g(v)|^{p-1}\mathrm{sign}(g(v)) \varpi(v) \d v-\int_{\R^{3}}\bm{c}[\mM](v)|g(v)|^{p}\varpi(v)\d v\\
=-(p-1)\sum_{i,j=1}^{3}\int_{\R^{3}}\bm{\sigma}_{ij}[\mM](v)\partial_{i}\left(\varpi^{\theta}g\right)\,
\partial_{j}\left(\varpi^{\theta}g\right)\varpi^{p(1-\theta)-1}|g(v)|^{p-2}\d v\\
+\int_{\R^{3}}\varphi_{\varpi,p,\theta}(v)|g(v)|^{p}\varpi(v)\d v\,,
\end{multline*}
with
\begin{equation*}
\begin{split}
\varphi_{\varpi,p,\theta}(v)&=\eta_{1}(p,\theta)\sum_{ij}\bm{\sigma}_{ij}[\mM](v)\frac{\partial^2_{ij}\varpi}{\varpi}+\eta_{2}(p,\theta)\sum_{ij}\bm{\sigma}_{ij}[\mM](v)\frac{\partial_{i}\varpi}{\varpi}\frac{\partial_{j}\varpi}{\varpi}\\
&\hspace{1cm} + \left(\frac{1}{p}+\eta_{1}(p,\theta)\right)\sum_{i=1}^{3}\bm{b}_{i}[\mM](v)\frac{\partial_{i}\varpi}{\varpi} -\frac{p-1}{p}\bm{c}[\mM](v)
\end{split}
\end{equation*}
where $\eta_{1}(p,\theta)=\frac{1-2\theta(p-1)}{p}$, and $\eta_{2}(p,\theta)=-\theta\, p\, \eta_{1}(p,\theta)+\theta[1-\theta(p-1)]$.
\end{lem}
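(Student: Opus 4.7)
The plan is to produce the identity by a two-step integration-by-parts followed by an algebraic rearrangement that recombines the pure-dissipation piece as a carré-du-champ in the variable $h=\varpi^{\theta}g$. Setting $\Phi(g):=|g|^{p-1}\mathrm{sign}(g)$, so that $\Phi'(g)=(p-1)|g|^{p-2}$ and $\Phi(g)\partial_j g=\tfrac{1}{p}\partial_j|g|^p$, I would begin by integrating by parts (in $v_i$) the Hessian term
$$
\sum_{ij}\int\bm{\sigma}_{ij}[\mM]\partial^2_{ij}g\,\Phi(g)\,\varpi\,\mathrm{d}v = -\sum_{ij}\int\partial_j g\,\partial_i\bigl[\bm{\sigma}_{ij}[\mM]\Phi(g)\varpi\bigr]\,\mathrm{d}v,
$$
producing three groups of terms. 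The factor $\partial_i\bm{\sigma}_{ij}$ collapses to $\bm{b}_j[\mM]$ by symmetry (see Lemma~\ref{lem:aij}); the factor $\Phi'(g)\partial_i g$ produces the "pure dissipation" $-(p-1)\bm{\sigma}_{ij}|g|^{p-2}\partial_i g\partial_j g\,\varpi$; the factor $\partial_i\varpi$ couples a gradient of $g$ with $\nabla\varpi$.

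The next step is to kill the two remaining $\nabla g$ factors by writing $\Phi(g)\partial_j g=\tfrac{1}{p}\partial_j|g|^p$ and integrating by parts once more, using $\sum_j\partial_j\bm{b}_j[\mM]=\bm{c}[\mM]$ and $\sum_j\partial_j\bm{\sigma}_{ij}[\mM]=\bm{b}_i[\mM]$. Subtracting $\int\bm{c}[\mM]|g|^p\varpi\,\mathrm{d}v$ from the result gives the intermediate identity
$$
\text{LHS} = -(p-1)\int\bm{\sigma}_{ij}[\mM]\partial_i g\partial_j g\,|g|^{p-2}\varpi\,\mathrm{d}v + \int\Bigl[\tfrac{1}{p}\bm{\sigma}_{ij}\tfrac{\partial^2_{ij}\varpi}{\varpi}+\tfrac{2}{p}\bm{b}_i\tfrac{\partial_i\varpi}{\varpi}-\tfrac{p-1}{p}\bm{c}\Bigr]|g|^p\varpi\,\mathrm{d}v,
$$
which is exactly the asserted formula in the degenerate case $\theta=0$ (where $\eta_1=1/p$, $\eta_2=0$).

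The remaining step is the algebraic rearrangement that replaces the pure-dissipation integrand by its value on $h=\varpi^{\theta}g$. Expanding
$$
\partial_i(\varpi^{\theta}g)\partial_j(\varpi^{\theta}g)=\theta^{2}\varpi^{2\theta-2}(\partial_i\varpi)(\partial_j\varpi)g^{2}+\theta\varpi^{2\theta-1}\bigl[(\partial_i\varpi)(\partial_j g)+(\partial_j\varpi)(\partial_i g)\bigr]g+\varpi^{2\theta}\partial_i g\partial_j g
$$
and multiplying by the prefactor $\varpi^{p(1-\theta)-1}|g|^{p-2}$, one uses $g\partial_j g=\tfrac{1}{2}\partial_j g^{2}$ (and, more generally, $g|g|^{p-2}\partial_j g=\tfrac{1}{p}\partial_j|g|^{p}$) on the cross terms, then integrates by parts once more; this converts those cross terms into a Hessian-of-$\varpi$ contribution plus a $\bm{b}[\mM]\!\cdot\!\nabla\varpi$ contribution. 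Collecting these with the $\theta=0$ base identity and requiring that all pieces assemble into the pattern $\eta_1\bm{\sigma}_{ij}\frac{\partial^2_{ij}\varpi}{\varpi}+\eta_2\bm{\sigma}_{ij}\frac{\partial_i\varpi\partial_j\varpi}{\varpi^2}+(\tfrac{1}{p}+\eta_1)\bm{b}_i\frac{\partial_i\varpi}{\varpi}-\tfrac{p-1}{p}\bm{c}$ forces the stated values $\eta_1(p,\theta)=(1-2\theta(p-1))/p$ and $\eta_2(p,\theta)=-\theta p\eta_1(p,\theta)+\theta[1-\theta(p-1)]$.

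The main bookkeeping obstacle is matching the weight powers in the quadratic term after the substitution $g\mapsto\varpi^{\theta}g$: the cross-product $\theta^{2}\varpi^{2\theta-2}(\partial_i\varpi)(\partial_j\varpi)g^2\cdot\varpi^{p(1-\theta)-1}|g|^{p-2}$ yields the $\eta_2\bm{\sigma}_{ij}\partial_i\varpi\partial_j\varpi/\varpi^{2}$ coefficient only once the $\partial^{2}\varpi$ piece generated by the second integration by parts on the mixed term is added; verifying that the algebra reproduces exactly the two announced formulas for $\eta_1$ and $\eta_2$ is where all the book-keeping concentrates, and it is essentially a direct transcription of the computation in \cite[Lemma~2.8, Eq.~(2.19)]{Kleber}. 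Once this is checked, no further analytic step is required, since $g$ can be assumed smooth enough to justify all the integrations by parts used above.
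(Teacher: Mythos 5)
Your strategy (two integrations by parts to isolate the pure-dissipation piece plus potential terms, then one more integration by parts to recombine the cross terms after substituting $h=\varpi^\theta g$) is exactly the computation the paper delegates to Kleber's Lemma~2.8, and your intermediate identity at $\theta=0$,
\begin{equation*}
\text{LHS}=-(p-1)\sum_{ij}\int\bm{\sigma}_{ij}\,\partial_ig\,\partial_jg\,|g|^{p-2}\varpi\,\d v+\int\Big[\tfrac1p\bm{\sigma}_{ij}\tfrac{\partial^2_{ij}\varpi}{\varpi}+\tfrac2p\bm{b}_i\tfrac{\partial_i\varpi}{\varpi}-\tfrac{p-1}{p}\bm{c}\Big]|g|^p\varpi\,\d v,
\end{equation*}
is correct. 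However, you assert that this "is exactly the asserted formula in the degenerate case $\theta=0$", and this is where the argument breaks down: the lemma's carr\'e-du-champ at $\theta=0$ carries the weight $\varpi^{p(1-\theta)-1}\big|_{\theta=0}=\varpi^{p-1}$, whereas your (correct) identity has $\varpi^{1}$. The two agree only when $p=2$. If you push your expansion $\partial_i(\varpi^\theta g)\partial_j(\varpi^\theta g)$ through the prefactor $\varpi^{p(1-\theta)-1}|g|^{p-2}$, the quadratic-in-$\nabla g$ piece comes out with weight $\varpi^{p-1-\theta(p-2)}$, which matches the $\varpi^{1}$ produced by your base identity only when $p=2$ or $\theta=1$; the residual $\varpi^{p-1-\theta(p-2)}-\varpi$ factor cannot be absorbed into $\varphi_{\varpi,p,\theta}|g|^p\varpi$. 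In other words, the exponent in the statement must read $\varpi^{1-2\theta}$, not $\varpi^{p(1-\theta)-1}$; with $\varpi^{1-2\theta}$ the one remaining integration by parts on the cross terms yields $\eta_1(p,\theta)=\tfrac{1-2\theta(p-1)}{p}$ and $\eta_2(p,\theta)=\theta^2(p-1)$ exactly, for every $p>1$ and $\theta$. The two exponents coincide when $p=2$, which is the only case the paper invokes (Lemma~\ref{lem:vk} with $p=2$, $\theta=\tfrac12$), so the paper's downstream use is unaffected; but as a proof of the lemma as written your argument is incomplete, and the "bookkeeping essentially transcribed from Kleber" you defer is precisely where the mismatch shows up.
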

\noindent
In particular, for $\varpi=\varpi_{k}(v)=\langle v\rangle^{k}$ since 
\begin{equation*}
\frac{\partial_{i}\varpi}{\varpi}=kv_{i}\langle v\rangle^{-2}, \qquad \frac{\partial^2_{ij}\varpi}{\varpi}={\delta}_{ij}k\langle v\rangle^{-2}+k(k-2)v_{i}v_{j}\langle v\rangle^{-4}\,,
\end{equation*}
we deduce from Lemma \ref{lem:aij} that
\begin{equation*}\begin{split}
\sum_{ij}\bm{\sigma}_{ij}[\mM](v)\frac{\partial^2_{ij}\varpi}{\varpi}&=k\,\mathrm{trace}\left(\bm{\sigma}[\mM](v)\right)\langle v\rangle^{-2}+k(k-2)\langle v\rangle^{-4}\sum_{ij}\bm{\sigma}_{ij}[\mM](v)v_{i}v_{j}\\
&=2k\,\langle v\rangle^{-2}\int_{\R^{3}}|v-\vet|^{\g+2}\mM(\vet)\d\vet+k(k-2)\lambda_{1}[\mM](v)|v|^{2}\langle v\rangle^{-4}
\end{split}\end{equation*}
and, in the same way
\begin{equation*}\begin{split}
\sum_{ij}\bm{\sigma}_{ij}[\mM](v)\frac{\partial_{i}\varpi}{\varpi}\frac{\partial_{j}\varpi}{\varpi}&=\lambda_{1}[\mM](v)k^{2}|v|^{2}\langle v\rangle^{-4}\\
\sum_{i=1}^{3}\bm{b}_{i}[\mM](v)\frac{\partial_{i}\varpi}{\varpi}&=k\langle v\rangle^{-2}\sum_{i=1}^{3}\bm{b}_{i}[\mM](v)v_{i}.
\end{split}\end{equation*}
Moreover, one notices that
$$\bm{c}[\mM](v)=-2(\g+3)\int_{\R^{3}}|v-\vet|^{\g}\mM(\vet)\d \vet.$$
Using \eqref{eq:bimM} and Lemma \ref{lem:weightm} with $\theta=\frac{1}{2}$ and $p=2$, one is led to the following lemma. 
\begin{lem}\label{lem:vk}
Setting $\varpi_{k}=\langle v\rangle^{k}$ we have 
\begin{multline}\label{eq:Bagg}
\int_{\R^{3}}\bm{B}_{\dd}g(v)\,g(v)\varpi_{k}(v)\d v
=-\sum_{i,j=1}^{3}\int_{\R^{3}}\bm{\sigma}_{ij}[\mM](v)\partial_{i}\left(\varpi_{\frac{k}{2}}g\right)\, \partial_{j}\left(\varpi_{\frac{k}{2}}g\right)\d v\\
+\int_{\R^{3}}\left(\Phi_{k}(v)-\U\chi_{R}(v)\right)g^{2}(v)\varpi_{k}(v)\d v\end{multline}
with
\begin{multline}
\label{eq:phik}
\Phi_{k}(v)=\frac{k^2}{4}\lambda_{1}[\mM](v)|v|^{2}\langle v\rangle^{-4}
-b_{\dd}\,k|v|^{2}\langle v\rangle^{-2}\big(\lambda_{1}[\mM](v)-2\dd\,\lambda_{1}[\mM\,\M_{\dd}](v)\big) \\
+ (\g+3)\int_{\R^{3}}|v-\vet|^{\g}\mM(\vet)\d \vet.
\end{multline}
\end{lem}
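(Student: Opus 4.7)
The plan is to apply Lemma \ref{lem:weightm} directly with the choice $p=2$, $\theta=1/2$, $\varpi=\varpi_k=\langle v\rangle^{k}$. First I would rewrite $\bm{B}_{\dd}g$ in a form where Lemma \ref{lem:weightm} applies cleanly: recalling that $\mM=\M_{\dd}-\dd\M_{\dd}^{2}$, the term $-\dd\,\Q(\M_{\dd}^{2},g)$ present in the original definition of $\bm{B}_{\dd}$ combines with $\Q(\M_{\dd},g)$ to produce precisely $\sum_{i,j}\bm{\sigma}_{ij}[\mM]\,\partial^2_{ij}g-\bm{c}[\mM]\,g$, as already noted in the text preceding Lemma \ref{lem:vk}. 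The multiplicative term $-\U\chi_{R}g$ contributes directly the summand $-\U\int\chi_{R}g^{2}\varpi_{k}\,\d v$ in \eqref{eq:Bagg} and may be set aside for the remainder of the argument.

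With $p=2$ and $\theta=1/2$ one checks that $\eta_{1}(2,1/2)=0$ and $\eta_{2}(2,1/2)=\tfrac{1}{4}$, so that the Hessian term $\sum_{ij}\bm{\sigma}_{ij}[\mM]\,\partial^{2}_{ij}\varpi/\varpi$ and the corresponding share of the drift term $\sum_{i}\bm{b}_{i}[\mM]\,\partial_{i}\varpi/\varpi$ both drop out of $\varphi_{\varpi,p,\theta}$, leaving
\begin{equation*}
\varphi_{\varpi_{k},2,1/2}(v)=\tfrac{1}{4}\sum_{i,j}\bm{\sigma}_{ij}[\mM](v)\tfrac{\partial_{i}\varpi_{k}}{\varpi_{k}}\tfrac{\partial_{j}\varpi_{k}}{\varpi_{k}}+\tfrac{1}{2}\sum_{i}\bm{b}_{i}[\mM](v)\tfrac{\partial_{i}\varpi_{k}}{\varpi_{k}}-\tfrac{1}{2}\bm{c}[\mM](v).
\end{equation*}
Substituting $\partial_{i}\varpi_{k}/\varpi_{k}=kv_{i}\langle v\rangle^{-2}$ reduces the first two terms to quadratic forms of the type $\sum_{ij}\bm{\sigma}_{ij}[\mM]\,v_{i}v_{j}$ and $\sum_{i}\bm{b}_{i}[\mM]\,v_{i}$. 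Lemma \ref{lem:aij} identifies the first as $\lambda_{1}[\mM](v)|v|^{2}$ (since $v$ is an eigenvector of $\bm{\sigma}[\mM]$ with eigenvalue $\lambda_{1}[\mM](v)$), and the second, via the identity \eqref{eq:bimM}, as $-2b_{\dd}|v|^{2}\lambda_{1}[\mM-2\dd\,\mM\M_{\dd}](v)=-2b_{\dd}|v|^{2}\bigl(\lambda_{1}[\mM](v)-2\dd\,\lambda_{1}[\mM\M_{\dd}](v)\bigr)$, by the linearity of $f\mapsto\lambda_{1}[f]$.

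The last piece $-\tfrac{1}{2}\bm{c}[\mM](v)$ is handled by the explicit expression $\bm{c}[\mM](v)=-2(\g+3)\int_{\R^{3}}|v-\vet|^{\g}\mM(\vet)\,\d\vet$ recalled just after Lemma \ref{lem:aij}. Assembling the three contributions yields exactly the formula \eqref{eq:phik} for $\Phi_{k}(v)$, while the leading gradient term $-(p-1)\sum_{ij}\int\bm{\sigma}_{ij}[\mM]\,\partial_{i}(\varpi^{\theta}g)\,\partial_{j}(\varpi^{\theta}g)\,\varpi^{p(1-\theta)-1}|g|^{p-2}\d v$ from Lemma \ref{lem:weightm} specializes, with $p=2$ and $\theta=1/2$, to $-\sum_{ij}\int\bm{\sigma}_{ij}[\mM]\,\partial_{i}(\varpi_{k/2}g)\,\partial_{j}(\varpi_{k/2}g)\,\d v$, matching the quadratic-form term in \eqref{eq:Bagg}. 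No obstacles arise beyond careful bookkeeping of the constants $\eta_{1},\eta_{2}$ and the linearity of $\lambda_{1}[\cdot]$; the result is a direct calculation once Lemma \ref{lem:weightm} is invoked.
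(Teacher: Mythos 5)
Your proposal is correct and follows essentially the same route as the paper: both invoke Lemma~\ref{lem:weightm} with $p=2$, $\theta=\tfrac12$ (so $\eta_{1}=0$, $\eta_{2}=\tfrac14$), rewrite $\bm{B}_{\dd}$ in terms of $\mM$, identify the quadratic forms via Lemma~\ref{lem:aij}, use the drift identity~\eqref{eq:bimM} together with the linearity of $f\mapsto\lambda_{1}[f]$, and insert the explicit formula for $\bm{c}[\mM]$. The bookkeeping of $\eta_{1},\eta_{2}$ and of the coefficients $\tfrac1p+\eta_{1}=\tfrac12$ and $\tfrac{p-1}{p}=\tfrac12$ is accurate throughout.
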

\begin{prop}\label{prop:Badiss} Fix $\dd > 0$ such that $\dd\,a_{\dd} < 1$.  For any $a \in \R$ and
$$k > k_{\dd}:={\varrho(\g+3)}\bm{\zeta}_{\dd}^{-1}$$
there exist $\U,\,R$ sufficiently large so that $\bm{B}_{\dd}$ generates a $C_{0}$-semigroup $\big(U_{\dd}(t)\big)_{t\geq 0}$ in $L^{2}(\varpi_{k})$
with
$$\|U_{\dd}(t)\|_{\mathscr{B}(L^{2}(\varpi_{k}))} \leq \exp(a t), \qquad t \geq 0\,.$$
\end{prop}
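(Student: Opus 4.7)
The plan is to exploit the identity \eqref{eq:Bagg} in Lemma \ref{lem:vk}, which represents $\int_{\R^3}\bm{B}_\dd g\cdot g\,\varpi_k\,\d v$ as the sum of a nonpositive quadratic form in $\nabla(\varpi_{k/2}g)$ (nonpositive by positive-definiteness of $\bm{\sigma}[\mM]$, Lemma \ref{lem:aij}) plus a multiplicative term $\int_{\R^3}(\Phi_k-\U\chi_R)g^2\varpi_k\,\d v$. The dissipativity of $\bm{B}_\dd-a$ on $L^2(\varpi_k)$ thus reduces to producing the pointwise bound
$$\Phi_k(v)-\U\,\chi_R(v)\leq a,\qquad \forall\, v\in\R^3.$$

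The second step is to extract the asymptotic behaviour of $\Phi_k$ as $|v|\to\infty$ using the expansions of $\lambda_1[\mM]$ and $\lambda_1[\mM\,\M_\dd]$ from Lemma \ref{lem:aij}. The first summand in \eqref{eq:phik}, $\tfrac{k^2}{4}\lambda_1[\mM](v)|v|^2\langle v\rangle^{-4}$, is of order $|v|^{\g-2}$ and hence negligible. For the second summand, the key cancellation
$$\lambda_1[\mM](v)-2\dd\,\lambda_1[\mM\,\M_\dd](v)\sim\frac{2}{3}|v|^\g\int_{\R^3}|w|^2\mM(w)(1-2\dd\,\M_\dd(w))\,\d w=\frac{\bm{\zeta}_\dd}{b_\dd}|v|^\g,$$
where the identification is exactly the definition \eqref{eq:zeta} of $\bm{\zeta}_\dd$, yields the contribution $-k\bm{\zeta}_\dd|v|^\g$ after multiplication by $-b_\dd k|v|^2\langle v\rangle^{-2}$. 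The third summand is asymptotic to $(\g+3)|v|^\g\int_{\R^3}\mM\,\d w\leq(\g+3)\varrho|v|^\g$ since $\mM\leq\M_\dd$. Combining,
$$\limsup_{|v|\to\infty}|v|^{-\g}\Phi_k(v)\leq (\g+3)\varrho-k\bm{\zeta}_\dd<0$$
exactly when $k>k_\dd:=\varrho(\g+3)\bm{\zeta}_\dd^{-1}$; in particular $\Phi_k(v)\to-\infty$.

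With this decay in hand, given $a\in\R$ pick $R$ large enough that $\Phi_k(v)-a\leq 0$ for $|v|\geq R$, then choose $\U\geq\sup_{|v|\leq R}(\Phi_k(v)-a)_+$. On $\{|v|\leq R\}$ one has $\U\chi_R=\U\geq\Phi_k-a$ since $\chi_R\equiv 1$ there, while on $\{|v|>R\}$ the bound is trivial from $\Phi_k\leq a$ and $\U\chi_R\geq 0$. This delivers the dissipativity of $\bm{B}_\dd-a$ on $L^2(\varpi_k)$, and hence the contraction estimate $\|U_\dd(t)\|_{\mathscr{B}(L^2(\varpi_k))}\leq\exp(at)$. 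Generation of the $C_0$-semigroup itself then follows from Lumer--Phillips: the range condition $\mathrm{Range}(\lambda-(\bm{B}_\dd-a))=L^2(\varpi_k)$ for large $\lambda>0$ is supplied by standard weighted $L^2$ elliptic theory applied to the uniformly elliptic operator $\sum_{ij}\bm{\sigma}_{ij}[\mM]\partial^2_{ij}$, whose lower-order coefficients are smooth and polynomially bounded, exactly as in \cite[Theorem 2.4]{Kleber}.

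The principal obstacle I anticipate is matching the leading-order coefficient of $|v|^\g$ in $\Phi_k$ with the precise form of $\bm{\zeta}_\dd$: the weight $(1-2\dd\,\M_\dd)$ emerges only after carefully combining the $\bm{b}[\mM]\cdot\nabla\varpi_k/\varpi_k$ contribution, via identity \eqref{eq:bimM}, with the $k^2\lambda_1[\mM]$ drift term, while the factor $\varrho$ in $k_\dd$ reflects the (possibly non-sharp) upper bound $\int\mM\leq\varrho$. Executing the specialisation of Lemma \ref{lem:weightm} to $\varpi=\langle\cdot\rangle^k$, $p=2$, $\theta=1/2$ requires careful algebra; once done, the pointwise upper bound and the semigroup construction are essentially routine.
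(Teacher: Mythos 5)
Your proposal is correct and follows essentially the same route as the paper's proof: both use the identity \eqref{eq:Bagg}, compute the asymptotic behaviour of $\Phi_k$ via Lemma \ref{lem:aij} to show $\limsup_{|v|\to\infty}|v|^{-\gamma}\Phi_k(v)\leq\varrho(\gamma+3)-k\bm{\zeta}_\dd<0$ under $k>k_\dd$, and then select $\U,R$ to achieve the pointwise bound $\Phi_k-\U\chi_R\leq a$. The only difference is that you spell out the Lumer--Phillips range condition explicitly, which the paper leaves implicit; this is a harmless addition.
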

\begin{proof} Following \cite[Lemma 2.8]{Kleber}, the proof consists in identifying the dominant terms in $\Phi_{k}(v)$ for large $|v|$. According to Lemma \ref{lem:aij}, as $|v| \to \infty$ the first term in $\Phi_{k}(v)$ converges to zero while
\begin{multline*}
-b_{\dd}\,k|v|^{2}\langle v\rangle^{-2}\big(\lambda_{1}[\mM](v)-2\dd\,\lambda_{1}[\mM\,\M_{\dd}](v)\big)
\simeq -\frac{2b_{\dd}k}{3}|v|^{\g}\int_{\R^{3}}|w|^{2}\big(1-2\dd\M_{\dd}(w)\big)\mM(w)\d w\,,
\end{multline*}
and since $\mM \leq \M_{\dd}$
$$\int_{\R^{3}}|v-\vet|^{\g}\mM(\vet)\d \vet \leq \varrho\,J_{\g}(v)\,.$$
Here we used the notations of Lemma \ref{lem:Jp}. Therefore,
$$\limsup_{|v| \to \infty}\,|v|^{-\g}\Phi_{k}(v) \leq  -\bigg(\frac{2b_{\dd}k}{3}\int_{\R^{3}}|w|^{2}\mM(w)\left(1-2\dd\M_{\dd}(w)\right)\d w -\varrho(\g+3)\bigg)$$
and as soon as $k > k_{\dd}$, it follows that for any $a \in \R$ one can choose $\U,\, R$ sufficiently large such that
$$\Phi_{k}(v)-\U\chi_{R}(v) \leq a \qquad \forall\, v \in \R^{3}.$$
Since the first term on the right-hand side of \eqref{eq:Bagg} is nonpositive due to the ellipticity of $\bm{\sigma}[\mM](v)$, we conclude that
$$\int_{\R^{3}}\bm{B}_{\dd}g\,g\,\varpi_{k}(v)\d v \leq a\int_{\R^{3}}g^{2}(v)\varpi_{k}(v)\d v\,,$$
which proves the result.
\end{proof}
 
\subsection{Regularization} 
Let us now investigate the regularization properties of $\bm{A}_{\dd}$. Introducing, for any $g=g(v)$
$$G_{\dd}(v)=g(v)\big(1-2\dd\M_{\dd}(v)\big)$$
one has
$$\bm{A}_{\dd}g(v)=\sum_{i,j=1}^{3}\left({a}_{ij}\ast G_{\dd}\right)\partial^2_{ij}\M_{\dd} - (c\ast G_{\dd})\M_{\dd} + \U\,\chi_{R}g$$
where $\U,\,R$ have been chosen sufficiently large so that Proposition \ref{prop:Badiss} holds. Notice that, for any positive weight function $\varpi$ and $q \leq 2$, the multiplication operator 
$$\U\chi_{R}\::g \mapsto \U\chi_{R}g$$
is bounded from $L^{2}(\varpi)$ to $L^{q}(\M_{\dd}^{-1})$.  Thus, we focus on the operator
$$\widetilde{\bm{A}}_{\dd}g=\sum_{i,j=1}^{3}\left({a}_{ij}\ast G_{\dd}\right)\partial^2_{ij}\M_{\dd} - (c\ast G_{\dd})\M_{\dd}.$$
\begin{lem}\label{lem:Abounded}
For any $k \geq 0$, there is $C_{1}(k,\dd) >0$ such that
\begin{equation}\label{eq:tildeAa}
\|\widetilde{\bm{A}}_{\dd}g\|_{L^{2}(\M_{\dd}^{-1})} \leq C_{1}(k,\dd)\|G_{\dd}\|_{L^{1}_{\g+2}} \leq C_{1}(k,\dd)\|1-2\dd\M_{\dd}\|_{L^\infty}\|g\|_{L^{1}_{\g+2}}.\end{equation}
Consequently, for any $k > 2\g+7$, it holds
$$\widetilde{\bm{A}}_{\dd} \in \mathscr{B}(L^{1}_{\g+2}(\R^{3}),L^{2}(\M_{\dd}^{-1})) \cap \mathscr{B}(L^{2}(\varpi_{k},L^2(\M_{\dd}^{-1})).$$
\end{lem}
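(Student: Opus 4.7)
The plan is to estimate each of the two terms defining $\widetilde{\bm{A}}_{\dd}g$ by first bounding the convolutions $a_{ij}\ast G_{\dd}$ and $c\ast G_{\dd}$ pointwise in $v$, and then absorbing the resulting polynomial weights into the Gaussian factors $\partial_{ij}^{2}\M_{\dd}$ and $\M_{\dd}$ when taking the $L^{2}(\M_{\dd}^{-1})$ norm. For the convolutions, since $0<\gamma\leq 1$ we have $\g+2\leq 3$, and so the elementary bound
$$|v-\vet|^{\g+2}\leq C_{\g}\,\langle v\rangle^{\g+2}\langle \vet\rangle^{\g+2}, \qquad |v-\vet|^{\g}\leq C_{\g}\,\langle v\rangle^{\g}\langle \vet\rangle^{\g}$$
immediately gives, together with $|a_{ij}(z)|\leq |z|^{\g+2}$ and $|c(z)|\leq 2(\g+3)|z|^{\g}$, the pointwise controls
$$|a_{ij}\ast G_{\dd}(v)|\leq C_{\g}\,\langle v\rangle^{\g+2}\|G_{\dd}\|_{L^{1}_{\g+2}}\,, \qquad |c\ast G_{\dd}(v)|\leq C_{\g}\,\langle v\rangle^{\g}\|G_{\dd}\|_{L^{1}_{\g}}\,,$$
and the last quantity is bounded by $\|G_{\dd}\|_{L^{1}_{\g+2}}$.

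Combining these bounds with the explicit form of $\widetilde{\bm{A}}_{\dd}g$, the desired first estimate of \eqref{eq:tildeAa} reduces to checking that the two quantities
$$\mathcal{N}_{1}:=\bigl\|\langle \cdot\rangle^{\g+2}\,\partial_{ij}^{2}\M_{\dd}\bigr\|_{L^{2}(\M_{\dd}^{-1})}, \qquad \mathcal{N}_{2}:=\bigl\|\langle \cdot\rangle^{\g}\,\M_{\dd}\bigr\|_{L^{2}(\M_{\dd}^{-1})}=\bigl\|\langle \cdot\rangle^{\g}\sqrt{\M_{\dd}}\bigr\|_{L^{2}}$$
are finite. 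Since $\M_{\dd}\leq M_{\dd}$ is dominated by a Maxwellian with parameters bounded uniformly in $\dd$ (Remark \ref{rmq:bounded}) and since $\partial_{ij}^{2}\M_{\dd}$ is, by the explicit expression used in the proof of Lemma \ref{lem:HessM}, a polynomial factor times $\M_{\dd}$ divided by $(1+\dd M)^{3}$, the integrand of $\mathcal{N}_{1}^{2}$ is controlled by a polynomial $\times\,\M_{\dd}$. Both integrals are therefore finite, with a bound $C_{1}(k,\dd)$ depending on $\dd$ through $a_{\dd},b_{\dd}$ (and allowed to depend on $k$ in view of the second conclusion). The bound $\|G_{\dd}\|_{L^{1}_{\g+2}}\leq \|1-2\dd\M_{\dd}\|_{L^{\infty}}\|g\|_{L^{1}_{\g+2}}$ is immediate.

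Finally, to obtain $\widetilde{\bm{A}}_{\dd}\in\mathscr{B}(L^{2}(\varpi_{k}),L^{2}(\M_{\dd}^{-1}))$ for $k>2\g+7$, I use a Cauchy--Schwarz embedding: for such $k$,
$$\|g\|_{L^{1}_{\g+2}}\leq \|g\|_{L^{2}(\varpi_{k})}\left(\int_{\R^{3}}\langle v\rangle^{2(\g+2)-k}\,\d v\right)^{1/2},$$
and the last integral converges precisely when $2(\g+2)-k<-3$, i.e.\ $k>2\g+7$. Combining with \eqref{eq:tildeAa} yields the claimed boundedness. The only mildly non-routine step is the verification of the finiteness of $\mathcal{N}_{1}$, for which the explicit polynomial-$\times$-Gaussian form of $\partial_{ij}^{2}\M_{\dd}$ from Lemma \ref{lem:HessM} is the decisive ingredient; everything else is elementary.
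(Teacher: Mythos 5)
Your proof is correct and follows essentially the same route as the paper's: bound the convolutions $a_{ij}\ast G_{\dd}$ and $c\ast G_{\dd}$ pointwise by $\langle v\rangle^{\gamma+2}\|G_{\dd}\|_{L^1_{\gamma+2}}$, reduce to the finiteness of $\int\langle v\rangle^{2\gamma+4}|\partial^{2}_{ij}\M_{\dd}|^{2}\M_{\dd}^{-1}\,\d v$ and $\int\langle v\rangle^{2\gamma}\M_{\dd}\,\d v$ (which you justify via the polynomial-times-$\M_{\dd}$ form of the Hessian), and finish with Cauchy–Schwarz to embed $L^{2}(\varpi_{k})\hookrightarrow L^{1}_{\gamma+2}$ for $k>2\gamma+7$. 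The only cosmetic difference is that you re-derive the convolution estimate from $|v-\vet|\leq 2\langle v\rangle\langle\vet\rangle$ where the paper cites \cite[Lemma 2.10]{Kleber}; and your aside ``since $0<\gamma\leq1$ we have $\gamma+2\leq 3$'' is actually irrelevant to that inequality, which holds for every nonnegative power.
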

\begin{proof} Recall from \cite[Lemma 2.10]{Kleber} that for any multi-index $\bm{\beta} \in \N^{3}$ with $|\bm{\beta}| \leq 2$, it holds that
\begin{equation*}
\left|\partial_{\bm{\beta}}\left(a_{ij}\ast f\right)(v)\right| \lesssim \min\left(\langle v\rangle^{\gamma+2}\|\partial_{\bm{\beta}}f\|_{L^{1}_{\g+2}},\langle v\rangle^{\gamma+2-|\bm{\beta}|}\|f\|_{L^{1}_{\g+2-|\bm{\beta}|}}\right)\,.
\end{equation*}
Thus, as in \cite[Lemma 2.11]{Kleber},
$$\left\|\left(a_{ij}\ast G_{\dd}\right)\partial^2_{ij}\M_{\dd}\right\|_{L^{2}(\M_{\dd}^{-1})}^{2} \leq C\,\|G_{\dd}\|_{L^{1}_{\g+2}}^{2}\,\int_{\R^{3}}\langle v\rangle^{2\g+4}\left|\partial^2_{ij}\M_{\dd}(v)\right|^{2}\,\M_{\dd}^{-1}\d v\,,$$
that is,
$$\bigg\|\sum_{ij}\left(a_{ij}\ast G_{\dd}\right)\partial^2_{ij}\M_{\dd}\bigg\|_{L^{2}(\M_{\dd}^{-1})} \leq C(k,\dd)\|G_{\dd}\|_{L^{1}_{\g+2}}\,.$$
Since 
$$\|(c \ast G_{\dd})\M_{\dd}\|_{L^{2}(\M_{\dd}^{-1})}\leq 2(\g+3)\|G_{\dd}\|_{L^{1}_{\g}}\|\M_{\dd}\|_{L^{1}(\varpi_{2\g})}\,,$$
we get \eqref{eq:tildeAa} and $\widetilde{\bm{A}}_{\dd} \in \mathscr{B}(L^{1}_{\g+2},L^{2}(\M_{\dd}^{-1}))$.  Now, by Cauchy-Schwarz inequality, it holds that for $s > \frac{3}{2}$
$$\|g\|_{L^{1}_{\g+2}} \leq \|g\|_{L^{2}(\varpi_{2\g+4+2s})}\,\|\langle \cdot \rangle^{-s}\|_{L^{2}}=C_{s}\,\|g\|_{L^{2}(\varpi_{2\g+4+2s})}$$
which proves that for $k > 2\g+4+3$ one also has $\widetilde{\bm{A}}_{\dd} \in \mathscr{B}(L^{2}(\varpi_{k}),L^2(\M_{\dd}^{-1})$.
\end{proof}
\noindent
Combining Lemma \ref{lem:Abounded} and Proposition \ref{prop:Badiss}, we prove the following lemma as in \cite[Corollary 2.12]{Kleber}.
\begin{lem}\label{lem:AU} Fix $\dd > 0$ such that $\dd\,a_{\dd}<1$ and $k > \max(k_{\dd},2\g+7)$.
Then, for any $a \in \R$ there exists $C(a,k,\dd) >0$ such that
$$\left\|\bm{A}_{\dd}U_{\dd}(t)\right\|_{\mathscr{B}(L^{2}(\varpi_{k}),L^{2}(\M_{\dd}^{-1}))} \leq C(a,k,\dd)\exp(at) \qquad \forall\, t \geq0.$$
\end{lem}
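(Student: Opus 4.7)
The proof is a direct composition argument leveraging the two preceding technical lemmas in the appendix. The strategy is:

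First, given the target growth rate $a \in \R$, I would invoke Proposition \ref{prop:Badiss}: since $\dd\,a_{\dd}<1$ and $k > k_{\dd}$, we can choose $\U$ and $R$ sufficiently large so that $\bm{B}_{\dd}$ generates a $C_{0}$-semigroup $(U_{\dd}(t))_{t\geq0}$ on $L^{2}(\varpi_{k})$ satisfying
\begin{equation*}
\|U_{\dd}(t)\|_{\mathscr{B}(L^{2}(\varpi_{k}))} \leq \exp(at), \qquad t \geq 0.
\end{equation*}
Note that the required $\U,R$ depend on $a$; this dependence will be absorbed into the final constant $C(a,k,\dd)$.

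Next, I would verify that $\bm{A}_{\dd}$ is bounded from $L^{2}(\varpi_{k})$ into $L^{2}(\M_{\dd}^{-1})$ under the assumption $k > 2\g+7$. Recall the decomposition
\begin{equation*}
\bm{A}_{\dd}g = \widetilde{\bm{A}}_{\dd}g + \U\,\chi_{R}\,g.
\end{equation*}
For the first term, Lemma \ref{lem:Abounded} directly provides $\widetilde{\bm{A}}_{\dd} \in \mathscr{B}(L^{2}(\varpi_{k}),L^{2}(\M_{\dd}^{-1}))$ with operator norm bounded by some constant $C_{1}(k,\dd)\|1-2\dd\M_{\dd}\|_{\infty}$ composed with the embedding $L^{2}(\varpi_{k}) \hookrightarrow L^{1}_{\g+2}$, valid since $k > 2\g+7 > 2(\g+2) + 3$. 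For the multiplication term, since $\chi_{R}$ is supported in $\{|v|\leq 2R\}$ and $\M_{\dd}^{-1}$ is locally bounded (actually continuous and strictly positive), one has
\begin{equation*}
\|\U\chi_{R} g\|_{L^{2}(\M_{\dd}^{-1})}^{2} \leq \U^{2} \sup_{|v|\leq 2R}\M_{\dd}^{-1}(v)\;\langle 2R\rangle^{-k}\|g\|_{L^{2}(\varpi_{k})}^{2},
\end{equation*}
which yields a bound depending only on $\U,R,\dd,k$. Combining the two estimates furnishes a constant $C_{0}(a,k,\dd) > 0$ (depending on $a$ through $\U, R$) such that
\begin{equation*}
\|\bm{A}_{\dd}\|_{\mathscr{B}(L^{2}(\varpi_{k}),L^{2}(\M_{\dd}^{-1}))} \leq C_{0}(a,k,\dd).
\end{equation*}

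Finally, the conclusion follows by composition: for any $g \in L^{2}(\varpi_{k})$ and $t \geq 0$,
\begin{equation*}
\|\bm{A}_{\dd}U_{\dd}(t)g\|_{L^{2}(\M_{\dd}^{-1})} \leq C_{0}(a,k,\dd)\,\|U_{\dd}(t)g\|_{L^{2}(\varpi_{k})} \leq C_{0}(a,k,\dd)\exp(at)\,\|g\|_{L^{2}(\varpi_{k})},
\end{equation*}
so we may take $C(a,k,\dd) = C_{0}(a,k,\dd)$. There is no serious obstacle here: both ingredients (semigroup estimate on $L^{2}(\varpi_{k})$ and boundedness of $\bm{A}_{\dd}$ into the Gaussian-weighted space) have already been established, and the only point requiring minimal care is tracking that the constant absorbs the $(\U,R)$-dependence introduced when tuning the decay rate $a$, which is harmless.
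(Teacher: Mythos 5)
Your proposal is correct and follows the same route the paper indicates (combine Proposition \ref{prop:Badiss} with Lemma \ref{lem:Abounded}, then compose). One small slip: in the bound for the multiplication term you write $\langle 2R\rangle^{-k}$, but since $\langle v\rangle^{-k}$ attains its supremum at $v=0$ on the ball $\{|v|\leq 2R\}$, the correct factor is $1$, not $\langle 2R\rangle^{-k}$; this only affects the constant and not the conclusion.
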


\end{document}